\newcommand{\norm}[1]{\left\lVert#1\right\rVert}
\DeclarePairedDelimiter\ceil{\lceil}{\rceil}
\DeclarePairedDelimiter\floor{\lfloor}{\rfloor}
\newcommand{\highlightRED}[1]{%
  \colorbox{red!0}{$\displaystyle#1$}}
  \newcommand{\highlightBLUE}[1]{%
  \colorbox{blue!0}{$\displaystyle#1$}}
  \newcommand{\highlightGRAY}[1]{%
  \colorbox{gray!0}{$\displaystyle#1$}}
\newcommand\mathcircled[1]{%
  \mathpalette\@mathcircled{#1}%
}
\newcommand\@mathcircled[2]{%
  \tikz[baseline=(math.base)] \node[draw,circle,inner sep=1pt] (math) {$\m@th#1#2$};%
}
\newcounter{AlphabetCounter}
\newtheorem{assumption}[AlphabetCounter]{\bf Assumption}
\begin{document}

\title{Decentralized Dictionary Learning Over Time-Varying Digraphs}

 \author{\name Amir Daneshmand \email adaneshm@purdue.edu \\
\name Ying Sun \email sun578@purdue.edu \\
\name Gesualdo Scutari \email gscutari@purdue.edu \\
       \addr School of Industrial Engineering\\
       Purdue University\\
       West-Lafayette, IN, USA
       \AND
       \name Francisco Facchinei \email facchinei@diag.uniroma1.it \\
       \addr Department of Computer, Control, and Management Engineering\\
       University of Rome ``La Sapienza''\\
       Rome, Italy
       \AND
       \name Brian M. Sadler \email brian.m.sadler6.civ@mail.mil \\
       \addr U.S. Army Research Laboratory
       \\ Adelphi, MD, USA
       }

\editor{??\vspace{-0.3cm}}

\maketitle

\begin{abstract}
This paper studies Dictionary Learning  problems wherein the learning task is distributed over a multi-agent network, modeled as a   time-varying directed graph.
This formulation is relevant, for instance, in Big Data scenarios where massive amounts of data are collected/stored in different locations (e.g., sensors, clouds) and  aggregating and/or processing all data in a fusion center might be inefficient or unfeasible, due to resource limitations, communication overheads or privacy issues.
We develop a unified decentralized algorithmic framework for this class of  \emph{nonconvex} problems, {which is proved to converge to  stationary solutions at a sublinear rate}.  
The new method hinges on Successive Convex Approximation  techniques, coupled with a decentralized tracking mechanism    aiming at locally estimating the gradient of the smooth part of the sum-utility. 
To the best of our knowledge, this is the first provably convergent decentralized algorithm  for Dictionary Learning and, more generally, bi-convex problems  over (time-varying) (di)graphs.
\end{abstract}

\begin{keywords}
Decentralized algorithms, dictionary learning, directed graph,  non-convex optimization,  time-varying network
\end{keywords}

\section{Introduction and Motivation\label{sec:intro}}
This paper introduces,  analyzes, and tests numerically the first {\em provably convergent distributed method} for a fairly general class of Dictionary Learning (DL) problems.  More specifically, we study  the problem of  finding a matrix    $\mathbf{D}\in\mathbb{R}^{M\times K}$ (a.k.a. the dictionary), 
by which the data matrix $\mathbf{S}\in\mathbb{R}^{M\times N}$ can be represented through a matrix $\mathbf{X}\in\mathbb{R}^{K\times N}$,  with a favorable structure  on $\mathbf{D}$ and $\mathbf{X}$ (e.g., sparsity). 
 We target  scenarios where computational resources and  data  are not centrally available, but distributed over a group of  $I$ agents, which can communicate through a (possibly) \emph{time-varying, directed}  network; see Fig. \ref{Schematic_digraph}. 
Each agent  $i\in\{1,2,\ldots,I\}$ owns one block $\mathbf{S}_i\in\mathbb{R}^{M\times n_i}$ of the  data $\mathbf{S}\triangleq [\mathbf{S}_1,\ldots,\mathbf{S}_I]$, with  $\sum_{i=1}^I n_i=N$. Partitioning the representation matrix $\mathbf{X}\triangleq [\mathbf{X}_1,\ldots,\mathbf{X}_I]$ according to  $\mathbf{S}$,  with  $\mathbf{X}_i\in\mathbb{R}^{K\times n_i}$, 
 the class of \emph{distributed} DL problems we aim at studying  reads 
\begin{figure}[t]
\label{Schematic_digraph}
\center
\includegraphics[scale=0.35]{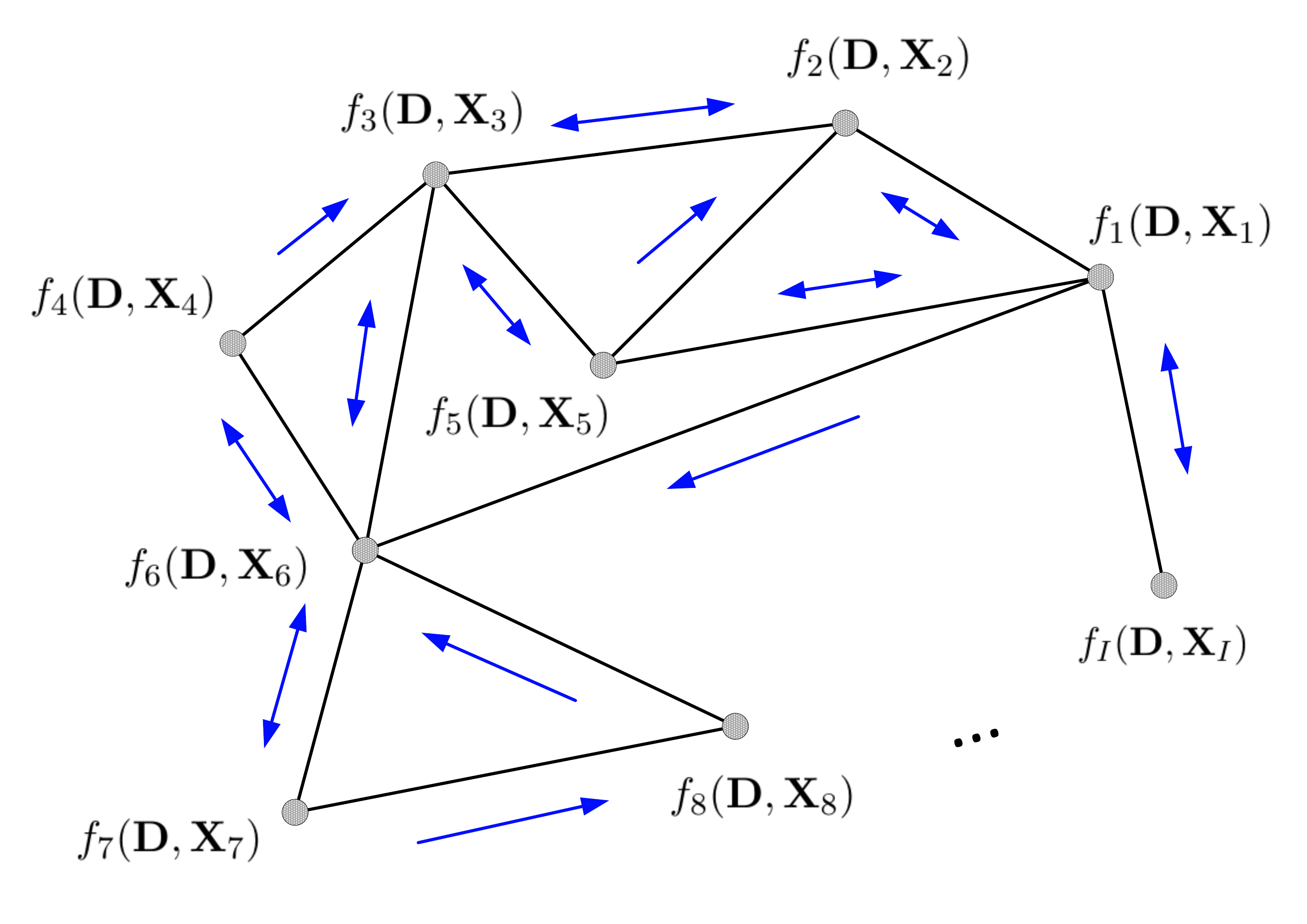}
\caption{Directed network topology}
\end{figure}
\begin{equation}
\begin{aligned}
\min_{\mathbf{D},\{\mathbf{X}_i\}_{i=1}^I}\quad& U(\mathbf{D},\mathbf{U})\triangleq\underset{\triangleq F(\mathbf{D},\mathbf{X})}{\underbrace{\sum_{i=1}^{I}f_{i}(\mathbf{D},\mathbf{X}_{i})}}+\sum_{i=1}^{I}g_{i}(\mathbf{X}_{i})+G(\mathbf{D})
\\
\mathrm{s.t.}\quad\,\,\quad &\mathbf{D} \in {\cal D},~\mathbf{X}_i\in\mathcal{X}_i,\quad i=1,\ldots ,I,
\end{aligned}
\label{eq:P1}\tag{P}
\end{equation}
 where  $f_i:\mathcal{D}\times \mathcal{X}_i\rightarrow\mathbb{R}$ is the fidelity function of agent $i$, which  measures the mismatch between the data $\mathbf{S}_i$ and the (local) model; {this function is assumed to be smooth and \emph{biconvex} (i.e., convex in  $\mathbf{D}$ for fixed  $\mathbf{X}_i$, and vice versa)}; $G:\mathcal{D}\rightarrow\mathbb{R}$ and $g_i:\mathcal{X}_i\rightarrow\mathbb{R}$ are (possibly non-smooth) convex functions, which are generally used to impose extra structure on the solution (e.g.,  low-rank or sparsity); and $\mathcal{D}\subseteq\mathbb{R}^{M\times K}$ and $\mathcal{X}_i\subseteq\mathbb{R}^{K\times n_i}$ are some  closed convex sets. To avoid scaling ambiguity in the model,  $\mathcal{D}$ is assumed to be bounded, without loss of generality. \setcounter{equation}{1}
Since all $f_i$'s share the common variable $\mathbf{D}$, we call it a \emph{shared} variable and, by the same token,  $\mathbf{X}_i$'s are termed \emph{private} variables. Note that, in this distributed setting, agent $i$  knows only its own functions $f_i$ (and $g_i$) but not $\sum_{j\neq i}f_j$. Hence, agents aim to cooperatively  solve Problem~\ref{eq:P1} leveraging local communications with their neighbors.   

Problem \ref{eq:P1} encompasses several DL-based formulations of practical interest, corresponding to different choices of the fidelity functions, regularizers, and feasible sets;  examples include the \emph{elastic net} \citep{elasticNet_Hastie_2005} sparse DL, sparse PCA \citep{Shen_Huang08}, non-negative matrix factorization and low-rank approximation \citep{Hastie_book15}, supervised  DL \citep{Mairal_NIPS08}, sparse singular value decomposition \citep{Lee_Shen_Huang_Marron10}, non-negative sparse coding \citep{Hoyer_SNMF}, principal component pursuit \citep{R-PCA}, robust non-negative sparse matrix factorization,  and  discriminative label consistent learning \citep{ZhuolinJiang_2011}. More details on  explicit customizations of the general model \ref{eq:P1} can be found in Sec.~\ref{sec:distributed-DL}.
 
 Our distributed setting  is motivated by several data-intensive applications in several fields, including    signal processing and machine learning, and   network systems  (such as clouds, cluster computers, networks of sensor  vehicles, or  autonomous robots) wherein the sheer volume and spatial/temporal disparity of scattered data, energy constraints, and/or privacy issues,  render centralized processing and storage infeasible or inefficient. Also, time-varying communications
arise, for instance, in mobile wireless networks (e.g., ad-hoc networks),
wherein nodes are mobile and/or communicate through  fading channels.
Moreover, since nodes generally transmit at different power and/or communication
channels are not symmetric, directed links are a natural assumption.
 
 Our goal is to design a provably convergent  \emph{decentralized} method  for  Problem \ref{eq:P1}, over \emph{time-varying} and  \emph{directed} graphs.  To the best of our knowledge this is an open problem, as documented next. \vspace{-0.2cm}
 
 \subsection{Challenges and related works}

The design of distributed algorithms for \ref{eq:P1} faces the following challenges:
\begin{enumerate}[label=(\roman*)]
\item Problem \ref{eq:P1} is \emph{non-convex} and \emph{non-separable} in the optimization 
variables; 
\item Each agent $i$ owns exclusively $\mathbf{S}_i$ and thus  can only compute  its own 
function $f_i$;
\item Each $f_i$ depends on a common set of variables$-$the dictionary $\mathbf{D}-$shared 
among all the agents,  as well as the private variables $\mathbf{X}_i$. Shared and private 
variables need to be treated differently. In fact, in several applications, the size of private 
variables is  much larger than that of the  shared ones; hence, broadcasting   agents' private 
variables over the network would result in an unaffordable   communication overhead;
\item The gradient   of each  $f_i$   is in general \emph{neither  bounded nor globally Lipschitz } on the  
feasible region. This represents a challenge in the design of provably convergent distributed 
algorithms, as boundedness of the gradient is  a standard assumption in the analysis of   most
distributed schemes for   nonconvex problems;
\item $G$ and $g_i$'s are  \emph{nonsmooth};
 
\item {The graph is  \emph{directed},  \emph{time-varying}; no other      
structure is assumed (such as  star or ring topology, etc.), but some long term connectivity properties (cf. Assumption \ref{B_strongly_connectivity}).}
\end{enumerate}

\noindent \emph{Centralized} methods for the solution of Problem~\ref{eq:P1} (or some closely related variants) have  been  extensively studied and prominent examples are \citep{Aharon_KSVD_2006,Mairal_Bach_2010,RazaviyaynTsengLuo14}. However, we are not aware of any \emph{distributed} algorithm that can address  challenges 
i)-vi) (even some subsets of them), 
as documented next. 

\noindent \textbf{Ad-hoc heuristics}: Several attempts have been made to extend  centralized approaches for DL problems to a distributed setting (\emph{undirected, static} graphs), under more or less restrictive assumptions; examples include primal methods  \citep{Raja_CKSVD_2013,chainais2013distributed,Wai_EXTRA_AO_2015}  and   (primal/)dual-based  ones
\citep{ChenTSP15,Liang_ADMM_2014,Theodoridis_ICASSP15}. While these schemes represent good heuristics, their theoretical
convergence remains an open question, and  numerical results are contradictory. For instance, some schemes are shown not to converge while some others fail to reach asymptotic agreement among the local copies of the dictionary; see, e.g. \citep{chainais2013distributed}.  

{
Recently and independently from our conference work \citep{daneshmand_Asilomar16},  
\citet{Prox_PDA_GlobalSIP}  proposed a distributed primal-dual-based method for   a 
class of  dictionary learning problems  related, but different from  Problem \ref{eq:P1}. 
Specifically, they considered:  quadratic loss functions $f_i$,  with a quadratic regularization 
on the dictionary (i.e., $G=0$), and norm ball constraints on the private variables. The 
network is modeled as a fixed undirected graph. Asymptotic convergence  of the scheme  to 
stationary solutions  is proved, but no rate analysis is reported. We remark that  the scheme 
 in \citep{Prox_PDA_GlobalSIP},    in 
order to establish  convergence,  requires some penalty parameters to go to infinity, which makes the method numerically not attractive.}

\noindent \textbf{Distributed nonconvex optimization}: Since the DL problem~\ref{eq:P1} is an instance of non-convex  optimization problems, {we briefly discuss   here the few  works in the literature on    distributed methods for   non-convex   optimization   \citep{Bianchi_Jakubowicz,Tatarenko2015,DoF,NEXT,sun2016distributed,pmlr-v70-hong17a,scutari-sun19}; we group these papers as  follows. The schemes in \citep{Bianchi_Jakubowicz,Tatarenko2015, DoF,pmlr-v70-hong17a}, while substantially different,  are all applicable  to {\it smooth, unconstrained} optimization, with  \citep{Bianchi_Jakubowicz,DoF} handling also {\it compact} constraints and \citep{Tatarenko2015} implementable on (time-varying) digraphs.} The distributed algorithms   in \citep{NEXT,sun2016distributed,scutari-sun19} can handle   objectives  with additive   {\it nonsmooth} convex  functions, with \citep{sun2016distributed,scutari-sun19} applicable to (time-varying) digraphs.  

All the above schemes    \emph{cannot} 
adequately deal  with   \emph{private} (i.e., $\mathbf{X}_i$'s)  {\it and} shared variables (i.e., $
\mathbf{D}$), which are a key feature of Problem \ref{eq:P1}. {Furthermore, convergence 
therein is proved under the assumption that the gradient of (the smooth part of) the objective 
function is \emph{globally Lipschitz continuous}, a property 
that we do not assume and that is not satisfied in many of the applications we consider.}
The design of provably convergent distributed algorithms for \ref{eq:P1} remains an open problem, let alone rate guarantees.

\vspace{-0.2cm}

\subsection{Major contributions}\vspace{-0.1cm}
In this paper, we propose the \emph{first provably convergent} distributed algorithm for the general class of DL problems \ref{eq:P1}, addressing  all challenges i)-vi).  The proposed approach uses a  general convexification-decomposition technique that hinges on recent (centralized) Successive Convex Approximation  methods \citep{ScutFacchSonPalPang13,Scutari-Facchinei-SagratellaTSP15}.  This technique is    coupled  with a {perturbed push-sum consensus
scheme  preserving the \emph{feasibility} of the iterates} and a tracking mechanism
aiming at estimating locally the gradient of $\sum_i f_i$. Both communication
and tracking protocols are implementable on \emph{time-varying} undirected
or \emph{directed} graphs (B-strongly connected). 
The scheme is proved to  converge
to stationary solutions of Problem~\ref{eq:P1}, under mild assumptions on the step-size employed by the algorithm; {a sublinear convergence rate is also established}.     
On the technical side, we contribute to the literature of distributed algorithms for bi-convex  (nonsmooth) constrained optimization by putting forth a new non-trivial convergence analysis that, for the first time, {i) avoids the   assumption  that  the gradients  $\nabla f_i$ are  globally Lipschitz; and ii)  deals with {\it private} and shared optimization variables.} Numerical experiments show that the proposed
schemes compare favorably with ad-hoc  algorithms, proposed for special instances of Problem~\ref{eq:P1}.\vspace{-0.2cm}

\subsection{Paper Organization}
The rest of the paper is organized as follows. The problem and network setting are introduced in Sec.~\ref{sec:distributed-DL}, along with some motivating applications. Sec.~\ref{sec:alg-design} presents the algorithm   and its convergence properties; the proofs of our results are given in the Appendix, Sec.~\ref{appendix}. Extensive numerical experiments showing the effectiveness  of the proposed scheme are discussed in   Sec. \ref{num_results} whereas   Sec.~\ref{conclusions} draws some conclusions. \vspace{-0.2cm}

\subsection{Notation}
\label{notation_section} 
Throughout the paper we  use the following notation.   We denote by $\mathbb{N}_+$ the set of non-negative integers. {Given $x\in\mathbb{R}$,   $\ceil{x}$ (resp. $\floor{x}$) denotes the smallest (resp. the largest) integer greater (resp. smaller) than or equal to $x$.} Vectors are denoted by bold lower-case 
letters (e.g., $\mathbf{x}$) whereas matrices are denoted by bold capital letters (e.g., $\mathbf{A}
$). The $k$-th canonical vector is denoted by  $\mathbf{e}_k$.  
The inner product between two real matrices, $\mathbf{A}$ and $\mathbf{B}$, is denoted by   $
\left\langle \mathbf{A},\mathbf{B}\right\rangle\triangleq \mathrm{tr}(\mathbf{A}^\intercal
\mathbf{B})$, where $\text{tr}(\bullet)$ is the trace operator{; $ \mathbf{A}  \otimes \mathbf{B}$ denotes the Kronecker product.}
  {Given the real matrix $\mathbf{A}$, with $ij$-entries denoted by $A_{ij}$, we will use the  
  following    matrix norms:   the Frobenius norm $||\mathbf{A}||_F\triangleq\sqrt{\sum_{i,j}|
  A_{ij}|^2}$; the $L_{1,1}$ norm $||\mathbf{A}||_{1,1}\triangleq\sum_{i,j}|A_{ij}|$; the $L_{2,\infty}
  $ norm $||\mathbf{A}||_{2,\infty}\triangleq\max_i\,\sqrt{\sum_jA_{ij}^2}$; the $L_{\infty,\infty}$ 
  norm $||\mathbf{A}||_{\infty,\infty}=\max_{i,j}|A_{ij}|$; and the spectral norm 
  $||\mathbf{A}|| _2\triangleq\sigma_{\mathrm{max}}(\mathbf{A})$}, where $
  \sigma_{\mathrm{max}}(\mathbf{A})
  $ denotes the  maximum singular value of $\mathbf{A}$. 
 The matrix quantities $\nabla_D f_i(\mathbf{D},\mathbf{X}_i)$ and $\nabla_{X_i} f_i(\mathbf{D},
 \mathbf{X}_i)$  are the gradients of $f_i$ with respect to $\mathbf{D}$ and $\mathbf{X}_i$, 
 evaluated  at $(\mathbf{D},\mathbf{X}_i)$, respectively, with the partial derivatives arranged 
 according to the patterns of $\mathbf{D}$ and $\mathbf{X}_i$, respectively. The same convention 
 is adopted  for subgradients of $g_i$ and $G$, that are therefore written as matrices of the same 
 dimensions of $\mathbf{X}_i$ and $\mathbf{D}$, respectively.   {Table \ref{Table_of_Notation} summarizes the main notation and symbols used in the paper}. 
 
Because of the nonconvexity of Problem \ref{eq:P1}, we aim at computing stationary solutions of  \ref{eq:P1}, defined as follows:  a  tuple $({\mathbf{D}^\ast}, \mathbf{X}^\ast)$, with  $\mathbf{X}^\ast\triangleq [\mathbf{X}_1^\ast, \dots, \mathbf{X}_I^\ast]$ is a stationarity solution of  \ref{eq:P1} if the following holds: $\mathbf{D}^\ast\in \mathcal{D}$, $\mathbf{X}_i^\ast\in \mathcal{X}_i$, $i=1,\ldots, I$, and 
{\begin{equation}
\begin{aligned}
 &\Big\langle\nabla_D F({\mathbf{D}^\ast},{\mathbf{X}^\ast}), \mathbf{D} -\mathbf{D}^\ast\Big\rangle+G(\mathbf{D}) -G(\mathbf{D}^\ast) \, \geq\, 0, & \forall \,
 \mathbf{D} \in { \cal D},
 \\
&\left\langle\nabla_{X_i} f_i(\mathbf{D}^\ast,\mathbf{X}_i^\ast),\mathbf{X}_i-{\mathbf{X}}_i^\ast\right\rangle+ g_i(\mathbf{X}_i)-g_i(\mathbf{X}_i^\ast)\, \geq \, 0, &  \forall\mathbf{X}_i\in\mathcal{X}_i, \; i = 1,\ldots, I.
\end{aligned}
\label{stationarity_cond}
\end{equation}	}
 
\begin{table}[H]
	\center \footnotesize  
	\renewcommand{\arraystretch}{1.2}
	\begin{tabular}{|c || c | c |c |}
		\hline
		\textbf{Symbol}  & \textbf{Definition} & \textbf{Member of} & \textbf{Reference}
		\\
		\hline
		\hline
		{$F(\mathbf{D},\mathbf{X})$} & $\sum_{i=1}^I f_i(\mathbf{D},\mathbf{X}_i)$ &  $\mathbb{R}^{M\times K}\times\mathbb{R}^{K\times N}\rightarrow\mathbb{R}$ &	\eqref{eq:P1}
		\\
		\hline
		{$U(\mathbf{D},\mathbf{X})$} & $F(\mathbf{D},\mathbf{X})+\sum_{i=1}^I g_i(\mathbf{X}_i)+G(\mathbf{D})$ &  $\mathbb{R}^{M\times K}\times\mathbb{R}^{K\times N}\rightarrow\mathbb{R}$ &\eqref{eq:P1}	
		\\
		\hline
		\parbox{1em}{$\mathbf{S}_i$} & Local data matrix &  $\mathbb{R}^{M\times n_i}$ &
		\\
		\hline	
		\parbox{1em}{$\mathbf{S}$} & $[\mathbf{S}_1,\mathbf{S}_2,\ldots,\mathbf{S}_I]$ &  $\mathbb{R}^{M\times N}$ &	
		\\
		\hline
		\parbox{1em}{$\mathbf{D}$} & Dictionary matrix variable &  $\mathcal{D}\subseteq\mathbb{R}^{M\times K}$ &
		\\
		\hline
		\parbox{1em}{$\mathbf{D}_{(i)}$} & Local copy of $\mathbf{D}$ of agent $i$ &  $\mathcal{D}\subseteq\mathbb{R}^{M\times K}$ &
		\\
		\hline
		\parbox{1em}{$\mathbf{D}^\nu_{(i)}$} & $\mathbf{D}_{(i)}$ at iteration $\nu$&  $\mathcal{D}\subseteq\mathbb{R}^{M\times K}$ &
		\\
		\hline
		\parbox{1em}{$\mathbf{D}^\nu$} & $ [\mathbf{D}_{(1)}^{\nu\intercal}, \mathbf{D}_{(2)}^{\nu\intercal},\ldots,\mathbf{D}_{(I)}^{\nu\intercal}]^\intercal$ & $\mathbb{R}^{M\times KI}$ & \eqref{Dnu_Xnu_Dbar_Def}
		\\
		\hline 
		\parbox{1em}{$\widetilde{\mathbf{D}}_{(i)}^\nu$} & Solution of subproblem \eqref{D_tilde_subproblem} & $\mathcal{D}\subseteq\mathbb{R}^{M\times K}$ & \eqref{D_tilde_subproblem}
		\\
		\hline
		\parbox{1em}{$\overline{\mathbf{D}}^\nu$} & $(1/I)\sum_{i=1}^I  \mathbf{D}_{(i)}^\nu$ &$\mathcal{D}\subseteq\mathbb{R}^{M\times K}$ & \eqref{Dnu_Xnu_Dbar_Def}
		\\
		\hline
		\parbox{1em}{$\mathbf{U}^\nu_{(i)}$} &  Local update of dictionary variable &  $\mathcal{D}\subseteq\mathbb{R}^{M\times K}$ & \eqref{U_update}
		\\
		\hline
		\parbox{1em}{$\mathbf{X}_i$} & Local matrix variable &  $\mathcal{X}_i\subseteq\mathbb{R}^{K\times n_i}$ &
		\\
		\hline
		\parbox{1em}{$\mathbf{X}$} & $ [\mathbf{X}_1,\mathbf{X}_2,\ldots,\mathbf{X}_I]$  &  $\mathcal{X}\subseteq\mathbb{R}^{K\times N}$ &
		\\
		\hline
		\parbox{1em}{$\mathbf{X}_i^\nu$} & $\mathbf{X}_i$ at iteration $\nu$ &  $\mathcal{X}_i\subseteq\mathbb{R}^{K\times n_i}$ &
		\\
		\hline
		\parbox{1em}{$\mathbf{X}^\nu$} & $\mathbf{X}$ at iteration $\nu$ : $ [\mathbf{X}_1^\nu,\mathbf{X}_2^\nu,\ldots,\mathbf{X}_I^\nu]$   &  $\mathcal{X}\subseteq\mathbb{R}^{K\times N}$ & \eqref{Dnu_Xnu_Dbar_Def}
		\\
		\hline
		\parbox{1em}{$\boldsymbol{\Theta}^{\nu}_{(i)}$} & Gradient-tracking variable  &  $\mathcal{D}\subseteq\mathbb{R}^{M\times K}$ & \eqref{Theta_update}
		\\
		\hline
		\parbox{1em}{$\mathbf{A}^\nu$} &  $(a_{ij}^\nu)_{i,j=1}^I-$ consensus weights at time $\nu$ & $\mathbb{R}^{I\times I}$ & Assumption \ref{A_matrix_Assumptions}
		\\
		\hline 	
	\end{tabular}
	\caption{Table of notation}
	\label{Table_of_Notation}
\end{table}

\section{Problem Setup and Motivating  Examples}\vspace{-0.1cm}
In this section, we first discuss  the assumptions underlying our model and then provide several examples of possible applications.
\label{sec:distributed-DL} 
\subsection{Problem Assumptions}
\label{formualtion}
We consider Problem \ref{eq:P1} under the following assumptions.

\begin{assumption}[On Problem \ref{eq:P1}]~
\label{Problem_Assumptions}\vspace{-0.1cm}
\begin{itemize}
	\item[\textbf{(\ref{Problem_Assumptions}1)}]   {Each}  $f_i:\mathcal{O}\times \mathcal{O}_i\rightarrow\mathbb{R}$ is $\mathcal{C}^2$, {lower bounded,} and biconvex, where $\mathcal{O}\supseteq\mathcal{D}$ and   $\mathcal{O}_i\supseteq\mathcal{X}_i$  are convex open sets;
		\item[\textbf{(\ref{Problem_Assumptions}2)}] Given $\mathbf{D}\in\mathcal{D}$,  {each} $\nabla_{X_i} f_i(\mathbf{D},\bullet)$ is Lipschitz continuous on $\mathcal{X}_i$, with Lipschitz constant $L_{\nabla{X_i}}(\mathbf{D})$. Furthermore, {each} $L_{\nabla{X_i}}:\mathcal{D}\rightarrow\mathbb{R}$ is continuous;
	\item[\textbf{(\ref{Problem_Assumptions}3)}]   $\!\mathcal{D}$ is compact and convex; and each  $\mathcal{X}_i$ is closed and convex (not necessarily bounded);
	\item[\textbf{(\ref{Problem_Assumptions}4)}]  $G:\mathcal{O} \rightarrow\mathbb{R}$ is   convex (possibly non-smooth);
	
\item[\textbf{(\ref{Problem_Assumptions}5)}] For all $i=1,\ldots, I$, either i)  $\mathcal{X}_i$ is {compact} and   $g_i:\mathcal{O}_i\rightarrow\mathbb{R}$ is convex; or ii)   $g_i$ is $\mu_i$-strongly convex. 
\end{itemize}
\end{assumption}

\noindent 
 The above assumptions  are quite mild and are satisfied by several problems of practical interest; see Sec. \ref{examples} for several concrete examples.

\subsection*{Network topology}\label{Network_topology}
We study Problem \ref{eq:P1} in   the following network setting.
Time is slotted and in  each time-slot $\nu$ the network of the $I$ agents is modeled as a digraph $\mathcal{G}^{\nu}=(\mathcal{V},\mathcal{E}^\nu)$, where   $\mathcal{V}=\{1,\ldots,I\}$ is the set of agents   and  $\mathcal{E}^\nu$ is the set of edges (communication links); we use $(i,j)\in \mathcal{E}^\nu$ to indicate that there is a directed link from node $i$ to node $j$.   The \emph{in-neighborhood} of agent $i\in\mathcal{V}$ at time $\nu$ is defined as  $\mathcal{N}_i^{\rm in}[\nu]=\{j\in\mathcal{V}|(j,i)\in\mathcal{E}^\nu\}\cup\{i\}$ (see  Fig. \ref{in_neighborhood_set}) whereas its \emph{out-neighborhood} is $\mathcal{N}_i^\textrm{out}\left[\nu\right]=\{j\in\mathcal{V}|\left(i,j\right)\in\mathcal{E}^\nu\}\cup\{i\}$. In words, agent $i$ can receive information from its in-neighborhood members, and  send information to its out-neighbors. The \emph{out-degree} of agent $i$ is defined as $d_i^\nu \triangleq  \left|\mathcal{N}_i^\textrm{out}\left[\nu\right]\right|$, where $\left|\bullet\right|$ denotes the cardinality of a set. If the graph is undirected, the set of in-neighbors and out-neighbors coincide; in such a case we just write $\mathcal{N}_i$ to denote the set of neighbors of agent $i$.  When the network is static, all the above quantities do not depend on the iteration index $\nu$; hence, we will drop the superscript ``$\nu$''.  To let information propagate over the network,  we assume that the sequence $\{\mathcal{G}^\nu\}_{\nu}$ possesses some ``long-term'' connectivity property, as stated next.

\begin{figure}[t]
\center\vspace{-0.7cm}
\includegraphics[scale=0.21]{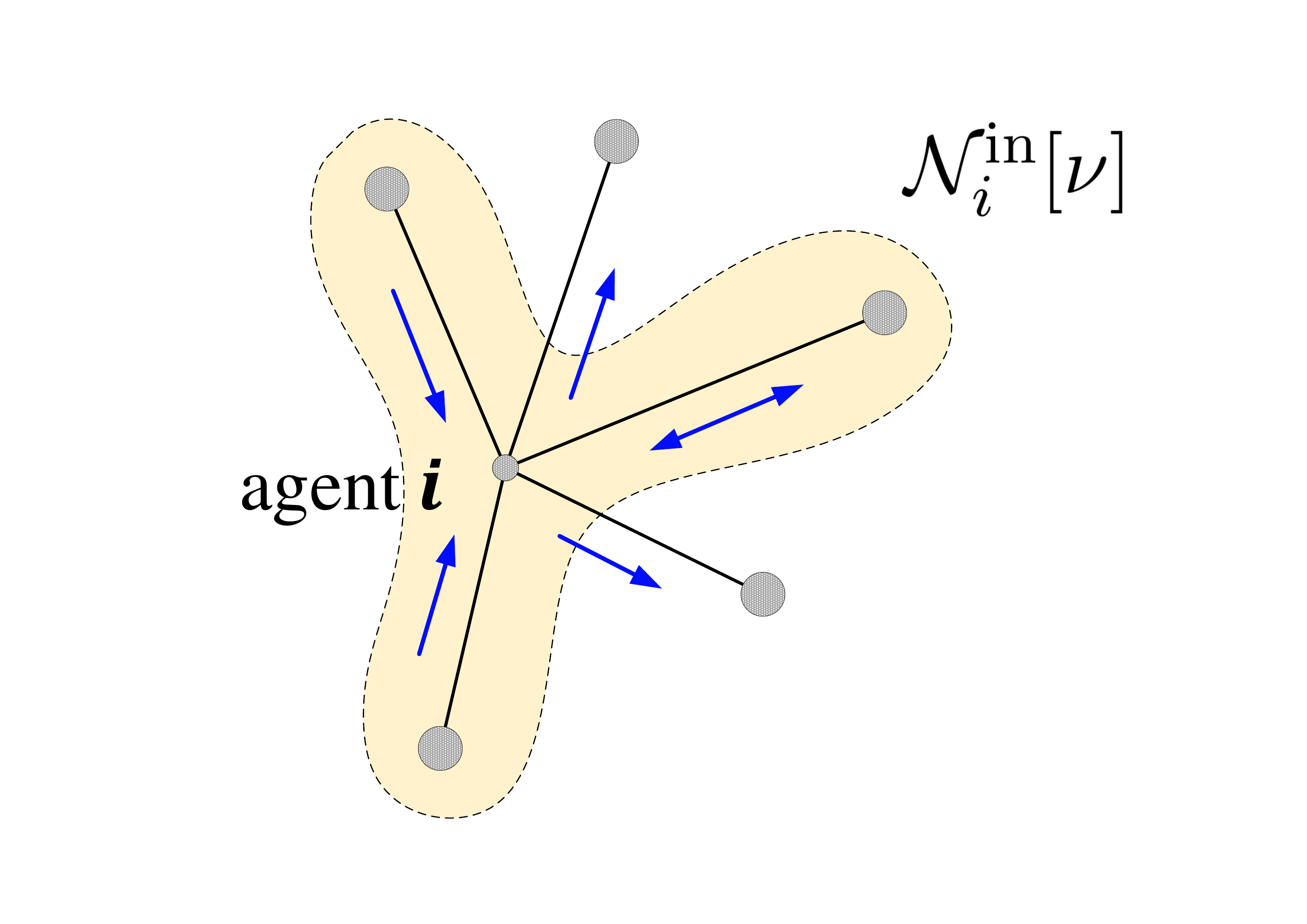}\vspace{-0.3cm}
\caption{Illustration of in-neighborhood set of agent $i$ at time $\nu$.}
\label{in_neighborhood_set}\vspace{-0.4cm}
\end{figure}

\begin{assumption}[B-strong connectivity]
\label{B_strongly_connectivity}
The graph sequence $\{\mathcal{G}^\nu\}_{\nu}$ is $B$-strongly connected, i.e., there exists an (arbitrarily large) integer $B>0$  (unknown to the agents) such that the graph with edge set $\cup_{t=kB}^{(k+1)B-1} \mathcal{E}^t$  is strongly connected,  for all $k\geq 0$.
\end{assumption}
Notice that  this condition  is quite mild and   widely used     in the literature to analyze convergence of  distributed algorithms over time-varying networks. Generally speaking, it permits strong connectivity to occur over time windows of length $B$, so that  information can propagate   from every node  to every other node in the network. 
Assumption~\ref{B_strongly_connectivity} is  satisfied in several practical scenarios. 
For instance, commonly used settings in cloud computing infrastructures  are   star, ring, tree, hypercube, or n-dimensional mesh (Torus) topologies, which all  satisfy  Assumption \ref{B_strongly_connectivity}. It is worth  mentioning that the multi-hop network topologies of these structures are migrating towards \emph{high-radix mesh and Torus}, since they are scalable, low-energy consuming,  and much cheaper than other topologies, like \emph{fat-tree} topologies  \citep{Kim_PhDThesis_Stanford_2008}. These type of connected networks are generally time-invariant and undirected, and  clearly  they  satisfy  Assumption \ref{B_strongly_connectivity}.

\subsection{Motivating examples}  \label{sec:examples}
We conclude this section discussing some practical instances of Problem \ref{eq:P1}, all satisfying Assumption~\ref{Problem_Assumptions}, which show the generality of the proposed model. \vspace{-0.2cm}
\label{examples}
\subsection*{Elastic net sparse DL \citep{Tosic_Frossard_2011,elasticNet_Hastie_2005}}
\label{elastic_net_example}
Sparse approximation of a signal with an adaptive dictionary is one of the most studied DL problems \citep{Tosic_Frossard_2011}. When an elastic net sparsity-inducing regularizer is used \citep{elasticNet_Hastie_2005}, the problem can be written as
\begin{equation}
\begin{aligned}
\min_{\mathbf{D},\{\mathbf{X}_i\}_{i=1}^I}\quad& \sum_{i=1}^I~\left\{\frac{1}{2}\,\norm{\mathbf{S}_i-\mathbf{D}\mathbf{X}_i}_F^2+\lambda\norm{\mathbf{X}_i}_{1,1}+\frac{\mu}{2}\norm{\mathbf{X}_i}_F^2\right\}
\\
\mathrm{s.t.}\qquad &\mathbf{D} \in {\cal D},~\mathbf{X}_i\in\mathbb{R}^{K\times n_i},\quad i=1,2,\ldots,I,
\end{aligned}
\label{eq:sparse_DL}
\end{equation}
where $\mathcal{D}\triangleq\{\mathbf{D}\,:\,||\mathbf{D}\mathbf{e}_k||_2\leq\alpha, \,k=1,2,\ldots,K\}$, and  $\alpha,\,\lambda,\,\mu>0$ are the tuning parameters. Problem \eqref{eq:sparse_DL} is an instance of \ref{eq:P1}, with $f_i(\mathbf{D},\mathbf{X}_i)=({1}/{2})\cdot\norm{\mathbf{S}_i-\mathbf{D}\mathbf{X}_i}_F^2$, $g_i(\mathbf{X}_i)=\lambda\norm{\mathbf{X}_i}_{1,1}+\frac{\mu}{2}\norm{\mathbf{X}_i}_F^2$, $G(\mathbf{D})=0$, and $\mathcal{X}_i=\mathbb{R}^{K\times n_i}$. It is not difficult to check that  \eqref{eq:sparse_DL} satisfies Assumption \ref{Problem_Assumptions}, and the Lipschitz constant in \ref{Problem_Assumptions}2 is given by  $L_{\nabla X_i}(\mathbf{D})=(\sigma_{\mathrm{max}}(\mathbf{D}))^2$. 

\subsection*{Supervised DL \citep{Mairal_NIPS08}}
\label{Supervised_DL}
Consider a classification problem with training set $\{\mathbf{s}_n,y_n\}_{n=1}^N$, where $\mathbf{s}_n$ is the feature vector with associated   binary label $y_n$. The discriminative DL problem aims at simultaneously learning a dictionary $\mathbf{D}^{(1)}\in\mathbb{R}^{M\times K}$ such that  $\mathbf{s}_n = \mathbf{D}^{(1)}\mathbf{x}_n$, for some sparse  $\mathbf{x}_n\in\mathbb{R}^K$, and finding a bilinear classifier $\zeta_n(\mathbf{D}^{(2)},\mathbf{x}_n,\mathbf{s}_n)\triangleq \mathbf{s}_n^\intercal\,\mathbf{D}^{(2)}\,\mathbf{x}_n$  that best separates the coded data with distinct labels  \citep{Mairal_NIPS08}. Assume that each agent $i$ owns $\{\left(\mathbf{s}_n,y_n\right)\,:\,n\in\mathcal{S}_i\}$, with  $\{\mathcal{S}_i\}_{i=1}^{I}$ being a partition of $\{1,\ldots,N\}$,
then the discriminative DL reads \vspace{-0.1cm}
\begin{equation}
\begin{aligned}
\min_{\substack{\mathbf{D}^{(1)},\mathbf{D}^{(2)}\\ \{\mathbf{x}_n\}_{n=1}^N}}\quad& \sum_{i=1}^{I}\sum_{n\in\mathcal{S}_i} \left[\ell\left( y_n\zeta_n\left(\mathbf{D}^{(2)},\mathbf{x}_n,\mathbf{s}_n\right)\right)+\frac{1}{2}\norm{\mathbf{s}_n-\mathbf{D}^{(1)}\mathbf{x}_n}_2^2+g_n(\mathbf{x}_n)\right]
\\
\mathrm{s.t.}\qquad &\mathbf{D}^{(1)} \in {\cal D}^{(1)},~\mathbf{D}^{(2)}\in\mathcal{D}^{(2)},\ {\mathbf{x}_n\in\mathbb{R}^K},\quad n=1,2,\ldots,N,
\end{aligned}\vspace{-0.1cm}
\label{eq:SDL}
\end{equation}
where $\ell(x)\triangleq \log(1+e^{-x})$ is the \emph{logistic} loss function; and  $g_n\left(\mathbf{x}_n\right) \triangleq \lambda \norm{\mathbf{x}_n}_1 + ({\mu}/{2})\cdot \norm{\mathbf{x}_n}_2^2$ is the elastic net regularizer. The dictionary $\mathbf{D}^{(1)}$ and classifier parameter $\mathbf{D}^{(2)}$ are constrained to belong to the convex compact sets $\mathcal{D}^{(1)}$ and $\mathcal{D}^{(2)}$, respectively.
Problem \eqref{eq:SDL} is an instance of Problem \ref{eq:P1}, with  $\mathbf{D} \triangleq \left[\mathbf{D}^{(1)}, \mathbf{D}^{(2)}\right]$, $\mathbf{S}_i\triangleq \left[\mathbf{s}_n\right]_{n\in\mathcal{S}_i}$ and $\mathbf{X}_i\triangleq \left[\mathbf{x}_n\right]_{n\in\mathcal{S}_i}$. Note that Assumption \ref{Problem_Assumptions} is satisfied, and  the Lipschitz constant in \ref{Problem_Assumptions}2  is given by 
$ L_{\nabla X_i}(\mathbf{D})=({1}/{4})\cdot \|y_i\cdot\mathbf{s}_i^\intercal\,\mathbf{D}^{(2)}\|_2^2+(\sigma_{\mathrm{max}}(\mathbf{D}^{(1)}))^2.$

\subsection*{DL for low-rank plus sparse representation \citep{Bouwmans20171}}\vspace{-0.1cm}
The low-rank plus sparse decomposition problems cover many applications in signal processing and machine learning \citep{Bouwmans20171}, including matrix completion, image denoising, deblurring, superresolution, and Principal Component Pursuit (PCP) \citep{R-PCA}.
Consider the  bi-linear model $\mathbf{S}\approx \mathbf{L}+\mathbf{H}\mathbf{Q}\mathbf{U}$:  the data matrix $\mathbf{S}$ is decomposed as the superposition of   a low-rank matrix $\mathbf{L}$ (capturing the correlations among data) and  $\mathbf{H}\mathbf{Q}\mathbf{U}$, where {$\mathbf{Q}\in\mathbb{R}^{M\times \tilde{K}}$} is an over-complete dictionary (capturing the representative modes of the data), {$\mathbf{U}\in\mathbb{R}^{\tilde{K}\times N}$} is a sparse matrix (representing the data parsimoniously), and  $\mathbf{H}$ is a given  \emph{degradation} matrix, which accounts for tasks such as denoising, superresolution, and deblurring.  
To enforce $\mathbf{L}$ to be low-rank, we employ the nuclear norm $\norm{\mathbf{L}}_*$ regularizer, which can be equivalently  rewritten as $\norm{\mathbf{L}}_* = \inf\{\frac{1}{2}\norm{\mathbf{P}}_F^2+\frac{1}{2}\norm{\mathbf{V}}_F^2\,:\, \mathbf{L}=\mathbf{P}\mathbf{V}\}$, where  $\mathbf{P}\in\mathbb{R}^{M\times L}$, $\mathbf{V}\in\mathbb{R}^{L\times N}$, and $L\ll \min(M,N)$ \citep{Srebro2005,Recht_SIAM_2010}.
Partitioning $\mathbf{V}$ and $\mathbf{U}$ according to $\mathbf{S}$, i.e.,  $\mathbf{S}_i = \mathbf{P}\mathbf{V}_i + \mathbf{H}\mathbf{Q}\mathbf{U}_i$, the problem reads
\begin{equation}
\begin{aligned}[c l]
\min_{\substack{\mathbf{P},\mathbf{Q}, (\mathbf{V}_i,\mathbf{U}_i)_{i=1}^I}}\quad& \sum_{i=1}^I~\Bigg[ \frac{1}{2}~\Big\|\mathbf{S}_i-\left[\mathbf{P}~ \mathbf{H}\mathbf{Q}\right]
{\begin{bmatrix}
\mathbf{V}_i
\\
\mathbf{U}_i
\end{bmatrix}} 
\Big\|_F^2
\\
&\qquad\qquad\qquad\quad+\frac{\zeta}{2I}\left( \norm{\mathbf{P}}_F^2+I\cdot\norm{\mathbf{V}_i}_F^2\right)
+\lambda\norm{\mathbf{X}_i}_{1,1}+\frac{\mu}{2}\norm{\mathbf{X}_i}_F^2\Bigg]
\\
\mathrm{s.t.} \quad\qquad&\mathbf{D} \in {\cal D}, \quad{\mathbf{X}_i\in\mathbb{R}^{(L+\tilde{K})\times n_i},\quad i=1,2,\ldots,I,}
\end{aligned}\label{eq:LRPS2}
\end{equation}
\noindent where $\mathcal{D}$ is some compact set; $\zeta>0$ is a constant used to promote the low-rank structure on $\mathbf{L}$ while  sparsity on $\mathbf{X}$ is enforced by the elastic net regularization, with constants $\lambda,\,\mu>0$. Problem  \eqref{eq:LRPS2} is clearly an instance of Problem \ref{eq:P1} wherein $f_i$ is the quadratic loss,  {and $[\mathbf{P},\mathbf{Q}]$ and $ 
[\mathbf{V}_i^\intercal, \mathbf{U}_i^\intercal]^\intercal$ are the shared and private variables {($K=L+\tilde{K}$)}, respectively.} Assumption~\ref{Problem_Assumptions} is satisfied, and  the Lipschitz constant in \ref{Problem_Assumptions}2 is given by $ L_{\nabla X_i}(\mathbf{D})=(\sigma_{\mathrm{max}}(\mathbf{\mathbf{P} \mathbf{H}\mathbf{Q}}))^2$.

A variant of this problem, which still is a particular case of Problem \eqref{eq:LRPS2}, is obtained by
replacing   the quadratic loss function  with the smoothed Huber function to achieve robustness against outliers \citep{Aravkin_2014}.\vspace{-0.2cm}

\subsection*{Sparse SVD/PCA \citep{Lee_Shen_Huang_Marron10, MAL-055, Mairal_Bach_2010}}\vspace{-0.1cm}
\label{SSVD}
Computing the SVD of a set of data with sparse singular vectors (Sparse SVD) is the foundation of many  applications in multivariate analysis, e.g., biclustering \citep{Lee_Shen_Huang_Marron10}.
As proposed in \citep{Mairal_Bach_2010}, Problem \ref{eq:P1} can be used to accomplish this task by imposing sparsity on the factors $\mathbf{D}$ and $\mathbf{X}$ of $\mathbf{S}$. More specifically, we have\vspace{-0.1cm}
\begin{equation}
\begin{aligned}
\min_{\mathbf{D},(\mathbf{X}_i)_{i=1}^I}\quad& \sum_{i=1}^I~\left\{\frac{1}{2}~\norm{\mathbf{S}_i-\mathbf{D}\mathbf{X}_i}_F^2+\lambda_X\norm{\mathbf{X}_i}_{1,1}+\frac{\mu_X}{2}\norm{\mathbf{X}_i}_F^2\right\} 
+\lambda_D\norm{\mathbf{D}}_{1,1}+\frac{\mu_D}{2}\norm{\mathbf{D}}_F^2
\\
\mathrm{s.t.}\qquad &\mathbf{D} \in {\cal D} \triangleq \{\mathbf{D}\in\mathbb{R}^{M\times K}\,:\,||\mathbf{D}||_{2,\infty}\leq\alpha\},\quad  {\mathbf{X}_i\in\mathbb{R}^{K\times n_i},
\quad i=1,2,\ldots,I,}
\end{aligned}
\label{eq:SSVD}
\end{equation}
where $\lambda_D,\,\lambda_X,\,\mu_D,\,\mu_X,\,\alpha>0$ are given constants.
 Problem \eqref{eq:SSVD}  is an instance of \ref{eq:P1}, with
 $f_i(\mathbf{D},\mathbf{X}_i)=({1}/{2})\cdot ||\mathbf{S}_i-\mathbf{DX}_i||^2_F$;  $G(\mathbf{D})=\lambda_D\,||\mathbf{D}||_{1,1}+({\mu_D}/{2})\cdot||\mathbf{D}||_F^2$, and $g_i(\mathbf{X}_i)=\lambda_X\,||\mathbf{X}_i||_{1,1}+({\mu_X}/{2})\cdot\,||\mathbf{X}_i||^2_F$.
  Note that orthonormality of factors are relaxed for sake of simplicity.
A related  formulation, termed Sparse PCA, has also been used  in \citep{MAL-055}.
 It is not difficult to show that Assumption \ref{Problem_Assumptions} is satisfied, and the Lipschitz constant in \ref{Problem_Assumptions}2  is given by $ L_{\nabla X_i}(\mathbf{D})=(\sigma_{\mathrm{max}}(\mathbf{D}))^2$.\vspace{-0.2cm}

 \subsection*{Non-negative Sparse Coding (NNSC) \citep{Hoyer_SNMF}}
\label{NNSC}\vspace{-0.1cm}
Non-negative Matrix Factorization (NMF) was primarily proposed by \citep{lee1999learning} as a better alternative to the classic SVD in learning localized features of image datasets, such as face images. The formulation enforces non-negativity of the entries of  $\mathbf{D}$ and $
\mathbf{X}$. This has been shown to  empirically lead to sparse solutions; however no explicit control on  sparsity is employed in the model. To overcome this  shortcoming, 
\citep{Hoyer_SNMF} proposed a  non-negative sparse coding (NNSC) formulation which extends NMF by adding a sparsity-inducing penalty function of $\mathbf{X}$. The problem reads 
\begin{equation}
\begin{aligned}
\min_{\mathbf{D},(\mathbf{X}_i)_{i=1}^I}\quad& \sum_{i=1}^I~\left\{\frac{1}{2}\,\norm{\mathbf{S}_i-\mathbf{D}\mathbf{X}_i}_F^2+
\lambda\, \norm{\mathbf{X}_i}_{1,1} +  
\frac{\mu}{2}\,\norm{\mathbf{X}_i}_F^2
\right\} 
\\
\mathrm{s.t.}\qquad &\mathbf{D} \in {\cal D} \triangleq {\{\mathbf{D}\in\mathbb{R}_+^{M\times K}~|~||
\mathbf{D}||_{2,\infty}\leq \alpha\}},~\mathbf{X}_i\in\mathbb{R}^{K\times n_i}_+,\quad i=1,2,\ldots,I,
\end{aligned}
\label{eq:NNSC}
\end{equation}
  for some  $\lambda,\,\mu,\,\alpha>0$. Problem~\eqref{eq:NNSC} is another instance of  \ref{eq:P1},
 with 
 $f_i(\mathbf{D},\mathbf{X}_i)=({1}/{2})\cdot||\mathbf{S}_i-\mathbf{D}
\mathbf{X}_i||_F^2$, $g_i(\mathbf{X}_i)=\lambda \norm{\mathbf{X}_i}_{1,1} +  
({\mu}/{2})\cdot\norm{\mathbf{X}_i}_F^2$, $G(\mathbf{D})=0$, $\mathcal{D}=\{\mathbf{D}\in\mathbb{R}_+^{M\times K}~|~||
\mathbf{D}||_{2,\infty}\leq \alpha\}$, and $\mathcal{X}_i=\mathbb{R}_+^{K\times n_i}$.  Assumption 
\ref{Problem_Assumptions} is satisfied, and the Lipschitz constant in \ref{Problem_Assumptions}2  is given by $ L_{\nabla X_i}
(\mathbf{D})=(\sigma_{\mathrm{max}}(\mathbf{D}))^2$. 

\section{Algorithmic Design}\label{sec:alg-design}
We introduce now our algorithmic framework. To shed light on the core idea behind the proposed scheme, we begin introducing an
informal and constructive description of the algorithm, followed
by its formal description along with its convergence properties.

{Each agent $i$ controls its private variable  $\mathbf{X}_i$  and   maintains     a local copy of the shared variables $\mathbf{D}$, denoted by $\mathbf{D}_{(i)}$, along with an auxiliary variable $\boldsymbol{\Theta}_{(i)}$;  we anticipate that  $\boldsymbol{\Theta}_{(i)}$ aims at  {\it locally} estimating the gradient sum $\sum_j \nabla_{D} f_j(\mathbf{D}_{(i)},\mathbf{X}_j)$, an information that is not available at agent $i$'s side. The value of these variables at iteration $\nu$ is denoted by $\mathbf{X}_i^\nu$, $\mathbf{D}_{(i)}^\nu$, and $\boldsymbol{\Theta}_{(i)}^\nu$, respectively. Roughly speaking, the update of these variables is designed so that asymptotically i) all the $\mathbf{D}_{(i)}$ will be   consensual,  i.e.,   $\mathbf{D}_{(i)}=\mathbf{D}_{(j)}$, $\forall i\neq j$; and ii)  the tuples $(\mathbf{D}_{(i)}, (\mathbf{X}_j)_{j=1}^I)$ will be  a stationary solutions of  Problem~\ref{eq:P1}.  This is accomplished throughout the following two steps, which are performed iteratively and in parallel across the agents.  }

\subsection*{Step 1:\,Local Optimization}
The nonconvexity of $f_i$ together with the lack of knowledge of $\sum_{j\neq i}f_j$ in $F$ prevents agent $i$ to solve directly  Problem~\ref{eq:P1}  {with respect to $(\mathbf{D}_{(i)},\mathbf{X}_i)$}.
Since $f_i$ is  \emph{bi-convex}  in $(\mathbf{D}_{(i)},\mathbf{X}_i)$,    a  natural approach is  then  to update $\mathbf{D}_{(i)}$ and $\mathbf{X}_i$ in an {\it alternating} fashion by solving a {\it local} approximation  of \ref{eq:P1}.
Specifically, at  iteration $\nu$, given the iterates $\mathbf{X}_i^\nu$, $\mathbf{D}_{(i)}^\nu$, and $\boldsymbol{\Theta}_{(i)}^\nu$, agent $i$ fixes $\mathbf{X}_i=\mathbf{X}_i^\nu$ and  solves the following strongly convex problem in $\mathbf{D}_{(i)}$ : 
{\begin{equation}
\widetilde{\mathbf{D}}_{(i)}^\nu\!\triangleq \underset{\mathbf{D}_{(i)}\in\mathcal{D}}{\text{argmin}}~\tilde{f}_i\big(\mathbf{D}_{(i)};\mathbf{D}_{(i)}^\nu,\mathbf{X}_i^\nu\big)+\left\langle I\cdot
 {\boldsymbol{\Theta}}_{(i)}^\nu-\nabla_D f_i(\mathbf{D}^{\nu}_{(i)},\mathbf{X}^{\nu}_i),\mathbf{D}_{(i)}-\mathbf{D}_{(i)}^\nu\right\rangle+G\left(\mathbf{D}_{(i)}\right),
\label{D_tilde_subproblem}
\end{equation} }
 where $\tilde{f}_i(\bullet; \mathbf{D}_{(i)}^\nu,\mathbf{X}_i^\nu)$ is a suitably chosen strongly convex approximation of $f_i(\bullet,\mathbf{X}_i^\nu)$  at   $(\mathbf{D}_{(i)}^\nu,\mathbf{X}_i^\nu)$ {(cf. Assumption~\ref{Ass-surrogates}, Sec.~\ref{subsec:discussion}); and ${\boldsymbol{\Theta}}_{(i)}^\nu$, as anticipated,   {is used to  track the gradient of $F$, with   
 $\lim_{\nu\to \infty}\|I\cdot {\boldsymbol{\Theta}}_{(i)}^\nu-\sum_{j=1}^I\nabla_D f_j(\mathbf{D}_{(i)}^\nu,\mathbf{X}_j^\nu)\|=0$;} which would lead to \vspace{-0.1cm}\begin{equation}\lim_{\nu\to \infty}\left\|\left(I\cdot  {\boldsymbol{\Theta}}_{(i)}^\nu-\nabla_D f_i(\mathbf{D}^{\nu}_{(i)},\mathbf{X}^{\nu}_i)\right)-\sum_{j\neq i}\nabla_D f_j(\mathbf{D}_{(i)}^\nu,\mathbf{X}_j^\nu)\right\|=0.\label{eq:tracking-property}\end{equation} This sheds light on the role of the linear term in (\ref{D_tilde_subproblem}): it can be regarded as a proxy of the sum-gradient $\sum_{j\neq i}\nabla_D f_j(\mathbf{D}_{(i)}^\nu,\mathbf{X}_j^\nu)$, which is not available at agent $i$'s side. In Step 2 below we show how to update  ${\boldsymbol{\Theta}}_{(i)}^\nu$  using only local information, so that (\ref{eq:tracking-property}) holds. }
 
Given  $\widetilde{\mathbf{D}}_{(i)}^\nu$, a step-size is employed in the update of $\mathbf{D}_{(i)}$, generating the iterate $\mathbf{U}_{(i)}^\nu$:
\begin{equation}
\label{U_update}
\mathbf{U}_{(i)}^\nu=\mathbf{D}_{(i)}^\nu+\gamma^\nu(\widetilde{\mathbf{D}}_{(i)}^\nu- \mathbf{D}_{(i)}^\nu),
\end{equation}
where $\gamma^\nu$ is the step-size,  to be  properly chosen (see Assumption \ref{assumption:step-size}, Sec. \ref{subsec:discussion}). 

Let us now consider the update of the private variables  $\mathbf{X}_i$. Fixing $\mathbf{D}_{(i)}=\mathbf{U}_{(i)}^\nu$, 
agent $i$ computes the new update $\mathbf{X}_i^{\nu+1}$ by solving the following strongly convex optimization problem:
\begin{equation}
\label{eq:Xupdate}
\mathbf{X}_i^{\nu+1}\triangleq \underset{\mathbf{X}_i\in\mathcal{X}_i}{\text{argmin}}\quad \tilde{h}_i(\mathbf{X}_i;
\mathbf{U}_{(i)}^\nu,\mathbf{X}_i^\nu)+g_i(\mathbf{X}_i),
\end{equation}
where $\tilde{h}_i(\bullet;
\mathbf{U}_{(i)}^\nu,\mathbf{X}_i^\nu)$ is a strongly convex function of $\mathbf{X}_i$, approximating $f_i(\mathbf{U}_{(i)}^\nu,\bullet)$  at   $(\mathbf{U}_{(i)}^\nu,\mathbf{X}_i^\nu)$;   {  see  Assumption~\ref{Ass-surrogates} (cf.  Sec.~\ref{subsec:discussion})  for specific instances of  $\tilde{h}_i$.}

\subsection*{Step 2:\,Local Communications}
Let us design now  a local communication  mechanism  ensuring asymptotic consensus over the local copies  $\mathbf{D}_{(i)}$'s and   property  (\ref{eq:tracking-property}). {To do so, we build on the (perturbed) push-sum protocol  {proposed in} \citep{sun2016distributed} (see also \cite{Kempe2003}). Specifically, an extra scalar variable $\phi_i$ is introduced at each agent's side to deal with the directed nature of the graph; given $\phi_i^\nu$ and  $\mathbf{U}_{(j)}^\nu$ from its in-neighbors  $j\in \mathcal{N}_i$, each agent $i$  updates its own local estimate $\mathbf{D}_{(i)}^\nu$ and $\phi_i^\nu$ according to:} \vspace{-0.2cm}
 \begin{equation}
{\phi^{\nu+1}_i =\sum_{j\in\mathcal{N}^{\rm in}_i[\nu]}a_{ij}^\nu\,\phi^\nu_j\quad \text{and}\quad\mathbf{D}_{(i)}^{\nu+1}=\frac{1}{\phi_i^{\nu +1}}\sum_{j\in\mathcal{N}^{\rm in}_i[\nu]}a^\nu_{ij}\,\phi_j^\nu\mathbf{U}_{(j)}^\nu.}
\label{D_weighted_avg}
\end{equation}
{where  $a_{ij}^\nu$'s  are some weights (to be properly chosen, see Assumption~\ref{A_matrix_Assumptions}, Sec.~\ref{subsec:discussion});   and $\phi_i^0=1$, for all $i=1,\ldots, I$.} 

Note that the updates in  \eqref{D_weighted_avg}    can be    implemented locally: all agents only need to (i) {send their local variable $\mathbf{U}_{(j)}^\nu$ and the scalar weight $a_{ij}^\nu\,\phi_j^\nu$ to their neighbors}; and (ii) collect locally the information coming from the neighbors.

%
%

To update the $\boldsymbol{\Theta}^{\nu}_{(i)}$ variables we leverage the 
  gradient tracking mechanism first introduced in  \citep{NEXT},  coupled with the push-sum consensus scheme \citep{sun2016distributed}, resulting in the following {perturbed push-sum scheme}:  
\begin{equation}
\boldsymbol{\Theta}^{\nu+1}_{(i)}= \frac{1}{\phi_i^{\nu+1}}\sum_{j\in\mathcal{N}_i^{\rm in}[\nu]}a_{ij}^\nu \phi_j^\nu\boldsymbol{{\Theta}}_{(j)}^{\nu}
+\frac{1}{\phi^{\nu+1}_i} \left(\nabla_D f_i(\mathbf{D}_{(i)}^{\nu+1},\mathbf{X}_i^{\nu+1})-\nabla_D f_i(\mathbf{D}_{(i)}^{\nu},\mathbf{X}_i^{\nu})\right),
\label{Theta_update}
\end{equation}
with  $\boldsymbol{{\Theta}}_{(i)}^0\triangleq \nabla_D f_i(\mathbf{D}_{(i)}^0,\mathbf{X}_i^0)$, for all $i=1,\ldots, I$. The update \eqref{Theta_update} follows similar logic as that  of $\mathbf{D}_{(i)}^\nu$ in \eqref{D_weighted_avg},   with the difference that     \eqref{Theta_update} contains a perturbation    [the second term in the RHS of \eqref{Theta_update}], which employs $\boldsymbol{{\Theta}}_{(i)}^{\nu}$  {and ensures the} desired tracking properties (otherwise $\boldsymbol{{\Theta}}_{(i)}^{\nu}$ would converge to the average of their initial values).  Note that \eqref{Theta_update} can be performed locally by agent $i$, following the same procedure as described for   \eqref{D_weighted_avg}.

Combining the above steps, we can now  formally introduce the proposed distributed algorithm for the  DL problems \ref{eq:P1}, as described in Algorithm \ref{alg1}, and termed  \emph{D$^4$L} (Decentralized Dictionary Learning over Dynamic Digraphs) \emph{Algorithm}.  

\begin{algorithm}[t]
~
\\
$\textbf{Initialization}:$ set $\nu=0$ and $
\phi^0_i=1,~\mathbf{D}_{(i)}^0\in\mathcal{D},~\mathbf{X}_i^0\in\mathcal{X}_i$,
$\boldsymbol{{\Theta}}_{(i)}^0=\nabla_D f_i(\mathbf{D}^0_{(i)},\mathbf{X}^0_i),$
\\
$~\qquad\qquad\qquad\quad$ for all $i=1,2,\ldots,I$.
\\
\texttt{S1.} If $(\mathbf{D}_{(i)}^\nu,\mathbf{X}_i^\nu)$ satisfies a suitable stopping criterion: \texttt{STOP};\smallskip
\\
\texttt{S2.} \textbf{Local Optimization:} Each agent $i$ computes:
\begin{enumerate}[label=(\alph*)]
\item $\widetilde{\mathbf{D}}_{(i)}^\nu$ and $\mathbf{U}_{(i)}^\nu$ according to \eqref{D_tilde_subproblem} and \eqref{U_update};
\item $\mathbf{X}_i^{\nu+1}$ according to \eqref{eq:Xupdate};
\end{enumerate}
\texttt{S3.} \textbf{Local Communications:} Each agent $i$ collects data
from its current neighbors and updates:
\begin{enumerate}[label=(\alph*)]
\item $\phi^{\nu+1}_i$ and $\mathbf{D}_{(i)}^{\nu+1}$ according to \eqref{D_weighted_avg};
\item $\boldsymbol{{\Theta}}^{\nu+1}_{(i)}$ according to  \eqref{Theta_update};
\end{enumerate}
\texttt{S4.} Set $\nu+1\to\nu$, and go to \texttt{S1}.
\\
~
\caption{: Decentralized Dictionary Learning over Dynamic Digraphs (D$^4$L)}
\label{alg1}
\end{algorithm}

\subsection{ {Algorithmic} Assumptions }
\label{subsec:discussion}
Before stating the {main} convergence result for the D$^4$L  Algorithm, we discuss the main assumptions governing the choices of  the free parameters of the algorithm, namely: the surrogate functions $\tilde{f}_i$ and $\tilde{h}_i$,   the step-size $\gamma^\nu$, and  the consensus weights $(a_{ij}^\nu)_{i,j=1}^I$. 

\smallskip 
\subsubsection{On the choice of $\tilde{f}_i$ and $\tilde{h}_i$.}

The surrogate functions  {are chosen to satisfy the following assumption.}
 
 \begin{assumption}[On  $\tilde{f}_i$ and $\tilde{h}_i$]  
	\label{Ass-surrogates} { Given $\mathbf{D}_{(i)}^\nu$ and $\mathbf{X}_i^\nu$, $\tilde{f}_i(\bullet;\mathbf{D}_{(i)}^\nu,\mathbf{X}_i^\nu)$ in (\ref{D_tilde_subproblem}) is either \begin{equation}
\label{eq:ftilde1}
\tilde{f}_i(\mathbf{D}_{(i)};\mathbf{D}_{(i)}^\nu,\mathbf{X}_i^\nu)=f_i(\mathbf{D}_{(i)},
\mathbf{X}_i^\nu)+\frac{\tau_{D,i}^\nu}{2}\, ||\mathbf{D}_{(i)}-\mathbf{D}_{(i)}^\nu||_F^2,
\end{equation}}
	{ or 
\begin{equation}
\label{eq:ftilde2}
\tilde{f}_i(\mathbf{D}_{(i)};\mathbf{D}_{(i)}^\nu,\mathbf{X}_i^\nu)=\left\langle \nabla_D 
f_i(\mathbf{D}^\nu_{(i)}, \mathbf{X}_i^\nu),\mathbf{D}_{(i)}-\mathbf{D}_{(i)}^\nu\right\rangle+
\frac{\tau_{D,i}^\nu}{2}\, \big\|\mathbf{D}_{(i)}-\mathbf{D}_{(i)}^\nu\big\|_F^2, \vspace{-0.1cm}
\end{equation}
where $\tau_{D,i}^\nu$ is a positive scalar satisfying   Assumption \ref{freeVars_assumptions}.}

{Given $\mathbf{U}_{(i)}^\nu$ and $\mathbf{X}_i^\nu$,  $\tilde{h}_i(\bullet;\mathbf{U}_{(i)}^\nu,\mathbf{X}_i^\nu)$ in (\ref{eq:Xupdate}) is either 
\begin{equation}
\label{eq:htilde1}
\tilde{h}_i(\mathbf{X}_i;
\mathbf{U}_{(i)}^\nu,\mathbf{X}_i^\nu)\triangleq f_i(\mathbf{U}_{(i)}^\nu,
\mathbf{X}_i)+ \frac{\tau_{X,i}^\nu}{2}\norm{\mathbf{X}_i-\mathbf{X}_i^\nu}_F^2,
\end{equation}
or 
\begin{equation}
\label{eq:htilde2}
\tilde h_i(\mathbf{X}_i;\mathbf{U}_{(i)}^\nu,\mathbf{X}_i^\nu)= \left\langle\nabla_{X_i} f_i(\mathbf{U}_{(i)}^\nu,\mathbf{X}_i^\nu),\mathbf{X}_i-\mathbf{X}_i^\nu\right\rangle+\frac{\tau_{X,i}^\nu}{2}\big\|\mathbf{X}_i-\mathbf{X}_i^\nu\big\|_F^2,
\end{equation}
where $\tau_{X,i}^\nu$ is a positive scalar satisfying   Assumption \ref{freeVars_assumptions}.}
\end{assumption}
 
\begin{assumption}[On    $\tau_{X,i}^\nu$ and $\tau_{D,i}^\nu$]
\label{freeVars_assumptions} 
The parameters     $(\tau_{X,i}^\nu)_{i=1}^I$ and $(\tau_{D,i}^\nu)_{i=1}^I$ are chosen such that\vspace{-0.2cm}
\begin{description}
\item[\textbf{(\ref{freeVars_assumptions}1)}]  $\{\tau^\nu_{D,i}\}_\nu$ and $\{\tau^\nu_{X,i}\}_\nu$ satisfy\vspace{-0.2cm}
\begin{equation}
0<\inf_{\nu}\tau^\nu_{D,i}\leq \sup_{\nu}\tau^\nu_{D,i}<+\infty,
\label{tau_D_condition}
\end{equation}
and\vspace{-0.3cm}
\begin{eqnarray}
& \sup_{\nu}\tau^\nu_{X,i}<+\infty,\medskip\nonumber\\
&\tau^\nu_{X,i}\geq\frac{1}{2}L_{\nabla{X_i}}(\mathbf{U}_{(i)}^\nu)+\epsilon,\quad \forall\nu\geq 
1,
\label{tau_X_cond_c}
\end{eqnarray}
for all $i=1,2,\ldots,I$, where $\epsilon>0$ is an arbitrarily small constant,  and   $L_{\nabla{X_i}}
$ is defined in Assumption \ref{Problem_Assumptions}2.
\item[\textbf{(\ref{freeVars_assumptions}2)}] Stronger convergence results [cf. Theorem~
\ref{th:conver}] can be obtained if, {under Assumption  \ref{Problem_Assumptions}5(ii)},  the sequences $\{\tau^\nu_{D,i}\}_\nu$ and $\{\tau^\nu_{X,i}
\}_\nu$, in addition to D1, also satisfy \vspace{-0.1cm}
\begin{equation}
\label{extra_condition_tau_D}
 \sum_{t=0}^\infty\left|\tau^{t+1}_{D,i}-\tau^{t}_{D,i}\right|<\infty,
\end{equation}
and\vspace{-0.1cm}
\begin{equation}
\label{extra_condition_tau_x}
\limsup_{\nu}\left|\tau^\nu_{X,i}-\tau^{\nu-1}_{X,i}\right|<\mu,
\end{equation}
where $\mu\triangleq \min_i \mu_i$ and  $\mu_i$ is the strongly convexity constant of $f_i$.
\end{description}
\end{assumption}

\noindent \textbf{Discussion.} Several comments are in order. 

\noindent $\bullet$ \textit{On the choice of $\tilde{f}_i$ and $\tilde{h}_i$}. 
Since $f_i$ (resp. $h_i$)  is convex in $\mathbf{D}_{(i)}$ (resp. $\mathbf{X}_{i}$), (\ref{eq:ftilde1}) [resp. (\ref{eq:htilde1})] is a  natural choice for the surrogate $\tilde{f}_i$ (resp. $\tilde{h}_i$): the structure of  $f_i$ (resp. $h_i$) is preserved while  a quadratic term is added to make the overall surrogate strongly convex.   The non-smooth strongly convex subproblems \eqref{D_tilde_subproblem} and \eqref{eq:Xupdate} resulting from (\ref{eq:ftilde1}) and  (\ref{eq:htilde1}) can be solved using standard solvers, e.g., projected subgradient methods. When dealing with large-scale instances, effective methods are also  \citep{Scutari-Facchinei-SagratellaTSP15,Dan-Facch-Kung-ScutTSP15}.

The alternative surrogates  $\tilde{f}_i$ and $\tilde{h}_i$ as given in \eqref{eq:ftilde2} and \eqref{eq:htilde2}, respectively, are based on the   the linearization of the original $f_i$ and $h_i$. This option is motivated by the fact that, for  specific instances of $f_i$ and $h_i$, \eqref{eq:ftilde2} and \eqref{eq:htilde2} lead to subproblems \eqref{D_tilde_subproblem} and \eqref{eq:Xupdate}  whose solution can be computed in closed form. 
 For instance, consider the \emph{elastic net sparse DL} problem   \eqref{eq:sparse_DL} in Sec. \ref{examples}, where $f_i(\mathbf{D},\mathbf{X}_i)=\frac{1}{2}||\mathbf{S}_i-\mathbf{D}\mathbf{X}_i||_F^2$; $G(\mathbf{D})=0$; and  $g_i(\mathbf{X}_i)=\lambda\,||\mathbf{X}_i||_1+\frac{\mu}{2}\,||\mathbf{X}_i||^2_F$, with   $\lambda,\mu>0$.
By using \eqref{eq:ftilde2}, the resulting subproblem \eqref{D_tilde_subproblem} admits  the following closed form solution:\vspace{-0.2cm}
{\begin{equation}
\label{closed_form_dictionary_lin} 
\widetilde{\mathbf{D}}_{(i)}^\nu=P_\mathcal{D}\left[\mathbf{D}_{(i)}^\nu-\frac{I}{\tau_{D,i}^\nu}\boldsymbol{\Theta}_{(i)}^\nu\right].
\end{equation}}
Referring to the sparse coding subproblem \eqref{eq:Xupdate}, if  $\tilde{h}_i$ is chosen according to \eqref{eq:htilde1},   computing the update  $\mathbf{X}_i^{\nu+1}$ results in solving a LASSO problem. If instead one uses the surrogate  in (\ref{eq:htilde2}), the  solution of  \eqref{eq:Xupdate} can be computed in closed form as
\begin{equation}
\mathbf{X}_i^{\nu+1}
=\frac{\tau^\nu_{X,i}}{\mu+\tau^\nu_{X,i}}\mathcal{T}_{\frac{\lambda}{\tau^\nu_{X,i}}} \left(\mathbf{X}^\nu_i-\frac{1}{\tau^\nu_{X,i}}\nabla_{X_i} f_i(\mathbf{U}_{(i)}^\nu,\mathbf{X}_i^\nu) \right),
\label{closed_form_X_linearization}
\end{equation}
where  $\mathcal{T}$ is  the soft-thresholding operator  $\mathcal T_\theta(x)\triangleq \max(|x|-\theta,0)\cdot \text{sign}(x)$   [with $\text{sign}(\cdot)$ denoting the sign function], applied to the matrix argument component-wise.

\smallskip 

\noindent$\bullet$ \textit{On the choice of $\tau_{X,i}^\nu$ and $\tau_{D,i}^\nu$.}  These coefficients must satisfy Assumption \ref{freeVars_assumptions}. Roughly speaking,   \ref{freeVars_assumptions}1   ensures that $(\tau_{X,i}^\nu)_{i=1}^I$ and $(\tau_{D,i}^\nu)_{i=1}^I$ are bounded (both from below and above) while  \ref{freeVars_assumptions}2 guarantees that these parameters are asymptotically  ``stable". 
  A trivial choice for $\tau_{D,i}^\nu$ satisfying both \eqref{tau_D_condition} and \eqref{extra_condition_tau_D} is $\tau_{D,i}^\nu=c$, for some $c>0$;    some practical rules for $\tau_{X,i}^\nu$ satisfying both  \eqref{tau_X_cond_c} and \eqref{extra_condition_tau_x} are the following:
\begin{enumerate}
\item[(a)] Use a constant $\tau_{X,i}^\nu$, that is,
\begin{equation}\nonumber
\tau^\nu_{X,i}=\max_{\mathbf{D}\in\mathcal{D}}\bigg[\max\bigg(\sigma_{\mathrm{max}}\big(\nabla^2_{X_i} f_i(\mathbf{D},\mathbf{X}_i^\nu)\big),\tilde{\epsilon}\bigg)\bigg],
\end{equation}
for some $\tilde{\epsilon}>0$. The above value can be,  however, much larger than any $\sigma_{\mathrm{max}}(\nabla^2_{X_i} f_i(\mathbf{U}_{(i)}^\nu,$ $\mathbf{X}_i^\nu))$, which can slow down the practical convergence of the algorithm;

\item[(b)] A less conservative choice is to satisfy \eqref{tau_X_cond_c} iteratively, while guaranteeing that $\tau_{X,i}^\nu$ is uniformly positive:
\begin{equation}\label{tau_x_second_choice}
\tau^\nu_{X,i}=\max(L_{\nabla{X_i}}(\mathbf{U}_{(i)}^\nu),\tilde{\epsilon}),
\end{equation}
where $\tilde{\epsilon}$ is any positive (possibly small) constant;
\item[(c)]
 A generalization of (b) is
\begin{equation}
\nonumber
\tau^\nu_{X,i}\in\left[\max(L_{\nabla{X_i}}(\mathbf{U}_{(i)}^\nu),\tilde{\epsilon}),\,L_{\nabla{X_i}}(\mathbf{U}_{(i)}^\nu)+{\tilde{\mu}}\right],
\end{equation}
for some $\tilde{\epsilon}$ and $\tilde{\mu}$  such that $0<\tilde{\epsilon} \leq \tilde{\mu}<\mu$.
\end{enumerate}

\begin{remark}
While the choices   (a)-(c)  above clearly satisfy \eqref{tau_X_cond_c}, it can be shown that    \eqref{extra_condition_tau_x} also holds, as a consequence of   the continuity of $L_{\nabla{X_i}}(\cdot)$ and Proposition \ref{consVar_props} (cf. Appendix \ref{Prelim_results}). 
\vspace{-0.1cm}
\end{remark} Note that all the above rules do not require any coordination among the agents, but are implementable in a fully distributed manner, using only local information.    

{
\subsubsection{On the choice of $\gamma^\nu$}
 The step-size can be chosen according to  the following  {assumption}. 
\begin{assumption}[On  $\gamma^\nu$]
\label{assumption:step-size} 
   $\{\gamma^\nu\}_\nu$  satisfies:
$\gamma^\nu\in(0,1]$,  for all $\nu$; $\sum_{\nu=0}^\infty \gamma^\nu=\infty$; and $\sum_{\nu=0}^\infty\left(\gamma^\nu\right)^2<\infty$.\end{assumption}}
The above assumption is the standard diminishing-rule; see, e.g., \citep{Bertsekas_ParallelMethods_1997}.  Here, we only recall one rule{, satisfying Assumption \ref{assumption:step-size},}  that we found very effective in our experiments, namely  \citep{Scutari-Facchinei-SagratellaTSP15}:  $\gamma^\nu=\gamma^{\nu-1}(1-\epsilon_0 \gamma^{\nu-1})$ with $\gamma^0\in (0,1]$ and $\epsilon_0\in (0,1/\gamma^0)$.

 \subsubsection{On the choice of the weigh coefficients   $\{a_{ij}^\nu\}$.} \label{subsec_weights}

We denote by   $\mathbf{A}^\nu$ the matrix whose entries are  the weights $a_{ij}^\nu$'s, i.e., $[\mathbf{A}^\nu]_{i,j}=a_{ij}^\nu$. This matrix is chosen so that  the following conditions are satisfied.

\begin{assumption}[On the weighting matrix]
\label{A_matrix_Assumptions}
Given the digraph $\mathcal{G}^{\nu}=(\mathcal{V},\mathcal{E}^\nu)$, each matrix $\mathbf{A}^\nu$, with $[\mathbf{A}^\nu]_{ij}=a_{ij}^\nu$, satisfies
	\begin{description}
		\item[\textbf{(\ref{A_matrix_Assumptions}1)}]  $a_{ii}^\nu\geq \kappa>0$ for all $i=1,\ldots,I$;
		\item[\textbf{(\ref{A_matrix_Assumptions}2)}] $a_{ij}^\nu\geq \kappa>0$, if $\left(j,i\right)\in \mathcal{E}^\nu$; and $a_{ij}^\nu=0$ otherwise;
		\item[\textbf{(\ref{A_matrix_Assumptions}3)}] $\mathbf{A}^\nu$ is column stochastic, i.e., $\mathbf{1} ^\intercal\mathbf{A}^\nu= \mathbf{1}^\intercal$.
	\end{description}
\end{assumption}

{When the graph $\mathcal G^\nu$ is directed, a valid choice of   $\mathbf{A}^\nu$         is   \citep{Kempe2003}:  $a_{ij}^\nu=1/d_j^\nu$ if $j\in \mathcal{N}^{\rm in}_i[\nu]$, and $a_{ij}^\nu=0$ otherwise, where $d_j^\nu$ is the out-degree of agent $j$ at time $\nu$. 
The resulting  communication  protocols  \eqref{D_weighted_avg}--\eqref{Theta_update} can be easily implemented in a distributed fashion:  
each agent i) broadcasts its local variable normalized by its current out-degree; and ii) collects locally the information coming from its neighbors. When the graph is undirected, several options are available in the literature, including:  the Laplacian,   Metropolis-Hastings, and maximum-degree weights; see, e.g.,  \citep{xiao2005scheme}.} \vspace{-0.3cm} 


\section{Convergence of D$^4$L}
\label{convergence_alg}
In this section, we provide the main convergence results for the D$^4$L Algorithm. 
  	 {	We begin introducing some definitions, instrumental to state our results. Let   \begin{equation}
  	 	\label{Dnu_Xnu_Dbar_Def}
  	 	\mathbf{D}^\nu\triangleq [\mathbf{D}_{(1)}^{\nu\intercal}, \mathbf{D}_{(2)}^{\nu\intercal},\ldots,\mathbf{D}_{(I)}^{\nu\intercal}]^\intercal,\quad \mathbf{X}^\nu\triangleq [\mathbf{X}_1^\nu,\mathbf{X}_2^\nu,\ldots,\mathbf{X}_I^\nu],\quad \text{and}\quad  \overline{\mathbf{D}}^\nu\triangleq \frac{1}{I}\sum_{i=1}^I\mathbf{D}_{(i)}^\nu.
  	 \end{equation}    }

{Given the sequence $\{(\mathbf{D}^\nu,\mathbf{X}^\nu)\}_\nu$ generated by   the D$^4$L Algorithm, convergence is stated measuring the distance of the   sequence $\{(\overline{\mathbf{D}}^\nu,\mathbf{X}^\nu\}_\nu$ from optimality as well as the consensus disagreement among the local variables $\mathbf{D}^\nu_{(i)}$'s. Distance from stationarity is measured by  
\begin{equation}
\label{stat_merit_subproblems}
\Delta^\nu\triangleq \max(\Delta_D(\overline{\mathbf{D}}^\nu,\mathbf{X}^\nu),\Delta_X(\overline{\mathbf{D}}^\nu,\mathbf{X}^\nu))
\end{equation}
where \begin{equation}
\begin{aligned}
\Delta_D(\overline{\mathbf{D}}^\nu,\mathbf{X}^\nu)\triangleq ||\widehat{\mathbf{D}}(\overline{\mathbf{D}}^\nu,\mathbf{X}^\nu)-\overline{\mathbf{D}}^\nu||_{\infty,\infty},\quad  \Delta_X(\overline{\mathbf{D}}^\nu,\mathbf{X}^\nu)\triangleq||\widehat{\mathbf{X}}(\overline{\mathbf{D}}^\nu,\mathbf{X}^\nu)-\mathbf{X}^\nu||_{\infty,\infty},
\end{aligned}
\end{equation} 
with the functions  $\widehat{\mathbf{D}}(\bullet,\bullet)$   
and $\widehat{\mathbf{X}}(\bullet,\bullet)$ defined as:
\begin{align}
	&\widehat{\mathbf{D}}(\overline{\mathbf{D}}^\nu,\mathbf{X}^\nu) \triangleq \underset{\mathbf{D}'\in\mathcal{D}}
	{\mathrm{argmin}} \left\langle \nabla_D F(\overline{\mathbf{D}}^\nu,\mathbf{X}^\nu),\mathbf{D}'-\overline{\mathbf{D}}^\nu\right\rangle+\frac{\hat{\tau}_D}{2}\norm{\mathbf{D}'-\overline{\mathbf{D}}^\nu}^2+G(\mathbf{D}'),
	\label{D_avg_def}  
	\\
	&\widehat{\mathbf{X}}(\overline{\mathbf{D}}^\nu,\mathbf{X}^\nu)\triangleq [\widehat{\mathbf{X}}_1(\overline{\mathbf{D}}^\nu,\mathbf{X}_1^\nu),\ldots,\widehat{\mathbf{X}}_I(\overline{\mathbf{D}}^\nu,\mathbf{X}_I^\nu)],\nonumber
	\\
	&\quad\quad\text{ with }\widehat{\mathbf{X}}_i(\overline{\mathbf{D}}^\nu,\mathbf{X}_i^\nu)\triangleq {\underset{\mathbf{X}'_i\in\mathcal{X}_i}{\mathrm{argmin}}} 
	\left\langle\nabla_{X_i} f_i(\overline{\mathbf{D}}^\nu,\mathbf{X}_i^\nu),\mathbf{X}'_i-\mathbf{X}_i^\nu\right\rangle+\frac{\hat{\tau}_{X}}{2}\norm{\mathbf{X}'_i-\mathbf{X}_i^\nu}^2+g_i(\mathbf{X}'_i),
	\label{X_avg_def}
\end{align}
for some given constants $\hat{\tau}_D>0$ and $\hat{\tau}_{X}>0$.  Note that  $\Delta_D(\bullet,\bullet)$   
and $\Delta_X(\bullet,\bullet)$ are valid merit functions,   in the sense that  { they are continuous and  $\Delta_D(\overline{\mathbf{D}}^\infty,\mathbf{X}^\infty)=\Delta_X(\overline{\mathbf{D}}^\infty,\mathbf{X}^\infty)=0$ if and only if  $(\overline{\mathbf{D}}^\infty,\mathbf{X}^\infty)$ is a stationary solution of Problem~\eqref{eq:P1}  \citep{Scutari-Facchinei-SagratellaTSP15}}.}

 The consensus error at iteration $\nu$ is measured by the function   \begin{equation}
 e^\nu\triangleq ||\mathbf{D}^\nu-\mathbf{1}\otimes\overline{\mathbf{D}}^\nu||_{\infty,\infty}.\label{eq:consensus_error}     \vspace{-0.1cm}	
   \end{equation}
Asymptotic convergence of D$^4$L to stationary solutions of $\eqref{eq:P1}$ is stated in  Theorem \ref{th:conver} below while the convergence rate is studied in Theorem \ref{th:rate}.

{\begin{theorem}
\label{th:conver}
\it Given Problem \ref{eq:P1} under Assumption \ref{Problem_Assumptions},     
let 
   {$\big\{\big(\mathbf{D}^\nu,\mathbf{X}^\nu\big)\big\}_\nu$} be the sequence generated by the D$^4$L Algorithm for a given initial point $\big(\mathbf{D}^0,\mathbf{X}^0\big)$ and under Assumptions \ref{B_strongly_connectivity}, \ref{Ass-surrogates}, \ref{freeVars_assumptions}1, \ref{assumption:step-size},   \ref{A_matrix_Assumptions}. 
 Then,
\begin{enumerate}[label=(\alph*)]
\item \texttt{[Consensus]}:   {All   $\mathbf{D}_{(i)}^\nu$'s are asymptotically   consensual, i.e.,}  $\lim_{\nu\rightarrow\infty}e^\nu=0$;
\item \texttt{[Convergence]}: i) $\{(\overline{\mathbf{D}}^\nu,\mathbf{X}^\nu)\}_\nu$ is bounded; ii) $\{U(\overline{\mathbf{D}}^\nu,\mathbf{X}^\nu)\}_\nu$   converges to a finite value;   iii)   $\lim_{\nu\rightarrow\infty}\Delta_X(\overline{\mathbf{D}}^\nu,\mathbf{X}^\nu)=0$; and iv) $\liminf_{\nu\rightarrow\infty}\Delta_D(\overline{\mathbf{D}}^\nu,\mathbf{X}^\nu)=0$. Therefore,   $\{(\overline{\mathbf{D}}^\nu,\mathbf{X}^\nu)\}_\nu$   has at least one  limit point which is a stationary solution of    \ref{eq:P1}. 
\end{enumerate}
If, in particular,  Assumption   \ref{Problem_Assumptions}5(ii) holds and  \ref{freeVars_assumptions}1 is reinforced  by \ref{freeVars_assumptions}2,  then convergence in (b) can be strengthened as follows:
\begin{enumerate}
\item [(b')]  Case (b) holds and 
$\lim_{\nu\rightarrow\infty}\Delta^\nu=0$, implying that     all the  limit points of  $\{(\overline{\mathbf{D}}^\nu,\mathbf{X}^\nu)\}_\nu$ are stationary solutions of    \ref{eq:P1}.
\end{enumerate}
\end{theorem}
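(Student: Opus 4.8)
\emph{Strategy and Layer~1 (mass, feasibility, compactness, consensus, tracking).} The plan is a three-layer argument: first extract the structural consequences of the push-sum weights; then build a descent (Lyapunov) inequality; then translate vanishing increments into the stationarity merit functions. For Layer~1: column stochasticity of $\mathbf{A}^\nu$ with $\phi^0_i=1$ gives $\sum_i\phi_i^\nu=I$ for all $\nu$, and Assumption~\ref{B_strongly_connectivity} together with $a_{ij}^\nu\ge\kappa$ gives uniform bounds $0<\phi_{\min}\le\phi_i^\nu\le\phi_{\max}$; telescoping \eqref{Theta_update} with $\boldsymbol{\Theta}_{(i)}^0=\nabla_D f_i(\mathbf{D}_{(i)}^0,\mathbf{X}_i^0)$ yields the exact mass identity $\sum_i\phi_i^\nu\boldsymbol{\Theta}_{(i)}^\nu=\sum_i\nabla_D f_i(\mathbf{D}_{(i)}^\nu,\mathbf{X}_i^\nu)$. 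Since $\mathbf{U}_{(i)}^\nu$ is a convex combination of $\mathbf{D}_{(i)}^\nu,\widetilde{\mathbf{D}}_{(i)}^\nu\in\mathcal{D}$ and each $\mathbf{D}_{(i)}^{\nu+1}$ a convex combination of the $\mathbf{U}_{(j)}^\nu$, all dictionary iterates stay in the compact $\mathcal{D}$; under \ref{Problem_Assumptions}5(i) the $\mathbf{X}_i^\nu$ are bounded outright, and under \ref{Problem_Assumptions}5(ii) their boundedness follows by a standard bootstrapping from the descent inequality of Layer~2 and coercivity of the strongly convex $g_i$. Confining all iterates to a fixed compact $\mathcal{D}\times\mathcal{K}$ is exactly what replaces the missing global-Lipschitz hypothesis: there $f_i\in\mathcal{C}^2$ makes $\nabla_D f_i,\nabla_{X_i}f_i$ Lipschitz with \emph{uniform} constants and $L_{\nabla X_i}(\cdot)$ bounded, consistent with Assumption~\ref{freeVars_assumptions}1. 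Finally, reading \eqref{D_weighted_avg} and \eqref{Theta_update} as perturbed push-sum recursions driven respectively by $\gamma^\nu(\widetilde{\mathbf{D}}_{(i)}^\nu-\mathbf{D}_{(i)}^\nu)$ and by the gradient increments, the standard geometric-decay estimate over $B$-strongly connected digraphs \citep{sun2016distributed} gives $e^\nu\le C\rho^\nu+C\sum_{t<\nu}\rho^{\nu-1-t}\,\gamma^t\,\mathrm{diam}(\mathcal{D})$ with $\rho\in(0,1)$; boundedness of the inputs and $\gamma^\nu\to0$ prove part~(a), and exchanging summation order further yields $\sum_\nu(e^\nu)^2<\infty$ and $\sum_\nu\gamma^\nu e^\nu<\infty$. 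Applied to \eqref{Theta_update}, once $\norm{\mathbf{D}^{\nu+1}-\mathbf{D}^\nu}_F,\norm{\mathbf{X}^{\nu+1}-\mathbf{X}^\nu}_F\to0$ are in hand from Layer~2, the same estimate gives the tracking property $\norm{I\boldsymbol{\Theta}_{(i)}^\nu-\sum_j\nabla_D f_j(\mathbf{D}_{(j)}^\nu,\mathbf{X}_j^\nu)}_F\to0$.

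\emph{Layer~2 (descent inequality).} I would carry the descent on the \emph{mass-weighted} average $\widehat{\mathbf{D}}^\nu\triangleq\frac1I\sum_i\phi_i^\nu\mathbf{D}_{(i)}^\nu\in\mathcal{D}$ rather than on $\overline{\mathbf{D}}^\nu$, because column stochasticity makes its update \emph{exact}, $\widehat{\mathbf{D}}^{\nu+1}=\widehat{\mathbf{D}}^\nu+\tfrac{\gamma^\nu}{I}\sum_j\phi_j^\nu(\widetilde{\mathbf{D}}_{(j)}^\nu-\mathbf{D}_{(j)}^\nu)$, while $\norm{\widehat{\mathbf{D}}^\nu-\overline{\mathbf{D}}^\nu}_F=O(e^\nu)$. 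The target is $V^{\nu+1}\le V^\nu-c_1\gamma^\nu\norm{\widetilde{\mathbf{D}}^\nu-\mathbf{D}^\nu}_F^2-c_2\norm{\mathbf{X}^{\nu+1}-\mathbf{X}^\nu}_F^2+\eta^\nu$ for $\nu$ large, with $V^\nu\triangleq U(\widehat{\mathbf{D}}^\nu,\mathbf{X}^\nu)$ (plus a vanishing consensus correction if convenient), $U$ bounded below on $\mathcal{D}\times\mathcal{K}$, and $\sum_\nu\eta^\nu<\infty$. I would chain three estimates along the algorithm's order of operations: (i) optimality of the exact best response \eqref{eq:Xupdate} plus the descent lemma for $f_i(\mathbf{U}_{(i)}^\nu,\cdot)$ — legitimate because \eqref{tau_X_cond_c} gives $\tau_{X,i}^\nu\ge\tfrac12 L_{\nabla X_i}(\mathbf{U}_{(i)}^\nu)+\epsilon$ — delivers a strict $-\epsilon\norm{\mathbf{X}_i^{\nu+1}-\mathbf{X}_i^\nu}_F^2$ decrease; (ii) convexity of $G$, first-order optimality and strong convexity of the $\mathbf{D}$-subproblem \eqref{D_tilde_subproblem}, and the interpolation \eqref{U_update}, turn the $\mathbf{D}$-move into a $-c_1\gamma^\nu\norm{\widetilde{\mathbf{D}}^\nu-\mathbf{D}^\nu}_F^2$ decrease plus an inner-product term in which $I\boldsymbol{\Theta}_{(i)}^\nu$ plays the role of $\nabla_D F$, its mismatch bounded by the tracking and consensus estimates; (iii) the smoothness descent lemma for $F(\cdot,\mathbf{X}^{\nu+1})$ absorbs $\widehat{\mathbf{D}}^\nu\to\widehat{\mathbf{D}}^{\nu+1}$ at cost $O((\gamma^\nu)^2)$. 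Every residual then carries a $\gamma^\nu$, is squared, is an $e^\nu$ paired with $\norm{\mathbf{X}_i^{\nu+1}-\mathbf{X}_i^\nu}_F$ (Young's inequality, the quadratic remainder absorbed into $c_2$), or equals $(e^\nu)^2$ or $\gamma^\nu\cdot(\text{tracking error})$, so $\sum_\nu\eta^\nu<\infty$. A deterministic sequence lemma (Robbins--Siegmund-type) then gives: $V^\nu$ converges; $\sum_\nu\norm{\mathbf{X}^{\nu+1}-\mathbf{X}^\nu}_F^2<\infty$, hence $\norm{\mathbf{X}^{\nu+1}-\mathbf{X}^\nu}_F\to0$; and $\sum_\nu\gamma^\nu\norm{\widetilde{\mathbf{D}}^\nu-\mathbf{D}^\nu}_F^2<\infty$, hence (since $\sum_\nu\gamma^\nu=\infty$) $\liminf_\nu\norm{\widetilde{\mathbf{D}}^\nu-\mathbf{D}^\nu}_F=0$. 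Convergence of $V^\nu$ with $\norm{\widehat{\mathbf{D}}^\nu-\overline{\mathbf{D}}^\nu}_F\to0$ gives (b)(ii); $\overline{\mathbf{D}}^\nu\in\mathcal{D}$ and boundedness of $\mathbf{X}^\nu$ give (b)(i).

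\emph{Layer~3 (merit functions and (b$'$)).} Since $\widehat{\mathbf{X}}_i(\overline{\mathbf{D}}^\nu,\mathbf{X}_i^\nu)$ and $\widehat{\mathbf{D}}(\overline{\mathbf{D}}^\nu,\mathbf{X}^\nu)$ of \eqref{X_avg_def}--\eqref{D_avg_def} are Lipschitz in their data (minimizers of strongly convex programs), and the algorithm's best responses differ from them only through $\mathbf{U}_{(i)}^\nu$ vs.\ $\overline{\mathbf{D}}^\nu$ (gap $O(e^\nu+\gamma^\nu)\to0$), $I\boldsymbol{\Theta}_{(i)}^\nu$ vs.\ $\nabla_D F(\overline{\mathbf{D}}^\nu,\mathbf{X}^\nu)$ (gap $\to0$ by tracking and consensus), bounded differences in the proximal coefficients, and the admissible surrogate in use (for \eqref{eq:ftilde1}/\eqref{eq:htilde1} vs.\ \eqref{eq:ftilde2}/\eqref{eq:htilde2} the comparison is the standard SCA one in \citep{Scutari-Facchinei-SagratellaTSP15}), I obtain $\Delta_X(\overline{\mathbf{D}}^\nu,\mathbf{X}^\nu)\le C\norm{\mathbf{X}^{\nu+1}-\mathbf{X}^\nu}_F+o(1)\to0$, which is (b)(iii), and $\Delta_D(\overline{\mathbf{D}}^\nu,\mathbf{X}^\nu)\le C\norm{\widetilde{\mathbf{D}}^\nu-\mathbf{D}^\nu}_F+o(1)$, whose $\liminf$ is $0$, which is (b)(iv); boundedness, continuity of $\Delta_D,\Delta_X$, and the stated fact that $\Delta_D=\Delta_X=0$ characterizes stationarity then yield a stationary solution of \ref{eq:P1} among the limit points. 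For (b$'$): under \ref{Problem_Assumptions}5(ii) and \ref{freeVars_assumptions}2 the map $(\mathbf{D},\mathbf{X},\boldsymbol{\Theta},\tau_D,\tau_X)\mapsto\widetilde{\mathbf{D}}$ is uniformly Lipschitz, so $\big|\norm{\widetilde{\mathbf{D}}_{(i)}^{\nu+1}-\mathbf{D}_{(i)}^{\nu+1}}_F-\norm{\widetilde{\mathbf{D}}_{(i)}^{\nu}-\mathbf{D}_{(i)}^{\nu}}_F\big|\le C(\norm{\mathbf{D}_{(i)}^{\nu+1}-\mathbf{D}_{(i)}^{\nu}}_F+\norm{\mathbf{X}_i^{\nu+1}-\mathbf{X}_i^\nu}_F+\norm{\boldsymbol{\Theta}_{(i)}^{\nu+1}-\boldsymbol{\Theta}_{(i)}^\nu}_F+|\tau_{D,i}^{\nu+1}-\tau_{D,i}^\nu|+|\tau_{X,i}^{\nu+1}-\tau_{X,i}^\nu|)$, whose right side $\to0$ by Layer~2 and \eqref{extra_condition_tau_D}--\eqref{extra_condition_tau_x}; a nonnegative sequence with vanishing increments, $\liminf=0$, $\sum_\nu\gamma^\nu(\cdot)^2<\infty$ and $\sum_\nu\gamma^\nu=\infty$ must tend to $0$, so $\norm{\widetilde{\mathbf{D}}^\nu-\mathbf{D}^\nu}_F\to0$, hence $\Delta_D\to0$, $\Delta^\nu\to0$, and all limit points are stationary.

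\emph{Main obstacle.} The crux is Layer~2: closing the descent inequality \emph{without} a global-Lipschitz assumption — which forces the a-priori confinement of \emph{all} iterates, including the $\mathbf{X}_i^\nu$, to a fixed compact set and the use of the iteration-dependent, merely bounded coefficients $\tau_{X,i}^\nu$ — while simultaneously guaranteeing that every consensus and tracking residual entering $\eta^\nu$ is summable. Working with the mass-weighted average $\widehat{\mathbf{D}}^\nu$ (exact average dynamics) and pairing the stray $O(e^\nu)$ terms against $\norm{\mathbf{X}^{\nu+1}-\mathbf{X}^\nu}_F$ via Young's inequality is what makes this go through; getting the $\mathbf{D}/\mathbf{X}$ update-order bookkeeping and the two admissible surrogate choices right is the delicate part of the argument.
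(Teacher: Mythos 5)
Your architecture (push-sum geometric decay, descent carried on the mass-weighted average $\overline{\mathbf{D}}_{\boldsymbol{\phi}^\nu}$, then translation of vanishing increments into $\Delta_X,\Delta_D$) is essentially the paper's, but there is a genuine ordering problem in how you obtain boundedness of $\{\mathbf{X}^\nu\}$ under Assumption \ref{Problem_Assumptions}5(ii). You propose to bootstrap it from the descent inequality of your Layer~2 plus coercivity, yet that descent inequality (and the gradient-tracking error bounds feeding its summable error $\eta^\nu$) requires Lipschitz constants of $\nabla_D f_i$, jointly in $(\mathbf{D},\mathbf{X}_i)$, on a region already known to contain the iterates; since global Lipschitz continuity is exactly what the paper refuses to assume (for the quadratic loss, $\nabla_D f_i=(\mathbf{D}\mathbf{X}_i-\mathbf{S}_i)\mathbf{X}_i^\intercal$ has a $\mathbf{D}$-Lipschitz constant growing like $\|\mathbf{X}_i\|^2$), these constants cannot be fixed before $\{\mathbf{X}^\nu\}$ is known to be bounded, so the bootstrap is circular as stated. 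The paper breaks the circle with a self-contained invariant-ball argument (its Step~2), which uses only the optimality of \eqref{eq:Xupdate}, the $\mu_i$-strong convexity of $g_i$, the co-coercivity of $\nabla_{X_i}f_i(\mathbf{U}_{(i)}^\nu,\cdot)$ and \eqref{tau_X_cond_c} to get $\|\mathbf{X}_i^{\nu+1}-\mathbf{X}_i^0\|_F\le\frac{\tau_{X,i}^\nu}{\tau_{X,i}^\nu+\mu_i}\|\mathbf{X}_i^\nu-\mathbf{X}_i^0\|_F+\frac{B_Z}{\tau_{X,i}^\nu+\mu_i}$, with $B_Z$ finite because only the compactness of $\mathcal{D}$ and the fixed initial point enter; hence the ball of radius $B_Z/\mu_i$ around $\mathbf{X}_i^0$ is invariant. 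You need this (or an equivalent a priori bound) \emph{before} Layer~2 can be written down with uniform constants.

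The second gap is in your proof of (b$'$). The sequence lemma you invoke --- a nonnegative sequence with vanishing increments, $\liminf=0$, $\sum_\nu\gamma^\nu a_\nu^2<\infty$ and $\sum_\nu\gamma^\nu=\infty$ must tend to $0$ --- is false: with $\gamma^\nu=1/\nu$, place excursions of height $\delta$ around indices $N_k=2^k$ built from increments of size $1/\log N_k$, with $a_\nu=0$ in between; increments vanish, $\liminf=0$, $\limsup\ge\delta$, and the weighted square sum is $O\big(\sum_k (\log N_k)/N_k\big)<\infty$. What is actually needed is that the variation of $\|\widetilde{\mathbf{D}}^\nu-\mathbf{D}^\nu\|_F$ over a window be controlled by $\sum\gamma^l$ over that window (plus vanishing terms), and this is not automatic because the $\mathbf{X}$-update carries no step size, so $\|\mathbf{X}^{\nu+1}-\mathbf{X}^\nu\|_F$ is only vanishing, not $O(\gamma^\nu)$. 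The paper's Step~6 closes this via a windowed contradiction: Proposition \ref{sol_map_prop_part2}(a) converts $\|\widetilde{\mathbf{D}}^{i_\nu}-\widetilde{\mathbf{D}}^{\nu}\|_F$ into $\|\overline{\mathbf{D}}_{\boldsymbol{\phi}^{i_\nu}}-\overline{\mathbf{D}}_{\boldsymbol{\phi}^{\nu}}\|_F$ (which telescopes into $\sum\gamma^l$) plus $\|\mathbf{X}^{i_\nu}-\mathbf{X}^{\nu}\|_F$, and the latter is tamed by the contraction recursion of Proposition \ref{sol_map_prop_part2}(b), $\|\mathbf{X}^{\nu+1}-\mathbf{X}^\nu\|_F\le p_X\|\mathbf{X}^\nu-\mathbf{X}^{\nu-1}\|_F+q_X\|\mathbf{U}^\nu-\mathbf{U}^{\nu-1}\|_F$ with $p_X<1$ --- which is precisely where Assumption \ref{Problem_Assumptions}5(ii) and \eqref{extra_condition_tau_x} enter (note \eqref{extra_condition_tau_x} only gives $\limsup_\nu|\tau_{X,i}^\nu-\tau_{X,i}^{\nu-1}|<\mu$, not your claim that these increments vanish) --- together with a separate argument that windowed sums of consensus errors go to zero along the contradiction subsequence. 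Without these pieces your (b$'$) does not close; the remaining parts (a) and (b)(i)--(iv), modulo the boundedness ordering above, do follow the paper's route.
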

\begin{proof}
The proof is quite involved and is given in Appendix  \ref{sub:Proof_th_conver}.\vspace{-0.3cm}
\end{proof}}

The above theorem states two main convergence results {under
Assumptions \ref{B_strongly_connectivity}, \ref{Ass-surrogates}, \ref{freeVars_assumptions}1, \ref{assumption:step-size},   \ref{A_matrix_Assumptions}:  i) existence of at least a  subsequence  of $(\overline{\mathbf{D}}^\nu,\mathbf{X}^\nu)$  converging to a stationary solution of Problem \ref{eq:P1}; and  ii)  asymptotic  consensus of all  $\mathbf{D}_{(i)}^\nu$  to a common value $\overline{\mathbf{D}}^\nu$. If Assumption   \ref{Problem_Assumptions}5(ii) is also assumed and  \ref{freeVars_assumptions}1 is reinforced  by \ref{freeVars_assumptions}2 the stronger results in (b') can be proven, showing that  every limit point is a stationary solution. Note that from a practical point of view the weaker result guaranteeing existence 
 of at least a  subsequence   converging to a stationary solution is perfectly satisfying, since it guarantees that the algorithm can be terminated after a finite number of iterations with an approximate solution.}
\vspace{-0.1cm}

{\begin{theorem}
	\label{th:rate}
	\it Consider either settings of Theorem \ref{th:conver}, with the additional assumption that 
	the  step-size sequence $\{\gamma^\nu\}_\nu$ is non-increasing. For any given $\epsilon>0$, let ${T}_{D,\epsilon}\triangleq\min\{\nu\in \mathbb{N}_+: \Delta_D(\overline{\mathbf{D}}^\nu,\mathbf{X}^\nu)\leq \epsilon\}$ and   ${T}_{X,\epsilon}\triangleq\min\{\nu\in \mathbb{N}_+\,:\, \Delta_X(\overline{\mathbf{D}}^\nu,\mathbf{X}^\nu)\leq \epsilon\}$. 
	Then,
	\begin{enumerate}[label=(\alph*)]
		\item \texttt{[Rate of consensus error]}:
		\begin{equation}
		\vspace{-.5cm}
		e^\nu=  \mathcal{O}\left(\gamma^{\ceil{\theta \nu}}\right),		\vspace{-.3cm}
		\label{Cons_err_rate}
		\end{equation}
		for  every   $\theta\in(0,1)$;
		\item \texttt{[Rate of optimization errors]}:   
		\begin{equation}
		{T}_{X,\epsilon}= \mathcal{O}\left(\frac{1}{\epsilon^{2}}\right).
				\label{Rate_TeX}
		\end{equation}
		Let $\gamma^\nu=K/\nu^p$ with  some constant  $K>0$ and $p\in(1/2,1)$. Then,
		\begin{equation}
		{T}_{D,\epsilon}= \mathcal{O}\left(\frac{1}{\epsilon^{2/(1-p)}}\right).
		\label{Rate_TeD}
		\end{equation}
	\end{enumerate}
\end{theorem}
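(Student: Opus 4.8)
## Proof Proposal for Theorem~\ref{th:rate}

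The plan is to build on the descent and tracking machinery already established in the proof of Theorem~\ref{th:conver} (Appendix~\ref{sub:Proof_th_conver}), extracting from it explicit rates rather than merely asymptotic statements. I would organize the argument around three quantities whose dynamics are controlled in the convergence proof: the consensus error $e^\nu$, the optimization gap for the private variables $\Delta_X(\overline{\mathbf{D}}^\nu,\mathbf{X}^\nu)$, and the optimization gap for the dictionary $\Delta_D(\overline{\mathbf{D}}^\nu,\mathbf{X}^\nu)$.

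First, for the consensus rate \eqref{Cons_err_rate}: the push-sum dynamics \eqref{D_weighted_avg} driven by the perturbation $\mathbf{U}^\nu_{(i)}-\mathbf{D}^\nu_{(i)}=\gamma^\nu(\widetilde{\mathbf{D}}^\nu_{(i)}-\mathbf{D}^\nu_{(i)})$ contract the disagreement geometrically (with some rate $\rho\in(0,1)$ determined by $\kappa$, $I$, and $B$ via Assumptions~\ref{B_strongly_connectivity}, \ref{A_matrix_Assumptions}), while injecting an error of order $\gamma^\nu$ at each step because $\mathcal{D}$ is compact (Assumption~\ref{Problem_Assumptions}3) so $\|\widetilde{\mathbf{D}}^\nu_{(i)}-\mathbf{D}^\nu_{(i)}\|$ is bounded. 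The standard estimate $e^\nu \lesssim \sum_{t=0}^{\nu-1}\rho^{\nu-1-t}\gamma^t$ then has to be converted into $\mathcal{O}(\gamma^{\ceil{\theta\nu}})$: split the sum at $t=\ceil{\theta\nu}$; the tail $t\ge\ceil{\theta\nu}$ contributes $\lesssim \gamma^{\ceil{\theta\nu}}/(1-\rho)$ since $\gamma^\nu$ is non-increasing, and the head $t<\ceil{\theta\nu}$ contributes $\lesssim \rho^{(1-\theta)\nu}$ which decays faster than any polynomial $\gamma^{\ceil{\theta\nu}}$ for $\theta<1$. This is a routine geometric-vs-subpolynomial comparison.

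Next, for \eqref{Rate_TeX}: the $\mathbf{X}$-update \eqref{eq:Xupdate} is an exact proximal/SCA step (no $\gamma^\nu$ damping), so it gives a genuine sufficient-decrease inequality of the form $U(\mathbf{U}^\nu,\mathbf{X}^{\nu+1})\le U(\mathbf{U}^\nu,\mathbf{X}^\nu)-c_X\|\mathbf{X}^{\nu+1}-\mathbf{X}^\nu\|^2$, combined with the Lipschitz-type perturbation bound $|U(\overline{\mathbf{D}}^{\nu+1},\cdot)-U(\mathbf{U}^\nu,\cdot)|$ controlled by $\gamma^\nu$ and $e^\nu$ (using local boundedness of $\nabla_D f_i$ on the compact set $\mathcal{D}$, which replaces the global Lipschitz assumption the paper avoids). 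Telescoping over $\nu$ and using $\sum(\gamma^\nu)^2<\infty$ and $\sum e^\nu<\infty$ (a consequence of the consensus rate) yields $\sum_\nu \|\mathbf{X}^{\nu+1}-\mathbf{X}^\nu\|^2<\infty$, hence $\min_{t\le\nu}\|\mathbf{X}^{t+1}-\mathbf{X}^t\|^2 = \mathcal{O}(1/\nu)$; relating $\|\mathbf{X}^{\nu+1}-\mathbf{X}^\nu\|$ to $\Delta_X$ through the optimality conditions of \eqref{eq:Xupdate} and \eqref{X_avg_def} (both strongly convex, so the best-response maps are Lipschitz) gives $\min_{t\le\nu}\Delta_X^2=\mathcal{O}(1/\nu)$, i.e. $T_{X,\epsilon}=\mathcal{O}(1/\epsilon^2)$.

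Finally, for \eqref{Rate_TeD}: here the dictionary is updated with the damped step \eqref{U_update}, so the descent inequality picks up a term $-c_D\gamma^\nu\|\widetilde{\mathbf{D}}^\nu_{(i)}-\mathbf{D}^\nu_{(i)}\|^2 + \mathcal{O}((\gamma^\nu)^2) + \mathcal{O}(\gamma^\nu e^\nu)$; telescoping gives $\sum_\nu \gamma^\nu\|\widetilde{\mathbf{D}}^\nu-\mathbf{D}^\nu\|^2<\infty$. With $\gamma^\nu=K/\nu^p$, $p\in(1/2,1)$, one has $\sum_{t\le\nu}\gamma^t \sim \nu^{1-p}$, so $\min_{t\le\nu}\|\widetilde{\mathbf{D}}^t-\mathbf{D}^t\|^2 \lesssim (\sum_{t\le\nu}\gamma^t)^{-1}\cdot\mathrm{const} = \mathcal{O}(\nu^{-(1-p)})$ — however one must be careful: the telescoped bound is on the $\gamma$-weighted sum, so the sharp statement is $\min_{t\le\nu}\|\widetilde{\mathbf{D}}^t-\mathbf{D}^t\| = \mathcal{O}(\nu^{-(1-p)/2})$, and since $\widetilde{\mathbf{D}}^\nu_{(i)}-\mathbf{D}^\nu_{(i)}$ is (up to a correction of order $e^\nu$, plus the gap between $\tau^\nu_{D,i}$ and $\hat\tau_D$) a multiple of $\widehat{\mathbf{D}}(\overline{\mathbf{D}}^\nu,\mathbf{X}^\nu)-\overline{\mathbf{D}}^\nu$, this transfers to $\Delta_D$, yielding $T_{D,\epsilon}=\mathcal{O}(\epsilon^{-2/(1-p)})$.

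The main obstacle I anticipate is the $\mathbf{D}$-rate: unlike the $\mathbf{X}$-step, the damped update does not give a clean $\mathcal{O}(1/\nu)$ telescoping, and one must juggle three error sources simultaneously — the $(\gamma^\nu)^2$ self-perturbation, the consensus error $e^\nu$ (which enters multiplied by $\gamma^\nu$ and must be summable against it, using \eqref{Cons_err_rate}), and the drift of the surrogate parameters $\tau^\nu_{D,i}$ — while keeping the merit function $\Delta_D$, defined via the fixed reference constant $\hat\tau_D$, comparable to the algorithm's own best-response residual $\|\widetilde{\mathbf{D}}^\nu-\mathbf{D}^\nu\|$. Making the constants in these comparisons uniform in $\nu$ (so that they do not silently depend on quantities that themselves vanish) is the delicate bookkeeping step. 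A secondary subtlety is that the stationarity merit $\Delta_D$ is evaluated at $\overline{\mathbf{D}}^\nu$ whereas the descent is naturally expressed at the local iterates $\mathbf{D}^\nu_{(i)}$ and at $\mathbf{U}^\nu_{(i)}$; bridging these requires the consensus rate from part~(a), which is why \eqref{Cons_err_rate} is proved first.
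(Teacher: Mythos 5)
Your overall route is the same as the paper's: part (a) via the perturbed push--sum estimate $e^\nu\lesssim\sum_{t}\rho^{\nu-t}\gamma^t$ split at $\ceil{\theta\nu}$; part (b) for $\mathbf{X}$ via summability of $(\Delta_X^\nu)^2$ (relating $\Delta_X$ to $\norm{\mathbf{X}^{\nu+1}-\mathbf{X}^\nu}_F$, consensus errors and $\gamma^\nu$, then counting the iterations with $\Delta_X>\epsilon$); and for $\mathbf{D}$ via the $\gamma$-weighted summability $\sum_\nu\gamma^\nu\norm{\widetilde{\mathbf{D}}^\nu-\mathbf{D}^\nu}_F^2<\infty$ together with $\sum_{t\le\nu}\gamma^t\sim\nu^{1-p}$, exactly as in \eqref{D_bound_squaresum_rate}. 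Still, two steps as you state them would not go through. First, you invoke ``$\sum_\nu e^\nu<\infty$ as a consequence of the consensus rate'' to telescope the $\mathbf{X}$-descent. This is false: \eqref{Cons_err_rate} gives $e^\nu=\mathcal{O}(\gamma^{\ceil{\theta\nu}})$, and $\{\gamma^\nu\}$ is \emph{not} summable (Assumption \ref{assumption:step-size}), so neither is $e^\nu$ in general. What is available, and what the paper actually uses, is \emph{square}-summability of the consensus error \eqref{D_consError_summable}--\eqref{U_err_sqsum_bound}; the consensus perturbation in the descent must therefore be kept multiplied by $\norm{\mathbf{X}^{\nu+1}-\mathbf{X}^\nu}_F$ (or by $\gamma^\nu$) and handled by Young/Cauchy--Schwarz, as in \eqref{descent_X} and Proposition \ref{prop_descent}. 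A perturbation bounded only by $c\, e^\nu$ per iteration, as your sketch of $|U(\overline{\mathbf{D}}^{\nu+1},\cdot)-U(\mathbf{U}^\nu,\cdot)|$ suggests, cannot be summed.

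Second, in transferring the residual $\norm{\widetilde{\mathbf{D}}^\nu-\mathbf{D}^\nu}_F$ to the merit $\Delta_D(\overline{\mathbf{D}}^\nu,\mathbf{X}^\nu)$ you list only the consensus error and the $\tau_{D,i}^\nu$ vs.\ $\hat\tau_D$ mismatch. This omits the gradient-tracking error: the local subproblem \eqref{D_tilde_subproblem} is driven by $I\boldsymbol{\Theta}_{(i)}^\nu$, whereas $\widehat{\mathbf{D}}(\overline{\mathbf{D}}^\nu,\mathbf{X}^\nu)$ in \eqref{D_avg_def} uses the true $\nabla_D F(\overline{\mathbf{D}}^\nu,\mathbf{X}^\nu)$, so the comparison of the two best responses necessarily contains the term $\|\boldsymbol{\Theta}_{(i)}^\nu-\tfrac{1}{I}\nabla_D F(\overline{\mathbf{D}}_{\boldsymbol{\phi}^\nu},\mathbf{X}^\nu)\|_F$ [cf.\ \eqref{Davg_Dnew_bound2}], and the rate argument needs its square-summability (Lemma \ref{consVar_props2}) to conclude $\sum_\nu\gamma^\nu(\Delta_D^\nu)^2<\infty$. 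This ingredient is available from the machinery of Theorem \ref{th:conver} that you intend to reuse, but as written your bridge to $\Delta_D$ is incomplete; once these two points are repaired (square-summable consensus/tracking errors in place of the unjustified $\ell_1$ summability, and the tracking term included in the $\Delta_D$ comparison), your argument matches the paper's proof.
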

\begin{proof}
See Appendix  \ref{th:rate_proof}
\end{proof}}

{

We  remark that, while a convergence rate has been established in the literature (see, e.g., \citet{Razaviyayn2014ParallelSC}) for certain {\it centralized} algorithms applied to special classes of DL problems, Theorem   \ref{th:rate} 
represents the first    rate result  for a {\it distributed} algorithm tackling the class  of DL problems  \ref{eq:P1}.}

\section{Numerical Experiments}\label{num_results}
In this section, we   test numerically  our algorithmic framework  on several classes of problems, namely: (i) Image denoising, (ii) Biclustering, (iii) Sparse PCA, and (iv) Non-negative sparse coding. {
We recall that D$^4$L is the first provably convergent distributed algorithm for Problem \ref{eq:P1}}; comparisons are thus not simple.  To give the sense of the performance of D$^4$L, in our experiments,

{
(i) when available,  we implemented,  {\em centralized} algorithms tailored to the specific problems under consideration and used the results as {\em benchmarks};}

{
(ii) for undirected graphs, we extended the (distributed)    Prox-PDA-IP \citep{Prox_PDA_GlobalSIP} algorithm  to the simulated instances of Problem P  (generalizations of this method to directed graphs seem not possible);}

{
(iii) for both undirected and directed graphs,  we implemented a suitable version of  the Adapt-Then-Combine (ATC) Algorithm
\citep{chainais2013distributed}. Note that  ATC has no formal convergence proof, and is originally designed to handle only undirected graphs, but we managed to make a sensible extension of this method to directed graphs too, by using some of  the ideas developed in  this paper.}

 All codes are written in MATLAB 2016b, and implemented on a computer  with Intel Xeon (E5-1607 v3) quad-core 3.10GHz processor and 16.0 GB of DDR4 main memory. 

\begin{figure}[t]\vspace{-0.8cm}
	\center
	\includegraphics[scale=0.5]{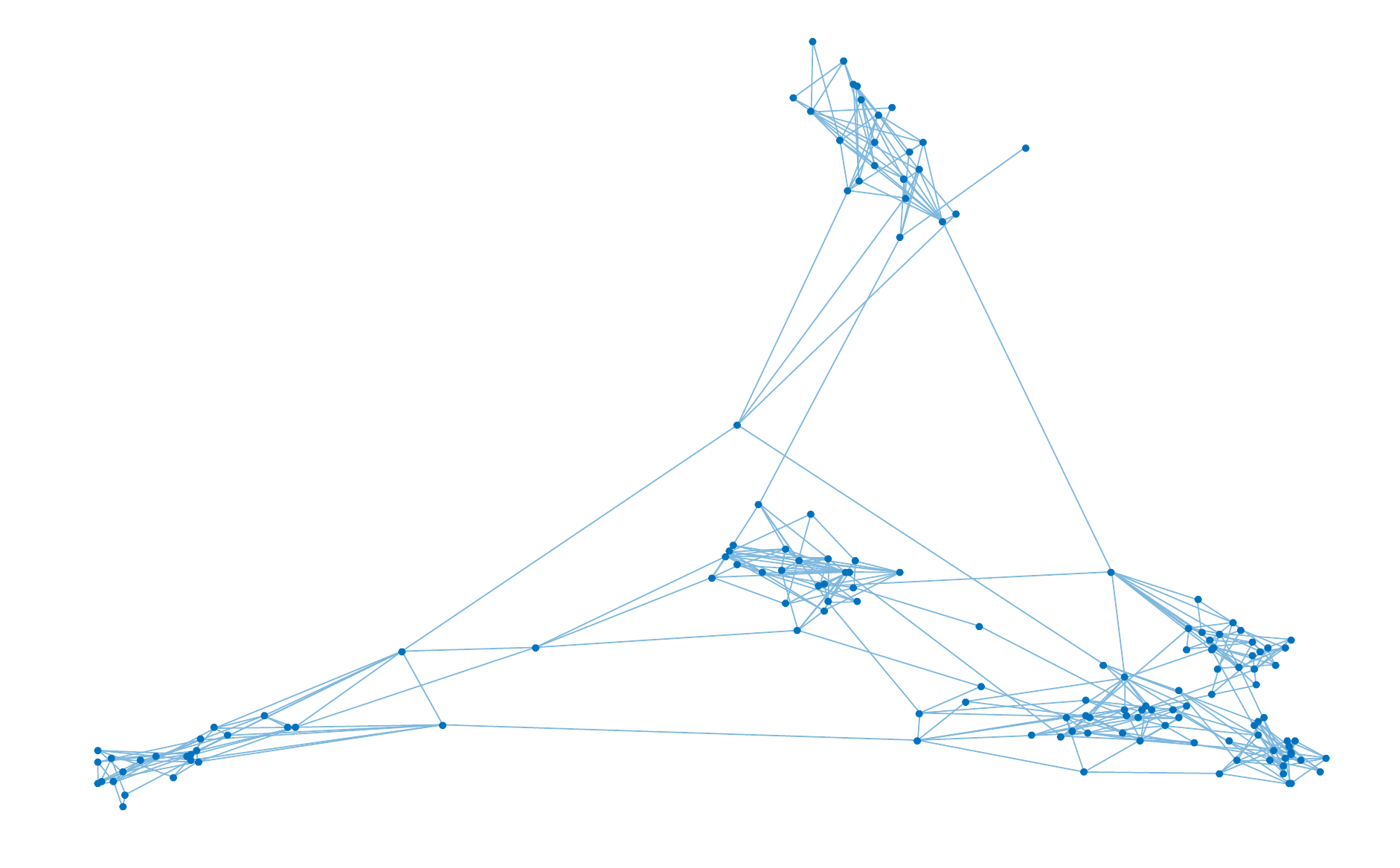}\vspace{-0.4cm}
	\caption{Topology of {a} simulated (sparse) network} \vspace{-0.4cm}
	\label{fig:network_schem}
\end{figure}

\subsection{Image Denoising }
\label{image_denoising_sec}
\noindent\textbf{Problem formulations:} We consider denoising a $512\times 512$ pixels image of a fishing boat \citep{USC_SIPI}$-$see  Fig.~\ref{fig:image_quality_comparison}(a). We simulate a cluster computer network  composed of   $150$ nodes (computers). Denoting by $\mathbf{F}_0$ and $\mathbf{F}$ the noise-free and corrupted image, respectively, the SNR (in dB) is defined as  $\text{SNR}\triangleq 20\cdot\log({||\text{vec}(\mathbf{F}_0)||_2}/{\sqrt{\text{MSE}}})$ while the Peak SNR (in dB) is defined as $\text{PSNR}\triangleq 20\cdot\log({\max_{i} (\text{vec}(\mathbf{F}_0))_i}/{\sqrt{\text{MSE}}})$, where $\text{MSE}$ is the Mean-Squared-Error between $\mathbf{F}_0$ and $\mathbf{F}$. 
The fishing boat image is corrupted by additive white Gaussian noise, so that $\text{SNR}=15$ dB and    $\text{PSNR}=20.34$ dB.

To perform the denoising task,  we consider the elastic net sparse DL formulation \eqref{eq:sparse_DL}. 
We extract 255,150 square sliding $s\times s$ pixel patches ($s=8$) and aggregate the vectorized extracted patches in a single data matrix $\mathbf{S}$ of size $64\times 255,150$. The size of the dictionary is $s^2\times s^2=64\times 64$; the data matrix is equally distributed across the $150$ nodes, resulting   in   sparse representation matrices $\mathbf{X}_i$  of size  $64\times 1701$ ($K= 64$ and $n_i = 1701$). The  total number of optimization variables is then  $16,333,696$. {The free parameters     $\lambda$, $\mu$ and $\alpha$ in \eqref{eq:sparse_DL} are set to $\lambda=1/s$, $\mu=\lambda$ and $\alpha=1$, respectively.}

 \noindent\textbf{Algorithms and tuning:} We tested:  i) two instances of the D$^4$L Algorithm, corresponding to   two alternative choices of the surrogate functions; {ii)  the Prox-PDA-IP algorithm  \citep{Prox_PDA_GlobalSIP}, adapted  to problem \eqref{eq:sparse_DL} (only on undirected networks); iii) the ATC algorithm \citep{chainais2013distributed};} and iv) the \emph{centralized} K-SVD algorithm  \citep{Elad_KSVDdenoising_TIP_2010} (\texttt{KSVD-Box v13} package), used as a benchmark. More specifically, the two instances of the D$^4$L Algorithm are:\vspace{-0.1cm}
\begin{itemize}
\item  \texttt{Plain D$^4$L}: $\tilde h_i$ is chosen as in \eqref{eq:htilde1} (the original function) and $\tilde{f}_i$ as  in \eqref{eq:ftilde2};
\item \texttt{Linearized D$^4$L}: $\tilde h_i$ is  given by \eqref{eq:htilde2} (first-order approximation) and $\tilde{f}_i$ is given by \eqref{eq:ftilde2}.\vspace{-0.1cm}
\end{itemize} The rest of the parameters in both instances of D$^4$L is set as:   $\gamma^\nu=\gamma^{\nu-1}(1-\epsilon \gamma^{\nu-1})$, with  $\gamma^0=0.5$ and $\epsilon=10^{-2}$;  $\tau^\nu_{D,i}=10$; and $\tau^\nu_{X,i}=\max(L_{\nabla{X_i}}(\mathbf{U}_{(i)}^\nu),1)$ [cf.~\eqref{tau_x_second_choice}].

{ {Our adaptation} of the Prox-PDA-IP algorithm       to   Problem \eqref{eq:sparse_DL} is summarized in Algorithm \ref{Prox-PDA-IP}. The difference with the original version in \citep{Prox_PDA_GlobalSIP} are: i)  the elastic net penalty is used in the objective function for the $\mathbf{X}_{i}$'s variables, instead of the $\ell_1$-norm and $\ell_2$-norm ball constraints; and ii)  the variables $\mathbf{D}_{(i)}$'s are constrained in $\mathcal{D}\triangleq\{\mathbf{D}\,:\,||\mathbf{D}\mathbf{e}_k||_2\leq\alpha, \,k=1,2,\ldots,K\}$  rather than using the $\ell_2$-norm regularization in the objective function.}  
{The other symbols used in  Algorithm \ref{Prox-PDA-IP} are: i)  the  incidence matrix of $\mathcal{G}$, denoted by  $\mathbf{M}=(M_{ei})_{e,i}\in\mathbb{R}^{E\times I}$, with $E\triangleq |\mathcal{E}|$; ii) the matrices $\boldsymbol{\Omega}_e^\nu\in\mathbb{R}^{M\times K}$,  $e=1,...,E$, which are the $\nu$-th iterate  of the dual matrix variables   $\boldsymbol{\Omega}_e\in\mathbb{R}^{M\times K}$, as   introduced in the original Prox-PDA-IP; and iii)   $\{\beta^\nu\}_{\nu\in\mathbb{N}_+}$ is the  increasing penalty parameter, set to  $\beta^\nu=0.002\nu$}.
{
\begin{algorithm}[t] {
		~
		\\
		{$\textbf{Initialization}: 
			\mathbf{D}_{(i)}^0\in\mathcal{D},~\mathbf{X}_i^0\in\mathcal{X}_i$,
			$\boldsymbol{\Omega}^0=\mathbf{0}$;
			\\
			\texttt{S1.} If $(\mathbf{D}_{(i)}^\nu,\mathbf{X}_i^\nu)_i$ satisfies stopping criterion: \texttt{STOP};\smallskip\vspace{0.1cm}
			\\
			\texttt{S2.} Each agent $i$ computes $\theta_i^\nu=||\mathbf{D}_{(i)}^\nu\mathbf{X}_i^\nu-\mathbf{S}_i||^2_F$ and: 
			\vspace{-.1cm}
			\begin{enumerate}[label=(\alph*)]
				\item $\displaystyle\mathbf{X}_i^{\nu+1} = \underset{\mathbf{X}_i\in\mathbb{R}^{K\times n_i}}{\text{argmin}} f_i(\mathbf{D}_{(i)}^\nu,\mathbf{X}_i)+g_i(\mathbf{X}_i)+\frac{\beta^{\nu+1}\theta_i^\nu}{2}||\mathbf{X}_i-\mathbf{X}_i^\nu||_F^2+\frac{\beta^{\nu+1}}{2}||\mathbf{D}_{(i)}^\nu(\mathbf{X}_i-\mathbf{X}_i^\nu)||_F^2$;
				\item  $\displaystyle{\mathbf{D}}_{(i)}^{\nu+1} = \underset{\mathbf{D}_{(i)}\in\mathcal{D}}{\text{argmin}}  ~~f_i(\mathbf{D}_{(i)},\mathbf{X}_i^{\nu+1})+\textstyle\sum_{e=1}^EM_{ei}\left\langle\boldsymbol{\Omega}_e^\nu,\mathbf{D}_{(i)}\right\rangle$ 
				\vspace{-0.3cm}
				\item[] \hspace{4cm}$+\beta^{\nu+1} \left(d_i||\mathbf{D}_{(i)}||_F^2- \big\langle\mathbf{D}_{(i)},(d_i-1)\mathbf{D}_{(i)}^\nu+\textstyle\sum_{j\in\mathcal{N}_i}\mathbf{D}_{(j)}^\nu\big\rangle\right)$;
				\item  $\boldsymbol{\Omega}^{\nu+1}_e =\boldsymbol{\Omega}^{\nu}_e+\beta^{\nu+1}\sum_{i=1}^IM_{ei}\mathbf{D}_{(i)}^{\nu+1},\quad\forall e=(i,j)\in\mathcal{E}$;
			\end{enumerate}
			\texttt{S3.} Set $\nu+1\to\nu$, and go to \texttt{S1}.}
		\\
		~
		\caption{: Prox-PDA-IP algorithm \citep{Prox_PDA_GlobalSIP} \label{Prox-PDA-IP}}
	}
\end{algorithm}}

All the algorithms are initialized to the same value:   $\mathbf{D}_{(i)}^0$'s coincide  with randomly  (uniformly) chosen   columns of $\mathbf{S}_{(i)}$'s whereas all $\mathbf{X}_i^0$'s are set to  zero.    

 While the subproblems solved at each iteration $\nu$ in Linearized D$^4$L admit a closed-form$-$see \eqref{closed_form_X_linearization} and \eqref{closed_form_dictionary_lin}$-$in both Plain D$^4$L and ATC, the update of the dictionary   has   the closed form  expression \eqref{closed_form_dictionary_lin}, but the update of   the private variables calls for the solution of a LASSO problem  (cf. Sec.~\ref{subsec:discussion}). 
 For both Plain D$^4$L and ATC, the LASSO subproblems at iteration $\nu$ are solved using the (sub)gradient algorithm, with the following tuning. A diminishing step-size is used, set to  $\gamma^r=\gamma^{r-1}(1-\epsilon \gamma^{r-1})$, where $\gamma^0=0.9$,  $\epsilon=10^{-3}$, and $r$ denotes the inner iteration index. A warm start is used for the subgradient algorithm: the initial points  are set   to  $\mathbf{X}^\nu_i$, where $\nu$ is the iteration index of the outer loop.  We terminate the subgradient algorithm in the inner loop when $J^{r}_{i}\leq 10^{-6}$, with 
 $$J^{r}_{i}\triangleq \norm{\mathbf{X}^{\nu,r}_i-\frac{s}{1+s}\,\mathcal{T}_{\frac{1}{s}} \left(\mathbf{X}^{\nu,r}_i-\left(\nabla_{X_i} f_i(\mathbf{U}_{(i)}^\nu,\mathbf{X}^{\nu,r}_i)+\tau_{X,i}^\nu\left(\mathbf{X}^{\nu,r}_i-\mathbf{X}^{\nu}_i\right) \right)\right)}_{\infty,\infty},$$
 where $\mathbf{X}^{\nu,r}_i$ denotes the value of $\mathbf{X}_i$ at the $r$-th inner iteration and  outer iteration $\nu$; and  $\mathcal T_\theta(x)\triangleq \max(|x|-\theta,0)\cdot \text{sign}(x)$  is  the soft-thresholding operator, applied to the matrix argument componentwise. In all our simulations, we observed that  the  above accuracy was reached within  30 (inner) iterations of the subgradient algorithm. 
 
 {In  the Prox-PDA-IP scheme, Step \texttt{S2} (cf. Algorithm \ref{Prox-PDA-IP}) calls for the solution of two subproblems, including a LASSO problem. As for Plain D$^4$L and ATC,  we used the (projected) (sub)-gradient algorithm (with the same diminishing step-size rule) to solve the subproblems;   we terminated  the inner loop  when the length between two consecutive iterates of the (projected) (sub)-gradient algorithm goes for the first time  below $10^{-6}$.}

We simulated both undirected and directed static graphs. {In the former case, there is no need of the $\phi$-variables and, in the second equation of \eqref{D_weighted_avg} [and (\ref{Theta_update})], the terms   $(\phi_j^\nu \, a^\nu_{ij})/\phi_i^{\nu +1}$ reduce to $a_{ij}$. The weights   $a_{ij}$ are chosen according to the  {Metropolis-Hasting} rule  \citep{Xiao}; the resulting    matrix  $\mathbf{A}^\nu=[a_{ij}]_{ij}$     is  thus  time-invariant and  doubly stochastic.  }When the graph is directed, we use the update of the $\phi_i^\nu$'s as in \eqref{D_weighted_avg},  with the weights  $a_{ij}^\nu$   chosen according to the  {push-sum protocol} \citep{Kempe2003} (cf. Sec.~\ref{subsec_weights}).

\noindent \textbf{Convergence speed and quality of the reconstruction:}
  In the first set  of simulations, we considered an undirected graph composed of 150 nodes, clustered in 6 groups of 25 (see Fig. \ref{fig:network_schem}). Starting from this topology, we kept adding random edges till a connected  graph was obtained. Specifically,  
 an arc is added  between two nodes in  the same cluster (resp.  different clusters) with probability $p_1=0.2$ (resp. $p_2=2\times 10^{-3}$).\\  
 \indent In Fig.~\ref{fig:obj_consErr_stationarity}
we plot the objective function value [subplot on the left], the consensus disagreement $e^\nu$ as in (\ref{eq:consensus_error}) [subplot in the center], and the distance from stationarity $\Delta^\nu$ as in (\ref{D_avg_def})  [subplot on the right] versus the {\em number of message exchanges}, achieved by Plain D$^4$L, Linearized D$^4$L, Prox-PDA-IP, and ATC. {Note that  the number of messages exchanged in the  ATC algorithm  at iteration $\nu$ coincides with   $\nu$ whereas  for  Prox-PDA-IP and the 
D$^4$L schemes is $2\nu$ (recall that  the latter schemes employ two steps of communications per iteration). }
  The figures clearly show that  both versions of D$^4$L {are much faster than Prox-PDA-IP  and ATC  (or, equivalently, they require fewer information exchanges). Moreover,  ATC does not seem  to  reach a consensus on the local copies of the dictionary, while Prox-PDA-IP and D$^4$L schemes reach an agreement quite soon.}
  In Fig.~\ref{fig:image_quality_comparison}, we plot the   reconstructed images along with their PSNR and MSE, obtained by the algorithms, when terminated after 1000 message exchanges. The figures  clearly show superior performance of D$^4$L {over its competitors}. Also,  the values of PSNR and MSE achieved by  D$^4$L are comparable with those obtained by (centralized) K-SVD  (\texttt{KSVD-Box v13} package).\\
\begin{figure}[t]
\center\vspace{-0.7cm}
\includegraphics[scale=0.35]{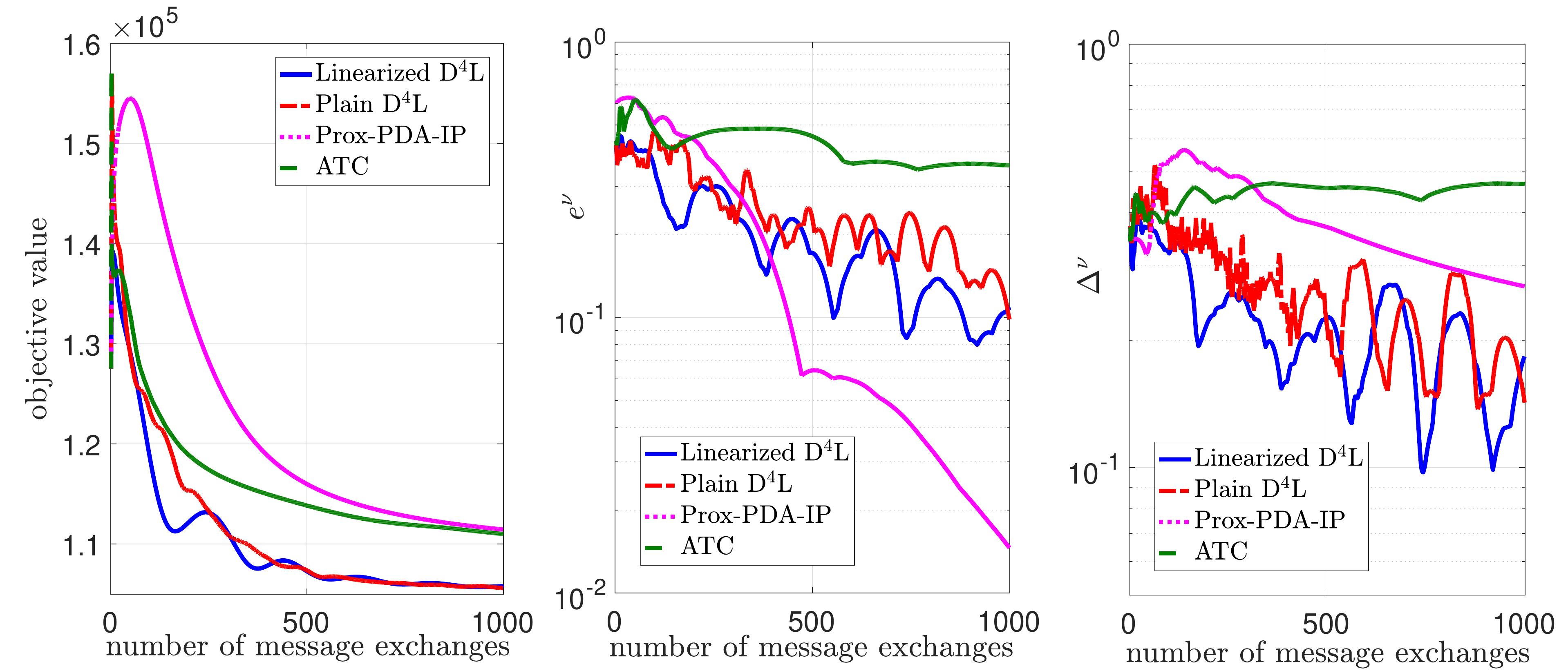}\vspace{-0.3cm}
\caption{{Denoising problem -- D$^4$L, Prox-PDA-IP and ATC algorithms:}   objective value [subplot on the left],  consensus disagreement [subplot in the center], and distance from stationarity $\Delta^\nu$ [cf.~\eqref{stat_merit_subproblems}] [subplot on the right] vs. number of message exchanges.}
\label{fig:obj_consErr_stationarity}\vspace{-0.5cm}
\end{figure}
\begin{figure}[t]  
 \centering  \includegraphics[scale=0.55]{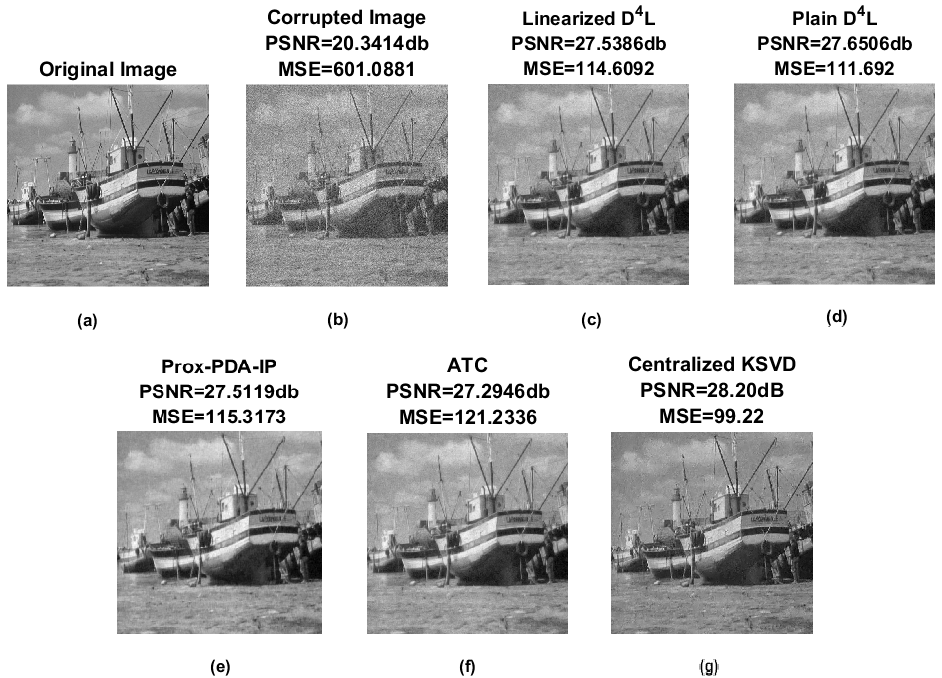}\vspace{-0.3cm}
\caption{Denoising outcome. (a): original image; (b):  corrupted image;   (c)-(f): denoising  achieved by D$^4$L, Prox-PDA-IP and ATC  terminated after 1000 message exchanges; and (g):  denoising achieved by centralized K-SVD  (\texttt{KSVD-Box v13}).}
\label{fig:image_quality_comparison}\vspace{-0.5cm}
\end{figure}
\begin{figure}[t]\vspace{-0.6cm}\centering
\includegraphics[scale=0.55]{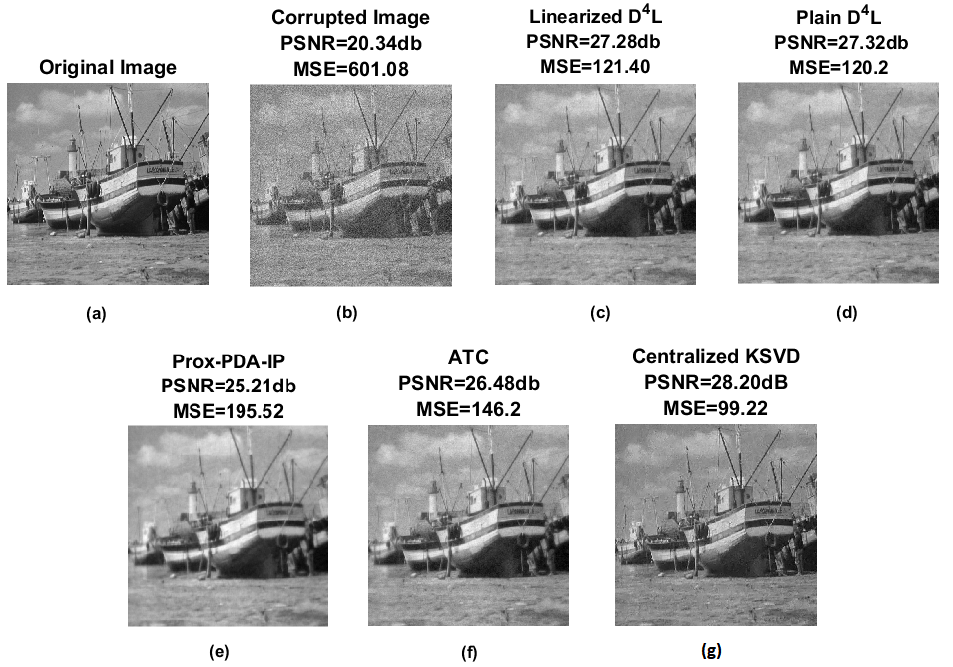}\vspace{-0.3cm}
\caption{Denoising outcome. (a): original image; (b):  corrupted image;   (c)-(f): denoising  achieved by D$^4$L, Prox-PDA-IP and ATC  terminated after 200 message exchanges; and (g):  denoising achieved by centralized K-SVD  (\texttt{KSVD-Box v13}).}\vspace{-0.5cm}
\label{fig:image_quality_comparison2}
\end{figure}
A closer look at Fig.~\ref{fig:obj_consErr_stationarity} shows that a significant decay on the objective function occurs in the first 200 message exchanges. It is then interesting to check the quality of the reconstructed images, achieved by the algorithms if  terminated  then.  
In 
Fig.~\ref{fig:image_quality_comparison2}, we report the images and values of PSNR and MSE obtained by terminating {the schemes after 200 message exchanges (we also plot the  benchmark obtained by  K-SVD,  run till optimality). }
 The figure shows that both versions of D$^4$L attain high quality solutions even if terminated after few message exchanges {while ATC and Prox-PDA-IP}  lag behind. This means that, in practice,  there is no need to run D$^4$L till very low values of  $e^\nu$ and  $\Delta^\nu$ are achieved.  

Since the algorithms do not have the same {cost-per-iteration,}   to get further insights into the performance of these schemes, we also compare them in terms of  running time. In Table \ref{avg_time_comparison_table}, we  report the \emph{averaged elapsed time} to execute one iteration of all algorithms.  We considered the same setting as in the previous figures, but  we terminated all algorithms after  273 seconds, which corresponds to the time  for the fastest algorithm (i.e. Linearized D$^4$L)  to perform 200 message exchanges [cf. Fig.~\ref{fig:image_quality_comparison2}]. The associated reconstructed images are shown in Fig.~\ref{fig:comparison_overall_shortTimeWait}. Once again, {these results clearly show  that  the linearized D$^4$L scheme significantly  outperforms Prox-PDA-IP and ATC}. Also,  Linearized D$^4$L performs considerably better than Plain D$^4$L, when terminated early; the explanation is  in Table \ref{avg_time_comparison_table} which shows that the time of one iteration of the former algorithm is much shorter than that of Plain D$^4$L.
\begin{table}[H]
\center
\begin{tabular}{c c}
\hline
\textbf{Algorithm}  & \textbf{Average Time per Iteration (sec)}
 \\
 \hline
Linearized D$^4$L  & 2.862
 \\
 Plain D$^4$L &  11.328
 \\
 Prox-PDA-IP & 30.98
 \\
ATC &9.838
\\
\hline
\end{tabular}
\caption{D$^4$L vs. Prox-PDA-IP and ATC: Average computation time per iteration}
\label{avg_time_comparison_table}\vspace{-0.2cm}
\end{table}

\begin{figure}[h]\vspace{-0.3cm}
\centering 
\includegraphics[scale=0.35]{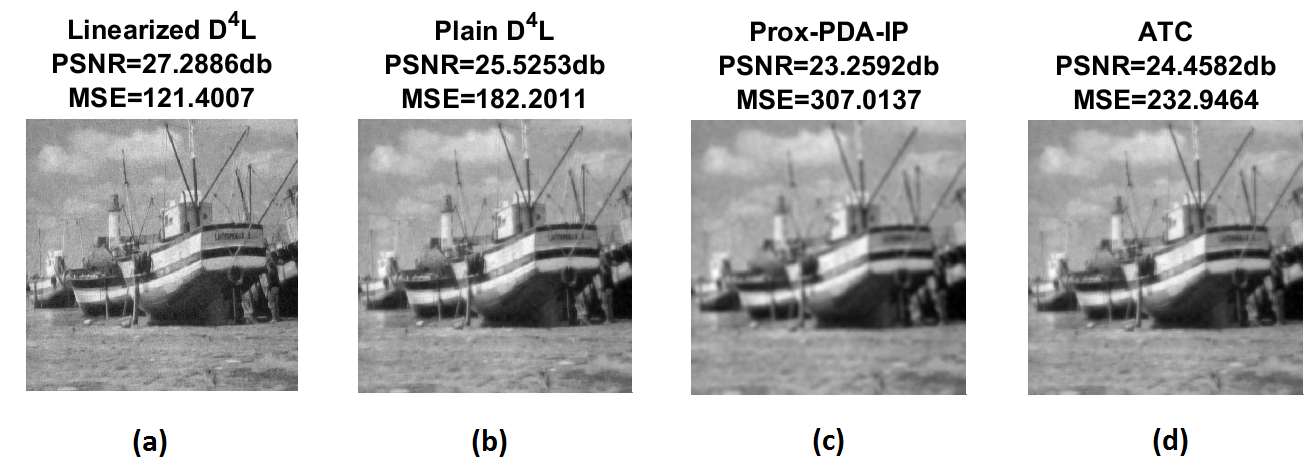}\vspace{-0.3cm}
\caption{Denoising outcome.    (a): Linearized D$^4$L; (b): Plain D$^4$L; (c): Prox-PDA-IP; (d) ATC; all terminated after  after 273 seconds run-time (corresponding to 200 message exchanges of the Linearized D$^4$L).}
\label{fig:comparison_overall_shortTimeWait}\vspace{-0.5cm}
\end{figure}
%

\noindent \textbf{Impact of the graph topology and connectivity:} We study now  the influence of the topology and graph connectivity  on the performance of the algorithms.  We consider  \emph{directed, static} graphs. 
We generated   5 instances of  digraphs, with different connectivity, according to the following procedure. There are    $500$ nodes ($I=500$), which are clustered in $n_c=50$ clusters,  each of them containing $10 = I/n_c$ nodes.  Each node has an outgoing arc to another node in the same cluster with probability $p_1$  while $p_2$ is the probability of an outgoing arc to a node in a different cluster.   We chose the values of  $p_1$ and $p_2$, as   in Table 
\ref{network_setting_casewise_0}; we simulated three     scenarios, namely:
N1 corresponds to a ``highly''  connected network, N3 describes a ``poorly'' connected scenario, and  N2 is  an  intermediate case.   
For each scenario, we generated 5 random instances (if a generated graph was not strongly connected we discarded it and generated a new one) and then 
ran Plain and Linearized D$^4$L and ATC  on the resulting 15 graphs. 
Recall that   ATC was not designed to work on directed networks. We thus  modified it  by using our new consensus protocol (but not the gradient 
tracking mechanism); we term it  {\emph{Modified} ATC}. 


\begin{figure}\vspace{-0.5cm}
\center
\includegraphics[scale=0.35]{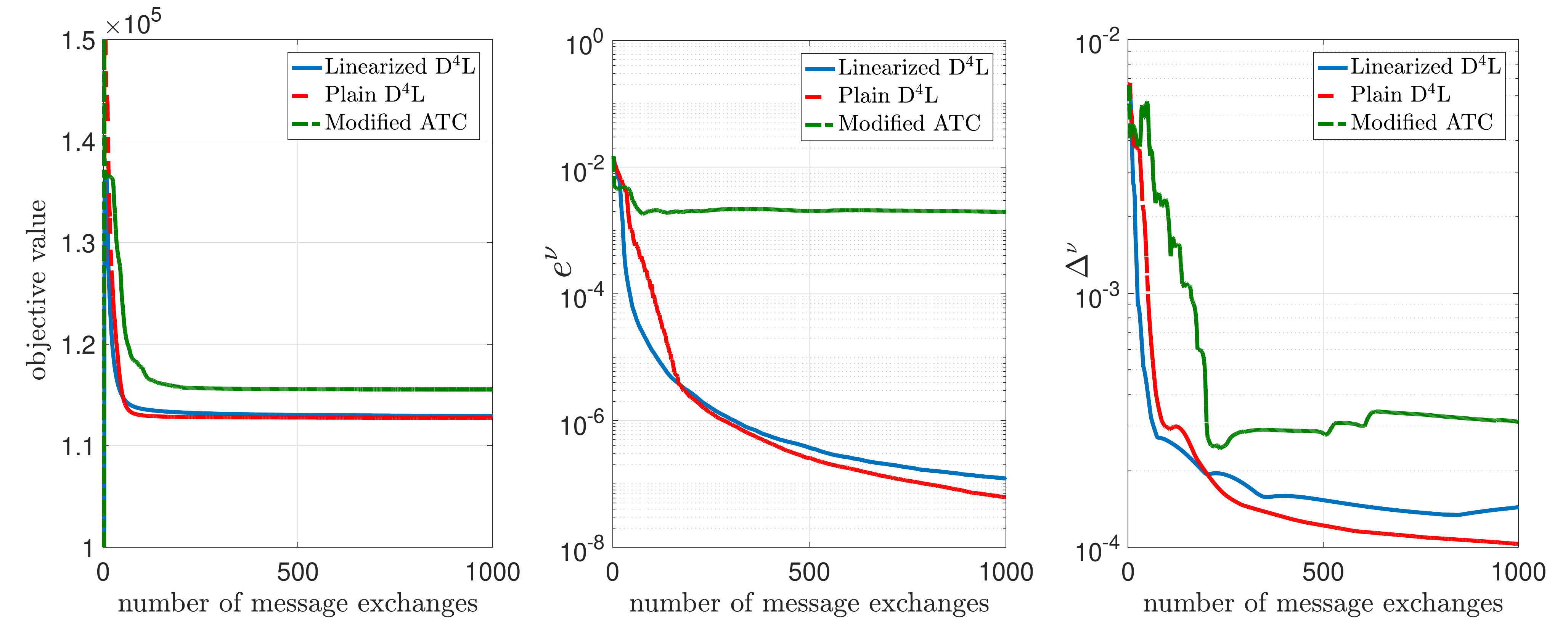}
\\
\centering (a)
\\
\center
\includegraphics[scale=0.35]{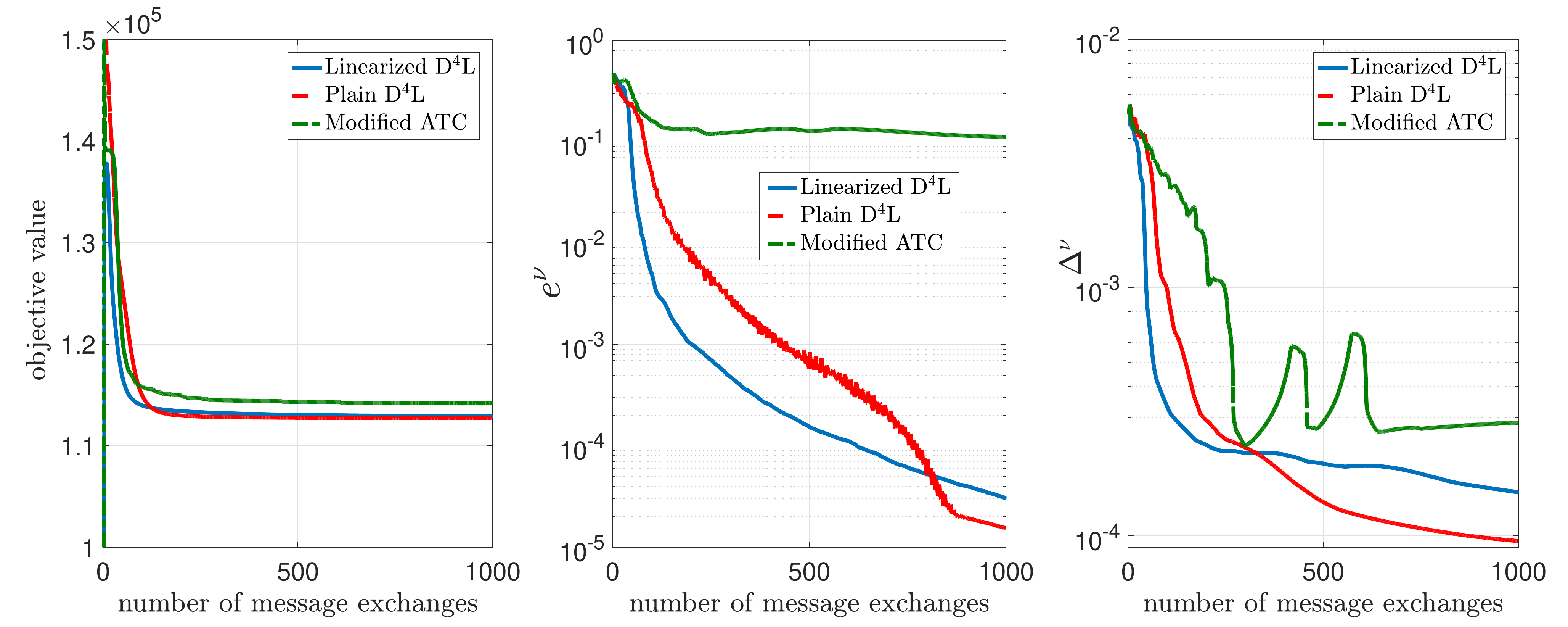}
\\
\centering (b)
\\\center
\includegraphics[scale=0.35]{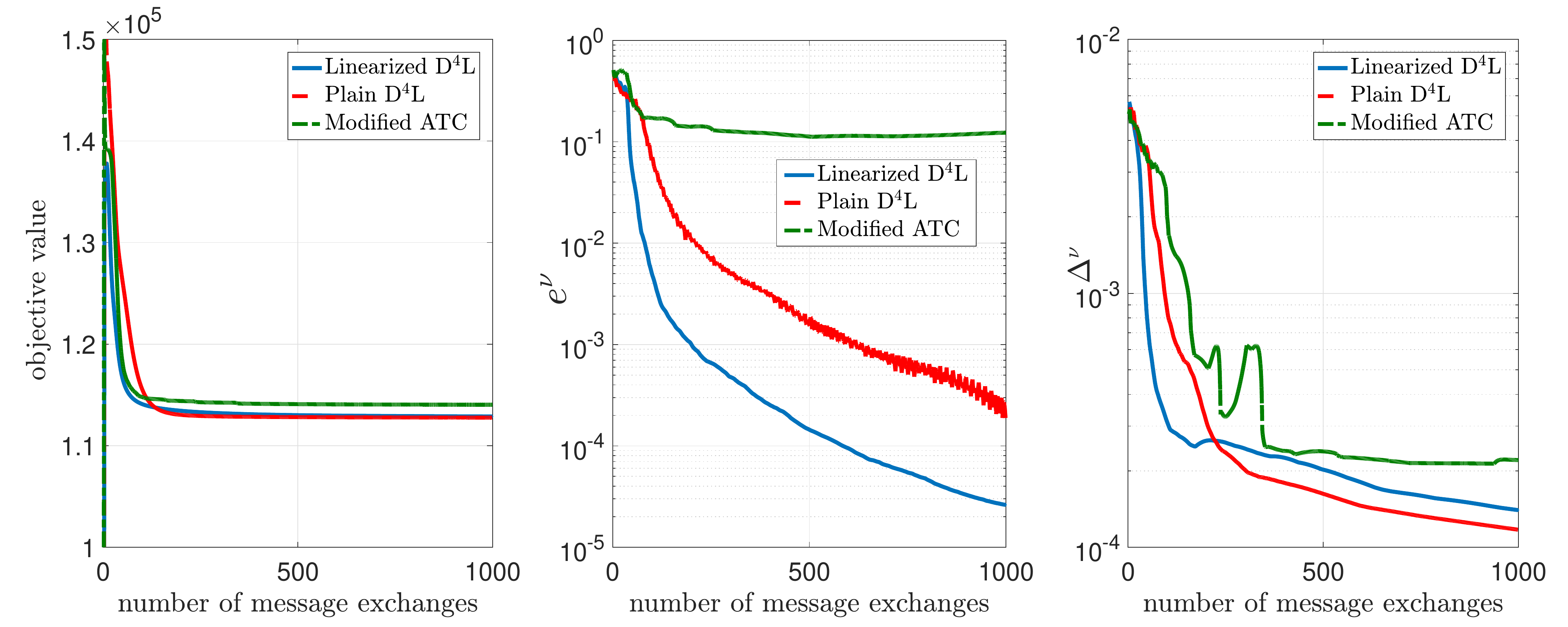}
\\
\centering (c)
\vspace{-0.2cm}\\
\caption{{Denoising problem--D$^4$L and  {Modified ATC} algorithms:} objective value [subplots on the left],  consensus disagreement [subplots in the center], and distance from stationarity $\Delta^\nu$ [cf. \eqref{stat_merit_subproblems}] [subplots on the right] vs. number of message exchanges. Comparison  over three network settings [cf.~Table~\ref{network_setting_casewise_0}]:  N1 [subplots (a)],  N2 [subplots (b)], and  N3 [subplots (c)].}
\label{fig:denoising_consensus_VS_inexactness}
\end{figure}

\begin{table}[h]
\center
\begin{tabular}{c c c c c}
\hline
\textbf{Network \#} &  $I$ &$n_c$& $p_1$& $p_2$ \\
\hline
~~~ N1 ~~~~~~& 500 &50  &0.9  &0.9  \\
~~~ N2 ~~~~~~& 500 &50  &0.1  &0.01  \\
~~~ N3 ~~~~~~& 500 &50  &0.05  &0.01  \\
  \hline
\end{tabular}
\caption{Network setting}
\label{network_setting_casewise_0}
\end{table}

 In Fig.~\ref{fig:denoising_consensus_VS_inexactness} we   plot the \emph{average} value of the objective function   [subplot on the left], the consensus disagreement $e^\nu$ [subplot in the center], and the distance from stationarity $\Delta^\nu$   [subplot on the right],  achieved by Plain D$^4$L, Linearized D$^4$L, and Modified ATC, versus the   number of message exchanges, for   the three scenarios N1 [subplot (a)], N2 [subplot (b)] and N3 [subplot (c)]. The average is taken over the  aforementioned  5 digraph realizations.  While also this batch of tests confirms the better behavior of D$^4$L schemes over ATC, it is interesting to observe that there seems to be little influence of the degree of connectivity on the behavior of Linearized and Plain D$^4$L.  The only aspect for which a reasonable influence can be seen is on consensus.  In fact, with respect to consensus, Linearized D$^4$L seems to improve over Plain D$^4$L,  when connectivity decreases. This has a natural  interpretation. 
   Plain D$^4$L solves much more accurate subproblems at each iteration and this is, in some sense useless, especially in early iterations, when information has not spread across the network. It seems clear that the less connected the graph,  the more time information  needs to spread. Therefore, in scenario N1, the two methods are almost equivalent and, looking at consensus error, we see that initially Linearized D$^4$L is better than Plain D$^4$L, but soon, as information spreads, Plain D$^4$L becomes, even if slightly, better than  Linearized D$^4$L. The same behavior can be observed for scenario N2, but this time the initial advantage of Linearized D$^4$L is larger and the switching point is reached much later. This is consistent with the fact that information needs more time 
 to spread and therefore solving the accurate subproblem is not advantageous. If one passes to N3, where connectivity is very loose, there is no switching point within the first 1000 message exchanges. 

\subsection{Biclustering}
\label{Biclustering_numerical_results}  Biclustering  has been shown to be useful in several applications, including biology,  information retrieval, and data mining; see, e.g., \citep{biclustering_survey}.

\noindent \textbf{Problem Formulation}: We consider a Biclustering problem 
in the form \eqref{eq:SSVD}, applied to genetic information. We solved the problem simulating  a  networked computer cluster  composed of 500 nodes (see Table \ref{network_setting_casewise_0}). 
The genetic data is borrowed from \citep{Lee_Shen_Huang_Marron10} (centered and normalized): 
the  data matrix $\mathbf{S}$ of size $56\times 12,625$ ($M=56$ and $N=12,625$) contains microarray gene expressions of 56 patients (rows);  each patient is either identified to be normal (Normal) or belonging to one of the following three types of lung cancer: pulmonary carcinoid tumors (Carcinoid), colon metastases (Colon), and small cell carcinoma (SmallCell). We  considered the \emph{unsupervised} instance of the problem, meaning that none of  the a-priori information about the type of  patients' cancer is used to perform biclustering. Following the numerical experiments of \citep{Lee_Shen_Huang_Marron10}, we seek rank-$3$ sparse matrices $\mathbf{X}_i$, and the data matrix $\mathbf{S}$ is equally distributed across the 500 nodes, 
resulting thus in $K=3$ and $n_i=26$. The total number of variables is then $39,168$. The other parameters are set as follows:  $\alpha=1$, $\lambda_X=\mu_X=0.1$, and $\lambda_D=\mu_D=0.1$.

\noindent\textbf{Algorithms and tuning:}
{We tested the instance of  D$^4$L where $\tilde{f}_i$ and $\tilde{h}_i$ are chosen according to  \eqref{eq:ftilde1} and \eqref{eq:htilde1}, respectively. The rationale behind this choice is to exploit the extra structure of the original function $f_i$, plus, in case of $\tilde{f}_i$, there is no certain benefit in using the linear approximation  \eqref{eq:ftilde2} as it does \emph{not} lead to any closed form solution of the subproblem \eqref{D_tilde_subproblem}.} {Note that the Prox-PDA-IP scheme is not applicable here since the network is directed.} 

The other parameters of the algorithm are set to: $\gamma^\nu=\gamma^{\nu-1}(1-\epsilon \gamma^{\nu-1})$, with  $\gamma^0=0.2$ and $\epsilon=10^{-2}$; and  $\tau^\nu_{D,i}=100$ and $\tau^\nu_{X,i}=\max(L_{\nabla{X_i}}(\mathbf{U}_{(i)}^\nu),100)$. We term such an instance of  D$^4$L   \emph{Plain D$^4$L}. We compared Plain D$^4$L with the following algorithms:  {i) (a modified version of) the distributed  ATC algorithm  \citep{chainais2013distributed}, where the optimization of $\mathbf{D}$ is adjusted to solve \eqref{eq:SSVD} (the elastic-net penalty is added), and the consensus mechanism is modified with our new consensus protocol  to handle directed network topologies;   we termed this instance  \emph{Modified ATC};} and ii) the centralized SSVD algorithm proposed in \cite{Lee_Shen_Huang_Marron10} (implemented using the MATLAB code provided by the authors), to benchmark the results obtained by the distributed algorithms.  All the distributed algorithm are initialized  setting each  $\mathbf{X}_i^0=\mathbf{0}$, and each $\mathbf{D}_{(i)}^0$ equal to some randomly chosen columns of $\mathbf{S}_i$. 

In D$^4$L, the subproblems \eqref{D_tilde_subproblem} and \eqref{eq:Xupdate} at iteration $\nu$ do not have a closed form solution; they are solved using the projected (sub)gradient algorithm, with diminishing step-size $\gamma^r=\gamma^{r-1}(1-\epsilon \gamma^{r-1})$, where $\gamma^0=0.9$, $\epsilon=10^{-3}$, and $r$ denotes the inner iteration index. A warm start is used  for the projected subgradient algorithm; the initial points are set to $\mathbf{D}_{(i)}^\nu$ and $\mathbf{X}^\nu_i$ in problems \eqref{D_tilde_subproblem} and \eqref{eq:Xupdate}, respectively, where $\nu$ is the iteration index of the outer loop.  We terminate the projected subgradient algorithm solving \eqref{D_tilde_subproblem} and \eqref{eq:Xupdate}  when $J^{r}_{D,i}\triangleq \|\widehat{\mathbf{D}}^{\nu,r}_{(i)}-\mathbf{D}^{\nu,r}_{(i)}\|_{\infty,\infty}\leq 10^{-6}$   and $J^{r}_{X,i}\triangleq \|\widehat{\mathbf{X}}^{\nu,r}_i-\mathbf{X}^{\nu,r}_i\|_{\infty,\infty},\leq 10^{-6}$, respectively, where \vspace{-0.3cm}
{\begin{equation}
\begin{aligned}
\widehat{\mathbf{D}}^{\nu,r}_{(i)}&\triangleq\underset{\mathbf{D}_{(i)}\in\mathcal{D}}{\text{argmin}}\left\langle \nabla_{D} f_i(\mathbf{D}_{(i)}^{\nu,r},\mathbf{X}^{\nu}_i)+I\boldsymbol{\Theta}_{(i)}^\nu-\nabla_{D} f_i(\mathbf{D}_{(i)}^{\nu},\mathbf{X}^{\nu}_i)+\tau_{D,i}^\nu\big(\mathbf{D}^{\nu,r}_{(i)}-\mathbf{D}^{\nu}_{(i)}\big),\mathbf{D}_{(i)}-\mathbf{D}_{(i)}^{\nu,r} \right\rangle\vspace{-0.2cm}
\\
&\qquad\qquad\qquad\qquad\qquad\qquad\qquad\qquad\qquad\qquad\qquad+\frac{100}{2}\norm{\mathbf{D}_{(i)}-\mathbf{D}_{(i)}^{\nu,r}}^2+G\left(\mathbf{D}_{(i)}\right),\medskip 
\\
\widehat{\mathbf{X}}^{\nu,r}_i&\triangleq\underset{\mathbf{X}_i\in\mathbb{R}^{K\times n_i}}{\text{argmin}}\left\langle \nabla_{X_i} f_i(\mathbf{U}_{(i)}^\nu,\mathbf{X}^{\nu,r}_i)+\tau_{X,i}^\nu\left(\mathbf{X}^{\nu,r}_i-\mathbf{X}^{\nu}_i\right),\mathbf{X}_i-\mathbf{X}_i^{\nu,r} \right\rangle \nonumber\\&\qquad\qquad\qquad\qquad\qquad\qquad\qquad\qquad\qquad\qquad\qquad  +\frac{100}{2}\norm{\mathbf{X}_i-\mathbf{X}_i^{\nu,r}}^2+g_i\left(\mathbf{X}_i\right),
\end{aligned}
\nonumber\vspace{-0.1cm}
\end{equation}}
with $\mathbf{D}^{\nu,r}_{(i)}$ and $\mathbf{X}^{\nu,r}_i$ denoting the value of $\mathbf{D}_{(i)}$ and $\mathbf{X}_i$ at the $\nu$-th outer and  $r$-th inner iteration, respectively. In all our simulations, the above accuracy was reached within 50 (inner) iterations of the projected subgradient algorithm.

\noindent \textbf{Convergence speed and quality of the reconstruction:} We simulated 3 \emph{directed static} network topologies, namely:  N1-N3, as given in Table \ref{network_setting_casewise_0}.
In Fig.~\ref{fig:biclustering_plots} we  plot he \emph{average} value of the objective function   [subplot on the left], the consensus disagreement $e^\nu$ [subplot in the center], and the distance from stationarity $\Delta^\nu$   [subplot on the right],  achieved by Plain D$^4$L  and Modified ATC, versus the number of message exchanges, for  the three scenarios N1 [subplot (a)], N2 [subplot (b)] and N3 [subplot (c)]. The average is taken over  5 digraph realizations. Fig. \ref{fig:biclustering_plots} shows that Plain D$^4$L algorithm attains satisfactory merit values in all network scenarios, while Modified ATC fails to reach consensus/convergence, even in highly connected networks. The poor performance of Modified ATC seem   mainly due to the incapability of locking the consensus.

\begin{figure}\vspace{-0.5cm}
\center
\includegraphics[scale=0.35]{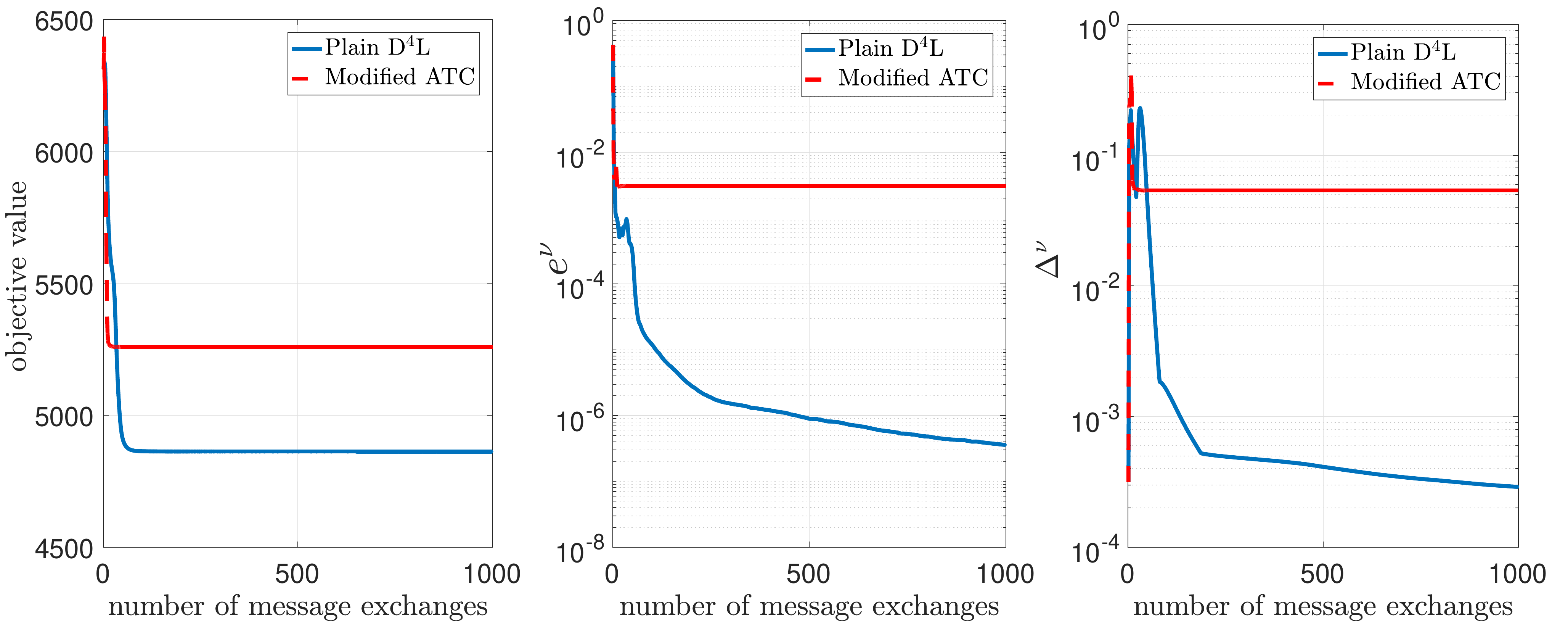}
\\
\centering (a)
\\
\center
\includegraphics[scale=0.35]{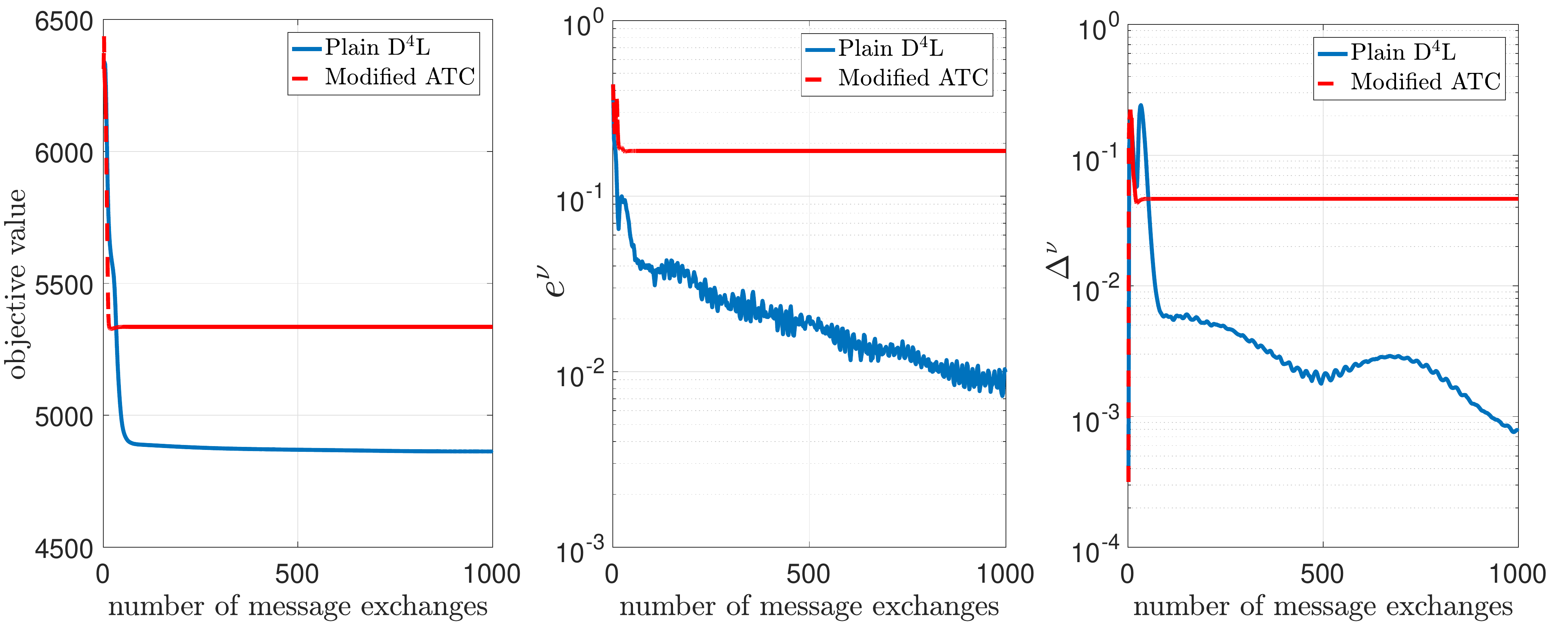}
\\
\centering (b)
\\\center
\includegraphics[scale=0.35]{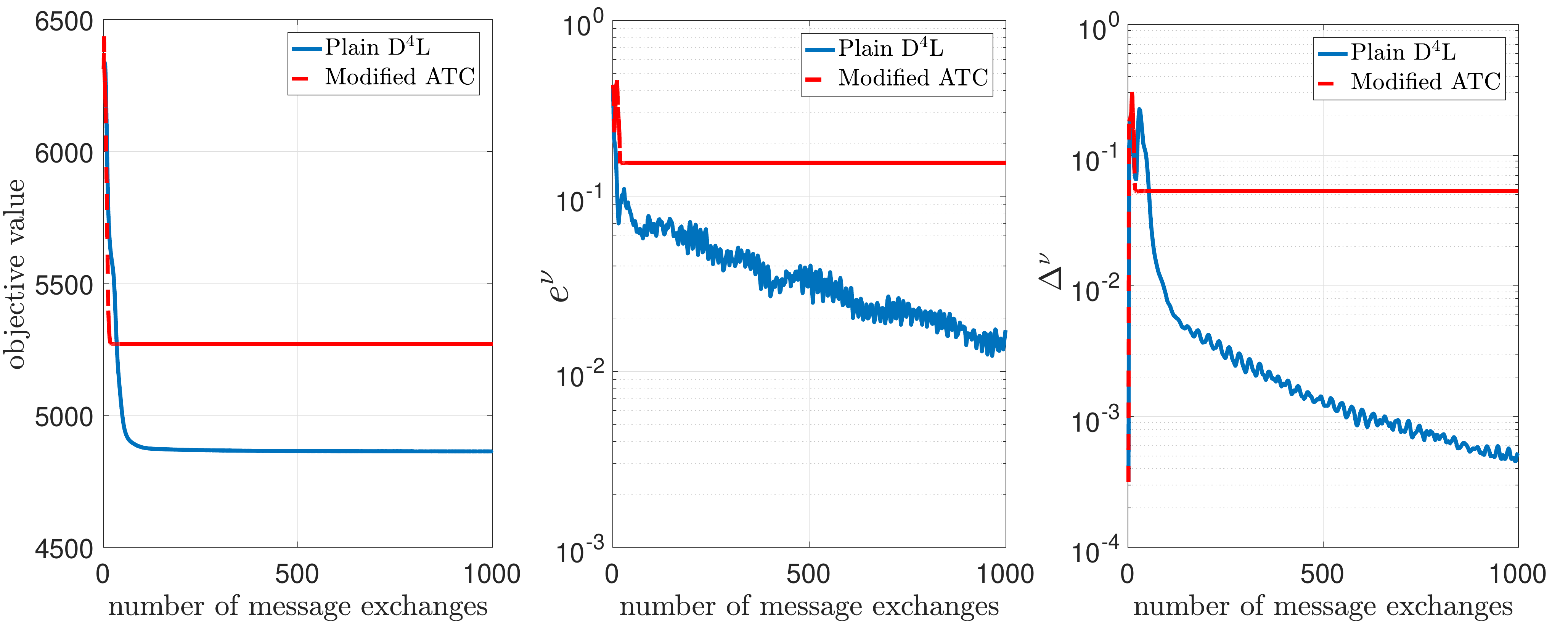}
\\
\centering (c)
\vspace{-0.2cm}\\
\caption{{Biclustering problem--Plain D$^4$L and Modified ATC algorithms:} objective value [subplots on the left],  consensus disagreement [subplots in the center], and distance from stationarity $\Delta^\nu$ [cf. \eqref{stat_merit_subproblems}] [subplots on the right] vs. number of message exchanges. Comparison  over three network settings [cf.~Table~\ref{network_setting_casewise_0}]:  N1 [subplots (a)],  N2 [subplots (b)], and  N3 [subplots (c)].}
\label{fig:biclustering_plots}
\end{figure}

In order to assess the quality of the solutions achieved by the three algorithms, we employ the following procedure. Given the limit point (up to the fixed accuracy)  $\mathbf{D}^\infty$ of the algorithm under consideration,
 patients' information is  in form of (unlabeled clusters of) data points $\{\mathbf{D}_{m,:}^\infty\}_{m=1}^{56}$, where  $\mathbf{D}_{m,:}^\infty$ denotes the $m$-th row of  $\mathbf{D}^\infty$ and   represents an individual patient. 
 In order to compare $\mathbf{D}^\infty$ with the labeled ground truth, we need to tag labels to the clustered points of $\mathbf{D}^\infty$. To do so,  we run the  K-means clustering algorithm on  $\{\mathbf{D}_{m,:}^\infty\}_{m=1}^{56}$. Specifically, we first run  K-means 100 times  and,  in each run,  we  perform a preliminary clustering   using  10\% of the points (randomly chosen). Then,  among the 100 obtained clustering configurations, we picked  the one  with  the smallest ``within-cluster sum  of point-to-centroid distances''.\footnote{Given a clustering partition $\{\mathcal{C}_i\}_{i=1}^4$, the 
 ``within-cluster sum  of point-to-centroid distances'' measures the quality of the k-means clustering, and is defined as $\sum_{i=1}^4\sum_{j\in\mathcal{C}_i}||\mathbf{D}_{j,:}^\infty-\overline{\mathbf{D}}_{\mathcal{C}_i}^\infty||^2$, where   $\overline{\mathbf{D}}_{\mathcal{C}_i}\triangleq\frac{1}{|\mathcal{C}_i|}\sum_{j\in\mathcal{C}_i}\mathbf{D}_{j,:}^\infty$ and  $|\mathcal{C}_i|$ denotes the cardinality of the set $\mathcal{C}_i$.}  
 Finally, we assign to each cluster the label associated with the most populated type of cancer in the cluster. Denoting the ground truth classes by $\{\mathcal{C}_i\}_{i=1}^4$ (recall that there are 4 classes/types 
of cancer), where each $\mathcal{C}_i$ consists of the group of patients with the same type of cancer, and by  $\{\widetilde{\mathcal{C}}_i\}_{i=1}^4$  the   clustering obtained  by the procedure described above  applied to the outcome   $\mathbf{D}^\infty$ of the simulated algorithms,  
we   measure the quality of the clustering by  the \emph{Jaccard index},   defined as \vspace{-0.2cm}
\begin{equation}
\nonumber
J=\frac{\left|\bigcup_i\left(\mathcal{C}_i\cap\widetilde{\mathcal{C}}_i\right)\right|}{\sum_i\left|\mathcal{C}_i\cup\widetilde{\mathcal{C}}_i\right|}.\vspace{-0.2cm}
\end{equation} 
Clearly $0\leq J\leq 1$, and the higher the index value, the better the quality of the clustering. 

In Table \ref{avg_Jaccard_table}, we report the average   and   Maximum Absolute Deviation (MAD) 
of the Jaccard indices from their average, computed over the aforementioned 5 realizations of the three graph topologies, as   in Table~\ref{network_setting_casewise_0} (see also Fig.~\ref{fig:biclustering_plots}). The values in the table  clearly show that Plain D$^4$L  achieves better results than  those produced by Modified ATC or centralized methods. Moreover,  the value of the Jaccard index from  Plain D$^4$L does not depend on the specific network topology. which is not the case for Modified ATC.

\begin{table}[H]
\center
\begin{tabular}{c c c c}
\hline
\textbf{Network \#}  & \textbf{Plain D$^4$L} & \textbf{Modified ATC} & \textbf{\cite{Lee_Shen_Huang_Marron10}}
 \\
 \hline
N1 &  0.8983/0 & 0.7778/0 & --
 \\
N2 &  0.8983/0 & 0.7045/0.3218 & --
 \\
N3 &  0.8983/0 & 0.7892/0.0172 & --
\\
Centralized &  -- & -- & 0.7231/--
\\
\hline
\end{tabular}
\caption{Biclustering problem$-${Average/MAD of Jaccard indices over 5 realizations of digraphs.} }
\label{avg_Jaccard_table}\vspace{-0.5cm}
\end{table}

\subsection{Non-negative Sparse Coding (NNSC) and Sparse PCA (SPCA)}
\noindent\textbf{Problem Formulation:} We consider the Non-negative Sparse Coding (NSC) formulation \eqref{eq:NNSC} \citep{Hoyer_SNMF} and the Sparse PCA problem  \eqref{eq:SSVD} \citep{Mairal_Bach_2010}. 
For both formulations, we run experiments  using the following two datasets:
\begin{itemize}
\item MIT-CBCL face database \#1 \citep{sung96}: a pool of $N=2,429$ vectorized face images of size $19\times 19$ pixels each (i.e. $M=361$);
\item The VOC 2006 database \citep{Everingham10}:  a pool of $N=10,000$ vectorized natural image patches of size $16\times 16$ pixels each (i.e. $M=256$).
\end{itemize}

\noindent Consistently with \citep{Mairal_Bach_2010},  the free parameters  are set as:
\begin{itemize}
\item 	NNSC  \eqref{eq:NNSC}:   $K=49$, $\lambda=\mu=1/\sqrt{M}$, and $\alpha=1$;    
\item Sparse PCA   \eqref{eq:SSVD}:  $K=49$, $\lambda_X=\mu_X=1/\sqrt{M}$, $\lambda_D=\mu_D=1/\sqrt{M}$, and $\alpha=1$. 
\end{itemize} 
The total number of variables for the above optimization problems are   136,710 for the  MIT-CBCL dataset, and 502,544  for the  VOC 2006 dataset.

We simulated the communication network as  \emph{static directed} graphs of size $I$, clustered in $n_c$ groups, where each node has an outgoing arc to another node in the same cluster with probability $p_1$, while $p_2$ is the probability of an outgoing arc to a node in a different cluster.
We run our tests over 6 different network scenarios, with various  size $I$ and probability pair $(p_1,p_2)$, as  given in Table \ref{network_setting_casewise}.  Note that if $N/I$ is not an integer, we pad zero columns to the data matrix $\mathbf{S}$ so that all the agents own equal-size partitions $\mathbf{S}_i$'s, thus $n_i=\ceil*{N/I}$ in both problems \eqref{eq:SSVD} and \eqref{eq:NNSC}.

\begin{table}[h]\vspace{-0.1cm}
\center
\begin{tabular}{c c c c c}
\hline
\textbf{Network \#} &  $I$ &$n_c$& $p_1$& $p_2$ \\
\hline
~~~ N4 ~~~~~~& 10 &2  &0.9  &0.3  \\
~~~ N5 ~~~~~ & 10 &2  &0.2  &0.1  \\
~~~ N6 ~~~~~ & 50 &5  &0.9  &0.3  \\
~~~ N7 ~~~~~ & 50 &5  &0.2  &0.1  \\
~~~ N8 ~~~~~ & 250 &10  &0.9  &0.3  \\
~~~  N9 ~~~~~ & 250 &10  &0.2  &0.1  \\
  \hline
\end{tabular}
\caption{Network setting for the NNSC and Sparse PCA problems.}
\label{network_setting_casewise}\vspace{-0.6cm}
\end{table}

\vspace{-0.1cm}
\subsubsection{Non-negative Sparse Coding} 
\noindent\textbf{Algorithms and tuning:} We test the Plain D$^4$L, with   $\tilde{f}_i$ and $\tilde{h}_i$  chosen according to  \eqref{eq:ftilde2} and  \eqref{eq:htilde1}, respectively.  The other parameters of the algorithm are set to: $\gamma^\nu=\gamma^{\nu-1}(1-\epsilon \gamma^{\nu-1})$, with  $\gamma^0=0.2$ and $\epsilon=10^{-2}$; and  $\tau^\nu_{D,i}=10$ and $\tau^\nu_{X,i}=\max(L_{\nabla{X_i}}(\mathbf{U}_{(i)}^\nu),10)$.
 We compare the proposed scheme with a modified version of ATC, equipped with our new consensus protocol, implementable on  directed networks. All the distributed algorithm are initialized  setting   $\mathbf{X}_i^0=\mathbf{0}$ and $\mathbf{D}_{(i)}^0$ equal to some randomly chosen columns of $\mathbf{S}_i$. Both Plain D$^4$L and Modified ATC call for solving a LASSO problem in updating the private variables  (cf. Sec.~\ref{subsec:discussion});  the update of the dictionary has instead a closed form expression, see \eqref{closed_form_dictionary_lin}.  For both Plain D$^4$L and Modified ATC, the LASSO subproblems at iteration $\nu$ are solved using the  projected (sub)gradient algorithm with diminishing step-size $\gamma^r=\gamma^{r-1}(1-\epsilon \gamma^{r-1})$, where $\gamma^0=0.9$, $\epsilon=10^{-3}$, and $r$ denoting the inner iteration index. We terminate the projected subgradient algorithm in the inner loop when $J_{X,i}^{r}\triangleq \|\widehat{\mathbf{X}}^{\nu,r}_i-\mathbf{X}^{\nu,r}_i\|_{\infty,\infty}\leq 10^{-4}$, where  \vspace{-0.2cm}
\begin{equation}
\widehat{\mathbf{X}}^{\nu,r}_i\triangleq\underset{\mathbf{X}_i\in\mathcal{X}_i}{\text{argmin}}\left\langle \nabla_{X_i} f_i(\mathbf{U}_{(i)}^\nu,\mathbf{X}^{\nu,r}_i)+\tau_{X,i}^\nu\left(\mathbf{X}^{\nu,r}_i-\mathbf{X}^{\nu}_i\right),\mathbf{X}_i-\mathbf{X}_i^{\nu,r} \right\rangle+\frac{1}{2}\norm{\mathbf{X}_i-\mathbf{X}_i^{\nu,r}}^2+g_i\left(\mathbf{X}_i\right),\vspace{-0.2cm}
\nonumber
\end{equation}
 and  $\mathbf{X}^{\nu,r}_i$ denotes the value of  $\mathbf{X}_i$ at the $\nu$-th outer and  $r$-th inner iteration. In all our simulations, the above accuracy was reached within 30 (inner) iterations of the projected subgradient algorithm.

\noindent\textbf{Convergence speed and quality of the reconstruction:} {We run the Plain D$^4$L and Modified ATC algorithms over different network settings, as listed in Table \ref{network_setting_casewise}, and we terminated them after 1500 message exchanges.  We replicated the tests for 5 independent realizations and we reported the \emph{average} of the final values of the objective function, the consensus disagreement, and the distance from stationarity in Table~\ref{NNSC_CBCL} and Table~\ref{NNSC_VOC}, for the MIT-CBCL and VOC 2006 datasets, respectively.}
 In Fig. \ref{fig:NNSC_CBCL} and Fig. \ref{fig:NNSC_VOC} (for MIT-CBCL and VOC 2006 datasets, respectively), we plot the \emph{average} value (over the aforementioned 5 graph realizations) of the objective function [subplot on the left], the consensus disagreement $e^\nu$ [subplot in the center], and the distance from stationarity $\Delta^\nu$   [subplot on the right], versus number of message exchanges, for the two extreme network scenarios N4 [subplot (a)] and N9 [subplot (b)].
These results show that the proposed  Plain D$^4$L significantly outperforms   the Modified ATC algorithm. They also show a remarkable stability of Plain D$^4$L with respect to the simulated network graphs, which is not observed for the Modified ATC, whose performance  deteriorates significantly going from N4 to N9. 
\begin{table}[H]
\center
\begin{tabular}{c c c c }
\hline
\textbf{Network \#} & \textbf{objective value} &\textbf{consensus disagreement} & \textbf{distance from stationarity}\\
\hline
~~~ N4 ~~~~~ & 169.8/171.9 &9.17e-7/2.9e-4 &4.7e-4/8.8e-2  \\
~~~ N5 ~~~~~ & 169.9/172.2 &4.5e-6/6.7e-4  &5.3e-4/9.8e-2    \\
~~~ N6 ~~~~~ & 169.8/177.2 &2.4e-7/1.9e-4  &5.1e-4/6.3e-2    \\
~~~ N7 ~~~~~ & 169.9/177.3 &1.1e-6/6.3e-4  &5.5e-4/6.1e-2    \\
~~~ N8 ~~~~~ & 169.8/191.0 & 2.1e-7/1.2e-4  &5.1e-4/2.2e-2  \\
~~~ N9 ~~~~~ & 169.8/190.9 &5.8e-7/3.1e-4 &6.6e-4/1.1e-2  \\
  \hline
\end{tabular}
\caption{NNSC problem (MIT-CBCL dataset)--Plain D$^4$L/Modified ATC algorithms: objective value, consensus disagreement, and distance from stationarity obtained after 1500 message exchanges.}
\label{NNSC_CBCL}\vspace{-0.5cm}
\end{table}
 \begin{table}[H]
\center
\begin{tabular}{c c c c }
\hline
\textbf{Network \#} & \textbf{objective value} &\textbf{consensus disagreement} & \textbf{distance from stationarity}\\
\hline
~~~ N4 ~~~~~ & 845.2/848.8 & 2.9e-6/1.3e-4 & 1.1e-3/4.1e-3  \\
~~~ N5 ~~~~~ & 845.9/850.1 & 1.5e-5/5.3e-4 &1.5e-3/6.1e-3    \\
~~~ N6 ~~~~~ & 844.8/879.5 & 5.7e-7/2.2e-4  &1.3e-3/4.4e-2    \\
~~~ N7 ~~~~~ & 844.5/879.3 & 1.9e-6/1.2e-3 & 1.3e-3/3.9e-2  \\
~~~ N8 ~~~~~ & 844.8/941.2 & 5.6e-7/1.5e-4  & 1.6e-3/4.8e-2  \\
~~~ N9 ~~~~~ & 845.0/941.8 & 1.5e-6/3.6e-4 & 1.1e-3/5.1e-2  \\
  \hline
\end{tabular}
\caption{NNSC problem (VOC 2006 dataset)--Plain D$^4$L/Modified ATC algorithms: objective value, consensus disagreement, and distance from stationarity obtained after 1500 message exchanges.}
\label{NNSC_VOC}\vspace{-0.7cm}
\end{table}
\begin{figure}[t]
\includegraphics[scale=0.34]{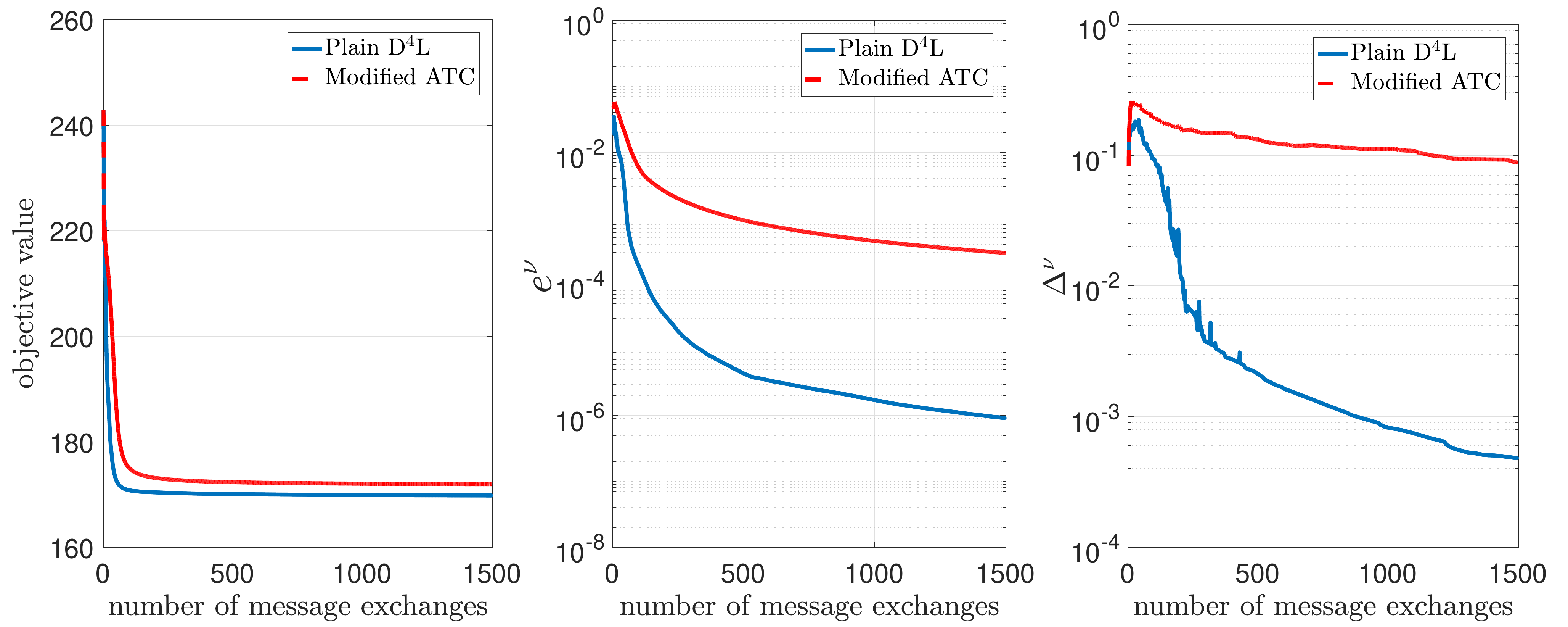}
\\
\centering(a)
\\
\hspace{-.4cm}\includegraphics[scale=0.34]{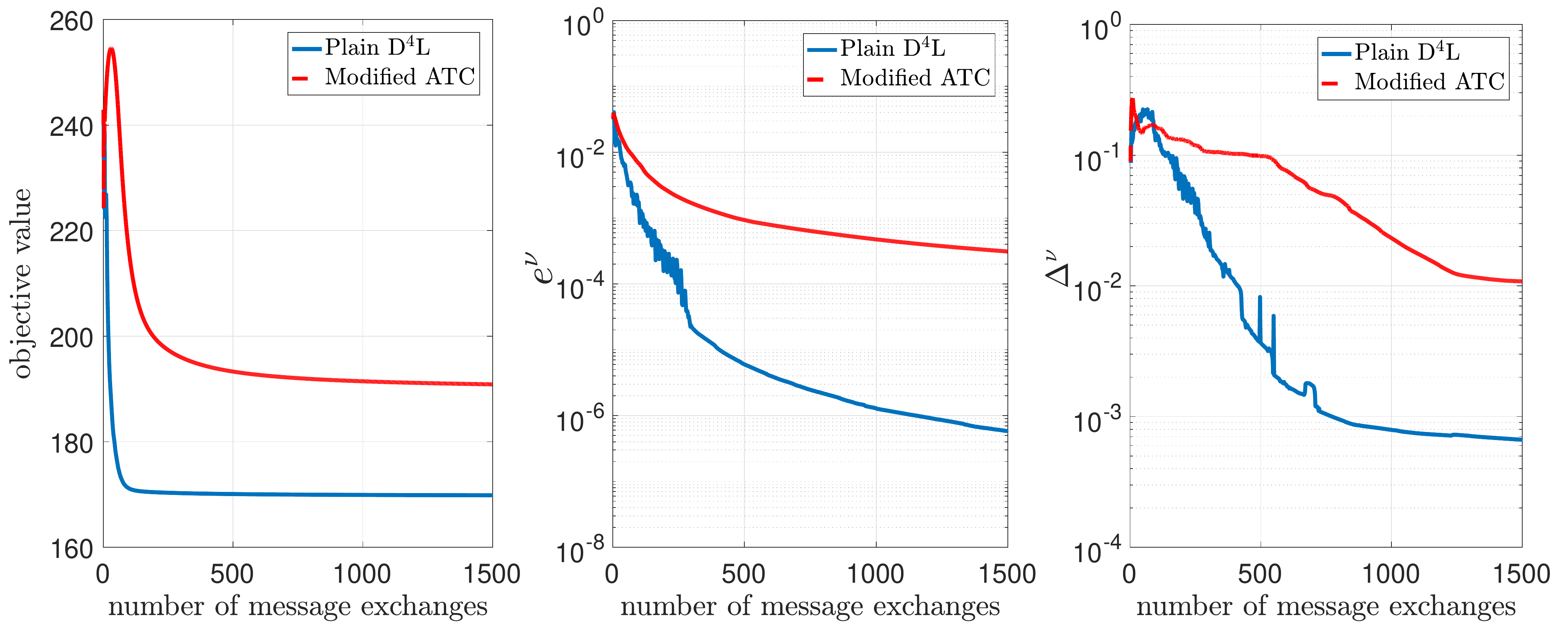}
\\
\centering (b)
\caption{NNSC problem (MIT-CBCL dataset)--Plain D$^4$L and Modified ATC algorithms: objective value [subplots on the left],  consensus disagreement [subplots in the center], and distance from stationarity $\Delta^\nu$ [cf. \eqref{stat_merit_subproblems}] [subplots on the right] vs. number of message exchanges. Comparison  over the  network settings   N4 [subplots (a)] and  N9 [subplots (b)] (cf.~Table~\ref{network_setting_casewise}).}\vspace{-0.4cm}
\label{fig:NNSC_CBCL}
\end{figure}
 \begin{figure}[t]
\includegraphics[scale=0.34]{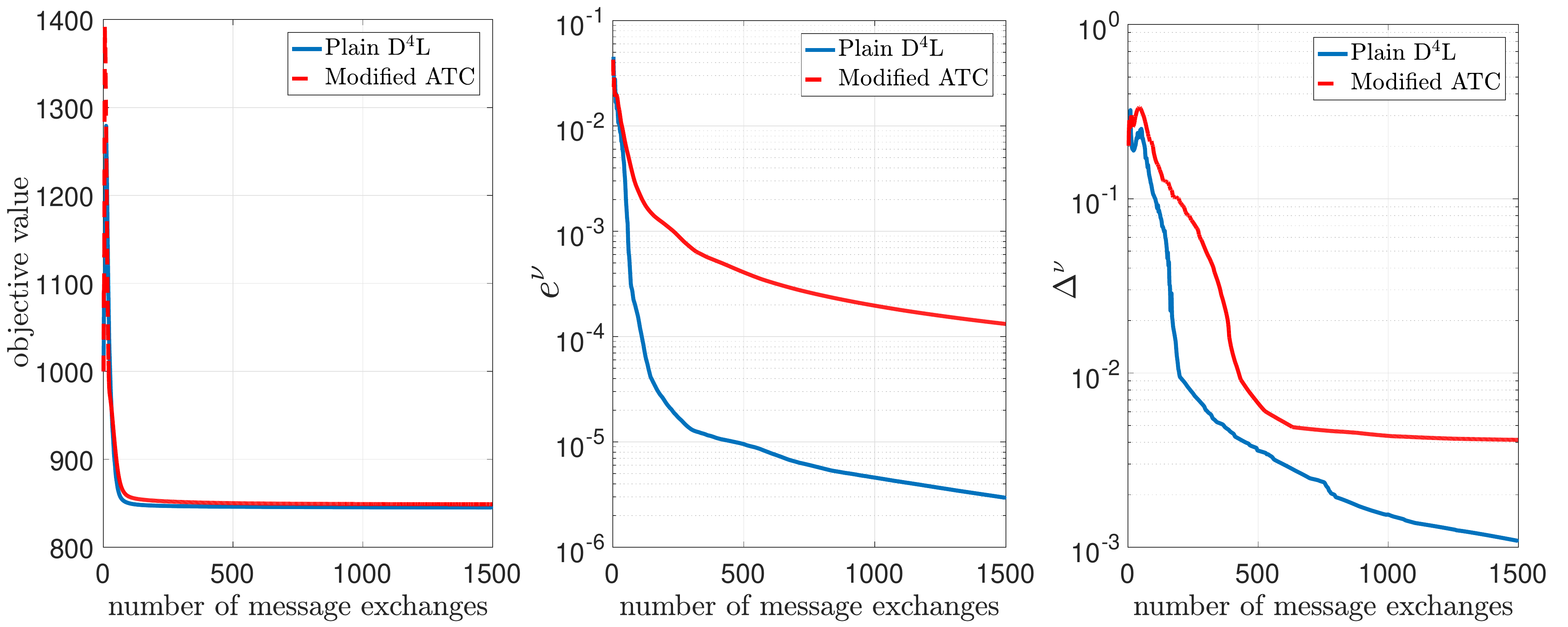}
\\
\centering(a)
\\
\hspace{-.4cm}\includegraphics[scale=0.34]{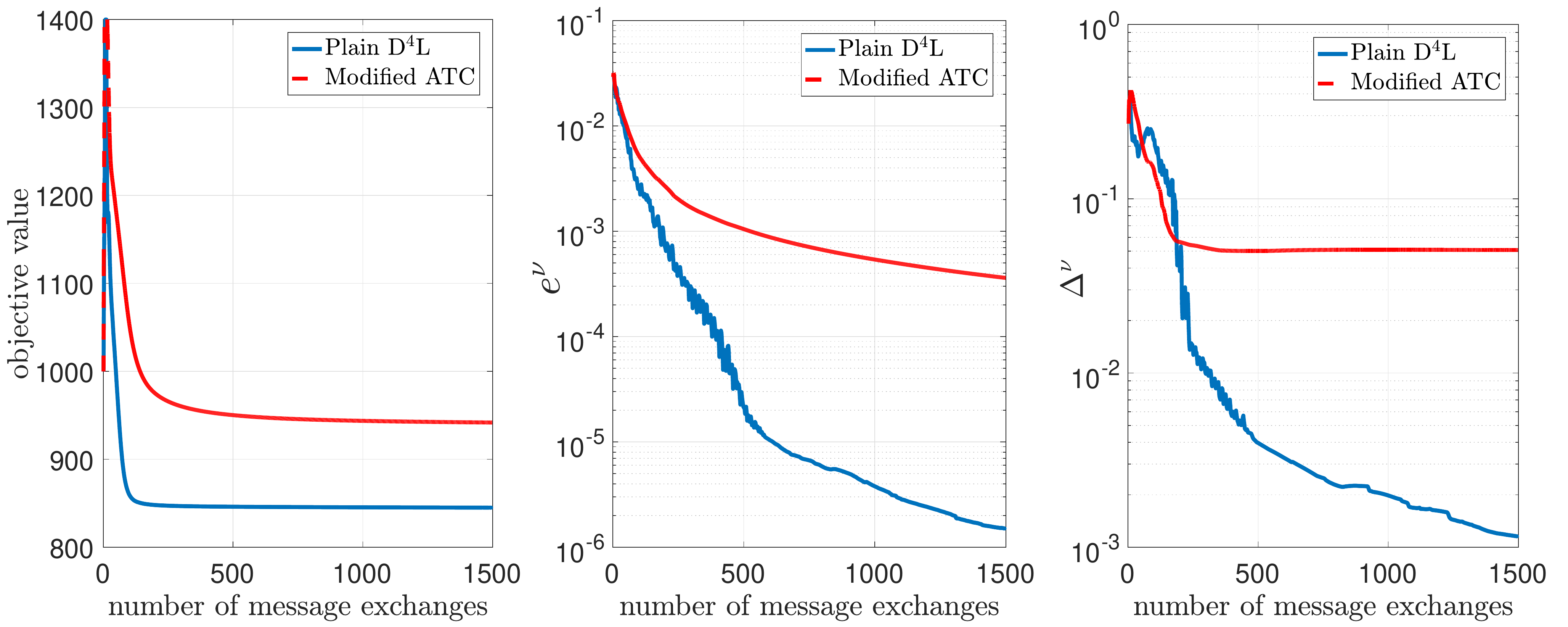}
\\
\centering (b)
\caption{NNSC problem (VOC 2006 dataset)--Plain D$^4$L and Modified ATC algorithms: objective value [subplots on the left],  consensus disagreement [subplots in the center], and distance from stationarity $\Delta^\nu$ [cf. \eqref{stat_merit_subproblems}] [subplots on the right] vs. number of message exchanges. Comparison  over the  network settings   N4 [subplots (a)] and  N9 [subplots (b)] (cf.~Table~\ref{network_setting_casewise}).}
\label{fig:NNSC_VOC}
\vspace{-0.5cm}
\end{figure}

\subsubsection{Sparse Principal Component Analysis (SPCA)}

\noindent\textbf{Algorithms and tuning:} We test the same D$^4$L version as used in the Biclustering experiments (cf. Subsec. \ref{Biclustering_numerical_results}), i.e.,   $\tilde{f}_i$ and $\tilde{h}_i$ are chosen according to  \eqref{eq:ftilde1} and \eqref{eq:htilde1}, respectively; we set   $\gamma^\nu=\gamma^{\nu-1}(1-\epsilon \gamma^{\nu-1})$, with  $\gamma^0=0.2$ and $\epsilon=10^{-2}$; and  $\tau^\nu_{D,i}=10$ and $\tau^\nu_{X,i}=\max(L_{\nabla{X_i}}(\mathbf{U}_{(i)}^\nu),10)$. We term it Plain D$^4$L. We compare Plain D$^4$L  with a  modified version of the ATC algorithm,    which has been adapted to solve \eqref{eq:SSVD} and  equipped with our new consensus protocol to handle directed network topologies. All the distributed algorithm are initialized, setting   $\mathbf{X}_i^0=\mathbf{0}$ and $\mathbf{D}_{(i)}^0$ equal to some randomly chosen columns of $\mathbf{S}_i$.
The subproblems \eqref{D_tilde_subproblem} and \eqref{eq:Xupdate} at iteration $\nu$ are solved using the projected (sub)gradient algorithm; the setting is the same as that used in the Biclustering problem (cf. Subsec.~\ref{Biclustering_numerical_results}). We terminate the projected subgradient algorithm in the inner loop when $J^{r}_{D,i}\leq 10^{-4}$ (in solving subproblem  \eqref{D_tilde_subproblem}) and $J^{r}_{X,i}\leq 10^{-4}$ (in solving subproblem \eqref{eq:Xupdate}), where  $J^{r}_{D,i}$ and $J^{r}_{X,i}$ are defined as those in Subsec.~\ref{Biclustering_numerical_results}. In all our simulations, the above accuracy was reached within 30 (inner) iterations of the projected subgradient algorithm. 

\noindent\textbf{Convergence speed and quality of the reconstruction:} We test the Plain D$^4$L and  the Modified ATC in different network settings, as listed in  Table \ref{network_setting_casewise}.  {The setting of the experiments and the averaging procedure of the reported values is the same of those used for the  NNSC problem.} The results of our experiments are reported in  Table~\ref{SPCA_CBCL} and Figure~\ref{fig:SPCA_CBCL} for the MIT-CBCL dataset; and in Table~\ref{SPCA_VOC} and Figure~\ref{fig:SPCA_VOC} for the VOC 2006 dataset. 
The behaviors or the algorithms are very   similar to  those described in the NNSC case and confirm all previous observations.\vspace{-0.2cm}

\begin{figure}[h]
\includegraphics[scale=0.34]{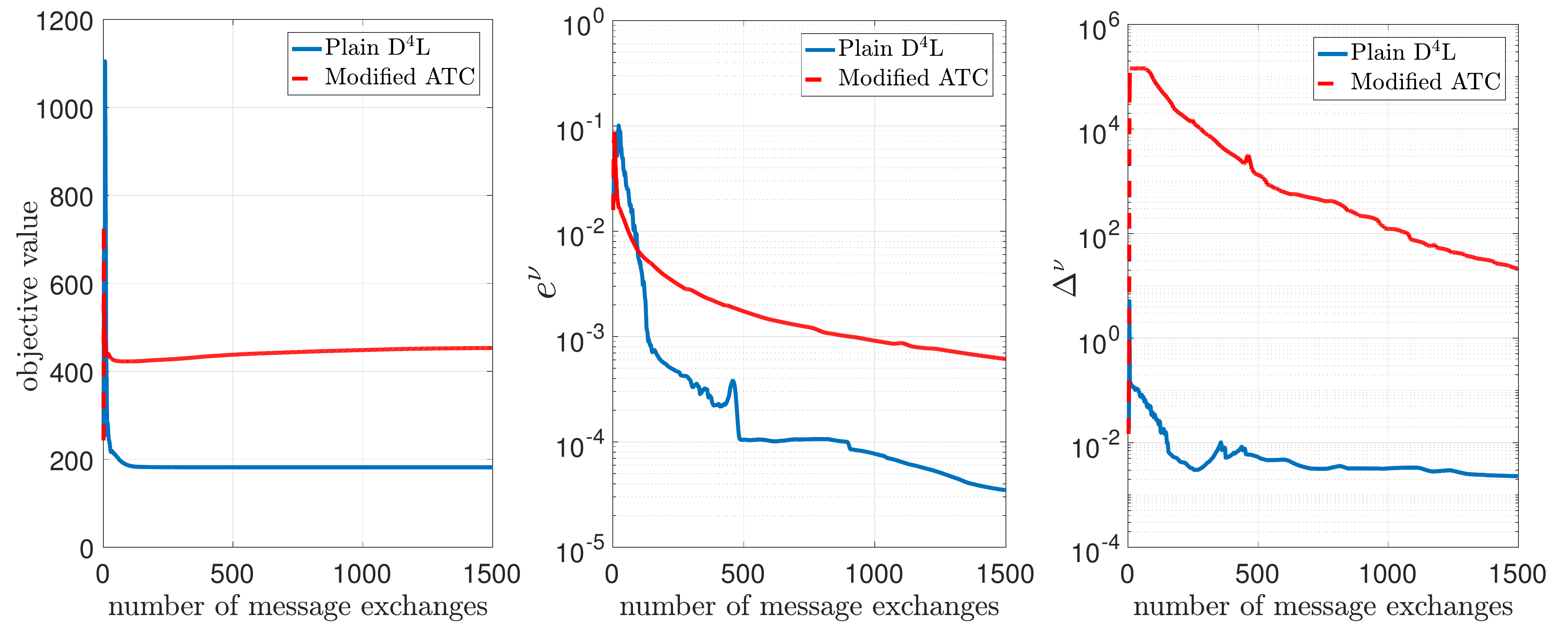}
\\
\centering(a)
\\
\hspace{-.4cm}\includegraphics[scale=0.34]{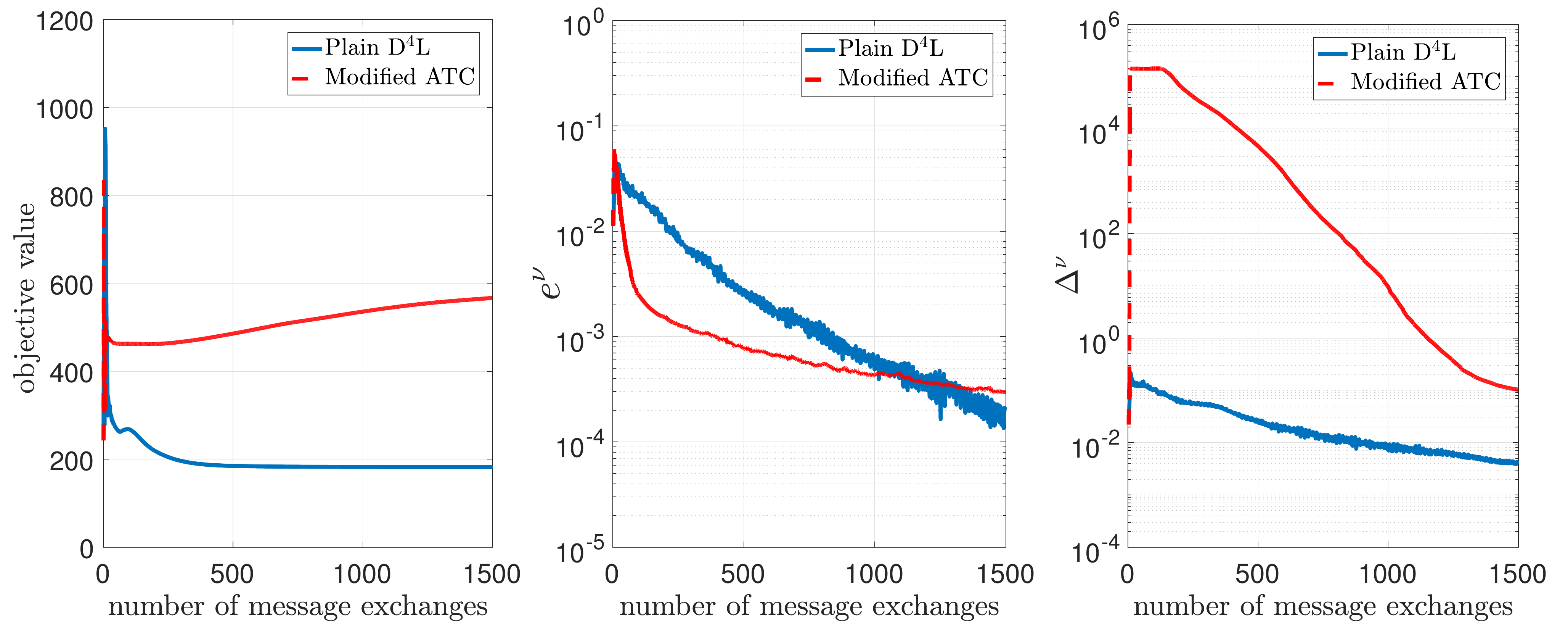}
\\
\centering (b)
\caption{SPCA problem (MIT-CBCL dataset)--Plain D$^4$L and Modified ATC algorithms: objective value [subplots on the left],  consensus disagreement [subplots in the center], and distance from stationarity $\Delta^\nu$ [cf. \eqref{stat_merit_subproblems}] [subplots on the right] vs. number of message exchanges. Comparison  over the  network settings   N4 [subplots (a)] and  N9 [subplots (b)] (cf.~Table~\ref{network_setting_casewise}).}
\label{fig:SPCA_CBCL}
\end{figure}

\begin{table}[H]
\center
\begin{tabular}{c c c c }
\hline
\textbf{Network \#} & \textbf{objective value} &\textbf{consensus disagreement} & \textbf{distance from stationarity}\\
\hline
~~~ N4 ~~~~~ & 181.9/453.1 &3.5e-5/6.1e-4 &2.2e-3/21.39  \\
~~~ N5 ~~~~~ & 185.1/446.4 &2.6e-5/1.5e-3  &1.3e-3/136.2    \\
~~~ N6 ~~~~~ & 182.7/517.3 &5.2e-6/4.3e-4 &1.3e-3/1.2e-1    \\
~~~ N7 ~~~~~ & 186.3/512.0 &2.3e-5/9.2e-4 &1.4e-3/1.18    \\
~~~ N8 ~~~~~ & 181.9/566.0 &4.0e-5/1.7e-4  &2.6e-3/1.5e-1  \\
~~~ N9 ~~~~~ & 182.6/566.9 &1.7e-4/2.9e-4 &4.2e-3/1.0e-1  \\
  \hline
\end{tabular}
\caption{SPCA problem (MIT-CBCL dataset)--Plain D$^4$L/Modified ATC algorithms: objective value, consensus disagreement, and distance from stationarity obtained after 1500 message exchanges.}
\label{SPCA_CBCL}
\end{table}

\begin{figure}[t]
\includegraphics[scale=0.34]{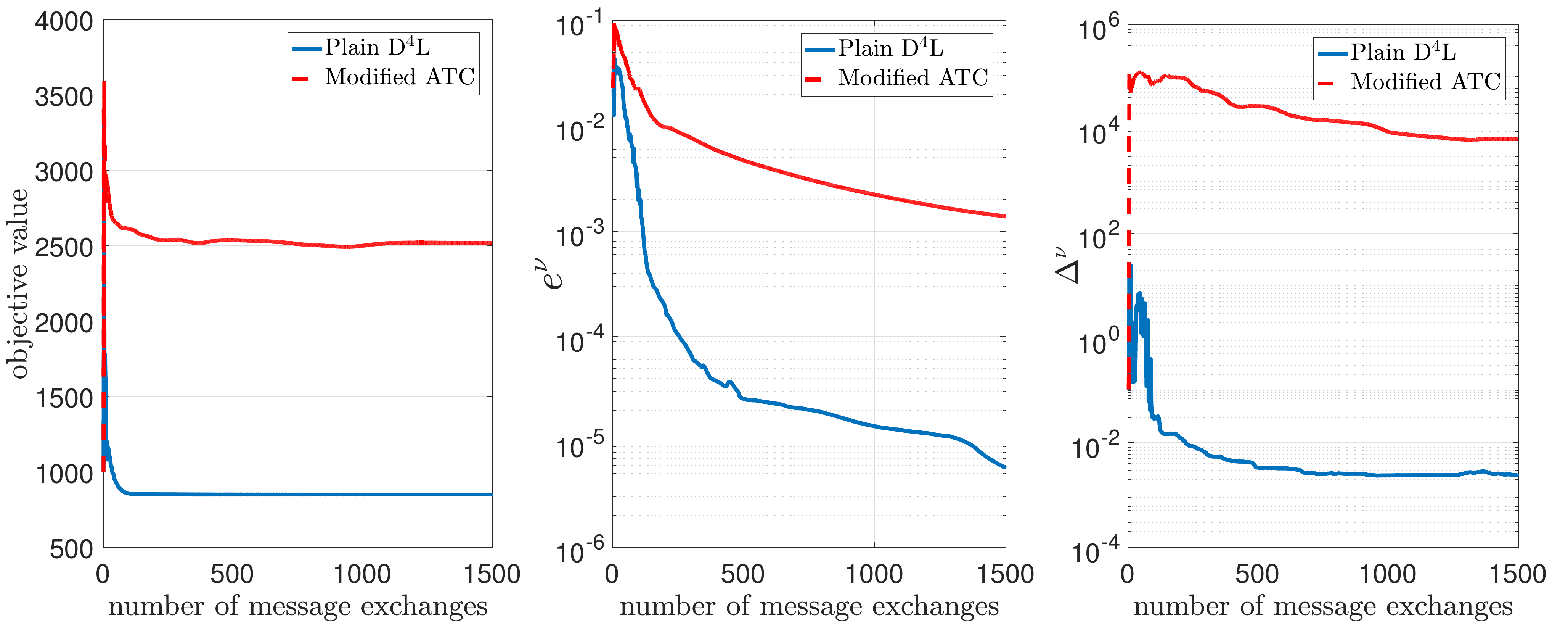}
\\
\centering(a)
\\
\hspace{-.4cm}\includegraphics[scale=0.34]{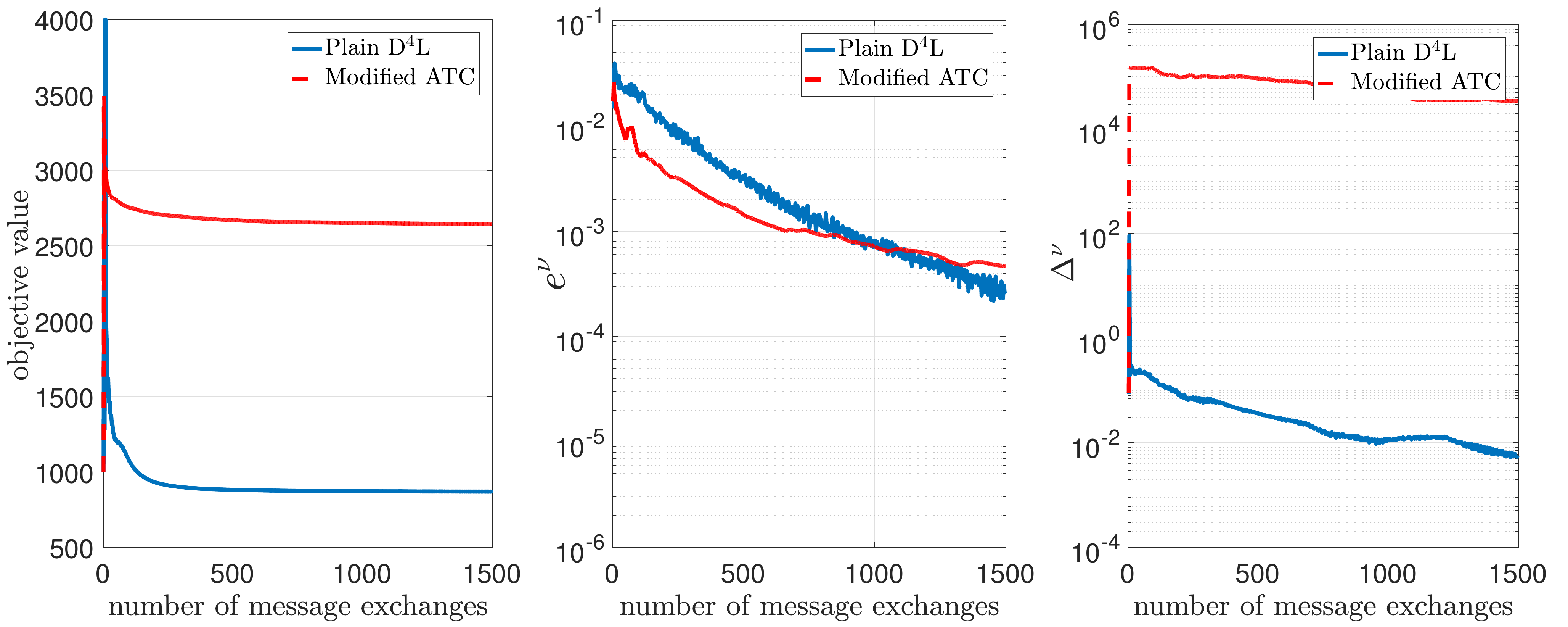}
\\
\centering (b)
\caption{SPCA problem (VOC 2006 dataset)--Plain D$^4$L and Modified ATC algorithms: objective value [subplots on the left],  consensus disagreement [subplots in the center], and distance from stationarity $\Delta^\nu$ [cf. \eqref{stat_merit_subproblems}] [subplots on the right] vs. number of message exchanges. Comparison  over the  network settings   N4 [subplots (a)] and  N9 [subplots (b)] (cf.~Table~\ref{network_setting_casewise}).}
\label{fig:SPCA_VOC}
\end{figure}

\begin{table}[H]
\center
\begin{tabular}{c c c c }
\hline
\textbf{Network \#} & \textbf{objective value} &\textbf{consensus disagreement} & \textbf{distance from stationarity}\\
\hline
~~~ N4 ~~~~~ & 849.3/2516.0 & 5.7e-6/1.3e-3 & 2.3e-3/6532      \\
~~~ N5 ~~~~~ & 866.0/2533.6 & 1.8e-5/3.5e-3 & 2.0e-3/1.3e+4    \\
~~~ N6 ~~~~~ & 864.8/2621.0 & 7.2e-6/4.3e-4 & 3.4e-3/1.7e+4    \\
~~~ N7 ~~~~~ & 859.1/2624.1 & 5.3e-5/1.1e-3 & 2.4e-3/1.6e+4    \\
~~~ N8 ~~~~~ & 872.7/2643.2 & 6.6e-5/2.3e-4 & 1.1e-2/2.9e+4    \\
~~~ N9 ~~~~~ & 869.1/2642.0 & 2.3e-4/4.6e-4 & 5.6e-3/3.4e+4    \\
  \hline
\end{tabular}
\caption{SPCA problem (VOC 2006 dataset)--Plain D$^4$L/Modified ATC algorithms: objective value, consensus disagreement, and distance from stationarity obtained after 1500 message exchanges.}
\label{SPCA_VOC}  
\end{table}

\section{Conclusions}\label{conclusions} 
This paper studied a fairly general class of distributed dictionary learning problems over time-varying multi-agent networks, with arbitrary topologies. We proposed the first decentralized algorithmic framework$-$the D$^4$L Algorithm$-$with provable convergence for this class of problems. Numerical experiments showed promising performance of our scheme with respect to state-of-the-art distributed methods. 

\acks{The work of Daneshmand, Sun, and Scutari has been  supported by the 
	the National Science Foundation (NSF grants CIF 1564044 and CAREER Award No. 1555850)
	and the  the Office of Naval Research (ONR  Grant N00014-16-1-2244). The work of Facchinei has been partially supported   by the Italian Ministry of Education, Research and University, under the PLATINO (PLATform for INnOvative services in future internet) PON project (Grant Agreement no. PON01$\_$01007).}

\begin{appendix}
\section{Appendix}
\label{appendix}
{In this section we prove the major results of the paper,   Theorems \ref{th:conver} and \ref{th:rate}.   We begin   rewriting the D$^4$L Algorithm in an equivalent vector-matrix form (cf. Sec.~\ref{notation_section_appendix}), which is more suitable for the analysis.    Theorem~\ref{th:conver} and Theorem \ref{th:rate} are then proved in Sec.~\ref{sub:Proof_th_conver} and Sec. \ref{th:rate_proof}, respectively. Some miscellanea results supporting the main proofs are collected in Appendix \ref{Prelim_results}. Table \ref{Table_of_Notation_appendix} below summarizes  the   symbols appearing in the proofs.}

\begin{table}[H]
	\center \footnotesize  
	\renewcommand{\arraystretch}{1.2}
	\begin{tabular}{|c || c | c |c |}
		\hline
		\textbf{Symbol}  & \textbf{Definition} & \textbf{Member of} & \textbf{Reference}
		\\
		\hline
		\hline
		\parbox{1em}{$\overline{\mathbf{D}}_{\boldsymbol{\phi}^\nu}$} & $(1/I)\sum_{i=1}^I \phi_i^\nu \,\mathbf{D}_{(i)}^\nu$ &$\mathcal{D}\subseteq\mathbb{R}^{M\times K}$ & \eqref{average_notation}
		\\
		\hline
		\parbox{1em}{$\overline{\mathbf{U}}_{\boldsymbol{\phi}^\nu}$} & $(1/I)\sum_{i=1}^I \phi_i^\nu \,\mathbf{U}_{(i)}^\nu$ &$\mathcal{D}\subseteq\mathbb{R}^{M\times K}$ & \eqref{average_notation}
		\\
		\hline
		\parbox{1em}{$\widetilde{\mathbf{D}}^{\nu}$}  &$[\widetilde{\mathbf{D}}_{(1)}^{\nu\intercal},\widetilde{\mathbf{D}}_{(2)}^{\nu\intercal},\ldots,\widetilde{\mathbf{D}}_{(I)}^{\nu\intercal}]^\intercal$ & $\mathbb{R}^{M\times KI}$  & \eqref{phi_notation}
		\\
		\hline 
		\parbox{1em}{${\mathbf{U}}^{\nu}$} &$[{\mathbf{U}}_{(1)}^{\nu\intercal},{\mathbf{U}}_{(2)}^{\nu\intercal},\ldots,{\mathbf{U}}_{(I)}^{\nu\intercal}]^\intercal$ & $\mathbb{R}^{M\times KI}$  &\eqref{phi_notation}
		\\
		\hline
		\parbox{1em}{$\boldsymbol{{\Theta}}^{\nu}$} & $[\boldsymbol{{\Theta}}_{(1)}^{\nu\intercal},\boldsymbol{{\Theta}}_{(2)}^{\nu\intercal},\ldots,\boldsymbol{{\Theta}}_{(I)}^{\nu\intercal}]^\intercal$  &  $\mathbb{R}^{M\times K I}$ &  \eqref{phi_notation}
		\\
		\hline
		\parbox{1em}{$\mathbf{G}^\nu$} & $[\nabla_D f_1(\mathbf{D}_{(1)}^\nu,\mathbf{X}^\nu_1)^\intercal,  ~\ldots~, ~\nabla_D f_I(\mathbf{D}_{(I)}^\nu,\mathbf{X}^\nu_I)^\intercal]^\intercal$ & $\mathbb{R}^{M\times K I}$ &\eqref{phi_notation}
		\\
		\hline
		\parbox{1em}{$\mathbf{W}^\nu$} & $(w_{ij}^\nu)_{i,j=1}^I= (a^\nu_{ij}\,\phi_j^\nu/\phi_i^{\nu +1})_{i,j=1}^I=(\boldsymbol{\Phi}^{\nu+1})^{-1}\mathbf{A}^\nu\boldsymbol{\Phi}^\nu$ & $\mathbb{R}^{I\times I}$ &  \eqref{W_mat_A_relat} 
		\\
		\hline 
		\parbox{1em}{$\mathbf{W}^{\nu:l}$} & $\mathbf{W}^{\nu}\cdot\mathbf{W}^{\nu-1}\cdots\mathbf{W}^{l},\quad \nu>l$ & $\mathbb{R}^{I\times I}$ & \eqref{P_def}
		\\
		\hline
		\parbox{1em}{$\widehat{\mathbf{W}}^\nu$} & $\mathbf{W}^\nu\otimes\mathbf{I}_M$ & $\mathbb{R}^{MI\times MI}$ & \eqref{P_def_kroneck}
		\\
		\hline 
		\parbox{1em}{$\widehat{\mathbf{W}}^{\nu:l}$} & $\mathbf{W}^{\nu:l}\otimes\mathbf{I}_M, \quad \nu>l$ & $\mathbb{R}^{MI\times MI}$ & \eqref{P_def}
		\\
		\hline
		\parbox{1em}{$\boldsymbol{\phi}^\nu$} &  $ [\phi_1^\nu,\phi_2^\nu,\ldots,\phi_I^\nu]^\intercal$ & $\mathbb{R}^I$ & \eqref{phi_notation}
		\\ 
		\hline
		\parbox{1em}{$\boldsymbol{\Phi}^\nu$} & $ \mathrm{diag}\left(\boldsymbol{\phi}^\nu\right)$ & $\mathbb{R}^{I\times I}$ & \eqref{phi_notation}
		\\
		\hline
		\parbox{1em}{$\widehat{\boldsymbol{\Phi}}^\nu$} & $\boldsymbol{\Phi}^\nu\otimes\mathbf{I}_M$ & $\mathbb{R}^{MI\times MI}$ & \eqref{P_def_kroneck}
		\\
		\hline 
		\parbox{1em}{$\mathbf{J}_{\boldsymbol{\phi}^\nu}$} & $(1/I)\mathbf{1}\boldsymbol{\phi}^{\nu \intercal}$ & $\mathbb{R}^{I\times I}$ &\eqref{J_def}
		\\
		\hline 
		\parbox{1em}{$\widehat{\mathbf{J}}_{\boldsymbol{\phi}^\nu}$} & $\mathbf{J}_{\boldsymbol{\phi}^\nu} \otimes\mathbf{I}_M$ & $\mathbb{R}^{MI\times MI}$ &\eqref{J_def}	
		\\
		\hline 	
	\end{tabular}
	\caption{Table of notation (appendix)}
	\label{Table_of_Notation_appendix}
\end{table}	

\subsection{D$^4$L in vector-matrix form}
\label{notation_section_appendix}
We rewrite Algorithm \ref{alg1} in a more convenient vector-matrix form. To do so, we introduce the following notation. Recalling  
{the definitions of    
 $\widetilde{\mathbf{D}}^\nu_{(i)}$ [cf. \eqref{D_tilde_subproblem}],  $\mathbf{U}_{(i)}^\nu$ [cf. \eqref{U_update}], $\phi_i^\nu$ [cf. (\ref{D_weighted_avg})],  
 and $\boldsymbol{{\Theta}}^\nu_{(i)}$ [cf. \eqref{Theta_update}], define the corresponding aggregate quantities\begin{equation}
	\label{phi_notation}
\begin{aligned}
 & \widetilde{\mathbf{D}}^{\nu}\triangleq [\widetilde{\mathbf{D}}_{(1)}^{\nu\intercal},\widetilde{\mathbf{D}}_{(2)}^{\nu\intercal},\ldots,\widetilde{\mathbf{D}}_{(I)}^{\nu\intercal}]^\intercal,\\ 
&\mathbf{U}^\nu\triangleq [\mathbf{U}_{(1)}^{\nu\intercal}, \mathbf{U}_{(2)}^{\nu\intercal}, \ldots, \mathbf{U}_{(I)}^{\nu\intercal}]^\intercal,\\
&\boldsymbol{\phi}^\nu\triangleq [\phi_1^\nu,\phi_2^\nu,\ldots,\phi_I^\nu]^\intercal,\quad 
\boldsymbol{\Phi}^\nu \triangleq \mathrm{diag}\left(\boldsymbol{\phi}^\nu\right)\\
&\boldsymbol{ {\Theta}}^\nu\triangleq[\boldsymbol{ {\Theta}}_{(1)}^{\nu\intercal},\boldsymbol{ {\Theta}}_{(2)}^{\nu\intercal},\ldots,\boldsymbol{ {\Theta}}_{(I)}^{\nu\intercal}]^\intercal,
\\
& \mathbf{G}^\nu \triangleq\bigg[\nabla_D f_1(\mathbf{D}_{(1)}^\nu,\mathbf{X}^\nu_1)^\intercal, ~\nabla_D f_2(\mathbf{D}_{(2)}^\nu,\mathbf{X}^\nu_2)^\intercal, ~\ldots~, ~\nabla_D f_I(\mathbf{D}_{(I)}^\nu,\mathbf{X}^\nu_I)^\intercal\bigg]^\intercal,
\end{aligned}
\end{equation}}
where   $\mathrm{diag}(\mathbf{x})$ is a diagonal matrix whose diagonal entries are the  components of the vector $\mathbf{x}$. Combining the weights $a_{ij}^\nu$ and the variables $\phi_i^\nu$ in the update of $\mathbf{D}_{(i)}^\nu$ [cf. (\ref{D_weighted_avg})] in the single coefficient  
\begin{equation*}
	w_{ij}^\nu\triangleq  \dfrac{a_{ij}^\nu\phi_j^\nu}{\sum_k a_{ik}^\nu\phi_k^\nu},
\end{equation*}
we define the weight matrix $\mathbf{W}^\nu$, whose entries are $[\mathbf{W}^\nu]_{i,j} =w_{ij}^\nu$. Note that $\mathbf{W}^\nu$ has the same zero-pattern of $\mathbf{A}^\nu$, and the following   properties hold (the latter under Assumption \ref{A_matrix_Assumptions})\begin{equation}
\mathbf{W}^\nu=\big(\boldsymbol{\Phi}^{\nu+1}\big)^{-1}\mathbf{A}^\nu\boldsymbol{\Phi}^\nu,\quad\mathrm{and}\quad\mathbf{W}^\nu\mathbf{1}=\mathbf{1}.
\label{W_mat_A_relat}
\end{equation} 
Finally, we define the following ``augmented'' matrices
\begin{equation}
	\widehat{\mathbf{W}}^\nu\triangleq\mathbf{W}^\nu\otimes\mathbf{I}_M\quad\text{and}\quad  \widehat{\boldsymbol{\Phi}}^\nu\triangleq \boldsymbol{\Phi}^\nu\otimes\mathbf{I}_M,
\label{P_def_kroneck}
\end{equation}
where $\mathbf{I}_M$ is the $M$-by-$M$ identity matrix.

Using the above notation,   the main iterates of the D$^4$L Algorithm, i.e., \eqref{U_update},   \eqref{D_weighted_avg}, and \eqref{Theta_update}, can be rewritten in compact form as\begin{align}
&~~\quad\qquad\mathbf{U}^\nu = \mathbf{D}^\nu+\gamma^\nu (\widetilde{\mathbf{D}}^\nu-\mathbf{D}^\nu),
\label{U_update_stackingMat}
\\
&~\qquad\qquad\qquad\boldsymbol{\phi}^{\nu+1} = \mathbf{A}^\nu\boldsymbol{\phi}^\nu,
\label{phi_update_stacked}
\\
&\qquad\qquad\qquad\mathbf{D}^{\nu+1} = \widehat{\mathbf{W}}^\nu\mathbf{U}^\nu,
\label{D_weighted_avg_stackingMat}
\\
&\boldsymbol{ {\Theta}}^{\nu+1} = \widehat{\mathbf{W}}^\nu \boldsymbol{ {\Theta}}^{\nu}+\big(\widehat{\boldsymbol{\Phi}}^{\nu+1}\big)^{-1}\left(\mathbf{G}^{\nu+1}-\mathbf{G}^\nu\right).
\label{Theta_update_stackingMat}
\end{align}

{ Instrumental to the analysis of the consensus disagreement are the following weighted   average quantities:}
\begin{equation}
\label{average_notation}
\overline{\mathbf{D}}_{\boldsymbol{\phi}^\nu}\triangleq\dfrac{1}{I}\sum_{i=1}^I \phi_i^\nu \,\mathbf{D}_{(i)}^\nu,\quad  \overline{\mathbf{U}}_{\boldsymbol{\phi}^\nu} \triangleq\dfrac{1}{I}\sum_{i=1}^I \phi_i^\nu\,\mathbf{U}_{(i)}^\nu.
\end{equation}
Using \eqref{D_weighted_avg}, \eqref{average_notation} and the column stochasticity of $\mathbf{A}^\nu$ (cf. Assumption \ref{A_matrix_Assumptions}3), it is not difficult to check that 
\begin{align}
\overline{\mathbf{D}}_{\boldsymbol{\phi}^{\nu+1}} = \overline{\mathbf{U}}_{\boldsymbol{\phi}^\nu},
\label{D_bar_eq_U_bar}
\end{align}
which, together with \eqref{U_update}, leads to the following dynamics for $\overline{\mathbf{D}}_{\boldsymbol{\phi}^{\nu}}$:
\begin{align}
\overline{\mathbf{D}}_{\boldsymbol{\phi}^{\nu+1}} = \overline{\mathbf{D}}_{\boldsymbol{\phi}^\nu} +\frac{\gamma^\nu}{I} \sum_{i=1}^I &\phi_i^\nu\left(\widetilde{\mathbf{D}}^\nu_{(i)}-\mathbf{D}_{(i)}^\nu\right).
\label{D_bar_update}
\end{align}

\subsection{Proof of Theorem \ref{th:conver}}
\label{sub:Proof_th_conver}  
{We prove Theorem \ref{th:conver} in the following order (consistent with the statements in the theorem):
\begin{description}
	\item[Step 1--Asymptotic consensus \&  related properties:] We prove that consensus is asymptotically achieved, that is,  $\lim_{\nu\rightarrow\infty}e^\nu=0$ [statement (a)], along with some properties on related quantities, which will be used in the other steps--see Sec.~\ref{subsection-proof-step1};
	\item[Step 2--Boundedness of the iterates:] We prove that the sequence  $\big\{\big(\mathbf{D}^\nu,\mathbf{X}^\nu\big)\big\}_\nu$ generated by the algorithm   is bounded [statement (b-i)]--see Sec.~\ref{subsection-proof-step2};
	\item[Step 3--Decrease of $\{U(\overline{\mathbf{D}}^\nu,\mathbf{X}^\nu)\}_\nu$:] We study the properties    of  
$\{U(\overline{\mathbf{D}}^\nu,\mathbf{X}^\nu)\}_\nu$     [statement (b-ii)]--see Sec.~\ref{subsection-proof-step3}; \item[Step 4--Vanishing X-stationarity:] We prove that  $\lim_{\nu\rightarrow\infty}\Delta_X(\overline{\mathbf{D}}^\nu,\mathbf{X}^\nu)=0$ [statement (b-iii)]--see Sec.~\ref{subsection-proof-step4};
\item[Step 5--Vanishing liminf  D-stationarity:] We prove  $\liminf_{\nu\rightarrow\infty}\Delta_D(\overline{\mathbf{D}}^\nu,\mathbf{X}^\nu)=0$ [statement (b-iv)]--see Sec.~\ref{subsection-proof-step5};
\item[Step 6--Vanishing D-stationarity:] Finally, we prove  $\lim_{\nu\rightarrow\infty}\Delta_D(\overline{\mathbf{D}}^\nu,\mathbf{X}^\nu)=0$ [statement (b')]--see Sec.~\ref{subsection-proof-step6}.
\end{description}  }

\subsubsection{Step 1--Asymptotic consensus and  related  properties}
   \label{subsection-proof-step1}
   \noindent \textbf{1) Preliminaries:}
To analyze the dynamics of the consensus disagreement,  we first  introduce the following product matrices and their augmented counterparts: Given      $\mathbf{W}^\nu$ [cf. (\ref{W_mat_A_relat})], and $\nu, l\in \mathbb{N}_+$, with $\nu\geq l$, let  
\begin{align}
&~\mathbf{W}^{\nu:l}\triangleq\begin{cases}
\mathbf{W}^{\nu}\cdot\mathbf{W}^{\nu-1}\cdots\mathbf{W}^{l}, & \nu>l,\\
\mathbf{W}^{\nu}, & \nu=l,\\
\mathbf{0}_{I}, & \nu<l,
\end{cases}
\label{P_def} 
\quad \text{and}\quad \widehat{\mathbf{W}}^{\nu:l}\triangleq\mathbf{W}^{\nu:l}\otimes\mathbf{I}_M. 
\end{align}
  
Define also the weight-average matrices 
\begin{equation}\label{J_def}
	\mathbf{J}_{\boldsymbol{\phi}^\nu}\triangleq\frac{1}{I}\mathbf{1}\boldsymbol{\phi}^{\nu \intercal},\quad\widehat{\mathbf{J}}_{\boldsymbol{\phi}^\nu}\triangleq\mathbf{J}_{\boldsymbol{\phi}^\nu} \otimes\mathbf{I}_M.
\end{equation}
 For notational simplicity, when $\boldsymbol{\phi}^\nu=\mathbf{1}$, we use $\mathbf{J}$ instead of $\mathbf{J}_{\boldsymbol{\phi}^\nu}$ and $\widehat{\mathbf{J}}\triangleq \mathbf{J}\otimes\mathbf{I}_M$. Using   \eqref{W_mat_A_relat} and \eqref{average_notation}, it is not difficult to check that the following   hold:
\begin{align}
&\quad\widehat{\mathbf{J}}_{\boldsymbol{\phi}^\nu}\mathbf{D}^\nu=\mathbf{1}\otimes\overline{\mathbf{D}}_{\boldsymbol{\phi}^\nu},
\label{D_avg_stackingMat}
\\
&\widehat{\mathbf{J}}_{\boldsymbol{\phi}^{\nu+1}} \widehat{\mathbf{W}}^{\nu:l}=\widehat{\mathbf{J}}\,\widehat{\boldsymbol{\Phi}}^l=\widehat{\mathbf{J}}_{\boldsymbol{\phi}^{l}}.
\label{colStochW_avgMat_eq}
\end{align}

The dynamics of the consensus disagreement  $e^\nu$ boils down to studying the decay  of $\|\mathbf{W}^{\nu:l}-\mathbf{J}_{\boldsymbol{\phi}^l}\|_2$ (this will be clear in the proof of Proposition \ref{consVar_props} below). The following lemma shows that $\mathbf{W}^{\nu:l}$ converges geometrically to $\mathbf{J}_{\boldsymbol{\phi}^l}$, as $\nu\to \infty$.

\begin{lemma}[\cite{book-CIME}-Lemma 4.13, Ch. 3.4.2.5]
\label{W_mat_err_decay} 
Let $\{\mathcal{G}^\nu\}_\nu$ be a sequence of digraphs satisfying Assumption \ref{B_strongly_connectivity}; let $\{\mathbf{A}^\nu\}_\nu$ be a sequence of matrices  satisfying Assumption \ref{A_matrix_Assumptions}; and let $\{\mathbf{W}^\nu\}_\nu$ be the sequence of matrices defined in \eqref{W_mat_A_relat}. Then, there holds
\begin{equation}
\norm{\mathbf{W}^{\nu:l}-\mathbf{J}_{\boldsymbol{\phi}^l}}_2\leq c_W\,(\rho)^{\nu-l+1},\quad \forall \nu\geq l, \,\nu,l\in \mathbb{N}_+,
\nonumber
\end{equation}
where  $c_W>0$ is a (proper) constant, and  $\rho\in(0,1)$ is defined as \begin{equation}
\rho=\left(1-\tilde{\kappa}^{-(I-1)B}\right)^{\frac{1}{B(I-1)}}<1,\label{eq:rho}\end{equation} with 
 $\tilde{\kappa}=\kappa^{IB+1}/I$; and $\kappa$ is defined in Assumption \ref{A_matrix_Assumptions}.  
 
 Furthermore,   the sequence $\{\phi_i^\nu\}_\nu$   satisfies
\begin{equation}
\begin{aligned}
\underline{\epsilon}_\phi\triangleq\inf_{\nu\in\mathbb{N}_+}\left(\min_{1\leq i\leq I}\phi^\nu_i\right)\geq\kappa^{IB},\quad\text{and}\quad 
 \bar{\epsilon}_\phi\triangleq\sup_{\nu\in\mathbb{N}_+}\left(\max_{1\leq i\leq I}\phi^\nu_i\right) \leq I-(I-1)\kappa^{IB}.\end{aligned}
\label{delta_def}
\end{equation}
If all the matrices $\mathbf{A}^\nu$ are doubly-stochastic, then $\underline{\epsilon}_\phi=\bar{\epsilon}_\phi=1$.
\end{lemma}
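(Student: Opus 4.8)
This is a quoted result; the plan below reconstructs the standard argument. The idea is to reduce everything to the column‑stochastic products $\mathbf{A}^{\nu:l}\triangleq\mathbf{A}^\nu\mathbf{A}^{\nu-1}\cdots\mathbf{A}^l$ and the scalar recursion $\boldsymbol{\phi}^{\nu+1}=\mathbf{A}^\nu\boldsymbol{\phi}^\nu$, and then to transfer the conclusions to $\mathbf{W}^{\nu:l}$ through the telescoped identity $\mathbf{W}^{\nu:l}=(\boldsymbol{\Phi}^{\nu+1})^{-1}\mathbf{A}^{\nu:l}\boldsymbol{\Phi}^l$, which follows from \eqref{W_mat_A_relat}. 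The Kronecker lift is harmless, since $\|\mathbf{M}\otimes\mathbf{I}_M\|_2=\|\mathbf{M}\|_2$, so it suffices to argue with the $I\times I$ matrices $\mathbf{W}^{\nu:l}$ and $\mathbf{J}_{\boldsymbol{\phi}^l}=\tfrac1I\mathbf{1}\,\boldsymbol{\phi}^{l\intercal}$.

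First I would establish the two‑sided bound \eqref{delta_def} on $\boldsymbol{\phi}^\nu$. Column stochasticity of each $\mathbf{A}^\nu$ (Assumption \ref{A_matrix_Assumptions}3) together with $\boldsymbol{\phi}^0=\mathbf{1}$ gives $\mathbf{1}^\intercal\boldsymbol{\phi}^\nu=I$ for every $\nu$, so at each iteration at least one entry is $\ge 1$. By $B$‑strong connectivity (Assumption \ref{B_strongly_connectivity}), for any ordered pair $(i,j)$ and any $\nu$ large enough there is, in the union graph over a window of length $\le IB$, a directed walk from $j$ to $i$; padding it with the self‑loops guaranteed by Assumption \ref{A_matrix_Assumptions}1 and using the uniform lower bound $\kappa$ on the nonzero entries, the corresponding entry of the window product of the $\mathbf{A}^\nu$'s is $\ge\kappa^{IB}$. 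Choosing $j$ so that $\phi_j\ge 1$ at the left end of the window yields $\phi_i^\nu\ge\kappa^{IB}$, i.e.\ the lower bound $\underline{\epsilon}_\phi\ge\kappa^{IB}$ (the finitely many initial iterations being handled directly from $\boldsymbol{\phi}^0=\mathbf{1}$); the upper bound $\bar{\epsilon}_\phi\le I-(I-1)\kappa^{IB}$ is then immediate from $\phi_i^\nu=I-\sum_{k\ne i}\phi_k^\nu$. When all $\mathbf{A}^\nu$ are doubly stochastic, $\boldsymbol{\phi}^\nu\equiv\mathbf{1}$ and both bounds equal $1$.

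Next comes the geometric ergodicity. Each $\mathbf{W}^\nu$ is row stochastic ($\mathbf{W}^\nu\mathbf{1}=\mathbf{1}$, cf.\ \eqref{W_mat_A_relat}), hence so is every $\mathbf{W}^{\nu:l}$; and $\mathbf{J}_{\boldsymbol{\phi}^l}$ is the natural rank‑one candidate for the limit, since it too is row stochastic ($\boldsymbol{\phi}^{l\intercal}\mathbf{1}=I$) and is invariant under the dynamics in the precise sense $\mathbf{J}_{\boldsymbol{\phi}^{\nu+1}}\mathbf{W}^{\nu:l}=\mathbf{J}_{\boldsymbol{\phi}^l}$, which is the $I\times I$ form of \eqref{colStochW_avgMat_eq}. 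Combining the $\boldsymbol{\phi}$‑bounds from the previous step with the path argument and the weight bound $\kappa$, one checks that the product of the $\mathbf{W}^\nu$'s over any window of length $B(I-1)$ is \emph{entrywise} $\ge\tilde{\kappa}=\kappa^{IB+1}/I$ (the $\kappa^{IB+1}$ being one extra hop beyond the $\kappa^{IB}$ in $\underline{\epsilon}_\phi$, the $1/I$ coming from the $\boldsymbol{\phi}$‑normalization). Such a block therefore has coefficient of ergodicity at most $1-\tilde{\kappa}$ by Hajnal's inequality, and submultiplicativity of the coefficient of ergodicity over consecutive blocks yields $\tau_1(\mathbf{W}^{\nu:l})\le c\,\rho^{\nu-l+1}$, with $\rho$ as in \eqref{eq:rho} (the per‑step rate extracted from the per‑block contraction over windows of length $B(I-1)$). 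Finally, a diameter argument upgrades this to the spectral‑norm estimate: the rows of $\mathbf{W}^{\nu:l}$ are within $2\tau_1(\mathbf{W}^{\nu:l})$ of one another, and passing to the limit in $\mathbf{J}_{\boldsymbol{\phi}^{\nu+1}}\mathbf{W}^{\nu:l}=\mathbf{J}_{\boldsymbol{\phi}^l}$ (using $\boldsymbol{\phi}^{\nu+1\intercal}\mathbf{1}=I$) forces the common limiting row to be exactly $\tfrac1I\boldsymbol{\phi}^{l\intercal}$, i.e.\ the limit is $\mathbf{J}_{\boldsymbol{\phi}^l}$; hence $\|\mathbf{W}^{\nu:l}-\mathbf{J}_{\boldsymbol{\phi}^l}\|_2\le c_W\,\rho^{\nu-l+1}$.

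The main obstacle is the combinatorial core of the third paragraph: extracting the uniform entrywise lower bound $\tilde{\kappa}$ for $B(I-1)$‑fold products under $B$‑strong connectivity, and tracking how the $\boldsymbol{\phi}$‑normalization (the $(\boldsymbol{\Phi}^{\nu+1})^{-1}$ factor) turns the clean bound on the $\mathbf{A}$‑products into the constant $\kappa^{IB+1}/I$ — together with the bookkeeping of the finitely many edge iterations in which the window does not yet ``see'' full connectivity. Pinning the limit down as $\mathbf{J}_{\boldsymbol{\phi}^l}$ rather than some other row‑stochastic matrix is exactly where \eqref{colStochW_avgMat_eq} is essential; everything else (Kronecker lift, norm equivalences, submultiplicativity of $\tau_1$) is routine.
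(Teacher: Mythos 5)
Your reconstruction is sound and follows essentially the same route as the source the paper relies on: the paper gives no in-text proof for this lemma (it is imported verbatim from the cited reference), and the standard argument there is exactly your combination of the $\boldsymbol{\phi}$-bounds \eqref{delta_def} via mass conservation and $B$-connectivity, the entrywise lower bound on windowed products, Hajnal-type contraction of the ergodicity coefficient, and the invariance identity \eqref{colStochW_avgMat_eq} to pin the limiting rank-one matrix to $\mathbf{J}_{\boldsymbol{\phi}^l}$. The only cosmetic remark is that the finite-$\nu$ bound follows directly from the weighted-row-average identity at each $\nu$ (no passage to the limit needed), and exact constant bookkeeping for $\tilde{\kappa}$ and $\rho$ is best deferred to the cited lemma.
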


\noindent \textbf{2) Proof of $\lim_{\nu\rightarrow\infty}e^\nu=0$.}
  {Using \eqref{eq:consensus_error}, we can write
\begin{equation}\begin{aligned}
e^\nu \overset{(a)}{\leq} K \norm{\mathbf{D}^\nu-\mathbf{1}\otimes\overline{\mathbf{D}}^\nu}_F 
 &\overset{(b)}{\leq} K \norm{\mathbf{D}^\nu-\mathbf{1}\otimes\overline{\mathbf{D}}_{\boldsymbol{\phi}^\nu}}_F+  K\,\sqrt{I}\norm{\overline{\mathbf{D}}^\nu-\overline{\mathbf{D}}_{\boldsymbol{\phi}^\nu}}_F\\
&\overset{(c)}{\leq}  2 K\norm{\mathbf{D}^\nu-\mathbf{1}\otimes\overline{\mathbf{D}}_{\boldsymbol{\phi}^\nu}}_F,
\label{Dcons_Dphi_consErr_Bound}
\end{aligned}\end{equation}
where (a) follows from the equivalence of norms; (b)  is due to the triangle inequality;  and in (c) we used $\sum_{i=1}^I a_i\leq \sqrt{I}||\mathbf{a}||$, with $\mathbf{a}=(a_i)_{i=1}^I\in\mathbb{R}^I$. }

{The following proposition concludes the proof of statement (a), proving that $\|\mathbf{D}^\nu-\mathbf{1}\otimes\overline{\mathbf{D}}_{\boldsymbol{\phi}^\nu}\|_F$ is square summable, along with some additional properties on related quantities.}

\begin{proposition}  \it
\label{consVar_props}
{In the above setting, there hold:\vspace{-0.1cm}
\begin{align}
& \quad\lim_{\nu\rightarrow\infty} ~||\mathbf{D}^\nu-\mathbf{1}\otimes \overline{\mathbf{D}}_{\boldsymbol{\phi}^\nu}||_F=0,
\label{D_consensus}
\\
&\lim_{\nu\rightarrow\infty}\, \sum_{t=1}^\nu ~||\mathbf{D}^t- \mathbf{1}\otimes\overline{\mathbf{D}}_{\boldsymbol{\phi}^t}||_F^2<\infty;
\label{D_consError_summable}\\
& \lim_{\nu\rightarrow\infty}\, \sum_{t=1}^\nu ~||\mathbf{U}^t- \mathbf{1}\otimes\overline{\mathbf{U}}_{\boldsymbol{\phi}^t} ||_F^2<\infty.
\label{U_err_sqsum_bound}
\end{align}}
\end{proposition}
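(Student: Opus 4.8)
The plan is to control the consensus disagreement $\mathbf{D}^\nu - \mathbf{1}\otimes\overline{\mathbf{D}}_{\boldsymbol{\phi}^\nu}$ by unrolling the recursion \eqref{D_weighted_avg_stackingMat}--\eqref{U_update_stackingMat} and exploiting the geometric contraction of the product matrices $\widehat{\mathbf{W}}^{\nu:l}$ towards $\widehat{\mathbf{J}}_{\boldsymbol{\phi}^l}$ guaranteed by Lemma \ref{W_mat_err_decay}. First I would write, using \eqref{D_weighted_avg_stackingMat}, $\mathbf{D}^{\nu+1} = \widehat{\mathbf{W}}^\nu\mathbf{U}^\nu = \widehat{\mathbf{W}}^{\nu:0}\mathbf{D}^0 + \sum_{t=0}^{\nu}\widehat{\mathbf{W}}^{\nu:t}\big(\mathbf{U}^t-\mathbf{D}^t\big)$, after inserting $\mathbf{U}^t = \mathbf{D}^t + \gamma^t(\widetilde{\mathbf{D}}^t-\mathbf{D}^t)$ and telescoping. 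Subtracting $\mathbf{1}\otimes\overline{\mathbf{D}}_{\boldsymbol{\phi}^{\nu+1}} = \widehat{\mathbf{J}}_{\boldsymbol{\phi}^{\nu+1}}\mathbf{D}^{\nu+1}$ (which follows from \eqref{D_avg_stackingMat}) and using \eqref{colStochW_avgMat_eq} to simplify $\widehat{\mathbf{J}}_{\boldsymbol{\phi}^{\nu+1}}\widehat{\mathbf{W}}^{\nu:t} = \widehat{\mathbf{J}}_{\boldsymbol{\phi}^{t}}$, one obtains
\begin{equation}\nonumber
\mathbf{D}^{\nu+1}-\mathbf{1}\otimes\overline{\mathbf{D}}_{\boldsymbol{\phi}^{\nu+1}} = \big(\widehat{\mathbf{W}}^{\nu:0}-\widehat{\mathbf{J}}_{\boldsymbol{\phi}^0}\big)\mathbf{D}^0 + \sum_{t=0}^{\nu}\gamma^t\big(\widehat{\mathbf{W}}^{\nu:t}-\widehat{\mathbf{J}}_{\boldsymbol{\phi}^t}\big)\big(\widetilde{\mathbf{D}}^t-\mathbf{D}^t\big).
\end{equation}
Taking Frobenius norms and applying Lemma \ref{W_mat_err_decay} gives a bound of the form $\|\mathbf{D}^{\nu+1}-\mathbf{1}\otimes\overline{\mathbf{D}}_{\boldsymbol{\phi}^{\nu+1}}\|_F \le c_W\rho^{\nu+1}\|\mathbf{D}^0\|_F + c_W\sum_{t=0}^{\nu}\gamma^t\rho^{\nu-t+1}\|\widetilde{\mathbf{D}}^t-\mathbf{D}^t\|_F$. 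Here I would use that $\mathcal{D}$ is compact (Assumption \ref{Problem_Assumptions}3), so $\widetilde{\mathbf{D}}^t_{(i)}, \mathbf{D}^t_{(i)}\in\mathcal{D}$ implies $\|\widetilde{\mathbf{D}}^t-\mathbf{D}^t\|_F \le C$ for some absolute constant $C$; hence the disagreement is dominated by the convolution $\rho^{\nu+1} + \sum_{t=0}^\nu\gamma^t\rho^{\nu-t+1}$.

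Next I would establish \eqref{D_consensus}: since $\gamma^t\to 0$ and $\rho\in(0,1)$, the convolution $\sum_{t=0}^\nu\gamma^t\rho^{\nu-t+1}$ tends to $0$ — this is a standard fact (split the sum at $\nu/2$; the tail is small because $\gamma^t$ is eventually small, the head is small because $\rho^{\nu-t+1}$ is small), so the whole right-hand side vanishes. For \eqref{D_consError_summable} I would square the bound, use $(a+b)^2\le 2a^2+2b^2$, and then invoke the summability of $\rho^{2(\nu+1)}$ together with a discrete Young-type inequality for the convolution term: $\sum_\nu\big(\sum_{t\le\nu}\gamma^t\rho^{\nu-t+1}\big)^2 \le \big(\sum_k\rho^k\big)\big(\sum_k\rho^k\big)\sum_t(\gamma^t)^2 < \infty$, using $\sum_t(\gamma^t)^2<\infty$ from Assumption \ref{assumption:step-size}. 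Finally, \eqref{U_err_sqsum_bound} follows from \eqref{D_consError_summable} almost immediately: from \eqref{U_update_stackingMat}, $\mathbf{U}^t - \mathbf{1}\otimes\overline{\mathbf{U}}_{\boldsymbol{\phi}^t} = (\mathbf{D}^t-\mathbf{1}\otimes\overline{\mathbf{D}}_{\boldsymbol{\phi}^t}) + \gamma^t\big((\widetilde{\mathbf{D}}^t-\mathbf{D}^t) - \mathbf{1}\otimes\overline{(\widetilde{\mathbf{D}}^t-\mathbf{D}^t)}_{\boldsymbol{\phi}^t}\big)$ using $\overline{\mathbf{U}}_{\boldsymbol{\phi}^t} = \overline{\mathbf{D}}_{\boldsymbol{\phi}^t} + \tfrac{\gamma^t}{I}\sum_i\phi_i^t(\widetilde{\mathbf{D}}^t_{(i)}-\mathbf{D}^t_{(i)})$; squaring, the first term is summable by \eqref{D_consError_summable} and the second is bounded by $C'(\gamma^t)^2$ (again by boundedness of $\mathcal{D}$ and of $\phi_i^t$ via \eqref{delta_def}), hence summable by Assumption \ref{assumption:step-size}.

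The main obstacle I anticipate is the bookkeeping in the telescoping/unrolling step and in verifying \eqref{colStochW_avgMat_eq} carefully enough that the ``mean-field'' part cancels exactly, so that only the contracting differences $\widehat{\mathbf{W}}^{\nu:t}-\widehat{\mathbf{J}}_{\boldsymbol{\phi}^t}$ survive; the rest is routine once Lemma \ref{W_mat_err_decay} and the compactness of $\mathcal{D}$ are in hand. A secondary technical point is that the index shift in \eqref{colStochW_avgMat_eq} ($\widehat{\mathbf{J}}_{\boldsymbol{\phi}^{\nu+1}}\widehat{\mathbf{W}}^{\nu:l} = \widehat{\mathbf{J}}_{\boldsymbol{\phi}^l}$) must be applied with the correct endpoints, and the $t=\nu$ term (where $\widehat{\mathbf{W}}^{\nu:\nu}=\widehat{\mathbf{W}}^\nu$) needs the bound $\|\widehat{\mathbf{W}}^\nu - \widehat{\mathbf{J}}_{\boldsymbol{\phi}^\nu}\|_2 \le c_W\rho$ from Lemma \ref{W_mat_err_decay} with $\nu=l$, which is already covered.
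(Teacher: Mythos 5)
Your proposal is correct and follows essentially the same route as the paper's proof: unroll the $\mathbf{D}$-dynamics, cancel the mean via $\widehat{\mathbf{J}}_{\boldsymbol{\phi}^{\nu+1}}\widehat{\mathbf{W}}^{\nu:t}=\widehat{\mathbf{J}}_{\boldsymbol{\phi}^t}$, invoke the geometric decay of Lemma \ref{W_mat_err_decay} together with boundedness of $\widetilde{\mathbf{D}}^t-\mathbf{D}^t$ from compactness of $\mathcal{D}$, and then obtain \eqref{D_consensus}, \eqref{D_consError_summable} from standard convolution estimates and \eqref{U_err_sqsum_bound} from the step-size update. The only cosmetic differences are that you phrase the square-summability step as a discrete Young inequality where the paper expands the double sum via $ab\le(a^2+b^2)/2$ and its Lemma \ref{sequence_convg_props}(b), and you split $\mathbf{U}^t-\mathbf{1}\otimes\overline{\mathbf{U}}_{\boldsymbol{\phi}^t}$ additively rather than as the paper's convex combination; both variants are equivalent.
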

\begin{proof}
To prove \eqref{D_consensus}, let us first expand $\mathbf{D}^\nu-\mathbf{1}\otimes \overline{\mathbf{D}}_{\boldsymbol{\phi}^\nu}$ as follows: for any $\nu\geq 1$, \vspace{-0.2cm}
\begin{equation}
\begin{aligned}
\mathbf{D}^\nu & \,\overset{\eqref{D_weighted_avg_stackingMat}}{=}\,\widehat{\mathbf{W}}^{\nu-1}\mathbf{U}^{\nu-1}=\widehat{\mathbf{W}}^{\nu-1}\mathbf{D}^{\nu-1}+ \widehat{\mathbf{W}}^{\nu-1}\left(\mathbf{U}^{\nu-1}-\mathbf{D}^{\nu-1}\right)
\\
& \, {=}\,\widehat{\mathbf{W}}^{\nu-1:0} \mathbf{D}^0+\sum_{t=0}^{\nu-1}\widehat{\mathbf{W}}^{\nu-1:t}  \left(\mathbf{U}^{t}-\mathbf{D}^{t}\right),
\end{aligned}
\label{rewrite_1}
\end{equation}
where the last equality   follows from  induction and  the definition of $\widehat{\mathbf{W}}^{\nu:l}$ [cf. \eqref{P_def}]. Similarly, we expand the subtrahend as  
\begin{equation}
\begin{aligned}
\mathbf{1}\otimes\overline{\mathbf{D}}_{\boldsymbol{\phi}^\nu}& \overset{\eqref{D_avg_stackingMat}}{=}\widehat{\mathbf{J}}_{\boldsymbol{\phi}^\nu}\mathbf{D}^\nu
\overset{\eqref{rewrite_1}}{=}\widehat{\mathbf{J}}_{\boldsymbol{\phi}^\nu} \left(\widehat{\mathbf{W}}^{\nu-1:0} \mathbf{D}^0+\sum_{t=0}^{\nu-1}\widehat{\mathbf{W}}^{\nu-1:t}  \left(\mathbf{U}^{t}-\mathbf{D}^{t}\right)\right)
\\
& \overset{\eqref{colStochW_avgMat_eq}}{=}\widehat{\mathbf{J}}\left(\mathbf{D}^0+ \sum_{t=0}^{\nu-1}\widehat{\boldsymbol{\Phi}}^t\left(\mathbf{U}^t-\mathbf{D}^t\right)\right).
\end{aligned}
\label{rewrite_2}
\end{equation}
 Subtracting \eqref{rewrite_2}  from \eqref{rewrite_1} and using \eqref{U_update_stackingMat}, yields\vspace{-0.2cm}
\begin{equation}
\begin{aligned}
\norm{\mathbf{D}^\nu-\mathbf{1}\otimes\overline{\mathbf{D}}_{\boldsymbol{\phi}^\nu} }_F\leq 
&\norm{\widehat{\mathbf{W}}^{\nu-1:0}-\widehat{\mathbf{J}}}_2\norm{\mathbf{D}^0}_F
+\sum_{t=0}^{\nu-1}\gamma^{t}\norm{\widehat{\mathbf{W}}^{\nu-1:t}-\widehat{\mathbf{J}}_{\phi^t}}_2\norm{\widetilde{\mathbf{D}}^t-\mathbf{D}^{t}}_F
\\
\overset{(a)}{\leq}\, & c_1(\rho)^\nu+c_2~\sum_{t=0}^{\nu-1} ~\gamma^t~(\rho)^{\nu-t}
\stackrel[\nu\rightarrow\infty]{(b)}{\longrightarrow} 0,
\end{aligned}
\label{lim_diff_D}
\end{equation}
for some finite   constants $c_1, c_2>0$, where (a) is due to  Lemma \ref{W_mat_err_decay}  and the boundedness of  $\{||\widetilde{\mathbf{D}}^\nu-\mathbf{D}^{\nu}||\}_\nu$; and (b) follows from  Lemma \ref{sequence_convg_props}(a) in Appendix \ref{Prelim_results}. 

Let us now proceed to prove \eqref{D_consError_summable}. Using \eqref{lim_diff_D}, we have \vspace{-0.2cm}
\begin{equation}
\begin{aligned}
&\lim_{\nu\rightarrow\infty}\,\sum_{t=1}^\nu\norm{\mathbf{D}^t-\mathbf{1}\otimes \overline{\mathbf{D}}_{\boldsymbol{\phi}^t}}_F^2
\leq  \lim_{\nu\rightarrow\infty}\,\sum_{t=1}^\nu \left(c_1(\rho)^t+c_2~\sum_{l=0}^{t-1} ~\gamma^l~(\rho)^{t-l}\right)^2
\\
&\overset{(a)}{\leq}  \frac{2c_1^2}{1-(\rho)^2} + 2\,c_2^2 \lim_{\nu\rightarrow\infty}\,\sum_{t=1}^\nu\sum_{l=0}^{t-1}\sum_{k=0}^{t-1} \gamma^l\gamma^k(\rho)^{t-k}(\rho)^{t-l}
\\
&\overset{(b)}{\leq} \frac{2c_1^2}{1-(\rho)^2} + c_2^2 \lim_{\nu\rightarrow\infty}\,\sum_{t=1}^\nu\sum_{l=0}^{t-1} (\gamma^l)^2(\rho)^{t-l}\sum_{k=0}^{t-1}(\rho)^{t-k}
+ c_2^2 \lim_{\nu\rightarrow\infty}\,\sum_{t=1}^\nu\sum_{k=0}^{t-1} (\gamma^k)^2(\rho)^{t-k}\sum_{l=0}^{t-1}(\rho)^{t-l}
\\
&\leq \frac{2c_1^2}{1-(\rho)^2}+\frac{2c_2^2}{1-\rho}\lim_{\nu\rightarrow\infty}\,\sum_{t=1}^\nu\sum_{l=0}^{t-1} (\gamma^l)^2(\rho)^{t-l}
\,\overset{(c)}{<}  \infty,
\end{aligned}
\nonumber
\end{equation}
where in (a) and (b) we used  $(a+b)^2\leq 2(a^2+b^2)$ and $ab\leq (a^2+b^2)/2$, respectively, and (c) is due to  Lemma \ref{sequence_convg_props}(b) (cf. Appendix  \ref{Prelim_results}).

We prove now  \eqref{U_err_sqsum_bound}. Using  \eqref{average_notation} and  \eqref{U_update_stackingMat}, we get\vspace{-0.2cm}
\begin{equation}
\begin{aligned}
\norm{\mathbf{U}^t- \mathbf{1}\otimes\overline{\mathbf{U}}_{\boldsymbol{\phi}^t} }_F^2
&= \norm{\gamma^t\Big(\widetilde{\mathbf{D}}^t-\mathbf{1}\otimes\dfrac{1}{I}\sum_{i=1}^I\phi_i^t\, \widetilde{\mathbf{D}}_{(i)}^t\Big)+ \left(1-\gamma^t\right)\left(\mathbf{D}^t- \mathbf{1}\otimes\overline{\mathbf{D}}_{\boldsymbol{\phi}^t}\right)}_F^2
\\
& \leq 2 \left(\gamma^t\right)^2\Big\|\widetilde{\mathbf{D}}^t-\mathbf{1}\otimes\dfrac{1}{I}\sum_{i=1}^I\phi_i^t\,\widetilde{\mathbf{D}}_{(i)}^t\Big\|_F^2+2 \norm{\mathbf{D}^t- \mathbf{1}\otimes\overline{\mathbf{D}}_{\boldsymbol{\phi}^t}}_F^2,
\end{aligned}
\label{U_cons_error}
\end{equation}
where in the last inequality we used  Jensen's inequality and $(1-\gamma^t)\leq 1$. Therefore,
\begin{equation}
\begin{aligned}
& \lim_{\nu\rightarrow \infty}\,\sum_{t=1}^\nu \norm{\mathbf{U}^t- \mathbf{1}\otimes\overline{\mathbf{U}}_{\boldsymbol{\phi}^t} }_F^2
 \leq  \lim_{\nu\rightarrow \infty} 2\,c_3 \sum_{t=1}^\nu \left(\gamma^t\right)^2+\lim_{\nu\rightarrow \infty} 2\sum_{t=1}^\nu \norm{\mathbf{D}^t- \mathbf{1}\otimes\overline{\mathbf{D}}_{\boldsymbol{\phi}^t}}_F^2
\,\overset{(a)}{<}\infty,
\end{aligned}
\nonumber
\end{equation}
where $c_3$ is a positive finite constant, 
and (a) follows from Assumption \ref{assumption:step-size} and \eqref{D_consError_summable}.
\end{proof} 

 \begin{remark}[On $L_{\nabla{X_i}}(\overline{\mathbf{U}}_{\boldsymbol{\phi}^\nu})$ and $L_{\nabla{X_i}}(\mathbf{U}_{(i)}^{\nu})$]\label{remark_on_L_X}  Recall that $\nabla_{X_i} f_i(\overline{\mathbf{U}}_{\boldsymbol{\phi}^\nu} ,\bullet)$ is Lipschitz continuous on $\mathcal X_i$, with constant $L_{\nabla{X_i}}(\overline{\mathbf{U}}_{\boldsymbol{\phi}^\nu})$ (cf. Assumption \ref{Problem_Assumptions}2).
Since $||\mathbf{U}_{(i)}^\nu-\overline{\mathbf{U}}_{\boldsymbol{\phi}^\nu} ||_F\underset{\nu\rightarrow\infty}{\longrightarrow}0$ [cf. \eqref{U_err_sqsum_bound}, Proposition \ref{consVar_props}] and $L_{\nabla{X_i}}(\mathbf{D})$ is continuous, we have 
{\begin{equation}
\Big\lvert L_{\nabla{X_i}}(\overline{\mathbf{U}}_{\boldsymbol{\phi}^\nu} )-L_{\nabla{X_i}}(\mathbf{U}_{(i)}^{\nu})\Big\rvert\underset{\nu\rightarrow\infty}{\longrightarrow}0,\quad i=1,2,\ldots I.
\label{Lip_continuity}
\end{equation}}
\end{remark}

\subsubsection{Step 2--Boundedness of the iterates}\label{subsection-proof-step2} We show   that 
the sequence $\big\{\big(\mathbf{D}^\nu,\mathbf{X}^\nu\big)\big\}_\nu$  generated by the $D^4L$ Algorithm is bounded [statement (b-i)].   We prove the result 
only for  $\tilde{h}_i$ given by  \eqref{eq:htilde2}; the proof can be easily tailored to the other choice    of $\tilde{h}_i$. 
If the sets $\mathcal X_i$ are bounded [Assumption \ref{Problem_Assumptions}5(i)], the result follows readily. Therefore, we consider next the setting under \ref{Problem_Assumptions}5(ii). 

Throughout the proof, we will use the following properties of  $\tilde{f}_i$ and $\tilde{h}_i$.
\begin{remark}
		\label{surrogate_props}
		The surrogate functions $\tilde{f}_i$  and $\tilde{h}_i$  as in Assumption \ref{Ass-surrogates} have the  following properties: for all $i=1,2,\ldots,I$,
		\begin{enumerate}[label=(\alph*)]
			\item $\tilde{f}_i(\bullet;\mathbf{D},\mathbf{X}_i)$ is  strongly convex on $\mathcal{D}$, uniformly with respect to  $(\mathbf{D},\mathbf{X}_i)\in \mathcal{D}\times \mathcal{X}_i$, with constant  $\tau_{D,i}^\nu>0$; and $\nabla_{D}\tilde{f}_i(\mathbf{D};\mathbf{D},\mathbf{X}_i)=\nabla_D f_i(\mathbf{D},\mathbf{X}_i)$, for all $(\mathbf{D},\mathbf{X}_i)\in\mathcal{D}\times \mathcal{X}_i$.
			\item  $\tilde{h}_i(\bullet;\mathbf{D},\mathbf{X}_i)$ is   strongly convex on $\mathcal{X}_i$, uniformly with respect to  $(\mathbf{D},\mathbf{X}_i)\in \mathcal{D}\times \mathcal{X}_i$, with constant   $\tau_{X,i}^\nu>0$; and    $\nabla_{X_i}\tilde{h}_i(\mathbf{X}_i;\mathbf{D},\mathbf{X}_i)=\nabla_{X_i} f_i(\mathbf{D},\mathbf{X}_i)$, for all $(\mathbf{D},\mathbf{X}_i)\in\mathcal{D}\times \mathcal{X}_i$.
		\end{enumerate}\end{remark}

  By  the optimality of $\mathbf{X}_i^{\nu+1}$ in \eqref{eq:Xupdate},  there exist $\boldsymbol{\Xi}^0_i\in\partial_{X_i} g_i(\mathbf{X}^0_i)$ and $\boldsymbol{\Xi}^{\nu+1}_i\in\partial_{X_i} g_i(\mathbf{X}_i^{\nu+1})$ such that 
\begin{equation}\nonumber
\begin{aligned}
0&\leq \left\langle\nabla_{X_i} \tilde{h}_i(\mathbf{X}_i^{\nu+1};\mathbf{U}_{(i)}^\nu,\mathbf{X}_i^\nu)+\boldsymbol{\Xi}_i^{\nu+1},\mathbf{X}_i^0-\mathbf{X}_i^{\nu+1}\right\rangle \\ &= \left\langle\boldsymbol{\Xi}_i^{\nu+1}-\boldsymbol{\Xi}^0_i+\tau_{X,i}^\nu(\mathbf{X}_i^{\nu+1}-\mathbf{X}^0_i),\mathbf{X}^0_i-\mathbf{X}_i^{\nu+1}\right\rangle
\\
&\quad +\left\langle\nabla_{X_i} f_i(\mathbf{U}_{(i)}^\nu,\mathbf{X}_i^\nu)-\nabla_{X_i} f_i(\mathbf{U}_{(i)}^\nu,\mathbf{X}_i^0),\mathbf{X}^0_i-\mathbf{X}_i^{\nu+1}\right\rangle
\\
&\quad-\left\langle\tau_{X,i}^\nu(\mathbf{X}_i^\nu-\mathbf{X}^0_i),\mathbf{X}^0_i-\mathbf{X}_i^{\nu+1}\right\rangle
\\
&\quad+\left\langle\nabla_{X_i} f_i(\mathbf{U}_{(i)}^\nu,\mathbf{X}_i^0)+\boldsymbol{\Xi}^0_i,\mathbf{X}^0_i-\mathbf{X}_i^{\nu+1}\right\rangle.
\end{aligned}
\end{equation}
Using  Remark \ref{surrogate_props}(b) and the $\mu_i$-strongly convexity of $g_i$'s, we obtain
\begin{equation}
\label{X_iterate_bound0}
\begin{aligned}
 (\tau_{X,i}^\nu+\mu_i)\norm{\mathbf{X}_i^{\nu+1}-\mathbf{X}^0_i}_F^2
 &\leq\left\langle\tau_{X,i}^\nu\mathbf{X}_i^\nu-\nabla_{X_i} f_i(\mathbf{U}_{(i)}^\nu,\mathbf{X}_i^\nu),\mathbf{X}_i^{\nu+1}-\mathbf{X}^0_i\right\rangle
\\
&\quad -\left\langle\tau_{X,i}^\nu\mathbf{X}^0_i-\nabla_{X_i} f_i(\mathbf{U}_{(i)}^\nu,\mathbf{X}_i^0),\mathbf{X}_i^{\nu+1}-\mathbf{X}^0_i\right\rangle
\\
&\quad -\left\langle\underbrace{\nabla_{X_i} f_i(\mathbf{U}_{(i)}^\nu,\mathbf{X}_i^0)+\boldsymbol{\Xi}^0_i}_{\triangleq\mathbf{Z}_i^\nu\in\partial_{X_i}U(\mathbf{U}_{(i)}^\nu,\mathbf{X}^0_i)},\mathbf{X}_i^{\nu+1}-\mathbf{X}^0_i\right\rangle.
\end{aligned}
\end{equation}
Define $\Upsilon_i^\nu(\mathbf{X}_i)\triangleq \tau_{X,i}^\nu\mathbf{X}_i-\nabla_{X_i} f_i(\mathbf{U}_{(i)}^\nu,\mathbf{X}_i)$ and rewrite \eqref{X_iterate_bound0} as  
\begin{equation}
\label{X_iterate_bound1}
(\tau_{X,i}^\nu+\mu_i)\,||\mathbf{X}_i^{\nu+1}-\mathbf{X}^0_i||_F 
 \leq \norm{\Upsilon_i^\nu(\mathbf{X}^\nu_i)-\Upsilon_i^\nu(\mathbf{X}^0_i)}_F + \norm{\mathbf{Z}_i^\nu}_F.
\end{equation}

Since $\mathcal D$ is compact and $\mathbf{X}_i^0$ is given, we have $||\mathbf{Z}_i^\nu||_F\leq B_Z$,  for all $i$, $\nu\geq 1$, and  some finite  $B_Z>0$. Let us bound next   $||\Upsilon_i^\nu(\mathbf{X}^\nu_i)-\Upsilon_i^\nu(\mathbf{X}^0_i)||_F$. We write   
\begin{equation}\label{eq:Upsilon_bound}
\begin{aligned}
\norm{\Upsilon_i^\nu(\mathbf{X}^\nu_i)-\Upsilon_i^\nu(\mathbf{X}^0_i)}_F^2&=(\tau_{X,i}^\nu)^2\norm{\mathbf{X}^\nu_i-\mathbf{X}^0_i}_F^2
 +\norm{\nabla_{X_i} f_i(\mathbf{U}_{(i)}^\nu,\mathbf{X}_i^\nu)-\nabla_{X_i} f_i(\mathbf{U}_{(i)}^\nu,\mathbf{X}_i^0)}_F^2
\\
&\quad-2\tau_{X,i}^\nu\left\langle\nabla_{X_i} f_i(\mathbf{U}_{(i)}^\nu,\mathbf{X}_i^\nu)-\nabla_{X_i} f_i(\mathbf{U}_{(i)}^\nu,\mathbf{X}_i^0),\mathbf{X}^\nu_i-\mathbf{X}^0_i\right\rangle
\\
&\overset{(a)}{\leq}(\tau_{X,i}^\nu)^2\norm{\mathbf{X}^\nu_i-\mathbf{X}^0_i}_F^2
\\
&\quad+\left(1-\frac{2\tau_{X,i}^\nu}{L_{\nabla X_i}(\mathbf{U}_{(i)}^\nu)}\right)\norm{\nabla_{X_i} f_i(\mathbf{U}_{(i)}^\nu,\mathbf{X}_i^\nu)-\nabla_{X_i} f_i(\mathbf{U}_{(i)}^\nu,\mathbf{X}_i^0)}_F^2,
\end{aligned}  
\end{equation}
where in (a) we used  the $1/L_{\nabla X_i}(\mathbf{U}_{(i)}^\nu)$-co-coercivity of $\nabla _{X_i}f_i(\mathbf{U}_{(i)}^\nu,\bullet)$ [due to the convexity of $f_i(\mathbf{U}_{(i)}^\nu,\bullet)$ and the $L_{\nabla X_i}(\mathbf{U}_{(i)}^\nu)$-Lipschitianity of $\nabla _{X_i}f_i(\mathbf{U}_{(i)}^\nu,\bullet)$ \cite[Prop.12.60]{RockWets98}], i.e., \begin{equation}\nonumber\begin{array}{ll}\left\langle\nabla_{X_i} f_i(\mathbf{U}_{(i)}^\nu,\mathbf{X}_i)-\nabla_{X_i} f_i(\mathbf{U}_{(i)}^\nu,\mathbf{Y}_i),\mathbf{X}_i-\mathbf{Y}_i\right\rangle\geq \\ \hspace{2cm} \dfrac{1}{L_{\nabla X_i}(\mathbf{U}_{(i)}^\nu)}\cdot \norm{\nabla_{X_i} f_i(\mathbf{U}_{(i)}^\nu,\mathbf{X}_i)-\nabla_{X_i} f_i(\mathbf{U}_{(i)}^\nu,\mathbf{Y}_i)}_F^2,\qquad \forall \mathbf{X}_i, \mathbf{Y}_i\in \mathcal{X}_i.\end{array}\end{equation}
 Note that  $||\Upsilon_i^\nu(\mathbf{X}^\nu_i)-\Upsilon_i^\nu(\mathbf{X}^0_i)||_F\leq\tau_{X,i}^\nu||\mathbf{X}^\nu_i-\mathbf{X}^0_i||_F$  as long as $\tau_{X,i}^\nu\geq \frac{1}{2}L_{\nabla X_i}(\mathbf{U}_{(i)}^\nu)$, which is satisfied under \ref{freeVars_assumptions}1. Therefore, we can bound  \eqref{X_iterate_bound1} as
\begin{equation}
\label{X_iterate_bound2}
(\tau_{X,i}^\nu+\mu_i)||\mathbf{X}_i^{\nu+1}-\mathbf{X}^0_i||_F
\leq  \tau_{X,i}^\nu\norm{\mathbf{X}^\nu_i-\mathbf{X}^0_i}_F+ B_Z. 
\end{equation}

We can now prove   that, starting from $\mathbf{X}_i^0$, the iterates $\mathbf{X}_i^\nu$ stays in the ball  $\mathcal B_i(R_i,\mathbf{X}^0_i)\triangleq \{\mathbf{X}_i\in \mathbb{R}^{K\times n_i}\,:\,\|\mathbf{X}_i-\mathbf{X}^0_i\|_F\leq R_i\}$, for all $\nu\geq 1$, where $R_i\geq{B_Z}/{\mu_i}$. Let us prove it by induction. Evidently $\mathbf{X}^0_i\in\mathcal B_i(R_i,\mathbf{X}^0_i)$. Let $\mathbf{X}_i^\nu\in \mathcal B_i(R_i,\mathbf{X}^0_i)$; by \eqref{X_iterate_bound2}, we get
\begin{equation}\nonumber
\begin{aligned}
||\mathbf{X}_i^{\nu+1}-\mathbf{X}^0_i||_F& \,\leq\,\frac{\tau_{X,i}^\nu}{\tau_{X,i}^\nu+\mu_i}\,||\mathbf{X}_i^\nu-\mathbf{X}^0_i||_F
+ \frac{B_Z}{\tau_{X,i}^\nu+\mu_i} \,\leq\,R_i,
\end{aligned}
\end{equation}
where the second inequality is due to  $\mathbf{X}_i^\nu\in \mathcal B_i(R,\mathbf{X}^0_i)$ and $R\geq{B_Z}/{\mu}$. Hence  $\mathbf{X}_i^{\nu+1}\in \mathcal B_i(R_i,\mathbf{X}^0_i)$. Therefore,  $\mathbf{X}_i^{\nu}\in \mathcal B_i(R_i,\mathbf{X}^0_i)$, for all $\nu\geq 0$. Since  $\mathcal{D}$ is bounded (cf. Assumption \ref{Problem_Assumptions}3), it follows that $(\mathbf{D}_{(i)}^\nu,\mathbf{X}_i^\nu)\in \mathcal{D}\times \mathcal B_i(R_i,\mathbf{X}^0_i)$, for all $\nu\geq 0$. $\hfill \square$ 

\begin{remark}[On the Lipschitz continuity of $\nabla f_i$'s]
\label{Lipschitz_continuity_remark} 
Since $f_i$ is $\mathcal{C}^2$, a direct consequence of  the boundedness of $\big\{\big(\mathbf{D}^\nu,\mathbf{X}^\nu\big)\big\}_\nu$ is that $\nabla f_i$  [the gradient of $f_i$ with respect to $(\mathbf{D},\mathbf{X}_i)$] is Lipschitz continuous  on  $\mathcal{D}\times \mathcal B_i(R_i,\mathbf{X}^0_i)$, that is,   there exists some positive finite constant   $L_{\nabla,i}$ such that 
\begin{equation}
\label{Lip_G}
||\nabla f_i(\mathbf{D},\mathbf{X}_i)-\nabla f_i(\mathbf{D}',\mathbf{X}'_i)||_F\leq L_{\nabla,i}\,||(\mathbf{D},\mathbf{X}_i)-(\mathbf{D}',\mathbf{X}_i')||_F,
\end{equation}
for all $(\mathbf{D},\mathbf{X}_i),(\mathbf{D}',\mathbf{X}_i')\in\mathcal{D}\times \mathcal B_i(R_i,\mathbf{X}^0_i)$, and    $i=1,2,\ldots,I$. We define $L_\nabla\triangleq \max_i L_{\nabla,i}$. 

The above result also  implies that  $\nabla_D F:\mathcal{D}\times(\mathcal{X}_1\times \cdots\times \mathcal{X}_I)\rightarrow\mathcal{D}$ [cf.  \eqref{eq:P1}] is Lipschitz continuous on $\mathcal{D}\times (\mathcal B_1(R_1,\mathbf{X}^0_1)\times \cdots \times \mathcal B_I(R_I,\mathbf{X}^0_I))$,  with constant ${L}_{\nabla_D}\triangleq I\cdot L_\nabla$. 
\end{remark}

{
\begin{remark}[On the Lipschitz continuity of $\nabla_D\tilde{f}_i$ and $\nabla_{X_i}\tilde{h}_i$]
	\label{Lipschitz_continuity_surrogates_remark} 
	  $\nabla_D \tilde{f}_i: \mathcal{D}\times \mathbb{R}^{K\times n_i}\times \mathbb{R}^{K\times n_i}\rightarrow \mathbb{R}^{M\times K}$  
	  and $\nabla_{X_i} \tilde{h}_i: \mathbb{R}^{K\times n_i}\times \mathcal{D}\times \mathbb{R}^{K\times n_i}\rightarrow \mathbb{R}^{K\times n_i}$  
	  are Lipschitz continuous  on  $\mathcal{D}\times \mathcal{D} \times \mathcal B_i(R_i,\mathbf{X}^0_i)$ and $\mathcal B_i(R_i,\mathbf{X}^0_i) \times \mathcal{D}\times \mathcal B_i(R_i,\mathbf{X}^0_i)$, respectively, with constants $\tilde{L}^D_{\nabla,i}$ and $\tilde{L}^X_{\nabla,i}$. Let us denote $\tilde{L}^D_{\nabla}\triangleq \max_{i}~\tilde{L}^D_{\nabla,i}$ and $\tilde{L}^X_{\nabla}\triangleq \max_{i}~\tilde{L}^X_{\nabla,i}$.
\end{remark}}

   \subsubsection{Step 3--Decrease of $\{U(\overline{\mathbf{D}}^\nu,\mathbf{X}^\nu)\}_\nu$}\label{subsection-proof-step3}
 We study here the  properties of $\{U(\overline{\mathbf{D}}^\nu,\mathbf{X}^\nu)\}_\nu$ showing, in  particular, that it is  convergent  [statement (b-ii)].   
 
 We begin with the following intermediate result. { \begin{proposition}\label{prop_descent}  
 Consider  the setting of Theorem \ref{th:conver}(b); there exist positive constants $s_X$, $\bar{\tau}_D$, $c_7, c_8,$ and a sufficiently large $\bar{\nu}\in \mathbb{N}_+$   such that   the following holds: for all $\nu\geq \bar{\nu}$, 
\begin{equation}
\begin{aligned}
U(\overline{\mathbf{D}}_{\boldsymbol{\phi}^{\nu+1}} ,\mathbf{X}^{\nu+1})\leq U(\overline{\mathbf{D}}_{\boldsymbol{\phi}^{\bar{\nu}}},\mathbf{X}^{\bar{\nu}})
  -\sum_{l=\bar{\nu}}^{\nu}Y^l+ \sum_{l=\bar{\nu}}^{\nu} W^l +E^{\nu,\bar{\nu}},
\end{aligned}
\label{descent_concise}\vspace{-0.3cm}
\end{equation}
where  
\begin{align}
\label{Y_sequence} 
& Y^l  \triangleq   s_X\left(||\mathbf{X}^{l+1}-\mathbf{X}^{l}||_F-\frac{Z^l}{2s_X} \right)^2+\dfrac{\bar{\tau}_D}{I}\,\gamma^l\left(|| \widetilde{\mathbf{D}}^l-\mathbf{D}^l||_F-\frac{I\bar{\epsilon}_\phi}{2\,\bar{\tau}_D}T^l\right)^2,\\
& Z_l  \triangleq c_7\sum_{t=l}^{\infty}\gamma^t(\rho)^{t-l}+L_X\|\mathbf{U}^l-\mathbf{1}\otimes \overline{\mathbf{U}}_{\boldsymbol{\phi}^l}\|_F\label{Zl},\\
& W^l  \triangleq   \frac{I\,\bar{\epsilon}_\phi^2}{4\bar{\tau}_D}\gamma^l\left(T^l\right)^2 +\frac{1}{4\,s_X} \left( Z^l\right)^2+L_G\gamma^l\norm{\mathbf{D}^l-\mathbf{1}\otimes\overline{\mathbf{D}}_{\boldsymbol{\phi}^l}}_F,\\
& E^{\nu,\bar{\nu}}  \triangleq \left[\frac{{c}_6}{1-\rho}((\rho)^{\bar{\nu}}-(\rho)^{\nu+1})+\frac{c_7 B_X}{(1-\rho)^2}\right]\cdot \left(\max_{ t\geq \bar{\nu}}\gamma^t\right),\\
&\{T^\nu\}_\nu\,\,\text{ is such that }\,\,\sum_{\nu=1}^\infty \left(T^\nu\right)^2<\infty, \label{eq:Tl_sequence_summability}
\end{align}
 with $\rho\in (0,1)$ and  $\bar{\epsilon}_\phi$   defined in (\ref{eq:rho}) and (\ref{delta_def}), respectively.  
\end{proposition}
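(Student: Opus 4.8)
The plan is to establish a \emph{one-step} descent bound for $U$ evaluated along $(\overline{\mathbf{D}}_{\boldsymbol{\phi}^\nu},\mathbf{X}^\nu)$ and then sum it. Exploiting the identity $\overline{\mathbf{D}}_{\boldsymbol{\phi}^{\nu+1}}=\overline{\mathbf{U}}_{\boldsymbol{\phi}^\nu}$ [cf.~\eqref{D_bar_eq_U_bar}], the one-step change is split as $U(\overline{\mathbf{D}}_{\boldsymbol{\phi}^{\nu+1}},\mathbf{X}^{\nu+1})-U(\overline{\mathbf{D}}_{\boldsymbol{\phi}^\nu},\mathbf{X}^\nu)=[U(\overline{\mathbf{U}}_{\boldsymbol{\phi}^\nu},\mathbf{X}^{\nu+1})-U(\overline{\mathbf{U}}_{\boldsymbol{\phi}^\nu},\mathbf{X}^\nu)]+[U(\overline{\mathbf{U}}_{\boldsymbol{\phi}^\nu},\mathbf{X}^\nu)-U(\overline{\mathbf{D}}_{\boldsymbol{\phi}^\nu},\mathbf{X}^\nu)]$, i.e.~an $\mathbf{X}$-move followed by a $\mathbf{D}$-move, and each bracket is bounded separately. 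Throughout, Step~2 (boundedness of the iterates) is used to make the relevant gradients Lipschitz on the pertinent compact sets (Remark~\ref{Lipschitz_continuity_remark}), and the uniform bounds $\underline{\epsilon}_\phi\le\phi_i^\nu\le\bar{\epsilon}_\phi$ of Lemma~\ref{W_mat_err_decay} together with the consensus bounds of Proposition~\ref{consVar_props} are invoked repeatedly.

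\textbf{Descent in $\mathbf{X}$.} Since $F$ is separable across the $\mathbf{X}_i$-blocks and each $\nabla_{X_i}f_i(\overline{\mathbf{U}}_{\boldsymbol{\phi}^\nu},\cdot)$ is $L_{\nabla X_i}(\overline{\mathbf{U}}_{\boldsymbol{\phi}^\nu})$-Lipschitz, the descent lemma gives, blockwise, $U(\overline{\mathbf{U}}_{\boldsymbol{\phi}^\nu},\mathbf{X}^{\nu+1})-U(\overline{\mathbf{U}}_{\boldsymbol{\phi}^\nu},\mathbf{X}^\nu)\le\sum_i(\langle\nabla_{X_i}f_i(\overline{\mathbf{U}}_{\boldsymbol{\phi}^\nu},\mathbf{X}_i^\nu),\mathbf{X}_i^{\nu+1}-\mathbf{X}_i^\nu\rangle+g_i(\mathbf{X}_i^{\nu+1})-g_i(\mathbf{X}_i^\nu)+\tfrac12 L_{\nabla X_i}(\overline{\mathbf{U}}_{\boldsymbol{\phi}^\nu})\|\mathbf{X}_i^{\nu+1}-\mathbf{X}_i^\nu\|_F^2)$. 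I would then invoke the optimality of $\mathbf{X}_i^{\nu+1}$ in \eqref{eq:Xupdate}, together with the $\tau_{X,i}^\nu$-strong convexity of $\tilde h_i(\cdot;\mathbf{U}_{(i)}^\nu,\mathbf{X}_i^\nu)$ and the gradient consistency $\nabla_{X_i}\tilde h_i(\mathbf{X}_i^\nu;\mathbf{U}_{(i)}^\nu,\mathbf{X}_i^\nu)=\nabla_{X_i}f_i(\mathbf{U}_{(i)}^\nu,\mathbf{X}_i^\nu)$ (Remark~\ref{surrogate_props}(b)), to get $\langle\nabla_{X_i}f_i(\mathbf{U}_{(i)}^\nu,\mathbf{X}_i^\nu),\mathbf{X}_i^{\nu+1}-\mathbf{X}_i^\nu\rangle+g_i(\mathbf{X}_i^{\nu+1})-g_i(\mathbf{X}_i^\nu)\le-\tau_{X,i}^\nu\|\mathbf{X}_i^{\nu+1}-\mathbf{X}_i^\nu\|_F^2$. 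Swapping $\mathbf{U}_{(i)}^\nu$ for $\overline{\mathbf{U}}_{\boldsymbol{\phi}^\nu}$ in the gradient costs, by Cauchy--Schwarz, at most $L_\nabla\|\mathbf{U}_{(i)}^\nu-\overline{\mathbf{U}}_{\boldsymbol{\phi}^\nu}\|_F\,\|\mathbf{X}_i^{\nu+1}-\mathbf{X}_i^\nu\|_F$; together with the residuals that leak in through the coupling with the tracking recursion (see below), these linear-in-$\|\mathbf{X}^{\nu+1}-\mathbf{X}^\nu\|_F$ errors are gathered into the coefficient $Z^\nu$ of \eqref{Zl}. Since $\tau_{X,i}^\nu\ge\tfrac12 L_{\nabla X_i}(\mathbf{U}_{(i)}^\nu)+\epsilon$ (\ref{freeVars_assumptions}1) and $|L_{\nabla X_i}(\mathbf{U}_{(i)}^\nu)-L_{\nabla X_i}(\overline{\mathbf{U}}_{\boldsymbol{\phi}^\nu})|\to0$ [Remark~\ref{remark_on_L_X}, eq.~\eqref{Lip_continuity}], for all $\nu\ge\bar{\nu}$ with $\bar{\nu}$ large the net coefficient of $\|\mathbf{X}^{\nu+1}-\mathbf{X}^\nu\|_F^2$ is $\le-s_X:=-\epsilon/2<0$; completing the square $-s_X a^2+Z^\nu a=-s_X(a-\tfrac{Z^\nu}{2s_X})^2+\tfrac1{4s_X}(Z^\nu)^2$ produces the $\mathbf{X}$-part of $Y^\nu$ and the $\tfrac1{4s_X}(Z^\nu)^2$ term of $W^\nu$.

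\textbf{Descent in $\mathbf{D}$.} Using \eqref{D_bar_update} and $\tfrac1I\sum_i\phi_i^\nu=1$ one has $\overline{\mathbf{U}}_{\boldsymbol{\phi}^\nu}=(1-\gamma^\nu)\overline{\mathbf{D}}_{\boldsymbol{\phi}^\nu}+\gamma^\nu\widetilde{\mathbf{D}}_{\boldsymbol{\phi}^\nu}$ with $\widetilde{\mathbf{D}}_{\boldsymbol{\phi}^\nu}:=\tfrac1I\sum_i\phi_i^\nu\widetilde{\mathbf{D}}_{(i)}^\nu$. For $G$, two applications of convexity (first along the $\gamma^\nu$-step, then across agents) give $G(\overline{\mathbf{U}}_{\boldsymbol{\phi}^\nu})-G(\overline{\mathbf{D}}_{\boldsymbol{\phi}^\nu})\le\gamma^\nu(\tfrac1I\sum_i\phi_i^\nu(G(\widetilde{\mathbf{D}}_{(i)}^\nu)-G(\mathbf{D}_{(i)}^\nu))+[\tfrac1I\sum_i\phi_i^\nu G(\mathbf{D}_{(i)}^\nu)-G(\overline{\mathbf{D}}_{\boldsymbol{\phi}^\nu})])$, where the Jensen gap $[\cdots]$ is $\le L_G\|\mathbf{D}^\nu-\mathbf{1}\otimes\overline{\mathbf{D}}_{\boldsymbol{\phi}^\nu}\|_F$ by local Lipschitzianity of $G$ on $\mathcal D$ (the last summand of $W^\nu$). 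For $F$, the descent lemma and $L_{\nabla_D}$-Lipschitzianity of $\nabla_D F(\cdot,\mathbf{X}^\nu)$ bound $F(\overline{\mathbf{U}}_{\boldsymbol{\phi}^\nu},\mathbf{X}^\nu)-F(\overline{\mathbf{D}}_{\boldsymbol{\phi}^\nu},\mathbf{X}^\nu)$ by $\tfrac{\gamma^\nu}{I}\sum_i\phi_i^\nu\langle\nabla_D F(\overline{\mathbf{D}}_{\boldsymbol{\phi}^\nu},\mathbf{X}^\nu),\widetilde{\mathbf{D}}_{(i)}^\nu-\mathbf{D}_{(i)}^\nu\rangle$ plus an $O((\gamma^\nu)^2)$ remainder (compactness of $\mathcal D$), which for $\bar{\nu}$ large is absorbed into the negative quadratic term below. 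Combining the $F$- and $G$-first-order terms and using the optimality of $\widetilde{\mathbf{D}}_{(i)}^\nu$ in \eqref{D_tilde_subproblem} (with Remark~\ref{surrogate_props}(a) and convexity of $G$), one gets for each $i$: $\langle\nabla_D F(\overline{\mathbf{D}}_{\boldsymbol{\phi}^\nu},\mathbf{X}^\nu),\widetilde{\mathbf{D}}_{(i)}^\nu-\mathbf{D}_{(i)}^\nu\rangle+G(\widetilde{\mathbf{D}}_{(i)}^\nu)-G(\mathbf{D}_{(i)}^\nu)\le-\tau_{D,i}^\nu\|\widetilde{\mathbf{D}}_{(i)}^\nu-\mathbf{D}_{(i)}^\nu\|_F^2+\|\nabla_D F(\overline{\mathbf{D}}_{\boldsymbol{\phi}^\nu},\mathbf{X}^\nu)-I\boldsymbol{\Theta}_{(i)}^\nu\|_F\|\widetilde{\mathbf{D}}_{(i)}^\nu-\mathbf{D}_{(i)}^\nu\|_F$. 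The error norm I would bound via the conservation identity $\sum_j\phi_j^\nu\boldsymbol{\Theta}_{(j)}^\nu=\sum_j\nabla_D f_j(\mathbf{D}_{(j)}^\nu,\mathbf{X}_j^\nu)$ (a telescoping consequence of \eqref{Theta_update_stackingMat} and column-stochasticity), controlling the argument mismatch by $L_\nabla\sum_j\|\mathbf{D}_{(j)}^\nu-\overline{\mathbf{D}}_{\boldsymbol{\phi}^\nu}\|_F$ and the $\boldsymbol{\Theta}$-disagreement by unrolling \eqref{Theta_update_stackingMat} with Lemma~\ref{W_mat_err_decay}; this yields a geometric convolution of $\{\|\mathbf{G}^{t+1}-\mathbf{G}^t\|_F\}_t$, and since $\|\mathbf{G}^{t+1}-\mathbf{G}^t\|_F$ is controlled by $\gamma^t$, the consensus error and $\|\mathbf{X}^{t+1}-\mathbf{X}^t\|_F$, this is the origin of the term $c_7\sum_{t\ge\nu}\gamma^t(\rho)^{t-\nu}$ in \eqref{Zl}. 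Collecting all these first-order errors into $T^\nu$, bounding $\tau_{D,i}^\nu\phi_i^\nu\ge\bar{\tau}_D>0$ and $\phi_i^\nu\le\bar{\epsilon}_\phi$, and completing the square $-\tfrac{\bar{\tau}_D}{I}\gamma^\nu a^2+\bar{\epsilon}_\phi\gamma^\nu T^\nu a$, produces the $\mathbf{D}$-part of $Y^\nu$ and the $\tfrac{I\bar{\epsilon}_\phi^2}{4\bar{\tau}_D}\gamma^\nu(T^\nu)^2$ term of $W^\nu$.

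\textbf{Telescoping, summability, and the main difficulty.} Adding the one-step bound over $l=\bar{\nu},\dots,\nu$ yields \eqref{descent_concise}: for $\bar{\nu}$ large the $O((\gamma^l)^2)$ descent-lemma remainders are absorbed into the negative quadratic terms of $Y^l$ (using $\gamma^l\to0$), while the transient boundary contributions of the geometric convolutions, uniformly controlled by $\max_{t\ge\bar{\nu}}\gamma^t$, are collected into $E^{\nu,\bar{\nu}}$, which therefore tends to $0$ as $\bar{\nu}\to\infty$ uniformly in $\nu$. Finally \eqref{eq:Tl_sequence_summability}, i.e.~$\sum_\nu(T^\nu)^2<\infty$ (and likewise $\sum_\nu(Z^\nu)^2<\infty$), follows from Proposition~\ref{consVar_props} (square-summability of $\|\mathbf{D}^\nu-\mathbf{1}\otimes\overline{\mathbf{D}}_{\boldsymbol{\phi}^\nu}\|_F$ and $\|\mathbf{U}^\nu-\mathbf{1}\otimes\overline{\mathbf{U}}_{\boldsymbol{\phi}^\nu}\|_F$), Assumption~\ref{assumption:step-size} ($\sum_\nu(\gamma^\nu)^2<\infty$), and a Young-type estimate bounding $\sum_\nu(\sum_{t\ge\nu}\gamma^t(\rho)^{t-\nu})^2$ by $\tfrac1{(1-\rho)^2}\sum_\nu(\gamma^\nu)^2$ --- exactly the manipulation already carried out in the proof of Proposition~\ref{consVar_props}. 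The main obstacle is precisely this error bookkeeping: the $\mathbf{X}$-update feeds into $\|\mathbf{G}^{\nu+1}-\mathbf{G}^\nu\|_F$, hence into the $\boldsymbol{\Theta}$-tracking error, which reappears inside both $Z^\nu$ and $T^\nu$; one must unroll the three coupled recursions --- for the $\mathbf{D}$-consensus, the $\boldsymbol{\Theta}$-tracking, and the increments $\|\mathbf{X}^{\nu+1}-\mathbf{X}^\nu\|_F$ --- without circularity, keep every constant independent of $\nu$, and certify that each resulting convolution of $\{\gamma^t\}$ with geometric weights is square-summable.
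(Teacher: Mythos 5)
Your proposal follows essentially the same route as the paper's proof: the same split of the one-step change into an $\mathbf{X}$-move and a $\mathbf{D}$-move via $\overline{\mathbf{U}}_{\boldsymbol{\phi}^\nu}=\overline{\mathbf{D}}_{\boldsymbol{\phi}^{\nu+1}}$, the descent lemma plus the surrogate optimality conditions (which the paper packages as Proposition~\ref{sol_map_prop}), the unrolled gradient-tracking bound with the $\|\mathbf{X}^{t+1}-\mathbf{X}^t\|_F$ convolution kept apart from the square-summable part $T^\nu$, the order-of-summation swap producing $Z^l$ and the boundary term $E^{\nu,\bar{\nu}}$, and the completion of squares yielding $Y^l$ and $W^l$. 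The argument is correct and matches the paper's proof in all essential respects.
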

\begin{proof}
See Appendix \ref{app_proof_prop_descent}	
\end{proof} }

Note that the sequences $\{Z^l\}_l$, $\{W^l\}_l$, and $\{E^{\nu,\bar{\nu}}\}_{\nu,\bar{\nu}}$ above satisfy
\begin{align}
 &~~ \lim_{\nu\rightarrow\infty}\sum_{l=\bar{\nu}}^{\nu}( Z^l)^2<\infty,\label{Z_l_sum_square}\smallskip
 \\
 &\quad \lim_{\nu\rightarrow\infty}\sum_{l=\bar{\nu}}^{\nu} W^l<\infty,\label{W_l_sum}
  \\
 & \lim_{\bar{\nu}\rightarrow\infty}\underset{{\triangleq E^{\infty,\bar{\nu}}<\infty}}{\underbrace{\left(\lim_{\nu\rightarrow\infty}E^{\nu,\bar{\nu}}\right)}}=0;\label{E_sequence}
\end{align}
where \eqref{Z_l_sum_square} follows from 
\eqref{U_err_sqsum_bound}  [cf. Prop.~\ref{consVar_props}], Assumption \ref{assumption:step-size}, and Lemma \ref{sequence_convg_props}(b) [cf. \eqref{bounded_sum3}] in Appendix \ref{subsec_sequences};   \eqref{W_l_sum} is a consequence of  $\lim_{\nu\rightarrow\infty}\sum_{l=\nu_1}^{\nu}\gamma^l(T^l)^2<\infty$ [due to (\ref{eq:Tl_sequence_summability})], \eqref{D_consError_summable}  [cf. Prop. \ref{consVar_props}], and \eqref{Z_l_sum_square}; and eq. \eqref{E_sequence} is proved by inspection.

It follows from \eqref{descent_concise}, \eqref{Z_l_sum_square}-\eqref{E_sequence} {and the lower-boundedness of $U$ (due to Assumption \ref{Problem_Assumptions}1)} that $\{U(\overline{\mathbf{D}}^{{\nu}},\mathbf{X}^{{\nu}})\}_{\nu}$ is convergent. Indeed, taking the limsup  of the LHS of \eqref{descent_concise} and using \eqref{W_l_sum} and \eqref{E_sequence}, we get
{\begin{equation}
\begin{aligned}
-\infty<\limsup_{\nu\rightarrow\infty} U(\overline{\mathbf{D}}_{\boldsymbol{\phi}^{\nu+1}} ,\mathbf{X}^{\nu+1})
&\leq U(\overline{\mathbf{D}}_{\boldsymbol{\phi}^{\bar{\nu}}},\mathbf{X}^{\bar{\nu}})+\sum_{l=\bar{\nu}}^\infty W^l+E^{\infty,\bar{\nu}}<\infty.
\end{aligned}
\nonumber
\end{equation}}
Taking now the liminf of the RHS of the above inequality with respect to $\bar{\nu}$  while  using  \eqref{E_sequence} and $\lim_{\bar{\nu}\rightarrow\infty}\sum_{l=\bar{\nu}}^\infty W^l=0$, yields
{\begin{align}
\nonumber
-\infty<\limsup_{\nu\rightarrow\infty}\, U(\overline{\mathbf{D}}_{\boldsymbol{\phi}^{\nu+1}} ,\mathbf{X}^{\nu+1})\leq \liminf_{\bar{\nu}\rightarrow\infty}\, U(\overline{\mathbf{D}}_{\boldsymbol{\phi}^{\bar{\nu}}},\mathbf{X}^{\bar{\nu}})<\infty,
\end{align}
which implies the convergence of $\{U(\overline{\mathbf{D}}_{\boldsymbol{\phi}^\nu} ,\mathbf{X}^\nu)\}_\nu$ to a finite value, and}
\begin{align}
&~\lim_{\nu\rightarrow\infty}\sum_{l=\bar{\nu}}^{\nu} \left(||\mathbf{X}^{l+1}-\mathbf{X}^{l}||_F-\frac{Z^l}{2s_X}\right)^2<\infty,
\label{finite_sq_sum_X}
\\
&\lim_{\nu\rightarrow\infty} \sum_{l=\bar{\nu}}^{\nu}\gamma^l\left(|| \widetilde{\mathbf{D}}^l-\mathbf{D}^l||_F-\frac{I\bar{\epsilon}_\phi}{2\bar{\tau}_D}T^l\right)^2 <\infty.
\label{finite_sq_sum_D}
\end{align}

  {Finally, we deduce that $\{U(\overline{\mathbf{D}}^\nu,\mathbf{X}^\nu)\}_\nu$  converges to the same limit point of $\{U(\overline{\mathbf{D}}_{\boldsymbol{\phi}^\nu} ,\mathbf{X}^\nu)\}_\nu$, due to i)  $\sqrt{I}\norm{\overline{\mathbf{D}}^\nu-\overline{\mathbf{D}}_{\boldsymbol{\phi}^\nu}}_F
\leq  \norm{\mathbf{D}^\nu-\mathbf{1}\otimes\overline{\mathbf{D}}_{\boldsymbol{\phi}^\nu}}_F\overset{\eqref{D_consensus}}{\underset{\nu\rightarrow\infty}{\longrightarrow}} 0$; ii) the   continuity of   $U$; and iii) the boundedness of    $\big\{\big(\mathbf{D}^\nu,\mathbf{X}^\nu\big)\big\}_\nu$     [cf. Sec. \ref{subsection-proof-step2}]. 
}
This concludes the proof. $\hfill \square$

\subsubsection{Step 4--Vanishing X-stationarity}\label{subsection-proof-step4} Building on the results in the previous step, we prove here   $\lim_{\nu\rightarrow\infty}\Delta_X(\overline{\mathbf{D}}^\nu,\mathbf{X}^\nu)=0$ [statement (b-iii)]. 
  {For notational simplicity, we will use the shorthand  
 $\widehat{\mathbf{X}}_i^\nu\triangleq \widehat{\mathbf{X}}_i(\overline{\mathbf{D}}^\nu,\mathbf{X}^\nu)$ and  $\widehat{\mathbf{X}}^\nu\triangleq \widehat{\mathbf{X}}(\overline{\mathbf{D}}^\nu,\mathbf{X}^\nu)$, with $\widehat{\mathbf{X}}_i(\overline{\mathbf{D}}^\nu,\mathbf{X}^\nu)$ and $\widehat{\mathbf{X}}(\overline{\mathbf{D}}^\nu,\mathbf{X}^\nu)$ defined in (\ref{X_avg_def}). 
  }
  
  { Using  the equivalence of norms and the triangle inequality, we have  $$\Delta_{X}(\overline{\mathbf{D}}^{\nu},\mathbf{X}^{\nu})\leq K_{X}\|\widehat{\mathbf{X}}^{\nu}-\mathbf{X}^{\nu}\|_{F}\leq K_{X}\underset{\texttt{{term}\,I}}{\underbrace{\|\mathbf{X}^{\nu+1}-\mathbf{X}^{\nu}\|_{F}}}+K_{X}\underset{\texttt{{term}\,II}}{\underbrace{\|\mathbf{X}^{\nu+1}-\widehat{\mathbf{X}}^{\nu}\|_{F}}},$$ for some   $K_X>0$. The rest of the proof consists in showing that \texttt{term I} and  \texttt{term II} above are asymptotically vanishing. }
  
 \noindent $\bullet$ \textbf{On \texttt{term I}:} {\texttt{Term I} can be bounded as\begin{equation}
\frac{1}{2}||\mathbf{X}^{\nu+1}-\mathbf{X}^{\nu}||_F^2\leq \left(||\mathbf{X}^{\nu+1}-\mathbf{X}^{\nu}||_F-\frac{Z^\nu}{2s_X} \right)^2+ \left(\frac{Z^\nu}{2s_X} \right)^2,
\label{X_bound_squaresum_rate}
\end{equation}
for all $\nu\in\mathbb{N}_+$, where the inequality follows from    $\frac{1}{2}a^2\leq (a-b)^2+ b^2,$ with $a,b\in\mathbb{R}$. This,   together with  \eqref{Z_l_sum_square} and \eqref{finite_sq_sum_X},   yields 
\begin{equation}
\sum_{\nu=0}^{\infty}||\mathbf{X}^{\nu+1}-\mathbf{X}^{\nu}||_F^2<\infty\quad \Longrightarrow\quad 
\label{lim_DeltaX_0}
\lim_{\nu\rightarrow\infty} ||\mathbf{X}^{\nu+1}-\mathbf{X}^\nu||_F=0.
\end{equation}}

 \noindent $\bullet$ \textbf{On \texttt{term II}:} 
{ Invoking the   optimality of  $\widehat{\mathbf{X}}_i^\nu$ [cf. \eqref{X_avg_def}]  and $\mathbf{X}^{\nu+1}_i$ [cf. \eqref{eq:Xupdate}], yields
\begin{equation*}
\begin{aligned}
\left\langle\nabla_{X_i} f_i(\overline{\mathbf{D}}^\nu,\mathbf{X}_i^\nu)+\hat{\tau}_X(\widehat{\mathbf{X}}_i^\nu-\mathbf{X}_i^\nu),\mathbf{X}_i^{\nu+1}-\widehat{\mathbf{X}}_i^\nu\right\rangle+g_i(\mathbf{X}_i^{\nu+1})-g_i(\widehat{\mathbf{X}}_i^\nu)\geq 0,
\\
\left\langle\nabla_{X_i} \tilde{h}_i(\mathbf{X}_i^{\nu+1}; {\mathbf{U}}_{(i)}^\nu,\mathbf{X}_i^\nu),\widehat{\mathbf{X}}_i^\nu-\mathbf{X}_i^{\nu+1}\right\rangle+g_i(\widehat{\mathbf{X}}_i^\nu)-g_i(\mathbf{X}_i^{\nu+1})\geq 0.
\end{aligned}
\end{equation*}}

{Summing the two inequalities above and using   Remark \ref{surrogate_props}(b), lead to
\begin{equation}
\begin{aligned}
\hat{\tau}_X||\mathbf{X}_i^{\nu+1}-\widehat{\mathbf{X}}_i^\nu||^2_F \leq & \left\langle \hat{\tau}_X\left(\mathbf{X}_i^{\nu+1}-\mathbf{X}_i^{\nu}\right)+\nabla_{X_i} f_i(\overline{\mathbf{D}}^\nu,\mathbf{X}_i^\nu)-\nabla_{X_i} f_i(\mathbf{U}_{(i)}^\nu,\mathbf{X}_i^\nu), \mathbf{X}_i^{\nu+1}-\widehat{\mathbf{X}}_i^\nu\right\rangle
\\
 &+\left\langle \nabla_{X_i} \tilde{h}_i(\mathbf{X}_i^{\nu}; {\mathbf{U}}_{(i)}^\nu,\mathbf{X}_i^\nu)-\nabla_{X_i} \tilde{h}_i(\mathbf{X}_i^{\nu+1}; {\mathbf{U}}_{(i)}^\nu,\mathbf{X}_i^\nu), \mathbf{X}_i^{\nu+1}-\widehat{\mathbf{X}}_i^\nu\right\rangle.
\end{aligned}
\label{Xavg_Xnew_bound1}
\end{equation}}

{Using the  $\tilde{L}_{\nabla,  i}^X$-Lipschitz continuity of $\nabla_{X_i} \tilde{h}_i$ [cf. Remark \ref{Lipschitz_continuity_surrogates_remark}]  and the $L_{\nabla,i}$-Lipschitz continuity of $\nabla f_i$, and \eqref{U_update}, it is not difficult to show that \eqref{Xavg_Xnew_bound1} implies
\begin{equation}
\begin{aligned}
&||\mathbf{X}_i^{\nu+1}-\widehat{\mathbf{X}}_i^\nu||_F
\\
& \leq  
 \frac{\tilde{L}_{\nabla,  i}^X+\hat{\tau}_{X}}{\hat{\tau}_{X}}\norm{\mathbf{X}_i^{\nu+1}-\mathbf{X}_i^{\nu}}_F +\frac{L_{\nabla,i}}{\hat{\tau}_{X}}\norm{\overline{\mathbf{D}}^\nu-\mathbf{D}_{(i)}^\nu}_F+ \frac{L_{\nabla,i} } {\hat{\tau}_{X}}\gamma^\nu \norm{\widetilde{\mathbf{D}}_{(i)}^\nu-\mathbf{D}_{(i)}^\nu}_F,
\\
& \leq  \frac{\tilde{L}_{\nabla,  i}^X+\hat{\tau}_{X}}{\hat{\tau}_{X}}\norm{\mathbf{X}_i^{\nu+1}-\mathbf{X}_i^{\nu}}_F +\frac{L_{\nabla,i}}{\hat{\tau}_{X}}\norm{\overline{\mathbf{D}}^\nu-\mathbf{D}_{(i)}^\nu}_F+\frac{L_{\nabla,i}}{\hat{\tau}_{X}}\gamma^\nu\, B_D,
\end{aligned}
\label{Xavg_Xnew_bound2}
\end{equation} for some $B_D>0$, where in the last inequality we used the  boundedness of $\{||\widetilde{\mathbf{D}}^\nu-\mathbf{D}^\nu||_F\}_\nu$.
Eq. \eqref{Xavg_Xnew_bound2} together with \eqref{lim_DeltaX_0},  Theorem \ref{th:conver}(a), and $\gamma^\nu\overset{\nu\rightarrow\infty}{\longrightarrow}0$ (cf. Assumption \ref{assumption:step-size}), yield   
 \begin{equation}
  	\lim_{\nu\to \infty} \|\mathbf{X}_i^{\nu+1}-\widehat{\mathbf{X}}_i^\nu\|_F=0.\label{eq:X_minus_x_hat}
  \end{equation}}
  
This concludes the proof of statement (b-iiii). $\hfill \square$

\subsubsection{Step 5--Vanishing liminf  D-stationarity}\label{subsection-proof-step5} {We prove  $\liminf_{\nu\rightarrow\infty}\Delta_D(\overline{\mathbf{D}}^\nu,\mathbf{X}^\nu)=0$ [statement (b-iv)] and $\{(\overline{\mathbf{D}}^\nu,\mathbf{X}^\nu)\}_\nu$   has at least one  limit point which is a stationary solution of    \ref{eq:P1}.
For notational simplicity,   we will use the shorthand   $\widehat{\mathbf{D}}^\nu\triangleq \widehat{\mathbf{D}}(\overline{\mathbf{D}}^\nu,\mathbf{X}^\nu)$, with $\widehat{\mathbf{D}}(\overline{\mathbf{D}}^\nu,\mathbf{X}^\nu)$ defined in   \eqref{D_avg_def}.}
  
{We begin bounding  $\Delta_D(\overline{\mathbf{D}}^\nu,\mathbf{X}^\nu)$ as \begin{equation*}
\Delta_{D}(\overline{\mathbf{D}}^{\nu},\mathbf{X}^{\nu})\leq K_{D}\|\widehat{\mathbf{D}}^{\nu}-\overline{\mathbf{D}}^{\nu}\|_{F}\leq K_{D}\underset{\texttt{{term}\,I}}{\underbrace{\|\widetilde{\mathbf{D}}_{(i)}^{\nu}-{\mathbf{D}}_{(i)}^{\nu}\|_{F}}}+K_{D}\underset{\texttt{{term}\,II}}{\underbrace{\|\widetilde{\mathbf{D}}_{(i)}^{\nu}-\widehat{\mathbf{D}}^{\nu}\|_{F}}}+K_{D}\|{\mathbf{D}}_{(i)}^{\nu}-\overline{\mathbf{D}}^{\nu}\|_{F},  \end{equation*} for some $K_D\!>\!0$. Note that $\|{\mathbf{D}}_{(i)}^{\nu}-\overline{\mathbf{D}}^{\nu}\|_{F}\!\to \!0$ [Theorem \ref{th:conver}(a)]. We show next that $\liminf_{\nu\to \infty}$ $\|\widetilde{\mathbf{D}}_{(i)}^{\nu}-{\mathbf{D}}_{(i)}^{\nu}\|_{F}=0$ and $\|\widetilde{\mathbf{D}}_{(i)}^{\nu}-\widehat{\mathbf{D}}^{\nu}\|_{F}\to 0$, which  proves $\liminf_{\nu\rightarrow\infty}$ $\Delta_D(\overline{\mathbf{D}}^\nu,\mathbf{X}^\nu)=0$.  } 
  
  \noindent $\bullet$ \textbf{On \texttt{term I}:}  {Similarly to the derivations of \eqref{X_bound_squaresum_rate},  there holds 
\begin{equation}
\frac{\gamma^\nu}{2}||\widetilde{\mathbf{D}}^\nu-\mathbf{D}^\nu||_F^2\leq \gamma^\nu\left(|| \widetilde{\mathbf{D}}^\nu-\mathbf{D}^\nu||_F-\frac{I\bar{\epsilon}_\phi}{2\,\bar{\tau}_D}T^\nu\right)^2+\left(\frac{I\bar{\epsilon}_\phi}{2\,\bar{\tau}_D}\right)^2\gamma^\nu(T^\nu)^2,
\end{equation}
for all $\nu\in\mathbb{N}_+$. By eq. \eqref{finite_sq_sum_D} and $\sum_{\nu=0}^\infty \left(T^\nu\right)^2<\infty$ [cf. \eqref{eq:Tl_sequence_summability}], we have
\begin{equation}
\sum_{\nu=0}^{\infty}\gamma^\nu||\widetilde{\mathbf{D}}^\nu-\mathbf{D}^\nu||_F^2<\infty\quad \overset{\text{Assumption \ref{assumption:step-size}}}{\Longrightarrow}\quad \liminf_{\nu\rightarrow\infty}||\widetilde{\mathbf{D}}^\nu-\mathbf{D}^\nu||_F=0.
\label{D_consecutive_sqrSummable}
\end{equation}
}
 
\noindent $\bullet$ \textbf{On \texttt{term II}:} { Using the optimality of $\widehat{\mathbf{D}}^\nu$ [cf. \eqref{D_avg_def}]   and $\widetilde{\mathbf{D}}_{(i)}^\nu$ [cf. \eqref{D_tilde_subproblem}], yields \begin{equation*}
\begin{aligned}
&\left\langle\nabla_D F(\overline{\mathbf{D}}^\nu,\mathbf{X}^\nu)+\hat{\tau}_D(\widehat{\mathbf{D}}^\nu-\overline{\mathbf{D}}^\nu),\widetilde{\mathbf{D}}_{(i)}^\nu-\widehat{\mathbf{D}}^\nu\right\rangle+G(\widetilde{\mathbf{D}}_{(i)}^\nu )-G(\widehat{\mathbf{D}}^\nu)\geq 0,
\\
&\left\langle\nabla_D \tilde{f}_i(\widetilde{\mathbf{D}}_{(i)}^\nu; {\mathbf{D}}_{(i)}^\nu,\mathbf{X}_i^\nu)+I\cdot
 {\boldsymbol{\Theta}}_{i}^\nu-\nabla_D f_i(\mathbf{D}^{\nu}_{(i)},\mathbf{X}^{\nu}_i),\widehat{\mathbf{D}}^\nu-\widetilde{\mathbf{D}}_{(i)}^\nu\right\rangle+G(\widehat{\mathbf{D}}^\nu)-G(\widetilde{\mathbf{D}}_{(i)}^\nu )\geq 0.
\end{aligned}
\end{equation*}
Summing the two inequalities above   and using Remark \ref{surrogate_props}(a),   yields
\begin{equation}
\begin{aligned}
&\hat{\tau}_D||\widetilde{\mathbf{D}}_{(i)}^\nu-\widehat{\mathbf{D}}^\nu ||^2_F
\\
& \leq 
  \left\langle \hat{\tau}_D(\widetilde{\mathbf{D}}_{(i)}^\nu-\mathbf{D}_{(i)}^\nu) +\hat{\tau}_D(\mathbf{D}_{(i)}^\nu-\overline{\mathbf{D}}^\nu)  + \nabla_D F(\overline{\mathbf{D}}^\nu,\mathbf{X}^\nu)- \nabla_D F(\overline{\mathbf{D}}_{\boldsymbol{\phi}^{\nu}},\mathbf{X}^\nu) ,\widetilde{\mathbf{D}}_{(i)}^\nu-\widehat{\mathbf{D}}^\nu\right\rangle
\\
&\quad +\left\langle  \nabla_D F(\overline{\mathbf{D}}_{\boldsymbol{\phi}^{\nu}},\mathbf{X}^\nu)-I \cdot {\boldsymbol{\Theta}}_{i}^\nu ,\widetilde{\mathbf{D}}_{(i)}^\nu-\widehat{\mathbf{D}}^\nu\right\rangle
\\
&\quad -\left\langle \nabla_D \tilde{f}_i(\widetilde{\mathbf{D}}_{(i)}^\nu; {\mathbf{D}}_{(i)}^\nu,\mathbf{X}_i^\nu)-\nabla_D \tilde{f}_i({\mathbf{D}}_{(i)}^\nu; {\mathbf{D}}_{(i)}^\nu,\mathbf{X}_i^\nu)  ,\widetilde{\mathbf{D}}_{(i)}^\nu-\widehat{\mathbf{D}}^\nu\right\rangle.
\end{aligned}
\label{Davg_Dnew_bound1}
\end{equation}

Using the  $\tilde{L}_{\nabla,  i}^D$-Lipschitz continuity of $\nabla_D \tilde{f}_i$ [cf. Remark \ref{Lipschitz_continuity_surrogates_remark}] and the $L_{\nabla_D}$-Lipschitz continuity of $\nabla_D F$ [cf. Remark \ref{Lipschitz_continuity_remark}], it is not difficult to check that \eqref{Davg_Dnew_bound1} implies  \begin{equation}
\begin{aligned}
\left\|\widetilde{\mathbf{D}}_{(i)}^\nu-\widehat{\mathbf{D}}^\nu \right\|_F
\leq & \frac{\tilde{L}_{\nabla,  i}^D+\hat{\tau}_D}{\hat{\tau}_D}\left\|\widetilde{\mathbf{D}}_{(i)}^\nu-\mathbf{D}_{(i)}^\nu\right\|_F +\left\|\overline{\mathbf{D}}^\nu-{\mathbf{D}}_{(i)}^\nu\right\|_F+\frac{L_{\nabla_D}}{\hat{\tau}_D\sqrt{I}}\norm{{\mathbf{D}}^\nu-\mathbf{1}\otimes\overline{\mathbf{D}}_{\boldsymbol{\phi}^{\nu}}}\\& +\frac{I}{\hat{\tau}_D}\left\| {\boldsymbol{\Theta}}_{i}^\nu-\frac{1}{I}\nabla_D F(\overline{\mathbf{D}}_{\boldsymbol{\phi}^{\nu}},\mathbf{X}^\nu)\right\|_F,
\end{aligned}
\label{Davg_Dnew_bound2}
\end{equation}
for all $i=1,\ldots,I$. Since $\liminf_{\nu\to\infty} \|\widetilde{\mathbf{D}}_{(i)}^\nu-\mathbf{D}_{(i)}^\nu\|_F=0$ [cf. \eqref{D_consecutive_sqrSummable}], $\|\overline{\mathbf{D}}^\nu-{\mathbf{D}}_{(i)}^\nu\|_F\to 0$ [Theorem \ref{th:conver}(a)], and $\norm{{\mathbf{D}}^\nu-\mathbf{1}\otimes\overline{\mathbf{D}}_{\boldsymbol{\phi}^{\nu}}}\to 0$ [cf. (\ref{D_consensus})], to prove   $\liminf_{\nu\to \infty} \|\widetilde{\mathbf{D}}_{(i)}^\nu-\widehat{\mathbf{D}}^\nu \|_F=0$, it is sufficient to show that the last term on the RHS of the above inequality is asymptotically vanishing, which is done in the lemma below.

\begin{lemma}[Vanishing gradient-tracking error]
	\label{consVar_props2}
	In the setting above, there holds:
	\begin{equation}
\sum_{\nu=0}^{\infty}\Big\|\boldsymbol{ {\Theta}}^\nu-\mathbf{1}\otimes\frac{1}{I} \nabla_D F(\overline{\mathbf{D}}_{\boldsymbol{\phi}^{\nu}} ,\mathbf{X}^\nu)\Big\|_F^2<\infty.
\label{squaresummable_tracking_err}
	\end{equation}
\end{lemma}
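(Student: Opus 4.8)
The plan is to run the standard gradient-tracking argument, adapted to the push-sum normalization. The cornerstone is the \emph{exact tracking identity}
\[
\widehat{\mathbf J}_{\boldsymbol{\phi}^\nu}\,\boldsymbol{\Theta}^\nu=\mathbf 1\otimes\frac1I\sum_{i=1}^I\nabla_D f_i(\mathbf D_{(i)}^\nu,\mathbf X_i^\nu),\qquad\forall\,\nu\ge 0,
\]
which I would prove by induction on \eqref{Theta_update_stackingMat}: the base case $\nu=0$ uses $\boldsymbol{\phi}^0=\mathbf 1$ and $\boldsymbol{\Theta}_{(i)}^0=\nabla_D f_i(\mathbf D_{(i)}^0,\mathbf X_i^0)$, while the inductive step uses $\widehat{\mathbf J}_{\boldsymbol{\phi}^{\nu+1}}\widehat{\mathbf W}^\nu=\widehat{\mathbf J}_{\boldsymbol{\phi}^\nu}$ (a special case of \eqref{colStochW_avgMat_eq}) together with $\widehat{\mathbf J}_{\boldsymbol{\phi}^{\nu+1}}(\widehat{\boldsymbol{\Phi}}^{\nu+1})^{-1}=\widehat{\mathbf J}$, the latter following from $\tfrac1I\mathbf 1\boldsymbol{\phi}^{(\nu+1)\intercal}(\boldsymbol{\Phi}^{\nu+1})^{-1}=\tfrac1I\mathbf 1\mathbf 1^\intercal$ and the column stochasticity of $\mathbf A^\nu$. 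With this identity in hand I would write
\[
\boldsymbol{\Theta}^\nu-\mathbf 1\otimes\tfrac1I\nabla_D F(\overline{\mathbf D}_{\boldsymbol{\phi}^\nu},\mathbf X^\nu)=(\mathbf I-\widehat{\mathbf J}_{\boldsymbol{\phi}^\nu})\boldsymbol{\Theta}^\nu+\mathbf 1\otimes\tfrac1I\sum_{i=1}^I\bigl(\nabla_D f_i(\mathbf D_{(i)}^\nu,\mathbf X_i^\nu)-\nabla_D f_i(\overline{\mathbf D}_{\boldsymbol{\phi}^\nu},\mathbf X_i^\nu)\bigr),
\]
and observe that the second term is immediately controlled: by the Lipschitz continuity of $\nabla f_i$ on $\mathcal D\times\mathcal B_i(R_i,\mathbf X^0_i)$ [Remark~\ref{Lipschitz_continuity_remark}] its Frobenius norm is $O(\|\mathbf D^\nu-\mathbf 1\otimes\overline{\mathbf D}_{\boldsymbol{\phi}^\nu}\|_F)$, hence square-summable by \eqref{D_consError_summable}.

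For the tracking-consensus term $(\mathbf I-\widehat{\mathbf J}_{\boldsymbol{\phi}^\nu})\boldsymbol{\Theta}^\nu$ I would unroll \eqref{Theta_update_stackingMat} into
\[
\boldsymbol{\Theta}^\nu=\widehat{\mathbf W}^{\nu-1:0}\boldsymbol{\Theta}^0+\sum_{t=0}^{\nu-2}\widehat{\mathbf W}^{\nu-1:t+1}(\widehat{\boldsymbol{\Phi}}^{t+1})^{-1}\bigl(\mathbf G^{t+1}-\mathbf G^t\bigr)+(\widehat{\boldsymbol{\Phi}}^{\nu})^{-1}\bigl(\mathbf G^{\nu}-\mathbf G^{\nu-1}\bigr),
\]
subtract $\widehat{\mathbf J}_{\boldsymbol{\phi}^\nu}\boldsymbol{\Theta}^\nu$, use $\widehat{\mathbf J}_{\boldsymbol{\phi}^\nu}\widehat{\mathbf W}^{\nu-1:l}=\widehat{\mathbf J}_{\boldsymbol{\phi}^l}$, and mimic the derivation of \eqref{lim_diff_D} to obtain a bound of the form
\[
\|(\mathbf I-\widehat{\mathbf J}_{\boldsymbol{\phi}^\nu})\boldsymbol{\Theta}^\nu\|_F\le c_1(\rho)^\nu+c_2\sum_{t=0}^{\nu-1}(\rho)^{\nu-1-t}\,\|\mathbf G^{t+1}-\mathbf G^t\|_F,
\]
where Lemma~\ref{W_mat_err_decay} supplies the geometric decay of $\widehat{\mathbf W}^{\nu:l}-\widehat{\mathbf J}_{\boldsymbol{\phi}^l}$, the uniform lower bound $\underline{\epsilon}_\phi>0$ on the $\phi_i^\nu$'s gives $\|(\widehat{\boldsymbol{\Phi}}^{t+1})^{-1}\|_2\le 1/\underline{\epsilon}_\phi$, and $\{\mathbf G^t\}_t$ is bounded (Step~2 and the $\mathcal C^2$ assumption on $f_i$). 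Squaring, summing over $\nu$, and applying the convolution estimate already employed for \eqref{D_consError_summable} (Cauchy--Schwarz on the geometric kernel plus the bookkeeping of Lemma~\ref{sequence_convg_props}(b)) then reduces everything to proving $\sum_\nu\|\mathbf G^{\nu+1}-\mathbf G^\nu\|_F^2<\infty$.

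Finally, $\sum_\nu\|\mathbf G^{\nu+1}-\mathbf G^\nu\|_F^2<\infty$ follows from the Lipschitz continuity of the $\nabla f_i$ [Remark~\ref{Lipschitz_continuity_remark}], which gives $\|\mathbf G^{\nu+1}-\mathbf G^\nu\|_F^2\le L_\nabla^2\bigl(\|\mathbf D^{\nu+1}-\mathbf D^\nu\|_F^2+\|\mathbf X^{\nu+1}-\mathbf X^\nu\|_F^2\bigr)$: the $\mathbf X$-increment is square-summable by \eqref{lim_DeltaX_0}, and for the $\mathbf D$-increment I would split $\mathbf D^{\nu+1}-\mathbf D^\nu=(\mathbf D^{\nu+1}-\mathbf 1\otimes\overline{\mathbf D}_{\boldsymbol{\phi}^{\nu+1}})-(\mathbf D^\nu-\mathbf 1\otimes\overline{\mathbf D}_{\boldsymbol{\phi}^\nu})+\mathbf 1\otimes(\overline{\mathbf D}_{\boldsymbol{\phi}^{\nu+1}}-\overline{\mathbf D}_{\boldsymbol{\phi}^\nu})$, the first two differences being square-summable by \eqref{D_consError_summable} and, via \eqref{D_bar_update}, the last term equalling $\tfrac{\gamma^\nu}{I}\sum_i\phi_i^\nu(\widetilde{\mathbf D}_{(i)}^\nu-\mathbf D_{(i)}^\nu)$, whose squared norm is summable since $\sum_\nu(\gamma^\nu)^2<\infty$ [Assumption~\ref{assumption:step-size}] and $\{\phi_i^\nu\}_\nu$, $\{\widetilde{\mathbf D}_{(i)}^\nu-\mathbf D_{(i)}^\nu\}_\nu$ are bounded. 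Combining the three estimates yields \eqref{squaresummable_tracking_err}. I expect the main obstacle to be the unrolling step: correctly handling the non-homogeneous, time-varying push-sum recursion and the interplay between the geometrically decaying factors $\widehat{\mathbf W}^{\nu:l}-\widehat{\mathbf J}_{\boldsymbol{\phi}^l}$ and the $\widehat{\boldsymbol{\Phi}}$-normalizations, with index bookkeeping clean enough that the resulting double sum is amenable to the same convolution argument used for the $\mathbf D$-consensus error.
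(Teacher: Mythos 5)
Your proposal is correct and follows essentially the same route as the paper's argument, which simply invokes the bound \eqref{theta_tracking_bound1} established inside the proof of Proposition~\ref{sol_map_prop}(a): unroll the perturbed push-sum recursion \eqref{Theta_update_stackingMat}, invoke the geometric decay of $\mathbf{W}^{\nu:l}-\mathbf{J}_{\boldsymbol{\phi}^l}$ from Lemma~\ref{W_mat_err_decay} together with the bound $\phi_i^\nu\geq\underline{\epsilon}_\phi$, control the perturbations $\mathbf{G}^{t+1}-\mathbf{G}^t$ via the Lipschitz continuity of $\nabla f_i$, and conclude by square-summability of the increments and consensus errors combined with the convolution estimate of Lemma~\ref{sequence_convg_props}. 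The only differences are organizational: you state the exact tracking identity $\widehat{\mathbf{J}}_{\boldsymbol{\phi}^\nu}\boldsymbol{\Theta}^\nu=\widehat{\mathbf{J}}\mathbf{G}^\nu$ explicitly and prove $\sum_\nu\|\mathbf{G}^{\nu+1}-\mathbf{G}^\nu\|_F^2<\infty$ directly (with a clean three-way split of $\mathbf{D}^{\nu+1}-\mathbf{D}^\nu$), whereas the paper folds the $\mathbf{D}$-related contributions into the square-summable sequence $T^\nu$ of \eqref{T_nu}.
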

\begin{proof}
See Sec. \ref{proof_lemma_tracking_vanishing}.	
\end{proof}}


By $\liminf_{\nu\rightarrow\infty}\Delta_D(\overline{\mathbf{D}}^\nu,\mathbf{X}^\nu)=0$, it follows that there exists an infinte subset $\mathcal{N}\subseteq \mathbb{N}_+$ such that     $\lim_{\mathcal{N}\ni\nu\rightarrow\infty}\Delta_D(\overline{\mathbf{D}}^\nu,\mathbf{X}^\nu)=0$. Since   $\{(\mathbf{D}^\nu,\mathbf{X}^\nu)\}_\nu$ is bounded [cf. Sec. \ref{subsection-proof-step2}], it has a convergent subsequence $\{(\mathbf{D}^\nu,\mathbf{X}^\nu)\}_{\nu\in \mathbb{N}'}$, with  ${\mathbb{N}'}\subseteq \mathbb{N}$; let $(\overline{\mathbf{D}}^\infty,\mathbf{X}^\infty)$  denote its limit point. Then, it must be $\lim_{\mathcal{N}'\ni\nu\rightarrow\infty}\Delta_D(\overline{\mathbf{D}}^\nu,\mathbf{X}^\nu)=0$. Combining this result with $\lim_{\nu\rightarrow\infty}\Delta_X(\overline{\mathbf{D}}^\nu,\mathbf{X}^\nu)=0$ (cf. Sec. \ref{subsection-proof-step4}), one can conclude  $\lim_{\mathcal{N}'\ni\nu\rightarrow\infty}\Delta^\nu=0$; hence,   $(\overline{\mathbf{D}}^\infty,\mathbf{X}^\infty)$ is a stationary solution of Problem~\ref{eq:P1}.

\subsubsection{Step 6--Vanishing D-stationarity}
\label{subsection-proof-step6} 
{Finally, we prove  $\lim_{\nu\rightarrow\infty}\Delta_D(\overline{\mathbf{D}}^\nu,\mathbf{X}^\nu)=0$ [statement (b')]. In view of the results already proved in Step 5, it is sufficient to show that $\limsup_{\nu\rightarrow\infty}|| \widetilde{\mathbf{D}}^\nu-\mathbf{D}^\nu||_F=0$. }
  
  \noindent \textbf{1) Preliminaries:}  We begin introducing the following preliminary results. \vspace{-0.2cm}
	{\begin{proposition}
	\it
	\label{sol_map_prop_part2} In the setting of Theorem  \ref{th:conver}(a),  the following hold for $\widetilde{\mathbf{D}}^{\nu}$ [cf. \eqref{D_tilde_subproblem}] and $\mathbf{X}^{\nu}$  [cf. \eqref{eq:Xupdate}]:\smallskip 	
\\
\noindent (a) There exists some constant $L_D>0$ and sequence  $\{\tilde{T}^\nu\}_\nu$, with   $\lim_{\nu\rightarrow\infty} \tilde{T}^\nu=0$, such that,  for any   $\nu_1, \nu_2\in \mathbb{N}_+$,
		\begin{equation}
		\label{D_hat_Lipschitz_err}
		||\widetilde{\mathbf{D}}^{\nu_2}- \widetilde{\mathbf{D}}^{\nu_1}||_F\leq L_D\Big(||\overline{\mathbf{D}}_{\boldsymbol{\phi}^{\nu_2}}- \overline{\mathbf{D}}_{\boldsymbol{\phi}^{\nu_1}}||_F+||\mathbf{X}^{\nu_2}-\mathbf{X}^{\nu_1}||_F\Big)+\tilde{T}^{\nu_1}+\tilde{T}^{\nu_2};
		\end{equation}
	\noindent (b) There exist some constants $0<p_X<1$ and $q_X>0$, and a sufficiently large $\nu_X\in \mathbb{N}_+$ such that, for all $\nu\geq\nu_X$,
	\begin{equation}
	\begin{aligned}
	&||\mathbf{X}^{\nu+1}-\mathbf{X}^\nu||_F\leq p_X||\mathbf{X}^\nu-\mathbf{X}^{\nu-1}||_F+q_X||\mathbf{U}^\nu-\mathbf{U}^{\nu-1}||_F.
	\end{aligned}
	\label{DeltaX_bound}
	\end{equation}
\end{proposition}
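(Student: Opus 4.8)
Both parts are sensitivity/contraction estimates for the parametric best-response maps defined by the strongly convex subproblems \eqref{D_tilde_subproblem} and \eqref{eq:Xupdate}; in each case the engine is the strong monotonicity of the associated first-order optimality conditions, combined with the Lipschitz bounds on $\nabla f_i$, $\nabla_D F$, $\nabla_D\tilde f_i$, $\nabla_{X_i}\tilde h_i$ that hold on the bounded region isolated in Step~2 (Remarks~\ref{Lipschitz_continuity_remark} and \ref{Lipschitz_continuity_surrogates_remark}). These estimates are legitimate only on that region, so boundedness of $\{(\mathbf{D}^\nu,\mathbf{X}^\nu)\}_\nu$ (hence of $\{\mathbf{U}^\nu\}_\nu$) proved in Step~2 is a prerequisite; (b) is invoked only in the regime of Step~6, so I may assume in addition Assumptions~\ref{Problem_Assumptions}5(ii) and \ref{freeVars_assumptions}2.

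For (a) I would argue by direct comparison. Writing the optimality condition of \eqref{D_tilde_subproblem} at $\nu_1$ and $\nu_2$ in variational-inequality form, with data $\tilde f_i(\cdot;\mathbf{D}_{(i)}^{\nu_k},\mathbf{X}_i^{\nu_k})$, the linear term $\mathbf{b}_i^{\nu_k}\triangleq I\boldsymbol{\Theta}_{(i)}^{\nu_k}-\nabla_D f_i(\mathbf{D}_{(i)}^{\nu_k},\mathbf{X}_i^{\nu_k})$, and $G$, then testing each with the other minimizer, adding, and using the monotonicity of $\partial G$ together with the $\tau_{D,i}^{\nu_1}$-strong convexity of $\tilde f_i(\cdot;\mathbf{D}_{(i)}^{\nu_1},\mathbf{X}_i^{\nu_1})$ (Remark~\ref{surrogate_props}(a)) and the uniform bound $\inf_\nu\tau_{D,i}^\nu>0$ (Assumption~\ref{freeVars_assumptions}1), one gets
\[
\|\widetilde{\mathbf{D}}_{(i)}^{\nu_2}-\widetilde{\mathbf{D}}_{(i)}^{\nu_1}\|_F\le \tfrac{1}{\inf_\nu\tau_{D,i}^\nu}\Big(\big\|\nabla_D\tilde f_i(\widetilde{\mathbf{D}}_{(i)}^{\nu_2};\mathbf{D}_{(i)}^{\nu_1},\mathbf{X}_i^{\nu_1})-\nabla_D\tilde f_i(\widetilde{\mathbf{D}}_{(i)}^{\nu_2};\mathbf{D}_{(i)}^{\nu_2},\mathbf{X}_i^{\nu_2})\big\|_F+\|\mathbf{b}_i^{\nu_1}-\mathbf{b}_i^{\nu_2}\|_F\Big).
\]
The first term is bounded, via Remark~\ref{Lipschitz_continuity_surrogates_remark}, by a constant times $\|\mathbf{D}_{(i)}^{\nu_1}-\mathbf{D}_{(i)}^{\nu_2}\|_F+\|\mathbf{X}_i^{\nu_1}-\mathbf{X}_i^{\nu_2}\|_F$, plus a term in $|\tau_{D,i}^{\nu_1}-\tau_{D,i}^{\nu_2}|$ that is controlled by the tail $\sum_{t\ge\min(\nu_1,\nu_2)}|\tau_{D,i}^{t+1}-\tau_{D,i}^{t}|$, a null quantity by \eqref{extra_condition_tau_D}. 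For $\mathbf{b}_i^{\nu_1}-\mathbf{b}_i^{\nu_2}$ I would add and subtract $\nabla_D F(\overline{\mathbf{D}}_{\boldsymbol{\phi}^{\nu_k}},\mathbf{X}^{\nu_k})$ and $\nabla_D f_i(\overline{\mathbf{D}}_{\boldsymbol{\phi}^{\nu_k}},\mathbf{X}_i^{\nu_k})$, so that $\mathbf{b}_i^{\nu_k}$ equals $\nabla_D F(\overline{\mathbf{D}}_{\boldsymbol{\phi}^{\nu_k}},\mathbf{X}^{\nu_k})-\nabla_D f_i(\overline{\mathbf{D}}_{\boldsymbol{\phi}^{\nu_k}},\mathbf{X}_i^{\nu_k})$ up to the gradient-tracking error $\|I\boldsymbol{\Theta}_{(i)}^{\nu_k}-\nabla_D F(\overline{\mathbf{D}}_{\boldsymbol{\phi}^{\nu_k}},\mathbf{X}^{\nu_k})\|_F$ and the consensus error $\|\mathbf{D}^{\nu_k}-\mathbf{1}\otimes\overline{\mathbf{D}}_{\boldsymbol{\phi}^{\nu_k}}\|_F$, which vanish by Lemma~\ref{consVar_props2} and \eqref{D_consensus}; the remaining ``clean'' difference is Lipschitz in $(\overline{\mathbf{D}}_{\boldsymbol{\phi}},\mathbf{X})$ by Remark~\ref{Lipschitz_continuity_remark}. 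Stacking over $i$, replacing the $\mathbf{D}_{(i)}$-differences by $\overline{\mathbf{D}}_{\boldsymbol{\phi}}$-differences up to the (vanishing) consensus error, and collecting all $\nu_k$-dependent residuals into a single decreasing null sequence $\tilde T^{\nu_k}$, yields \eqref{D_hat_Lipschitz_err}.

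For (b), recalling $\nabla_{X_i}\tilde h_i(\mathbf{X}_i;\mathbf{U}_{(i)}^t,\mathbf{X}_i^t)=\nabla_{X_i}f_i(\mathbf{U}_{(i)}^t,\mathbf{X}_i^t)+\tau_{X,i}^t(\mathbf{X}_i-\mathbf{X}_i^t)$ for the linearized surrogate \eqref{eq:htilde2} (the plain one is analogous), I would write the optimality conditions of \eqref{eq:Xupdate} at $\nu$ and $\nu-1$, subtract them, and test with $\mathbf{X}_i^{\nu+1}-\mathbf{X}_i^\nu$. Using the $\mu_i$-strong monotonicity of $\partial_{X_i}g_i$ and rearranging (inserting $\nabla_{X_i}f_i(\mathbf{U}_{(i)}^{\nu-1},\mathbf{X}_i^\nu)$ and recognizing the map $\Upsilon_i^{t}(\mathbf{X}_i)\triangleq\tau_{X,i}^{t}\mathbf{X}_i-\nabla_{X_i}f_i(\mathbf{U}_{(i)}^{t},\mathbf{X}_i)$ already analyzed in Step~2), one reaches $(\tau_{X,i}^\nu+\mu_i)\|\mathbf{X}_i^{\nu+1}-\mathbf{X}_i^\nu\|_F^2\le\langle\Upsilon_i^{\nu-1}(\mathbf{X}_i^\nu)-\Upsilon_i^{\nu-1}(\mathbf{X}_i^{\nu-1}),\mathbf{X}_i^{\nu+1}-\mathbf{X}_i^\nu\rangle+\langle\nabla_{X_i}f_i(\mathbf{U}_{(i)}^{\nu-1},\mathbf{X}_i^\nu)-\nabla_{X_i}f_i(\mathbf{U}_{(i)}^{\nu},\mathbf{X}_i^\nu),\mathbf{X}_i^{\nu+1}-\mathbf{X}_i^\nu\rangle$. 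The bound $\|\Upsilon_i^{\nu-1}(\mathbf{X}_i^\nu)-\Upsilon_i^{\nu-1}(\mathbf{X}_i^{\nu-1})\|_F\le\tau_{X,i}^{\nu-1}\|\mathbf{X}_i^\nu-\mathbf{X}_i^{\nu-1}\|_F$ holds because $\tau_{X,i}^{\nu-1}\ge\tfrac12 L_{\nabla X_i}(\mathbf{U}_{(i)}^{\nu-1})$ by \eqref{tau_X_cond_c}, via co-coercivity of $\nabla_{X_i}f_i(\mathbf{U}_{(i)}^{\nu-1},\cdot)$ (cf. \eqref{eq:Upsilon_bound}); the second term is bounded by $L_{\nabla,i}\|\mathbf{U}_{(i)}^\nu-\mathbf{U}_{(i)}^{\nu-1}\|_F\|\mathbf{X}_i^{\nu+1}-\mathbf{X}_i^\nu\|_F$ (Remark~\ref{Lipschitz_continuity_remark}). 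Dividing by $\|\mathbf{X}_i^{\nu+1}-\mathbf{X}_i^\nu\|_F$ gives the per-agent recursion
\[
\|\mathbf{X}_i^{\nu+1}-\mathbf{X}_i^\nu\|_F\le \frac{\tau_{X,i}^{\nu-1}}{\tau_{X,i}^\nu+\mu_i}\,\|\mathbf{X}_i^\nu-\mathbf{X}_i^{\nu-1}\|_F+\frac{L_{\nabla,i}}{\tau_{X,i}^\nu+\mu_i}\,\|\mathbf{U}_{(i)}^\nu-\mathbf{U}_{(i)}^{\nu-1}\|_F ,
\]
and taking the $\ell_2$-sum over $i$ yields \eqref{DeltaX_bound} with $q_X\triangleq\max_i L_{\nabla,i}/(\inf_\nu\tau_{X,i}^\nu+\mu_i)$, provided the leading coefficients can be made $<1$.

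The crux is exactly that last point: $\tau_{X,i}^{\nu-1}/(\tau_{X,i}^\nu+\mu_i)<1$ requires $\tau_{X,i}^{\nu-1}-\tau_{X,i}^\nu<\mu_i$, and one cannot rely on non-expansiveness of a \emph{fixed} proximal map because the weight $\tau_{X,i}^\nu$ varies with $\nu$; the near-miss $\tau_{X,i}^{\nu-1}-\tau_{X,i}^\nu$ has to be dominated by the strong-convexity margin $\mu_i$ of $g_i$, which is precisely what Assumption~\ref{freeVars_assumptions}2 provides, since $\limsup_\nu|\tau_{X,i}^\nu-\tau_{X,i}^{\nu-1}|<\mu=\min_i\mu_i$ gives some $\delta>0$ with $\tau_{X,i}^{\nu-1}-\tau_{X,i}^\nu\le\mu-\delta$ for all large $\nu$, hence the coefficient is $\le 1-\delta/(\sup_\nu\tau_{X,i}^\nu+\mu)<1$ uniformly in $i$ (using $\sup_\nu\tau_{X,i}^\nu<\infty$ from Assumption~\ref{freeVars_assumptions}1); so $\nu_X$ is chosen past this threshold and $p_X\triangleq\max_i(1-\delta/(\sup_\nu\tau_{X,i}^\nu+\mu))$. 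The only other delicate bookkeeping, in both parts, is to verify that every residual can be split as $\tilde T^{\nu_1}+\tilde T^{\nu_2}$ (resp. absorbed into the $\|\mathbf{U}^\nu-\mathbf{U}^{\nu-1}\|_F$ term) with $\tilde T^\nu$ a genuine null sequence — which uses the consensus decay \eqref{D_consensus}, the vanishing tracking error of Lemma~\ref{consVar_props2}, and the summability in \eqref{extra_condition_tau_D}.
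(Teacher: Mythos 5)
Your proposal is correct and follows essentially the same route as the paper's proof: compare the optimality conditions of the two strongly convex subproblems, absorb the consensus error, the gradient-tracking error (Lemma~\ref{consVar_props2}, Proposition~\ref{consVar_props}) and the $\tau_{D,i}$-drift controlled by \eqref{extra_condition_tau_D} into the vanishing residual $\tilde T^\nu$ for (a), and for (b) use the co-coercivity bound on the map $\Upsilon_i$ together with \eqref{extra_condition_tau_x} to push the contraction coefficient strictly below one. The only cosmetic deviation is your choice of pivot $\Upsilon_i^{\nu-1}$ in (b), which yields the factor $\tau_{X,i}^{\nu-1}/(\tau_{X,i}^{\nu}+\mu_i)$ in place of the paper's $\bigl(\tau_{X,i}^{\nu}+|\tau_{X,i}^{\nu}-\tau_{X,i}^{\nu-1}|\bigr)/(\tau_{X,i}^{\nu}+\mu_i)$; both are handled by the same argument.
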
}

\begin{proof} See Appendix~\ref{app_proof_prop_delta_d_zero}\end{proof} 

 \noindent \textbf{2) Proof of $\limsup_{\nu\rightarrow\infty}|| \widetilde{\mathbf{D}}^\nu-\mathbf{D}^\nu||_F=0$.} 
For notational simplicity, let us define $\Delta\widetilde{\mathbf{D}}^\nu\triangleq \widetilde{\mathbf{D}}^\nu-\mathbf{D}^\nu$.
{Suppose by contradiction that $\limsup_{\nu\rightarrow\infty}~||\Delta\widetilde{\mathbf{D}}^\nu||_F>0$;  since $\liminf_{\nu\rightarrow\infty}~||\Delta\widetilde{\mathbf{D}}^\nu||_F$ $=0$ [cf. (\ref{D_consecutive_sqrSummable})],   there exists $\delta>0$ such that $||\Delta\widetilde{\mathbf{D}}^\nu||_F> 2\delta$ and $||\Delta\widetilde{\mathbf{D}}^{\nu'}||_F< \delta$ for infinitely many $\nu,\nu'\in\mathbb{N}_+$.} Therefore, one can find an infinite
 subset of indices, denoted by $\mathcal{K}$, having the following properties:
 for any $\nu\in\mathcal{K}$, there exists an index $i_\nu>\nu$ such that 
\begin{align}
\label{bounds_divergent_seq1}
&||\Delta\widetilde{\mathbf{D}}^\nu||_F<\delta, \qquad ||\Delta\widetilde{\mathbf{D}}^{i_\nu}||_F>2\delta,
\\
\label{bounds_divergent_seq2}
\delta\leq &||\Delta\widetilde{\mathbf{D}}^j||_F\leq 2\delta, \qquad\nu<j<i_\nu.
\end{align}
Let $\nu_2$ be a sufficiently large integer such that \eqref{descent_concise} holds   and 
$T^{\nu}<\frac{2\bar{\tau}_D\,\delta}{I\bar{\epsilon}_\phi}$, for all $\nu\geq\nu_2$ [such $\nu_2$ exists, due to \eqref{eq:Tl_sequence_summability}]. Note that there exists a $\bar{\delta}>0$ such that $\delta-\frac{I\bar{\epsilon}_\phi}{2\bar{\tau}_D} T^\nu\geq \bar{\delta}$, for all  $\nu\geq\nu_2$. Choose  $\mathcal{K}\ni\nu\geq \nu_2$; using \eqref{descent_concise}, with $\nu=i_\nu$ and $\bar{\nu}=\nu+1$, yields\vspace{-0.2cm}
\begin{equation}
\begin{aligned}
U(\overline{\mathbf{D}}_{\boldsymbol{\phi}^{i_\nu+1}},\mathbf{X}^{i_\nu+1}) \leq & \,
U(\overline{\mathbf{D}}_{\boldsymbol{\phi}^{\nu+1}} ,\mathbf{X}^{\nu+1})-c_{8}\, \left(\bar{\delta}\right)^2\sum_{l=\nu+1}^{i_\nu}\gamma^l
 +\sum_{l=\nu+1}^{i_\nu} W^l+E^{i_\nu,\nu+1},
\end{aligned}
\label{ineq_4_contra}
\end{equation}
for some  finite constant $c_{8}>0$. Using the convergence of $\lbrace U(\overline{\mathbf{D}}_{\boldsymbol{\phi}^\nu} ,\mathbf{X}^\nu)\rbrace_\nu$, $\sum_{l=1}^{\infty} W^l<\infty$ [cf. \eqref{W_l_sum}], and $\lim_{\mathcal{K}\ni\nu\rightarrow\infty} E^{i_\nu,\nu+1}=0$ [cf. \eqref{E_sequence}], inequality \eqref{ineq_4_contra} implies
\begin{equation}
\label{lim_partial_sum_gamma_zero}
\lim_{\mathcal{K}\ni\nu\rightarrow\infty}\sum_{l=\nu+1}^{i_\nu}\gamma^l=0.
\end{equation}

We show next that \eqref{lim_partial_sum_gamma_zero} leads to a contradiction.

{ It follows from \eqref{bounds_divergent_seq1} and \eqref{bounds_divergent_seq2} that, for all $\mathcal{K}\ni\nu\geq \nu_2$,
\begin{equation}
\begin{aligned}
\delta< & ||\Delta\widetilde{\mathbf{D}}^{i_\nu}||_F -||\Delta\widetilde{\mathbf{D}}^\nu||_F
\\
\overset{(a)}{\leq} &|| \Delta\widetilde{\mathbf{D}}^{i_\nu}-\Delta\widetilde{\mathbf{D}}^\nu  ||_F
=||\widetilde{\mathbf{D}}^{i_\nu}-\mathbf{D}^{i_\nu}-\widetilde{\mathbf{D}}^\nu+\mathbf{D}^\nu||_F
\\
\overset{(b)}{\leq} &\|\widetilde{\mathbf{D}}^{i_\nu}-\widetilde{\mathbf{D}}^\nu\|+\|\mathbf{D}^\nu\pm\mathbf{1}\otimes\overline{\mathbf{D}}_{\boldsymbol{\phi}^{\nu}} \pm\mathbf{1}\otimes\overline{\mathbf{D}}_{\boldsymbol{\phi}^{i_\nu}}-\mathbf{D}^{i_\nu}\|_F
\\
 \overset{\eqref{D_hat_Lipschitz_err}}{\,\,\leq}  & \left(L_D+\sqrt{I}\right)\, ||\overline{\mathbf{D}}_{\boldsymbol{\phi}^{i_\nu}}- \overline{\mathbf{D}}_{\boldsymbol{\phi}^\nu} ||_F+ \widetilde{E}^{i_\nu,\nu},
\end{aligned}
\label{lower_bound_contra3}
\end{equation}
with 
\begin{equation}
\begin{aligned}
\widetilde{E}^{i_\nu,\nu}&\triangleq L_D||\mathbf{X}^{i_\nu}-\mathbf{X}^\nu||_F+||\mathbf{D}^\nu- \mathbf{1}\otimes\overline{\mathbf{D}}_{\boldsymbol{\phi}^\nu} ||_F +||\mathbf{D}^{i_\nu}-\mathbf{1}\otimes\overline{\mathbf{D}}_{\boldsymbol{\phi}^{i_\nu}}||_F+\tilde{T}^{\nu}+\tilde{T}^{i_\nu},
\end{aligned}
\nonumber
\end{equation}
where in (a) we  used the reverse triangle inequality (i.e. $||\mathbf{A}||_F-||\mathbf{B}||_F\leq ||\mathbf{A}-\mathbf{B}||_F,\forall \mathbf{A},\mathbf{B}\in\mathbb{R}^{MI\times K}$); and  in (b) we add/subtracted some dummy terms and used the triangle inequality.

We prove next that  $\lim_{\nu\rightarrow\infty}~\widetilde{E}^{i_\nu,\nu}=0$. Clearly,  if $\lim_{\nu\rightarrow\infty}||\mathbf{X}^{i_\nu}-\mathbf{X}^\nu||_F=0$, then \eqref{D_consensus} [cf.  Proposition~\ref{consVar_props}]  and $\tilde{T}^\nu\overset{\nu\rightarrow\infty}{\longrightarrow} 0$  [cf. Proposition \ref{sol_map_prop_part2}(a)] imply $\lim_{\nu\rightarrow\infty}~\widetilde{E}^{i_\nu,\nu}=0$. 
 It is then sufficient to show $\lim_{\nu\rightarrow\infty}||\mathbf{X}^{i_\nu}-\mathbf{X}^\nu||_F=0$.

First, we bound $||\mathbf{X}^{i_\nu}-\mathbf{X}^\nu||_F$ properly. 
} 
Summing \eqref{DeltaX_bound} from  $\nu>\nu_2$ to $i_\nu-1$, yields\vspace{-0.1cm}
\begin{equation}
\begin{aligned}
&\sum_{t=\nu}^{i_\nu-1}||\mathbf{X}^{t+1}-\mathbf{X}^t||_F\leq p_X\sum_{t=\nu}^{i_\nu-1}||\mathbf{X}^t-\mathbf{X}^{t-1}||_F+q_X\sum_{t=\nu}^{i_\nu-1}||\mathbf{U}^t-\mathbf{U}^{t-1}||_F,
\end{aligned}
\label{DeltaX_bound2}
\end{equation}
implying
\begin{equation}
||\mathbf{X}^{i_\nu}-\mathbf{X}^{\nu}||_F \leq \sum_{t=\nu}^{i_\nu-1}||\mathbf{X}^{t+1}-\mathbf{X}^t||_F \leq \frac{p_X}{1-p_X} ||\mathbf{X}^{\nu}-\mathbf{X}^{\nu-1}||_F
 +\frac{q_X}{1-p_X}\sum_{t=\nu}^{i_\nu-1}||\mathbf{U}^t-\mathbf{U}^{t-1}||_F.
\nonumber
\end{equation}
Since  $\lim_{\nu\rightarrow\infty}||\mathbf{X}^{\nu}-\mathbf{X}^{\nu-1}||_F=0$ [cf.~\eqref{lim_DeltaX_0}], it follows from the above inequality that $\lim_{\mathcal{K}\ni\nu\rightarrow\infty}||\mathbf{X}^{i_\nu}-\mathbf{X}^{\nu}||_F=0$, if $\lim_{\mathcal{K}\ni\nu\rightarrow\infty}\sum_{t=\nu}^{i_\nu-1}||\mathbf{U}^t-\mathbf{U}^{t-1}||_F=0$, which is proved next.
Rewrite first  $||\mathbf{U}^t-\mathbf{U}^{t-1}||_F$ as   
\begin{equation}
\begin{aligned}
&||\mathbf{U}^t-\mathbf{U}^{t-1}||_F \leq ||\mathbf{U}^t-\mathbf{1}\otimes\overline{\mathbf{U}}_{\boldsymbol{\phi}^t} ||_F+||\mathbf{U}^{t-1}-\mathbf{1}\otimes\overline{\mathbf{U}}_{\boldsymbol{\phi}^{t-1}}||_F+\sqrt{I}||\overline{\mathbf{U}}_{\boldsymbol{\phi}^t} -\overline{\mathbf{U}}_{\boldsymbol{\phi}^{t-1}}||_F
\\
&\qquad \overset{\eqref{D_bar_eq_U_bar}-\eqref{D_bar_update}}{=}||\mathbf{U}^t-\mathbf{1}\otimes\overline{\mathbf{U}}_{\boldsymbol{\phi}^t} ||_F+||\mathbf{U}^{t-1}-\mathbf{1}\otimes\overline{\mathbf{U}}_{\boldsymbol{\phi}^{t-1}}||_F
+\frac{\gamma^{t}}{\sqrt{I}}\left\Vert\sum_{i=1}^I\phi_i^t\left(\widetilde{\mathbf{D}}_{(i)}^{t}-\mathbf{D}_{(i)}^{t}\right)\right\Vert_F.
\end{aligned}
\nonumber
\end{equation}
 Since $||\sum_{i=1}^I\phi_i^t\,(\widetilde{\mathbf{D}}_{(i)}^{t}-\mathbf{D}_{(i)}^{t})||_F$ is bounded (due to $\phi_i^t\leq \bar{\epsilon}_\phi$ [cf. \eqref{delta_def}] and compactness of  $\mathcal{D}$) and $\lim_{\mathcal{K}\ni\nu\rightarrow\infty}\sum_{t=\nu}^{i_\nu}\gamma^t=0$ [cf. \eqref{lim_partial_sum_gamma_zero}], there holds  $\lim_{\mathcal{K}\ni\nu\rightarrow\infty}\sum_{t=\nu}^{i_\nu}{\gamma^{t}}||\sum_{i=1}^I\widetilde{\mathbf{D}}_{(i)}^{t}-\mathbf{D}_{(i)}^{t}||_F=0$. Therefore, to prove  $\lim_{\mathcal{K}\ni\nu\rightarrow\infty}\sum_{t=\nu}^{i_\nu-1}||\mathbf{U}^t-\mathbf{U}^{t-1}||_F=0$, it is sufficient to show that   $\lim_{\mathcal{K}\ni\nu\rightarrow\infty}\sum_{t=\nu}^{i_\nu}||\mathbf{U}^t-\mathbf{1}\otimes\overline{\mathbf{U}}_{\boldsymbol{\phi}^t} ||_F=0$ [which implies also $\lim_{\mathcal{K}\ni\nu\rightarrow\infty}\sum_{t=\nu}^{i_\nu}||\mathbf{U}^{t-1}-\mathbf{1}\otimes\overline{\mathbf{U}}_{\boldsymbol{\phi}^{t-1}}||_F=0$, due to $\lim_{\nu\rightarrow\infty}||\mathbf{U}^{\nu}-\mathbf{1}\otimes\overline{\mathbf{U}}_{\boldsymbol{\phi}^\nu} ||_F=0$, see \eqref{U_err_sqsum_bound}].
By \eqref{U_cons_error},  the boundedness of $\{\widetilde{\mathbf{D}}^\nu\}_\nu$, and \eqref{lim_partial_sum_gamma_zero}, it is sufficient  to show that $\lim_{\mathcal{K}\ni\nu\rightarrow\infty}\sum_{t=\nu}^{i_\nu}||\mathbf{D}^t-\mathbf{1}\otimes\overline{\mathbf{D}}_{\boldsymbol{\phi}^t}||_F=0$. We have
\begin{equation}
\begin{aligned}
&\lim_{\mathcal{K}\ni\nu\rightarrow\infty}\sum_{l=\nu}^{i_\nu}||\mathbf{D}^l-\mathbf{1}\otimes\overline{\mathbf{D}}_{\boldsymbol{\phi}^l} ||_F
\overset{\eqref{lim_diff_D}} {\leq} \lim_{\mathcal{K}\ni\nu\rightarrow\infty}\sum_{l=\nu}^{i_\nu}\left(
c_1(\rho)^l+c_2~\sum_{t=0}^{l-1} ~\gamma^t~(\rho)^{l-t}\right)
\\
&\quad= c_2~ \lim_{\mathcal{K}\ni\nu\rightarrow\infty}\sum_{l=\nu}^{i_\nu}
\sum_{t=0}^{l-1} ~\gamma^t~(\rho)^{l-t} =c_2~ \lim_{\mathcal{K}\ni\nu\rightarrow\infty}\sum_{t=0}^{i_\nu-1}
\sum_{l=\max(t+1,\nu)}^{i_\nu} ~\gamma^t~(\rho)^{l-t}
\\
&\quad= c_2~ \lim_{\mathcal{K}\ni\nu\rightarrow\infty}\sum_{t=0}^{\nu-1}
\sum_{l=\nu}^{i_\nu} ~\gamma^t~(\rho)^{l-t}+c_2~ \lim_{\mathcal{K}\ni\nu\rightarrow\infty}\sum_{t=\nu}^{i_\nu-1}
\sum_{l=t+1}^{i_\nu} ~\gamma^t~(\rho)^{l-t}
\\
&\quad\leq c_2~ \lim_{\mathcal{K}\ni\nu\rightarrow\infty}\sum_{t=0}^{\nu-1}
\sum_{l=\nu}^{i_\nu} ~\gamma^t~(\rho)^{l-t}+\frac{c_2}{1-\rho}~ \underbrace{\lim_{\mathcal{K}\ni\nu\rightarrow\infty}\sum_{t=\nu}^{i_\nu-1}~\gamma^t}_{\qquad\quad=0\quad\mathrm{by~}\eqref{lim_partial_sum_gamma_zero}} \\
&\quad\leq \frac{c_2}{1-\rho}~ \lim_{\mathcal{K}\ni\nu\rightarrow\infty}~\sum_{t=0}^{\nu-1}\gamma^t~(\rho)^{\nu-t}
\overset{\eqref{zero_lim}}{=}0.
\end{aligned}
\nonumber
\end{equation}
This proves
$
\lim_{\nu\rightarrow\infty}||\mathbf{X}^{i_\nu}-\mathbf{X}^\nu||_F=0$ and thus   $\lim_{\nu\rightarrow\infty}~\widetilde{E}^{i_\nu,\nu}=0$.


We can now prove that  \eqref{lim_partial_sum_gamma_zero} leads to a contradiction. Since  $\widetilde{E}^{i_\nu,\nu}\overset{\nu\rightarrow\infty}{\longrightarrow}0$, there exists a sufficiently large integer $\nu_3\in \mathcal K$, such that  $\nu_3>\nu_2$   and  $\widetilde{E}^{i_\nu,\nu}<\delta$, for all  $\nu>\nu_3$. Define  $\delta'$ such that  $0<\delta'\leq \delta-\widetilde{E}^{i_\nu,\nu}$. Using  \eqref{D_bar_update} and $\mathbf{1}^\intercal\boldsymbol{\phi}^\nu=I$,
 \eqref{lower_bound_contra3} implies
\begin{equation}
\begin{aligned}
\dfrac{\delta'}{(L_D+1)\sqrt{I}}&< \sum_{t=\nu}^{i_\nu-1}\gamma^t~||\Delta\widetilde{\mathbf{D}}^t||_F
\overset{\eqref{bounds_divergent_seq1}-\eqref{bounds_divergent_seq2}}{\leq} 2\delta\,\sum_{t=\nu}^{i_\nu-1}\gamma^t,
\end{aligned}
\label{lower_bound_contra4}
\end{equation}
for all $\mathcal K\ni\nu>\nu_3$.  Equation \eqref{lower_bound_contra4}   contradicts  \eqref{lim_partial_sum_gamma_zero}. Hence, it must be    $\limsup_{\nu\rightarrow\infty}~||\Delta\widetilde{\mathbf{D}}^\nu||_F=0$, and thus  
$\lim_{\nu\rightarrow\infty}~||\widetilde{\mathbf{D}}^\nu-\mathbf{D}^\nu||_F=0.\hfill \square$

\subsection{Proof of Theorem \ref{th:rate}}
\label{th:rate_proof}

\noindent {\textbf{(a) Rate of consensus error.} Fix $\theta\in (0,1)$. 
Combining   \eqref{Dcons_Dphi_consErr_Bound} and \eqref{lim_diff_D}, we obtain 
\begin{equation}
\begin{aligned}
e^\nu
\leq & 
  c_8~\sum_{l=1}^{\nu} ~\gamma^{\nu-l}~(\rho)^{l}
= c_8~\sum_{l=1}^{\floor{(1-\theta)\nu}} ~\gamma^{\nu-l}~(\rho)^{l}+c_8~\sum_{l=\floor{(1-\theta)\nu}+1}^{\nu} ~\gamma^{\nu-l}~(\rho)^{l}
\\
\leq & c_8 ~\gamma^{\nu-\floor{(1-\theta)\nu}}~\sum_{l=0}^{\floor{(1-\theta)\nu}}~(\rho)^{l}+c_8~\gamma^{0}~\sum_{l=\floor{(1-\theta)\nu}+1}^{\nu} ~(\rho)^{l},\\
\overset{(a)}{=} & c_8 ~\gamma^{\ceil{\theta\nu}}~\dfrac{1-(\rho)^{\floor{(1-\theta)\nu}+1}}{1-\rho}+c_8~\gamma^{0}~(\rho)^{\floor{(1-\theta)\nu}+1}\cdot\dfrac{1-(\rho)^{\ceil{\theta\nu}}}{1-\rho}\\
 {\leq} & c_9\left(\gamma^{\ceil{\theta\nu}}+  (\rho)^{(1-\theta)\nu}\right)\overset{(b)}{\leq} c_9\left(\gamma^{\ceil{\theta\nu}}+\left((\rho)^{\frac{1-\theta}{\theta}}\right)^{\ceil{\theta\nu}-1}\right)\overset{(c)}{=}\mathcal{O}\left(\gamma^{\ceil{\theta \nu}}\right),
\end{aligned}
\label{lim_diff_D5}
\end{equation}}
for some positive constants $c_8$ and $c_9$, where in (a) we used $\nu-\floor{(1-\theta)\nu}=\ceil{\theta\nu}$;   (b) follows from    $x\geq \ceil{x}-1,$ $x\in\mathbb{R}$; and in (c) we used   ${\tilde{\rho}}^{\nu}=o(\gamma^{\nu})$, for any $\tilde{\rho}\in(0,1)$. This proves statement (a). 

\noindent {\textbf{(b) Rate of optimization errors.}	In the following we will use the shorthand:  
 $\widehat{\mathbf{X}}_i^\nu\triangleq \widehat{\mathbf{X}}_i(\overline{\mathbf{D}}^\nu,\mathbf{X}^\nu)$ and  $\widehat{\mathbf{X}}^\nu\triangleq \widehat{\mathbf{X}}(\overline{\mathbf{D}}^\nu,\mathbf{X}^\nu)$, with $\widehat{\mathbf{X}}_i(\overline{\mathbf{D}}^\nu,\mathbf{X}^\nu)$ and $\widehat{\mathbf{X}}(\overline{\mathbf{D}}^\nu,\mathbf{X}^\nu)$ defined in (\ref{X_avg_def}); and  $\widehat{\mathbf{D}}^\nu\triangleq \widehat{\mathbf{D}}(\overline{\mathbf{D}}^\nu,\mathbf{X}^\nu)$, with $\widehat{\mathbf{D}}(\overline{\mathbf{D}}^\nu,\mathbf{X}^\nu)$  defined in   \eqref{D_avg_def}.}

\noindent { \textbf{1) Proof of \eqref{Rate_TeX}:}  We begin bounding  $\Delta_X(\overline{\mathbf{D}}^\nu,\mathbf{X}^\nu)$ as    \begin{equation}\frac{1}{2}(\Delta_X(\overline{\mathbf{D}}^\nu,\mathbf{X}^\nu))^2\overset{(a)}{\leq }\frac{ {K_1}}{2}||\widehat{\mathbf{X}}^\nu-\mathbf{X}^\nu||^2_F \leq {K_1}||\mathbf{X}^{\nu+1}-\widehat{\mathbf{X}}^\nu ||^2_F+{K}_1||\mathbf{X}^{\nu+1}-\mathbf{X}^{\nu}||^2_F,\label{eq:bound_delta_X_square}\end{equation} 
where (a) holds by equivalence of the norms with ${K}_1$ being  a proper positive constant. 

 By squaring both sides of \eqref{Xavg_Xnew_bound2} and using $\frac{1}{n}(\sum_{i=1}^n a_i)^2\leq \sum_{i=1}^n a_i^2,\forall a_i\in\mathbb{R}$ (by Jensen inequality), the first term on the RHS of (\ref{eq:bound_delta_X_square}) can be bounded as  
\begin{equation}
\begin{aligned}
&\frac{1}{3K_2}||\mathbf{X}_i^{\nu+1}-\widehat{\mathbf{X}}_i^\nu||^2_F
\leq\norm{\mathbf{X}_i^{\nu+1}-\mathbf{X}_i^{\nu}}^2_F +\norm{\overline{\mathbf{D}}^\nu-\mathbf{D}_{(i)}^\nu}^2_F+(\gamma^\nu)^2,
\end{aligned}
\label{Xavg_Xnew_bound2_recall}
\end{equation} 
for some positive constant $K_2>0$. Summing  \eqref{Xavg_Xnew_bound2_recall} over  $i=1,\ldots,I$, yields
\begin{equation}
\begin{aligned}
\frac{1}{3K_2}||\mathbf{X}^{\nu+1}-\widehat{\mathbf{X}}^\nu||^2_F
\leq & \norm{\mathbf{X}^{\nu+1}-\mathbf{X}^{\nu}}^2_F +\norm{\mathbf{D}^\nu-\mathbf{1}\otimes\overline{\mathbf{D}}^\nu}^2_F+I(\gamma^\nu)^2
\\
\leq & \norm{\mathbf{X}^{\nu+1}-\mathbf{X}^{\nu}}^2_F +4\norm{\mathbf{D}^\nu-\mathbf{1}\otimes\overline{\mathbf{D}}_{\boldsymbol{\phi}^\nu}}^2_F+I(\gamma^\nu)^2.
\end{aligned}
\label{Xavg_Xnew_bound2_recall2}
\end{equation} 
Finally, combining  (\ref{eq:bound_delta_X_square}) and \eqref{Xavg_Xnew_bound2_recall2}, yields
\begin{equation}
\begin{aligned}
\frac{1}{2{K_1}}(\Delta_X(\overline{\mathbf{D}}^\nu,\mathbf{X}^\nu))^2\leq (3K_2+1) 
\norm{\mathbf{X}^{\nu+1}-\mathbf{X}^{\nu}}^2_F +12K_1\norm{\mathbf{D}^\nu-\mathbf{1}\otimes\overline{\mathbf{D}}_{\boldsymbol{\phi}^\nu}}^2_F+3K_2I(\gamma^\nu)^2.
\end{aligned}
\label{Xavg_Xnew_bound2_recall3}
\end{equation} 
It follows from   \eqref{Xavg_Xnew_bound2_recall3} together with \eqref{lim_DeltaX_0}, \eqref{D_consError_summable}   and Assumption \ref{assumption:step-size} $$\sum_{\nu=0}^{\infty}(\Delta_X(\overline{\mathbf{D}}^\nu,\mathbf{X}^\nu))^2<\infty.$$ By the definition of $T_{X,\epsilon}$, there holds  
\begin{equation}
T_{X,\epsilon} \epsilon^2\leq \sum_{\nu=0}^{T_{X,\epsilon}}(\Delta_X(\overline{\mathbf{D}}^\nu,\mathbf{X}^\nu))^2<\infty,
\nonumber 
\end{equation}
which proves \eqref{Rate_TeX}.}

\noindent { \textbf{2) Proof of \eqref{Rate_TeD}:} 
  Following the same approach as above, we can bound $\Delta_D(\overline{\mathbf{D}}^\nu,\mathbf{X}^\nu)$ as
 \begin{equation}
\frac{1}{3}(\Delta_D(\overline{\mathbf{D}}^\nu,\mathbf{X}^\nu))^2\leq \frac{K_3}{3}||\widehat{\mathbf{D}}^\nu-\overline{\mathbf{D}}^\nu||^2_F \leq K_3||\widehat{\mathbf{D}}^\nu-   \widetilde{\mathbf{D}}_{(i)}^\nu||^2_F+K_3||\widetilde{\mathbf{D}}_{(i)}^\nu-{\mathbf{D}}_{(i)}^\nu||^2_F+K_3||{\mathbf{D}}_{(i)}^\nu-\overline{\mathbf{D}}^\nu||^2_F, \label{eq:bound_delta_D_square}\end{equation} for some $K_3>0$. Using \eqref{Davg_Dnew_bound2}, the first term on the RHS of (\ref{eq:bound_delta_D_square}) can be bounded as
\begin{equation}
\begin{aligned}
  \frac{1}{4 K_4}\|\widetilde{\mathbf{D}}_{(i)}^\nu-\widehat{\mathbf{D}}^\nu \|^2_F \leq & \,\|\widetilde{\mathbf{D}}_{(i)}^\nu-\mathbf{D}_{(i)}^\nu\|^2_F+\left\|{\boldsymbol{\Theta}}_{(i)}^\nu-\frac{1}{I}\nabla_D F(\overline{\mathbf{D}}_{\boldsymbol{\phi}^{\nu}},\mathbf{X}^\nu)\right\|^2_F+\|\overline{\mathbf{D}}^\nu-{\mathbf{D}}_{(i)}^\nu\|_F^2
\\
&  +\norm{{\mathbf{D}}^\nu-\mathbf{1}\otimes\overline{\mathbf{D}}_{\boldsymbol{\phi}^{\nu}}}^2
\end{aligned}
\label{Davg_Dnew_bound2_recall}
\end{equation}
with some constant $K_4>0$. Summing \eqref{Davg_Dnew_bound2_recall} over   $i=1,\ldots,I$,   we get
\begin{equation}
\begin{aligned}
&\frac{1}{4 K_4}\|\widetilde{\mathbf{D}}^\nu-\mathbf{1}\otimes\widehat{\mathbf{D}}^\nu \|^2_F
 \\
\leq & \|\widetilde{\mathbf{D}}^\nu-\mathbf{D}^\nu\|^2_F+\left\| {\boldsymbol{\Theta}}^\nu-\mathbf{1}\otimes\frac{1}{I}\nabla_D F(\overline{\mathbf{D}}_{\boldsymbol{\phi}^{\nu}},\mathbf{X}^\nu)\right\|^2_F+\norm{\mathbf{D}^\nu-\mathbf{1}\otimes\overline{\mathbf{D}}^\nu}^2_F+I\norm{\mathbf{D}^\nu-\mathbf{1}\otimes\overline{\mathbf{D}}_{\boldsymbol{\phi}^\nu}}^2_F
\\
\leq & \|\widetilde{\mathbf{D}}^\nu-\mathbf{D}^\nu\|^2_F+\left\| {\boldsymbol{\Theta}}^\nu-\mathbf{1}\otimes\frac{1}{I}\nabla_D F(\overline{\mathbf{D}}_{\boldsymbol{\phi}^{\nu}},\mathbf{X}^\nu)\right\|^2_F+(4+I)\norm{\mathbf{D}^\nu-\mathbf{1}\otimes\overline{\mathbf{D}}_{\boldsymbol{\phi}^\nu}}^2_F.
\end{aligned}
\label{Davg_Dnew_bound2_recall2}
\end{equation}

Summing  (\ref{eq:bound_delta_D_square}) over $i=1,\ldots, I$ and using  (\ref{Davg_Dnew_bound2_recall2}), yields 
 
\begin{equation}
\begin{aligned}
& \frac{I}{3 K_3}(\Delta_D(\overline{\mathbf{D}}^\nu,\mathbf{X}^\nu))^2
\\
\leq &  (4K_4+1) \|\widetilde{\mathbf{D}}^\nu-\mathbf{D}^\nu\|^2_F+4K_4\left\|{\boldsymbol{\Theta}}^\nu-\mathbf{1}\otimes\frac{1}{I}\nabla_D F(\overline{\mathbf{D}}_{\boldsymbol{\phi}^{\nu}},\mathbf{X}^\nu)\right\|^2_F
\\
& +4((4+I)K_4+1)\norm{\mathbf{D}^\nu-\mathbf{1}\otimes\overline{\mathbf{D}}_{\boldsymbol{\phi}^\nu}}^2_F.
\end{aligned}
\label{Davg_Dnew_bound2_recall2_2}
\end{equation}

It follows from   \eqref{Davg_Dnew_bound2_recall2_2}  together with    \eqref{D_consecutive_sqrSummable},   \eqref{squaresummable_tracking_err}, and \eqref{D_consError_summable} $$\sum_{\nu=0}^{\infty}\gamma^\nu(\Delta_D(\overline{\mathbf{D}}^\nu,\mathbf{X}^\nu))^2<\infty.$$ By definition of $T_{D,\epsilon}$ and non-increasing property of $\{\gamma^\nu\}_\nu$, we get
\begin{equation}
\gamma^{ T_{D,\epsilon}}\,T_{D,\epsilon} \,\epsilon^2\leq \sum_{\nu=0}^{T_{D,\epsilon}}\gamma^\nu(\Delta_D(\overline{\mathbf{D}}^\nu,\mathbf{X}^\nu))^2<\infty.
\label{D_bound_squaresum_rate}
\end{equation}
Using $\gamma^\nu=K/\nu^p$, with some constant $K>0$ and $p\in(1/2,1)$,   \eqref{D_bound_squaresum_rate} provides the desired result as in \eqref{Rate_TeD}.
\hfill$\square$}

\subsection{Miscellanea results}\label{Prelim_results}
This section contains some miscellanea  results used in the  proofs of Theorems \ref{th:conver} and \ref{th:rate}. 

\subsubsection{Sequence properties} \label{subsec_sequences} The following lemma summarizes some summability properties of suitably chosen sequences, which appear in some of the proofs.  
\begin{lemma}
\label{sequence_convg_props}
Given the sequences $\{a^\nu\}_\nu$ and  $\{b^\nu\}_\nu$, and a scalar   $\lambda\in [0,1)$,   the following  hold:
\begin{enumerate}[label=(\alph*)]
\item If $\lim_{\nu\rightarrow\infty}a^\nu=0$, then,
\begin{equation}
\label{zero_lim}
\lim_{\nu\rightarrow\infty}\sum_{t=1}^\nu a^t(\lambda)^{\nu-t}=0.
\end{equation}
\item If $\lim_{\nu\rightarrow\infty}\sum_{t=1}^\nu\left(a^t\right)^2<\infty$ and  $\lim_{\nu\rightarrow\infty}\sum_{t=1}^\nu\left(b^t\right)^2<\infty$, then
\begin{align}
&~~~\lim_{\nu\rightarrow\infty}\sum_{l=1}^\nu\sum_{t=1}^l a^t b^l(\lambda)^{l-t}<\infty,\label{bounded_sum1}
\\
&~~\lim_{\nu\rightarrow\infty}\sum_{l=1}^\nu\sum_{t=1}^l \left(a^t\right)^2(\lambda)^{l-t}<\infty,
\label{bounded_sum2}
\\
&\lim_{\nu,\nu'\rightarrow\infty}\sum_{l=1}^\nu\left(\sum_{t=l}^{\nu'}a^t(\lambda)^{t-l}\right)^2<\infty.
\label{bounded_sum3}
\end{align}
\end{enumerate}
\end{lemma}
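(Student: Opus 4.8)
The plan is to prove both parts by entirely elementary manipulations. Part (a) is the discrete analogue of the statement that convolving a null sequence with a summable (here, geometric) kernel yields a null sequence — a Toeplitz-type limit lemma — and will be handled by an $\varepsilon$-splitting argument. Part (b) will follow from interchanging the order of summation (a finite Fubini), together with Young's inequality $ab\le\tfrac12(a^2+b^2)$ and the Cauchy--Schwarz inequality, using throughout the geometric bound $\sum_{k\ge 0}\lambda^k=1/(1-\lambda)$.

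For \eqref{zero_lim}, fix $\varepsilon>0$ and pick $N$ such that $|a^t|\le\varepsilon$ for all $t>N$. For $\nu>N$ I would split $\sum_{t=1}^\nu a^t\lambda^{\nu-t}=\sum_{t=1}^N a^t\lambda^{\nu-t}+\sum_{t=N+1}^\nu a^t\lambda^{\nu-t}$; the first term is at most $\lambda^{\nu-N}\sum_{t=1}^N|a^t|$, which tends to $0$ as $\nu\to\infty$ since it is a fixed finite sum multiplied by a geometrically vanishing factor, while the second term is bounded in absolute value by $\varepsilon\sum_{k\ge 0}\lambda^k=\varepsilon/(1-\lambda)$. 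Letting first $\nu\to\infty$ and then $\varepsilon\downarrow 0$ gives the claim. For \eqref{bounded_sum2} I would interchange the two sums: $\sum_{l=1}^\nu\sum_{t=1}^l(a^t)^2\lambda^{l-t}=\sum_{t=1}^\nu(a^t)^2\sum_{l=t}^\nu\lambda^{l-t}\le\frac{1}{1-\lambda}\sum_{t=1}^\infty(a^t)^2<\infty$. For \eqref{bounded_sum1} I would first bound $|a^t b^l|\le\tfrac12\big((a^t)^2+(b^l)^2\big)$; the $(a^t)^2$-contribution is controlled by \eqref{bounded_sum2}, and the $(b^l)^2$-contribution equals $\tfrac12\sum_{l}(b^l)^2\sum_{t=1}^l\lambda^{l-t}\le\frac{1}{2(1-\lambda)}\sum_l(b^l)^2<\infty$. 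For \eqref{bounded_sum3} I would apply Cauchy--Schwarz to $\sum_{t=l}^{\nu'}a^t\lambda^{t-l}=\sum_{t=l}^{\nu'}\big(a^t\lambda^{(t-l)/2}\big)\lambda^{(t-l)/2}$, obtaining $\big(\sum_{t=l}^{\nu'}a^t\lambda^{t-l}\big)^2\le\frac{1}{1-\lambda}\sum_{t=l}^{\nu'}(a^t)^2\lambda^{t-l}$, and then sum over $l$ and interchange again: $\sum_{l=1}^\nu\big(\sum_{t=l}^{\nu'}a^t\lambda^{t-l}\big)^2\le\frac{1}{1-\lambda}\sum_{t=1}^{\nu'}(a^t)^2\sum_{l=1}^{\min(t,\nu)}\lambda^{t-l}\le\frac{1}{(1-\lambda)^2}\sum_{t=1}^{\infty}(a^t)^2$, which is finite and independent of $\nu$ and $\nu'$.

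The argument is almost entirely routine; the only points deserving care are (i) carrying absolute values through so that the bounds remain valid for sequences of arbitrary sign, and (ii) in \eqref{bounded_sum3}, applying Cauchy--Schwarz \emph{before} summing over $l$ (rather than after), which is precisely what makes the final estimate uniform in both $\nu$ and $\nu'$ — the form in which the result is used in Step~3 of the proof of Theorem~\ref{th:conver}. I do not anticipate any genuine obstacle here; the lemma is a bookkeeping device isolating these geometric-kernel estimates from the main arguments.
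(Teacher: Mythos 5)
Your proof is correct, and it differs from the paper's in one structural respect: the paper does not prove parts (a), \eqref{bounded_sum1} and \eqref{bounded_sum2} at all, but cites them from Lemma~7 of \citet{Nedic2010}, and only proves \eqref{bounded_sum3} itself. You instead give a self-contained elementary argument for everything: the $\varepsilon$-splitting for (a), the finite Fubini interchange for \eqref{bounded_sum2}, and Young's inequality plus the geometric tail for \eqref{bounded_sum1} (carrying absolute values, which in fact yields absolute convergence and hence existence of the limits, slightly more than the stated boundedness). For \eqref{bounded_sum3} your route is also a bit different: you apply Cauchy--Schwarz with the weight split $a^t\lambda^{(t-l)/2}\cdot\lambda^{(t-l)/2}$ before summing over $l$, whereas the paper expands the square into a double sum over $(t,k)$ and uses $ab\le\frac12(a^2+b^2)$ followed by two geometric-series bounds; the two estimates are essentially equivalent and both land on the same uniform bound $\frac{1}{(1-\lambda)^2}\sum_{t}(a^t)^2$, independent of $\nu$ and $\nu'$, which is exactly the form needed in Step~3 of the proof of Theorem~\ref{th:conver}. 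What your version buys is self-containedness and a marginally cleaner path to \eqref{bounded_sum3}; what the paper's version buys is brevity by leaning on the cited lemma.
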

\begin{proof}
For the proof of (a) and  \eqref{bounded_sum1}-\eqref{bounded_sum2} in (b), see \citep[Lemma 7]{Nedic2010}. We prove next \eqref{bounded_sum3}. Expand the LHS of \eqref{bounded_sum3} as:
\begin{equation}
\begin{aligned}
\lim_{\nu,\nu'\rightarrow\infty}&\sum_{l=1}^\nu\left(\sum_{t=l}^{\nu'}a^t(\lambda)^{t-l}\right)^2=\lim_{\nu,\nu'\rightarrow\infty}\sum_{l=1}^\nu\sum_{t=l}^{\nu'}\sum_{k=l}^{\nu'}a^ta^k(\lambda)^{t-l}(\lambda)^{k-l}
\\
&\leq \lim_{\nu,\nu'\rightarrow\infty} \sum_{l=1}^\nu\sum_{t=l}^{\nu'}\sum_{k=l}^{\nu'}\frac{(a^t)^2+(a^k)^2}{2}(\lambda)^{t-l}(\lambda)^{k-l}
=\lim_{\nu,\nu'\rightarrow\infty} \sum_{l=1}^\nu\sum_{t=l}^{\nu'}(a^t)^2(\lambda)^{t-l}\sum_{k=l}^{\nu'}(\lambda)^{k-l},
\end{aligned}
\nonumber
\end{equation}
where the inequality is due to $a\cdot b\leq (a^2+b^2)/2$. Using the bound on the sum of the geometric series, the above inequality yields
\begin{equation}
\begin{aligned}
\lim_{\nu,\nu'\rightarrow\infty}&\sum_{l=1}^\nu\left(\sum_{t=l}^{\nu'}a^t(\lambda)^{t-l}\right)^2
\leq \frac{1}{1-\lambda}\lim_{\nu,\nu'\rightarrow\infty}\sum_{l=1}^\nu\sum_{t=l}^{\nu'}(a^t)^2(\lambda)^{t-l}
  = \frac{1}{1-\lambda}\lim_{\nu'\rightarrow\infty} \sum_{t=1}^{\nu'}\lim_{\nu\rightarrow\infty}\sum_{l=1}^{\min(\nu,t)}(a^t)^2(\lambda)^{t-l}
\\
&\leq \frac{1}{1-\lambda}\lim_{\nu'\rightarrow\infty}\sum_{t=1}^{\nu'}(a^t)^2 \sum_{l=1}^t(\lambda)^{t-l}
\leq \frac{1}{(1-\lambda)^2}\lim_{\nu'\rightarrow\infty}\sum_{t=1}^{\nu'}(a^t)^2<\infty.
\end{aligned}
\nonumber
\end{equation}\vspace{-0.6cm}
\end{proof}

\smallskip

\subsubsection{On the properties of the best-response map}\label{subsec_best-response_properties} Some key properties of the best-response maps defined in   \eqref{D_tilde_subproblem} and \eqref{eq:Xupdate} are summarized and proved  next.
{\begin{proposition}
\it
\label{sol_map_prop} Let   {$\big\{\big(\mathbf{D}^\nu,\mathbf{X}^\nu\big)\big\}_\nu$}  be the sequence generated by the $D^4L$ Algorithm, in the setting of Theorem \ref{th:conver}(a). 
Given the solution maps defined in   \eqref{D_tilde_subproblem} and \eqref{eq:Xupdate}, the following hold:\smallskip 
 
\noindent (a) There exist some constants $s_D>0$ and $\eta>0$, and a sequence  $\{T^\nu\}_\nu$, with   $\sum_{\nu=1}^\infty \left(T^\nu\right)^2<\infty$, such that: for all $\nu\geq 1$,
\begin{equation}
\label{best-response_D_part_optimality} 
\begin{aligned}
&\Big\langle\nabla_D F(\overline{\mathbf{D}}_{\boldsymbol{\phi}^\nu} ,\mathbf{X}^{\nu}), \sum_{i=1}^I \phi_i^\nu\left(\widetilde{\mathbf{D}}^{\nu}_{(i)}-\mathbf{D}_{(i)}^{\nu}\right)\Big\rangle
+\sum_{i=1}^I\phi_i^\nu\left(G(\widetilde{\mathbf{D}}^{\nu}_{(i)})-G(\mathbf{D}_{(i)}^{\nu})\right)
\\
&\quad \leq -s_D\Big(||\widetilde{\mathbf{D}}^\nu-\mathbf{D}^\nu||_F-\frac{I\bar{\epsilon}_\phi}{2\,s_D}T^\nu\Big)^2
+\eta ~||\widetilde{\mathbf{D}}^{\nu}-\mathbf{D}^{\nu}||_F\sum_{t=1}^{\nu}(\rho)^{\nu-t}||\mathbf{X}^t-\mathbf{X}^{t-1}||_F+\frac{I^2\,\bar{\epsilon}_\phi^2}{4\,s_D}\left(T^\nu\right)^2,
\end{aligned}
\end{equation}
where $\rho\in (0,1)$ and $\bar{\epsilon}_\phi$ are defined in (\ref{eq:rho}) and (\ref{delta_def}), respectively;

\noindent (b) There exist finite constants $s_X>0$ and $L_X>0$, such that:  for all  $\nu\geq 1$,
\begin{equation}
\label{best-response_X_part_optimality}
\begin{aligned}
&\sum_{i=1}^I\left\langle\nabla_{X_i} f_i (\overline{\mathbf{D}}_{\boldsymbol{\phi}^{\nu+1}} ,\mathbf{X}_i^\nu), \mathbf{X}_i^{\nu+1}-\mathbf{X}_i^\nu \right\rangle
+\sum_{i=1}^I \left(g_i(\mathbf{X}_i^{\nu+1})-g_i(\mathbf{X}_i^\nu)\right)
\\
&\qquad\qquad\qquad \leq-\sum_{i=1}^I \tau_{X,i}^\nu ||\mathbf{X}_i^{\nu+1}-\mathbf{X}_i^\nu||_F^2+L_{X}||\mathbf{U}^\nu-\mathbf{1}\otimes\overline{\mathbf{U}}_{\boldsymbol{\phi}^\nu} ||_F~||\mathbf{X}^{\nu+1}-\mathbf{X}^\nu||_F.
\end{aligned}
\end{equation}
%
%
%
\end{proposition}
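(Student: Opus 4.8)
\textbf{Proof plan for Proposition~\ref{sol_map_prop}.}

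\emph{Part (b).} This is the easier of the two and I would tackle it first, since it only involves one communication/consensus quantity, namely $\|\mathbf{U}^\nu-\mathbf{1}\otimes\overline{\mathbf{U}}_{\boldsymbol{\phi}^\nu}\|_F$. The starting point is the first-order optimality condition for $\mathbf{X}_i^{\nu+1}$ in \eqref{eq:Xupdate}: writing it with the specific surrogate $\tilde h_i$ (using Remark~\ref{surrogate_props}(b), $\nabla_{X_i}\tilde h_i(\mathbf{X}_i^\nu;\mathbf{U}_{(i)}^\nu,\mathbf{X}_i^\nu)=\nabla_{X_i}f_i(\mathbf{U}_{(i)}^\nu,\mathbf{X}_i^\nu)$) and testing it against $\mathbf{X}_i^\nu-\mathbf{X}_i^{\nu+1}$, one gets, via strong convexity of $\tilde h_i$ with modulus $\tau_{X,i}^\nu$,
\[
\big\langle\nabla_{X_i}f_i(\mathbf{U}_{(i)}^\nu,\mathbf{X}_i^\nu),\mathbf{X}_i^{\nu+1}-\mathbf{X}_i^\nu\big\rangle+g_i(\mathbf{X}_i^{\nu+1})-g_i(\mathbf{X}_i^\nu)\le -\tau_{X,i}^\nu\|\mathbf{X}_i^{\nu+1}-\mathbf{X}_i^\nu\|_F^2 .
\]
Then I would replace the gradient evaluated at $\mathbf{U}_{(i)}^\nu$ by the one at $\overline{\mathbf{U}}_{\boldsymbol{\phi}^{\nu+1}}=\overline{\mathbf{U}}_{\boldsymbol{\phi}^\nu}$ (using \eqref{D_bar_eq_U_bar}), paying a Cauchy--Schwarz error term $\|\nabla_{X_i}f_i(\overline{\mathbf{U}}_{\boldsymbol{\phi}^\nu},\mathbf{X}_i^\nu)-\nabla_{X_i}f_i(\mathbf{U}_{(i)}^\nu,\mathbf{X}_i^\nu)\|_F\,\|\mathbf{X}_i^{\nu+1}-\mathbf{X}_i^\nu\|_F$, which by the Lipschitz continuity of $\nabla f_i$ (Remark~\ref{Lipschitz_continuity_remark}) is bounded by $L_{\nabla,i}\|\mathbf{U}_{(i)}^\nu-\overline{\mathbf{U}}_{\boldsymbol{\phi}^\nu}\|_F\|\mathbf{X}_i^{\nu+1}-\mathbf{X}_i^\nu\|_F$. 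Summing over $i$ and using $\sum_i a_i b_i\le(\sum_i a_i^2)^{1/2}(\sum_i b_i^2)^{1/2}$ to collect the per-agent consensus errors into $\|\mathbf{U}^\nu-\mathbf{1}\otimes\overline{\mathbf{U}}_{\boldsymbol{\phi}^\nu}\|_F$ gives \eqref{best-response_X_part_optimality} with $L_X\triangleq\max_i L_{\nabla,i}$ (up to a harmless constant); the quadratic term is negative because $\tau_{X,i}^\nu>0$.

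\emph{Part (a).} This is the real work. The optimality condition for $\widetilde{\mathbf{D}}_{(i)}^\nu$ in \eqref{D_tilde_subproblem}, tested against $\mathbf{D}_{(i)}^\nu-\widetilde{\mathbf{D}}_{(i)}^\nu$ and using $\nabla_D\tilde f_i(\mathbf{D}_{(i)}^\nu;\cdot)=\nabla_D f_i(\mathbf{D}_{(i)}^\nu,\mathbf{X}_i^\nu)$ (Remark~\ref{surrogate_props}(a)) and strong convexity of $\tilde f_i$, gives a bound in which the linear term is $\langle I\boldsymbol{\Theta}_{(i)}^\nu,\widetilde{\mathbf{D}}_{(i)}^\nu-\mathbf{D}_{(i)}^\nu\rangle$ plus the $G$-difference. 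The plan is: (i) weight by $\phi_i^\nu$ and sum over $i$; (ii) replace $I\boldsymbol{\Theta}_{(i)}^\nu$ by $\nabla_D F(\overline{\mathbf{D}}_{\boldsymbol{\phi}^\nu},\mathbf{X}^\nu)$, incurring the gradient-tracking error $\|\boldsymbol{\Theta}_{(i)}^\nu-\tfrac1I\nabla_D F(\overline{\mathbf{D}}_{\boldsymbol{\phi}^\nu},\mathbf{X}^\nu)\|_F$; (iii) control that tracking error by the explicit geometric-convolution bound that one obtains by unrolling the perturbed push-sum recursion \eqref{Theta_update_stackingMat}, exactly as in the proof of Lemma~\ref{consVar_props2}: the error at step $\nu$ is dominated by $\sum_{t=1}^{\nu}(\rho)^{\nu-t}\big(\|\mathbf{D}^{t}-\mathbf{D}^{t-1}\|_F+\|\mathbf{X}^t-\mathbf{X}^{t-1}\|_F\big)$ (using the $L_\nabla$-Lipschitzness of the $\nabla_D f_i$'s to bound $\|\mathbf{G}^t-\mathbf{G}^{t-1}\|_F$), plus an exponentially decaying initialization term. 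The $\|\mathbf{D}^t-\mathbf{D}^{t-1}\|_F$ part, after using \eqref{D_bar_update} and \eqref{lim_diff_D}, feeds into the square-summable residual sequence $\{T^\nu\}_\nu$ together with the geometric tails of $\gamma^t$; the $\|\mathbf{X}^t-\mathbf{X}^{t-1}\|_F$ part is kept explicit and produces the middle term $\eta\|\widetilde{\mathbf{D}}^\nu-\mathbf{D}^\nu\|_F\sum_{t}(\rho)^{\nu-t}\|\mathbf{X}^t-\mathbf{X}^{t-1}\|_F$. Finally (iv) the term linear in $\|\widetilde{\mathbf{D}}^\nu-\mathbf{D}^\nu\|_F$ multiplying $T^\nu$ is split off by the elementary inequality $-s_D a^2+c\,a\le -s_D(a-\tfrac{c}{2s_D})^2+\tfrac{c^2}{4s_D}$ with $a=\|\widetilde{\mathbf{D}}^\nu-\mathbf{D}^\nu\|_F$ and $c=\tfrac{I\bar\epsilon_\phi}{\,}T^\nu$-type constant, yielding the stated form with the completed-square term and the $\tfrac{I^2\bar\epsilon_\phi^2}{4s_D}(T^\nu)^2$ remainder. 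Here $\bar\epsilon_\phi$ enters through $\phi_i^\nu\le\bar\epsilon_\phi$ from \eqref{delta_def}.

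\emph{Main obstacle.} The delicate point is step (iii)--(iv) of part (a): one has to package the entire history of the gradient-tracking error into a \emph{single} square-summable sequence $\{T^\nu\}_\nu$ while carving out the $\mathbf{X}$-increments, which must stay visible (they will be cancelled later in Step~3 against the descent produced by the $\mathbf{X}$-update). Concretely, the convolution $\sum_t(\rho)^{\nu-t}\|\mathbf{D}^t-\mathbf{D}^{t-1}\|_F$ is \emph{not} itself summable termwise, so square-summability of the resulting $T^\nu$ relies on Lemma~\ref{sequence_convg_props}(b), in particular \eqref{bounded_sum3}, applied to $a^t=\gamma^t$ (after bounding $\|\mathbf{D}^t-\mathbf{D}^{t-1}\|_F$ via \eqref{lim_diff_D}, i.e.\ $\lesssim(\rho)^t+\sum_{l}\gamma^l(\rho)^{t-l}$) together with Assumption~\ref{assumption:step-size} ($\sum(\gamma^\nu)^2<\infty$). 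Getting the bookkeeping of the nested geometric sums right — and making sure no cross term between the $\mathbf{D}$-part and the $\mathbf{X}$-part is silently dropped — is where the proof is technically heaviest; everything else is Cauchy--Schwarz, triangle inequality, and the completed-square trick.
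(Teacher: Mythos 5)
Your proposal is correct and follows essentially the same route as the paper's proof: part (b) via the optimality condition for \eqref{eq:Xupdate}, strong convexity of $\tilde h_i$, the identity $\overline{\mathbf{U}}_{\boldsymbol{\phi}^\nu}=\overline{\mathbf{D}}_{\boldsymbol{\phi}^{\nu+1}}$ and a Lipschitz/Cauchy--Schwarz swap of the gradient argument; part (a) via the optimality of \eqref{D_tilde_subproblem}, $\phi_i^\nu$-weighting, unrolling the perturbed push-sum recursion to bound the tracking error by a geometric convolution of $\mathbf{D}$- and $\mathbf{X}$-increments, absorbing the $\mathbf{D}$-part (plus initialization and consensus terms) into a square-summable $T^\nu$ via Lemma~\ref{sequence_convg_props} while keeping the $\mathbf{X}$-increments explicit, and finishing with the completed-square inequality. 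The only cosmetic difference is how you bound $\|\mathbf{D}^t-\mathbf{D}^{t-1}\|_F$ (through \eqref{lim_diff_D} and \eqref{D_bar_update} rather than the paper's direct $\gamma^t$-bound), which changes nothing in substance.
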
}

\begin{proof}
\textbf{(a)} It follows from the optimality of $\widetilde{\mathbf{D}}^\nu_{(i)}$ [cf.  \eqref{D_tilde_subproblem}] and convexity of $G$ that
{\begin{equation}
\label{eq:D_min_principle}
\begin{aligned}
&\left\langle \nabla_D \tilde{f}_i(\widetilde{\mathbf{D}}^\nu_{(i)};\mathbf{D}^\nu_{(i)},\mathbf{X}^\nu_i)+I\boldsymbol{\Theta}_{(i)}^\nu-\nabla_D f_i(\mathbf{D}_{(i)},\mathbf{X}_i),\mathbf{D}_{(i)}^\nu- \widetilde{\mathbf{D}}^\nu_{(i)} \right\rangle
+G(\mathbf{D}_{(i)}^\nu)-G(\widetilde{\mathbf{D}}_{(i)}^\nu)\geq 0.
\end{aligned}
\end{equation}
Adding and subtracting inside the first term $\sum_{j}\nabla_D f_j(\overline{\mathbf{D}}_{\boldsymbol{\phi}^\nu} ,\mathbf{X}_j^\nu)$   and using $\nabla_D \tilde{f}_i(\mathbf{D}_{(i)}^\nu;\mathbf{D}_{(i)}^\nu, \mathbf{X}_i^\nu)=\nabla_D {f}_i(\mathbf{D}_{(i)}^\nu, \mathbf{X}_i^\nu)$ [cf. Remark \ref{surrogate_props}], inequality \eqref{eq:D_min_principle} becomes
\begin{equation}\hspace{-0.3cm}
\begin{aligned}
&\left\langle\nabla_D \tilde{f}_i(\widetilde{\mathbf{D}}^\nu_{(i)};\mathbf{D}^\nu_{(i)},\mathbf{X}^\nu_i)-\nabla_D \tilde{f}_i(\mathbf{D}_{(i)}^\nu;\mathbf{D}_{(i)}^\nu, \mathbf{X}_i^\nu), \widetilde{\mathbf{D}}^\nu_{(i)}-\mathbf{D}_{(i)}^\nu \right\rangle
\\
&+\Big\langle I\cdot\boldsymbol{{\Theta}}^\nu_{(i)}-\sum_{j=1}^I\nabla_D f_j(\overline{\mathbf{D}}_{\boldsymbol{\phi}^\nu} ,\mathbf{X}_j^\nu), \widetilde{\mathbf{D}}^\nu_{(i)}-\mathbf{D}_{(i)}^\nu\Big\rangle
\\
&+\Big\langle\sum_{j=1}^I\nabla_D f_j(\overline{\mathbf{D}}_{\boldsymbol{\phi}^\nu} ,\mathbf{X}_j^\nu), \widetilde{\mathbf{D}}^\nu_{(i)}-\mathbf{D}_{(i)}^\nu \Big\rangle
+G(\widetilde{\mathbf{D}}_{(i)}^\nu)-G(\mathbf{D}_{(i)}^\nu)\leq 0.
\end{aligned}
\nonumber
\end{equation}}
Invoking the uniform strongly convexity of $\tilde{f}_i(\bullet;\mathbf{D}^\nu_{(i)},\mathbf{X}^\nu_i)$, the definition of ${\boldsymbol{\Theta}}^\nu_{(i)}$ in \eqref{Theta_update},  and recalling  that $\nabla_D F(\overline{\mathbf{D}}_{\boldsymbol{\phi}^\nu} ,\mathbf{X}^\nu)=\sum_{j} \nabla_D f_j(\overline{\mathbf{D}}_{\boldsymbol{\phi}^\nu} ,\mathbf{X}_j^\nu)$, we get
\begin{equation}
\begin{aligned}
&\left\langle \nabla_D F(\overline{\mathbf{D}}_{\boldsymbol{\phi}^\nu} ,\mathbf{X}^\nu), \widetilde{\mathbf{D}}^\nu_{(i)}-\mathbf{D}_{(i)}^\nu\right\rangle
+G(\widetilde{\mathbf{D}}^\nu_{(i)})-G(\mathbf{D}_{(i)}^\nu)
\\
&\qquad\qquad\leq - \tau_{D,i}^\nu~\left\|\widetilde{\mathbf{D}}^\nu_{(i)}-\mathbf{D}_{(i)}^\nu\right\|^2
+ I \left\|\boldsymbol{{\Theta}}^\nu_{(i)}-\frac{1}{I}\sum_{j=1}^I\nabla_D f_j(\overline{\mathbf{D}}_{\boldsymbol{\phi}^\nu} ,\mathbf{X}_j^\nu)\right\|_F \norm{\widetilde{\mathbf{D}}^\nu_{(i)}- \mathbf{D}_{(i)}^\nu}_F.
\end{aligned}
\nonumber
\end{equation}
Multiplying both side of the above inequality by the  positive quantities $\phi_i^\nu$   and summing over $i=1,2,\ldots,I$ while using    $\phi_i^\nu\leq \bar{\epsilon}_\phi$ [cf. (\ref{delta_def})], yields 
\begin{equation}\hspace{-0.2cm}
\begin{aligned}
&\Big\langle\nabla_D F(\overline{\mathbf{D}}_{\boldsymbol{\phi}^\nu} ,\mathbf{X}^\nu),\sum_{i=1}^I  \phi_i^\nu\left(\widetilde{\mathbf{D}}^\nu_{(i)}-\mathbf{D}_{(i)}^\nu\right)\Big\rangle
+\sum_{i=1}^I\phi_i^\nu\left(G(\widetilde{\mathbf{D}}^\nu_{(i)})- G(\mathbf{D}^\nu_{(i)})\right)
\\
&\qquad\qquad \leq  -s_D||\widetilde{\mathbf{D}}^\nu- \mathbf{D}^\nu||^2
+I\bar{\epsilon}_\phi\underbrace{\left\|\boldsymbol{{\Theta}}^\nu-\mathbf{1}\otimes\frac{1}{I}\sum_{i=1}^I\nabla_D f_i(\overline{\mathbf{D}}_{\boldsymbol{\phi}^\nu} ,\mathbf{X}_i^\nu)\right\|_F}_{\text{gradient tracking error}}\norm{\widetilde{\mathbf{D}}^\nu- \mathbf{D}^\nu}_F,
\end{aligned}
\label{descentIneq_D_i_sum}
\end{equation}
where  $s_D$ is any positive constant such that $s_D\leq \min_{i,\nu}\phi_i^\nu\tau_{D,i}^\nu$ [note that such a constant exists because  $\phi_i^\nu\geq\underline{\epsilon}_\phi$, with $\underline{\epsilon}_\phi>0$ defined in (\ref{delta_def}), and all $\tau_{D,i}^\nu$ are uniformly bounded away from zero--see Assumption \ref{freeVars_assumptions}1].

Now let us bound the \emph{gradient tracking error} term in \eqref{descentIneq_D_i_sum}. Using  \eqref{Theta_update_stackingMat} recursively,   $\boldsymbol{{\Theta}}^{\nu}$ can be rewritten as
\begin{equation}
\begin{aligned}
\boldsymbol{{\Theta}}^{\nu}
&=\widehat{\mathbf{W}}^{\nu-1:0}\,\boldsymbol{{\Theta}}^{0}+\sum_{t=1}^{\nu-1}\widehat{\mathbf{W}}^{\nu-1:t}\left(\widehat{\boldsymbol{\Phi}}^t\right)^{-1}\left(\mathbf{G}^t-\mathbf{G}^{t-1}\right)
+\left(\widehat{\boldsymbol{\Phi}}^\nu\right)^{-1}\left(\mathbf{G}^{\nu}-\mathbf{G}^{\nu-1}\right).\end{aligned}
\label{theta_expansion}
\end{equation}
Using  the definition of $\mathbf{G}^\nu$ [cf.~\eqref{phi_notation}] and  $\widehat{\mathbf{J}}$ [cf. \eqref{J_def}], write
\begin{equation}
\mathbf{1}\otimes\frac{1}{I}\sum_{i=1}^I\nabla_D f_i(\mathbf{D}_{(i)}^\nu,\mathbf{X}_i^\nu)=\widehat{\mathbf{J}}\mathbf{G}^\nu = \widehat{\mathbf{J}}\mathbf{G}^0+\sum_{t=1}^{\nu}\widehat{\mathbf{J}}\left(\mathbf{G}^t-\mathbf{G}^{t-1}\right),\nonumber 
\end{equation}
which, using  ${\boldsymbol{\Theta}}^0=\mathbf{G}^0$, leads to the following expansion for $\mathbf{1}\otimes\frac{1}{I}\sum_{i=1}^I\nabla_D f_i(\overline{\mathbf{D}}_{\boldsymbol{\phi}^\nu} ,\mathbf{X}_i^\nu)$: \begin{equation}
\begin{aligned}
\mathbf{1}\otimes\frac{1}{I}\sum_{i=1}^I\nabla_D f_i(\overline{\mathbf{D}}_{\boldsymbol{\phi}^\nu} ,\mathbf{X}_i^\nu)
&=\widehat{\mathbf{J}}\,\boldsymbol{\widetilde{\Theta}}^{0}+\sum_{t=1}^{\nu}\widehat{\mathbf{J}}\left(\mathbf{G}^t-\mathbf{G}^{t-1}\right)
\\
&\quad+\mathbf{1}\otimes\frac{1}{I}\sum_{i=1}^I\left(\nabla_D f_i(\overline{\mathbf{D}}_{\boldsymbol{\phi}^\nu} ,\mathbf{X}_i^\nu)-\nabla_D f_i(\mathbf{D}_{(i)}^\nu,\mathbf{X}_i^\nu)\right).
\end{aligned}
\label{nablaD_expansion}
\end{equation}
Using  \eqref{theta_expansion} and \eqref{nablaD_expansion}, the  \emph{gradient tracking error} term in \eqref{descentIneq_D_i_sum} can be upper bounded as   
\begin{equation}\label{theta_tracking_bound1}
\begin{aligned}
&\Big\|\boldsymbol{ {\Theta}}^\nu-\mathbf{1}\otimes\frac{1}{I}\sum_{i=1}^I\nabla_D f_i(\overline{\mathbf{D}}_{\boldsymbol{\phi}^\nu} ,\mathbf{X}_i^\nu)\Big\|_F
\\
& \overset{(a)}{\leq} \Big\|\widehat{\mathbf{W}}^{\nu-1:0}-\widehat{\mathbf{J}}\Big\|_2~\Big\|\boldsymbol{{\Theta}}^{0}\Big\|_F
+\frac{1}{\underline{\epsilon}_\phi}\sum_{t=1}^{\nu-1}\Big\|\widehat{\mathbf{W}}^{\nu-1:t}-\widehat{\mathbf{J}}_{\phi^t}\Big\|_2~\Big\|\mathbf{G}^t-\mathbf{G}^{t-1}\Big\|_F
\\
&\quad +\Big\|\left(\widehat{\boldsymbol{\Phi}}^\nu\right)^{-1}-\widehat{\mathbf{J}}\Big\|_2\norm{\mathbf{G}^\nu-\mathbf{G}^{\nu-1}}_F
+\frac{1}{\sqrt{I}}\,\sum_{i=1}^I\Big\|\nabla_D f_i(\overline{\mathbf{D}}_{\boldsymbol{\phi}^\nu} ,\mathbf{X}_i^\nu)-\nabla_D f_i(\mathbf{D}_{(i)}^\nu,\mathbf{X}_i^\nu)\Big\|_F\\
& \overset{(b)}{\leq}   c_4\,(\rho)^\nu+c_5\,L_{\nabla} \sum_{t=1}^{\nu}(\rho)^{\nu-t}\left(\norm{\mathbf{D}^t-\mathbf{D}^{t-1}}_F +\norm{\mathbf{X}^t-\mathbf{X}^{t-1}}_F\right)
 +L_{\nabla}\Big\|\mathbf{D}^\nu-\mathbf{1} \otimes\overline{\mathbf{D}}_{\boldsymbol{\phi}^\nu} \Big\|_F
 \\
&  \overset{(c)} {=} T^\nu+c_5\,L_{\nabla}\sum_{t=1}^{\nu}(\rho)^{\nu-t}\norm{\mathbf{X}^t-\mathbf{X}^{t-1}}_F,
\end{aligned}
\end{equation}
for some positive finite constants $c_4$ and $c_5$, where   in (a) we used the lower bound $\phi^\nu_i\geq \underline{\epsilon}_\phi$  [cf. \eqref{delta_def}] and   $\widehat{\mathbf{J}}_{\phi^t}=\widehat{\mathbf{J}}\,\widehat{\boldsymbol{\Phi}}^t$ [cf.~\eqref{colStochW_avgMat_eq}]; and in  (b) we used \eqref{phi_notation}, (\ref{Lip_G}) (cf. Remark \ref{Lipschitz_continuity_remark}), and Lemma \ref{W_mat_err_decay}; and in (c) we defined  
$T^\nu$  as
\begin{equation}
\label{T_nu}
T^\nu \triangleq   c_4\,(\rho)^\nu+c_5\,L_{\nabla}\sum_{t=1}^{\nu}(\rho)^{\nu-t}\norm{\mathbf{D}^t-\mathbf{D}^{t-1}}_F+L_{\nabla}\norm{\mathbf{D}^\nu-\mathbf{1} \otimes\overline{\mathbf{D}}_{\boldsymbol{\phi}^\nu} }_F.
\end{equation}
Substituting \eqref{theta_tracking_bound1} into \eqref{descentIneq_D_i_sum} 
yields\vspace{-0.2cm}
\begin{equation}
\begin{aligned}
\Big\langle\nabla_D F(\overline{\mathbf{D}}_{\boldsymbol{\phi}^\nu} ,&\mathbf{X}^\nu),\sum_{i=1}^I  \phi_i^\nu\left(\widetilde{\mathbf{D}}^\nu_{(i)}-\mathbf{D}_{(i)}^\nu\right)\Big\rangle
+\sum_{i=1}^I\phi_i^\nu\left(G(\widetilde{\mathbf{D}}^\nu_{(i)})- G(\mathbf{D}^\nu_{(i)})\right)
\\
&\leq -s_D ||\widetilde{\mathbf{D}}^\nu- \mathbf{D}^\nu||_F^2+I\bar{\epsilon}_\phi T^\nu ||\widetilde{\mathbf{D}}^\nu- \mathbf{D}^\nu||_F
\\
&\quad+I\,\bar{\epsilon}_\phi \,c_5\, L_{\nabla}\,||\widetilde{\mathbf{D}}^\nu- \mathbf{D}^\nu||_F\sum_{t=1}^{\nu}(\rho)^{\nu-t}\norm{\mathbf{X}^t-\mathbf{X}^{t-1}}_F
\\
&=-s_D\left(||\widetilde{\mathbf{D}}^\nu- \mathbf{D}^\nu||_F-\frac{I\,\bar{\epsilon}_\phi}{2 \,s_D}T^\nu\right)^2+\frac{I^2\,\bar{\epsilon}_\phi^2}{4 \,s_D}\left(T^\nu\right)^2
\\
&\quad+\eta ~||\widetilde{\mathbf{D}}^\nu- \mathbf{D}^\nu||_F\,\sum_{t=1}^{\nu}(\rho)^{\nu-t}\norm{\mathbf{X}^t-\mathbf{X}^{t-1}}_F,
\end{aligned}
\nonumber
\end{equation}
with $\eta=I\bar{\epsilon}_\phi c_5L_{\nabla}$.

To complete the proof, we need to show that   $\sum_{\nu=1}^\infty \left(T^\nu\right)^2<\infty$. Note that  the first term on the RHS of \eqref{T_nu}  is square summable, and so is the third one, due to  Proposition \ref{consVar_props} [cf. \eqref{D_consError_summable}]. Invoking Jensen's inequality, it is sufficient to show that the second  term on RHS of \eqref{T_nu} is square summable. Following the same approach  used to prove  \eqref{D_consError_summable}, we have\vspace{-0.2cm}
\begin{equation}
\begin{aligned}
&\lim_{\nu\rightarrow\infty}\sum_{t=1}^\nu\left(\sum_{l=1}^{t}(\rho)^{t-l}\norm{\mathbf{D}^l-\mathbf{D}^{l-1}}_F\right)^2
\\
&\leq \lim_{\nu\rightarrow\infty}\sum_{t=1}^\nu \sum_{l=1}^{t} \sum_{k=1}^{t} (\rho)^{t-l}(\rho)^{t-k}\norm{\mathbf{D}^l-\mathbf{D}^{l-1}}_F\norm{\mathbf{D}^k-\mathbf{D}^{k-1}}_F
\\
&\overset{(a)}{\leq}\frac{1}{1-\rho}\lim_{\nu\rightarrow\infty}\sum_{t=1}^\nu \sum_{l=1}^{t}  (\rho)^{t-l}(\gamma^l)^2 \norm{\widetilde{\mathbf{D}}^{l-1}-\mathbf{D}^{l-1}}_F^2
\,\overset{(b)}{<}\infty,
\end{aligned}
\nonumber
\end{equation}
where (a) follows  from $ab\leq(a^2+b^2)/2$, and (b) is due to  Lemma \ref{sequence_convg_props} and Assumption \ref{Problem_Assumptions}3.
Hence $\sum_{\nu=1}^\infty \left(T^\nu\right)^2<\infty$.

\noindent \textbf{(b)} We prove this statement using the definition \eqref{eq:ftilde2} of  $\tilde{f}_i$; the same conclusion holds   also using the alternative choice \eqref{eq:ftilde1} of $\tilde{f}_i$; the proof is thus omitted.   Invoking the  optimality of $\mathbf{X}^{\nu+1}$ [cf. \eqref{eq:Xupdate}] together with the  convexity of $g_i$, yield\vspace{-0.2cm}
\begin{equation}
\begin{aligned}
&\left\langle\nabla_{X_i} \tilde{h}_i(\mathbf{X}_i^{\nu+1};\mathbf{U}_{(i)}^\nu,\mathbf{X}_i^\nu)-\nabla_{X_i} \tilde{h}_i(\mathbf{X}_i^\nu;\mathbf{U}_{(i)}^\nu,\mathbf{X}_i^\nu),\mathbf{X}^\nu_i- \mathbf{X}_i^{\nu+1}\right\rangle
\\
&+\left\langle\nabla_{X_i} \tilde{h}_i(\mathbf{X}_i^\nu;\mathbf{U}_{(i)}^\nu,\mathbf{X}_i^\nu)-\nabla_{X_i} \tilde{h}_i(\mathbf{X}_i^\nu;\overline{\mathbf{U}}_{\boldsymbol{\phi}^\nu} ,\mathbf{X}_i^\nu),\mathbf{X}^\nu_i- \mathbf{X}_i^{\nu+1}\right\rangle
\\
&+\left\langle\nabla_{X_i} \tilde{h}_i(\mathbf{X}_i^\nu;\overline{\mathbf{U}}_{\boldsymbol{\phi}^\nu} ,\mathbf{X}_i^\nu),\mathbf{X}^\nu_i- \mathbf{X}_i^{\nu+1}\right\rangle+g_i\left(\mathbf{X}^\nu_i\right)- g_i( \mathbf{X}_i^{\nu+1})\geq 0.
\end{aligned}
\nonumber\vspace{-0.2cm}
\end{equation}
Using Remark \ref{surrogate_props} 
and $\overline{\mathbf{U}}_{\boldsymbol{\phi}^\nu} =\overline{\mathbf{D}}_{\boldsymbol{\phi}^{\nu+1}} $ [cf. \eqref{D_bar_eq_U_bar}], we obtain\vspace{-0.1cm}
\begin{equation}
\begin{aligned}
&\left\langle\nabla_{X_i}f_i(\overline{\mathbf{D}}_{\boldsymbol{\phi}^{\nu+1}} ,\mathbf{X}_i^\nu),\mathbf{X}_i^{\nu+1}-\mathbf{X}_i^\nu\right\rangle
+g_i(\mathbf{X}_i^{\nu+1})-g_i\left(\mathbf{X}_i^\nu \right)
\\
&\qquad\qquad \leq -\tau^\nu_{X,i}\norm{\mathbf{X}_i^{\nu+1}-\mathbf{X}_i^\nu}_F^2 +\left\langle\nabla_{X_i}f_i(\overline{\mathbf{U}}_{\boldsymbol{\phi}^\nu} ,\mathbf{X}_i^\nu)-\nabla_{X_i}f_i(\mathbf{U}_{(i)}^\nu,\mathbf{X}_i^\nu),\mathbf{X}_i^{\nu+1}-\mathbf{X}_i^\nu\right\rangle,
\end{aligned}\nonumber \vspace{-0.1cm}
\end{equation}
which, together with  \eqref{Lip_G}, leads to the desired result \eqref{best-response_X_part_optimality}, with $L_{X}=\sqrt{I}L_{\nabla}$.\smallskip
\end{proof}




\subsubsection{Proof of Proposition \ref{prop_descent}}\label{app_proof_prop_descent}
We begin observing that\vspace{-0.4cm} \begin{equation}
G(\overline{\mathbf{D}}_{\boldsymbol{\phi}^{\nu+1}} ) \leq 
   G(\overline{\mathbf{D}}_{\boldsymbol{\phi}^\nu} )+\frac{\gamma^\nu}{I} \sum_{i=1}^I\phi_i^\nu\left(G(\widetilde{\mathbf{D}}^\nu_{(i)})-G(\mathbf{D}_{(i)}^\nu)\right)
 +\frac{\gamma^\nu}{I} \sum_{i=1}^I\phi_i^\nu\left(G(\mathbf{D}_{(i)}^\nu)-G(\overline{\mathbf{D}}_{\boldsymbol{\phi}^\nu} )\right),\label{G_bound}
\end{equation}
  due to \eqref{D_bar_update} and the convexity of $G$ [together with $\mathbf{1}^\intercal \boldsymbol{\phi}^\nu=1$]. 

 Invoking the descent lemma for  $f_i(\overline{\mathbf{U}}_{\boldsymbol{\phi}^\nu} ,\bullet)$  and using $\overline{\mathbf{U}}_{\boldsymbol{\phi}^\nu} =\overline{\mathbf{D}}_{\boldsymbol{\phi}^{\nu+1}} $ [cf. \eqref{D_bar_eq_U_bar}], we get: for sufficiently large  $\nu$, say $\nu\geq \nu_0$,  \vspace{-0.2cm}
\begin{equation}
\label{descent_X}
\begin{aligned}
&U(\overline{\mathbf{D}}_{\boldsymbol{\phi}^{\nu+1}} ,\mathbf{X}^{\nu+1})
\\
&\leq\sum_{i=1}^I \left\{f_i(\overline{\mathbf{D}}_{\boldsymbol{\phi}^{\nu+1}} ,\mathbf{X}_i^\nu)+ g_i(\mathbf{X}_i^{\nu})\right\}+ G(\overline{\mathbf{D}}_{\boldsymbol{\phi}^{\nu+1}} )+\sum_{i=1}^I \left\{g_i(\mathbf{X}_i^{\nu+1})-g_i(\mathbf{X}_i^{\nu})\right\}
\\
&\quad+\sum_{i=1}^I\,\left\langle\nabla_{X_i} f_i(\overline{\mathbf{D}}_{\boldsymbol{\phi}^{\nu+1}} ,\mathbf{X}_i^{\nu}),\mathbf{X}_i^{\nu+1}-\mathbf{X}_i^\nu\right\rangle
+\frac{1}{2}\sum_{i=1}^I L_{\nabla{X_i}}(\overline{\mathbf{U}}_{\boldsymbol{\phi}^\nu} )\norm{\mathbf{X}_i^{\nu+1}-\mathbf{X}_i^\nu}_F^2
\\
&\overset{(a)}{\leq}  U(\overline{\mathbf{D}}_{\boldsymbol{\phi}^{\nu+1}} ,\mathbf{X}^{\nu})-\sum_{i=1}^I\left\lbrace\left(\tau^\nu_{X,i}-\frac{1}{2}L_{\nabla{X_i}}(\overline{\mathbf{U}}_{\boldsymbol{\phi}^\nu} )\right)\norm{\mathbf{X}_i^{\nu+1}-\mathbf{X}_i^{\nu}}_F^2\right\rbrace
\\
&\quad +L_X ||\mathbf{U}^\nu-\mathbf{1}\otimes\overline{\mathbf{U}}_{\boldsymbol{\phi}^\nu} ||_F~||\mathbf{X}^{\nu+1}-\mathbf{X}^\nu||_F\\
& \overset{(b)}{\leq} U(\overline{\mathbf{D}}_{\boldsymbol{\phi}^{\nu+1}} ,\mathbf{X}^{\nu})-{s_X}||\mathbf{X}^{\nu+1}-\mathbf{X}^{\nu}||_F^2
+L_{X}||\mathbf{U}^\nu-\mathbf{1}\otimes\overline{\mathbf{U}}_{\boldsymbol{\phi}^\nu} ||_F~ ||\mathbf{X}^{\nu+1}-\mathbf{X}^\nu||_F,
\end{aligned}
\end{equation}
where in (a) we used Proposition \ref{sol_map_prop}(b); and in (b)  $s_X>0$ is a constant such that $\inf_{\nu\geq \nu_0}(\tau_{X,i}^\nu-\frac{1}{2}L_{\nabla{X_i}}(\overline{\mathbf{U}}_{\boldsymbol{\phi}^\nu} ))\geq s_X$, for all $i=1,\ldots, I$. Note that such a constant exists because of   \eqref{Lip_continuity} and Assumption \ref{freeVars_assumptions}1.

 {To upper bound $ U(\overline{\mathbf{D}}_{\boldsymbol{\phi}^{\nu+1}} ,\mathbf{X}^{\nu})$,} we apply the descent lemma to $F(\bullet,\mathbf{X}^\nu)$. Recalling  that $\nabla_D F(\bullet,\mathbf{X}^\nu)$ is Lipschitz continuous with constant {${L}_{\nabla_D}$} and using \eqref{D_bar_update},  we get\vspace{-0.2cm}
 \begin{equation}
\begin{aligned}
U(\overline{\mathbf{D}}_{\boldsymbol{\phi}^{\nu+1}} ,\mathbf{X}^{\nu})& 
\leq F(\overline{\mathbf{D}}_{\boldsymbol{\phi}^\nu} ,\mathbf{X}^{\nu})+\frac{\gamma^\nu}{I} \Big\langle\nabla_D F(\overline{\mathbf{D}}_{\boldsymbol{\phi}^\nu} ,\mathbf{X}^\nu),\sum_{i=1}^I \phi_i^\nu\left(\widetilde{\mathbf{D}}_{(i)}^\nu-\mathbf{D}_{(i)}^\nu\right) \Big\rangle
\\
&\quad+\frac{{L}_{\nabla_D}}{2}\left(\frac{\gamma^\nu}{I}\right)^2\norm{\sum_{i=1}^I\phi_i^\nu\left(\widetilde{\mathbf{D}}^\nu_{(i)} -\mathbf{D}_{(i)}^\nu\right)}_F^2+\sum_{i=1}^I g_i(\mathbf{X}_i^{\nu})+  G(\overline{\mathbf{D}}_{\boldsymbol{\phi}^{\nu+1}} ),\\
& \overset{(a)}{\leq}  \,  U(\overline{\mathbf{D}}_{\boldsymbol{\phi}^\nu} ,\mathbf{X}^{\nu})
-\frac{s_D\cdot \gamma^\nu}{I}\left(||\widetilde{\mathbf{D}}^\nu-\mathbf{D}^\nu||_F-\frac{I\bar{\epsilon}_\phi}{2s_D}T^\nu\right)^2
\\
&+\frac{\eta\cdot\gamma^\nu}{I} || \widetilde{\mathbf{D}}^\nu-\mathbf{D}^\nu||_F\sum_{t=1}^{\nu}(\rho)^{\nu-t}||\mathbf{X}^t-\mathbf{X}^{t-1}||_F+\frac{I\bar{\epsilon}_\phi^2 \gamma^\nu}{4s_D}\left(T^\nu\right)^2
\\
&+\frac{{L}_{\nabla_D}}{2}({\gamma^\nu})^2\,||\widetilde{\mathbf{D}}^\nu-\mathbf{D}^\nu||_F^2+\underbrace{\frac{\gamma^\nu}{I} \sum_{i=1}^I\phi_i^\nu\left(G(\mathbf{D}_{(i)}^\nu)-G(\overline{\mathbf{D}}_{\boldsymbol{\phi}^\nu} )\right)}_{\leq L_G\gamma^\nu\norm{\mathbf{D}^\nu-\mathbf{1}\otimes\overline{\mathbf{D}}_{\boldsymbol{\phi}^\nu} }_F},
\end{aligned}
\label{descent_D}
\end{equation} 
where   in (a) we used \eqref{G_bound}
 and Proposition \ref{sol_map_prop}(a), and   the Lipschitz continuity of $G$, due to the convexity of $G$ ($G$ is thus locally Lipschitz continuous) and  the compactness of $\mathcal{D}$; we denoted by $L_G>0$ the   Lipschitz constant.

 Combining \eqref{descent_X} with \eqref{descent_D} and defining {$\bar{\tau}^\nu_D\triangleq s_D-\gamma^\nu\,\frac{ I{L}_{\nabla_D} }{2}$}, we get: for $\nu\geq \nu_0$,
\begin{equation}
\label{one-step-descent}
\begin{aligned}
U(\overline{\mathbf{D}}_{\boldsymbol{\phi}^{\nu+1}} ,\mathbf{X}^{\nu+1})
&\leq U(\overline{\mathbf{D}}_{\boldsymbol{\phi}^\nu} ,\mathbf{X}^{\nu})-s_X||\mathbf{X}^{\nu+1}-\mathbf{X}^{\nu}||_F^2+L_X||\mathbf{U}^\nu-\mathbf{1}\otimes\overline{\mathbf{U}}_{\boldsymbol{\phi}^\nu} ||_F ~||\mathbf{X}^{\nu+1}-\mathbf{X}^\nu||_F
\\
&\quad-\frac{\bar{\tau}^\nu_D\,\gamma^\nu}{I}\left(|| \widetilde{\mathbf{D}}^\nu-\mathbf{D}^\nu||_F-\frac{I\,\bar{\epsilon}_\phi}{2\,\bar{\tau}^\nu_D}\,T^\nu\right)^2+\frac{I\,\bar{\epsilon}_\phi^2\,\gamma^\nu}{4\,\bar{\tau}^\nu_D}\,\left(T^\nu\right)^2
\\
&\quad+\frac{\eta\,\gamma^\nu}{I}\, || \widetilde{\mathbf{D}}^\nu-\mathbf{D}^\nu||_F\underbrace{\sum_{t=1}^{\nu}(\rho)^{\nu-t}||\mathbf{X}^t-\mathbf{X}^{t-1}||_F}_{\leq {c}_5\,(\rho)^\nu+ {\rho}^{-1}\sum_{t=1}^{\nu}(\rho)^{\nu-t}||\mathbf{X}^{t+1}-\mathbf{X}^{t}||_F}\!\!\!\!\!\!\!\!\!+L_G\,\gamma^\nu\norm{\mathbf{D}^\nu-\mathbf{1}\otimes\overline{\mathbf{D}}_{\boldsymbol{\phi}^\nu} }_F,
\end{aligned}
\end{equation}
where $c_5\triangleq (\rho)^{-1}\,||\mathbf{X}^1-\mathbf{X}^0||_F$. Since $\gamma^\nu \downarrow
  0$, there exists an integer $\nu_1\geq \nu_0$  and some $\bar{\tau}_D$ such that   $\bar{\tau}^\nu_D\geq\bar{\tau}_D>0$, for all $\nu \geq \nu_1$. Let $\bar{\nu}$ be any integer $\bar{\nu}\geq \nu_1$. Then,  applying  \eqref{one-step-descent}  recursively on $\nu, \nu-1, \ldots , \bar{\nu}+1,\bar{\nu}$, and using the boundedness of $\{||\widetilde{\mathbf{D}}^\nu-\mathbf{D}^\nu||_F\}_\nu$, we obtain\vspace{-0.2cm}
\begin{equation}
\begin{aligned}
&U(\overline{\mathbf{D}}_{\boldsymbol{\phi}^{\nu+1}} ,\mathbf{X}^{\nu+1})
\\
&\leq U(\overline{\mathbf{D}}_{\boldsymbol{\phi}^{\bar{\nu}}},\mathbf{X}^{\bar{\nu}})-s_X\sum_{l=\bar{\nu}}^{\nu}||\mathbf{X}^{l+1}-\mathbf{X}^{l}||_F^2
+L_X\,\sum_{l=\bar{\nu}}^{\nu}||\mathbf{U}^l-\mathbf{1}\otimes\overline{\mathbf{U}}_{\boldsymbol{\phi}^l}||_F~||\mathbf{X}^{l+1}-\mathbf{X}^l||_F
\\
&\quad-\frac{\bar{\tau}_D}{I}\sum_{l=\bar{\nu}}^{\nu}\gamma^l\left(|| \widetilde{\mathbf{D}}^l-\mathbf{D}^l||_F-\frac{I\bar{\epsilon}_\phi}{2\,\bar{\tau}_D}T^l\right)^2+\frac{I\bar{\epsilon}_\phi^2}{4\,\bar{\tau}_D}\sum_{l=\bar{\nu}}^{\nu}\gamma^l\left(T^l\right)^2
\\
&\quad+c_6\,\sum_{l=\bar{\nu}}^{\nu}\gamma^l(\rho)^l+c_7\,\sum_{l=\bar{\nu}}^{\nu}\sum_{t=1}^{l}\gamma^l(\rho)^{l-t}\norm{\mathbf{X}^{t+1}-\mathbf{X}^t}_F+L_G\sum_{l=\bar{\nu}}^\nu\gamma^l\norm{\mathbf{D}^l-\mathbf{1}\otimes\overline{\mathbf{D}}_{\boldsymbol{\phi}^l} }_F,
\end{aligned}
\label{descent_ineq}
\end{equation}
for some finite constants $c_6, c_7>0$. Using the boundedness of   $\{||\mathbf{X}^{\nu+1}-\mathbf{X}^{\nu}||_F\}_\nu$ {(cf. Step 2), i.e.,  $||\mathbf{X}^{\nu+1}-\mathbf{X}^{\nu}||_F\leq B_X$, for all  $\nu$ and some   $B_X>0$, we can bound the double-sum term on the RHS of  \eqref{descent_ineq} as}
\begin{equation}
\begin{aligned}
&\sum_{l=\bar{\nu}}^{\nu}\sum_{t=1}^{l}\gamma^l(\rho)^{l-t} \norm{\mathbf{X}^{t+1}-\mathbf{X}^t}_F
=\sum_{l=1}^{\nu}\sum_{t=\max(\bar{\nu},l)}^{\nu}\gamma^t(\rho)^{t-l}\norm{\mathbf{X}^{l+1}-\mathbf{X}^l}_F
\\
&=\sum_{l=1}^{\bar{\nu}-1}\norm{\mathbf{X}^{l+1}-\mathbf{X}^l}_F\sum_{t=\bar{\nu}}^{\nu}\gamma^t(\rho)^{t-l}
+\sum_{l=\bar{\nu}}^{\nu}\norm{\mathbf{X}^{l+1}- \mathbf{X}^l}_F\sum_{t=l}^{\nu}\gamma^t(\rho)^{t-l}
\\
&\overset{(a)}{\leq} B_X\cdot\left(\max_{\bar{\nu}\leq t\leq \nu}\gamma^t\right)\cdot\frac{\left(1-(\rho)^{\bar{\nu}}\right)\left(1-(\rho)^{\nu-\bar{\nu}+1}\right) }{(1-\rho)^2}
+\sum_{l=\bar{\nu}}^{\nu}\norm{\mathbf{X}^{l+1}-\mathbf{X}^l}_F\sum_{t=l}^{\infty}\gamma^t(\rho)^{t-l}
\\
&\leq \frac{B_X}{(1-\rho)^2}~\left(\max_{ t\geq \bar{\nu}}\gamma^t\right) +\sum_{l=\bar{\nu}}^{\nu}\norm{\mathbf{X}^{l+1}-\mathbf{X}^l}_F\sum_{t=l}^{\infty}\gamma^t(\rho)^{t-l},
\end{aligned}
\label{error_sum_simplification}
\end{equation}
where in (a) we used the summability of $\sum_{t=l}^{\nu}\gamma^t(\rho)^{t-l}$, and the following bound
 $$\sum_{l=1}^{\bar{\nu}-1}\sum_{t=\bar{\nu}}^{{\nu}}\gamma^t (\rho)^{t-l}\leq \left(\max_{ t\geq \bar{\nu}}\gamma^t\right)\,\dfrac{\left(1-(\rho)^{\bar{\nu}}\right)\left(1-(\rho)^{\nu-\bar{\nu}+1}\right)}{(1-\rho)^2}.$$
Substituting \eqref{error_sum_simplification} in \eqref{descent_ineq}, yields\vspace{-0.3cm}
\begin{equation}
\begin{aligned}
U(\overline{\mathbf{D}}_{\boldsymbol{\phi}^{\nu+1}} ,\mathbf{X}^{\nu+1})&\leq U(\overline{\mathbf{D}}_{\boldsymbol{\phi}^{\bar{\nu}}},\mathbf{X}^{\bar{\nu}})-s_X\sum_{l=\bar{\nu}}^{\nu}\norm{\mathbf{X}^{l+1}-\mathbf{X}^{l}}_F^2
\\
&\quad+\sum_{l=\bar{\nu}}^{\nu}\underbrace{\left(c_7\sum_{t=l}^{\infty}\gamma^t(\rho)^{t-l}+L_X||\mathbf{U}^l-\mathbf{1}\otimes \overline{\mathbf{U}}_{\boldsymbol{\phi}^l}||_F\right)}_{\triangleq Z^l} \norm{\mathbf{X}^{l+1}-\mathbf{X}^l}_F
\\
&\quad -\frac{\bar{\tau}_D}{I}\sum_{l=\bar{\nu}}^{\nu}\gamma^l\left(|| \widetilde{\mathbf{D}}^l-\mathbf{D}^l||_F-\frac{I\bar{\epsilon}_\phi}{2\,\bar{\tau}_D}T^l\right)^2+\frac{I\bar{\epsilon}_\phi^2}{4\,\bar{\tau}_D}\sum_{l=\bar{\nu}}^{\nu}\gamma^l\left(T^l\right)^2
\\
&\quad+\underbrace{\left[\frac{{c}_6}{1-\rho}\left((\rho)^{\bar{\nu}}-(\rho)^{\nu+1}\right)+\frac{c_7 B_X}{(1-\rho)^2}\right]\cdot \left(\max_{ t\geq \bar{\nu}}\gamma^t\right)}_{\triangleq E^{\nu,\bar{\nu}}}+L_G\sum_{l=\bar{\nu}}^\nu\gamma^l\norm{\mathbf{D}^l-\mathbf{1}\otimes\overline{\mathbf{D}}_{\boldsymbol{\phi}^l} }_F\nonumber 
\end{aligned}
\vspace{-0.3cm}
\end{equation}
which complete the proof. 
\hfill$\square$
\subsubsection{Proof of Lemma \ref{consVar_props2}}\label{proof_lemma_tracking_vanishing}

The result  follows by squaring both sides of  eq. \eqref{theta_tracking_bound1} [in the proof of Proposition \ref{sol_map_prop} (a)],   using $\sum_{\nu=1}^{\infty} (T^\nu)^2<\infty$ [cf. Proposition \ref{sol_map_prop} (a)], and 
	\begin{equation}
	\begin{aligned}
	&\lim_{\nu\rightarrow\infty}\sum_{t=1}^\nu\left(\sum_{l=1}^{t}(\rho)^{t-l}\norm{\mathbf{X}^l-\mathbf{X}^{l-1}}_F\right)^2
	\\
	&\leq \lim_{\nu\rightarrow\infty}\sum_{t=1}^\nu \sum_{l=1}^{t} \sum_{k=1}^{t} (\rho)^{t-l}(\rho)^{t-k}\norm{\mathbf{X}^l-\mathbf{X}^{l-1}}_F\norm{\mathbf{X}^k-\mathbf{X}^{k-1}}_F
	\\
	&\overset{(a)}{\leq}\frac{1}{1-\rho}\lim_{\nu\rightarrow\infty}\sum_{t=1}^\nu \sum_{l=1}^{t}  (\rho)^{t-l}  \norm{{\mathbf{X}}^{l}-\mathbf{X}^{l-1}}_F^2
	\,\overset{(b)}{<}\infty,
	\end{aligned}
	\nonumber
	\end{equation}
	where (a) follows  from $ab\leq(a^2+b^2)/2,\forall a,b\in\mathbb{R}$, and (b) is due to   \eqref{lim_DeltaX_0} and Lemma \ref{sequence_convg_props}. 
 \hfill $\square$ 
 
 \subsubsection{Proof of Proposition \ref{sol_map_prop_part2}}\label{app_proof_prop_delta_d_zero} 
 
 \noindent  \textbf{(a)} Using the optimality of $\widetilde{\mathbf{D}}_{(i)}^\nu$ defined in \eqref{D_tilde_subproblem} 
	together with convexity of $G$, yields
	\begin{equation}
	\nonumber 
	\begin{aligned}
	&\quad\left\langle \highlightBLUE{\nabla_D \tilde{f}_i(\widetilde{\mathbf{D}}^{\nu_1}_{(i)};\mathbf{D}^{\nu_1}_{(i)},\mathbf{X}^{\nu_1}_i)}+I\cdot
	 {\boldsymbol{\Theta}}_{(i)}^{\nu_1}-\nabla_D f_i(\mathbf{D}^{\nu_1}_{(i)},\mathbf{X}^{\nu_1}_i), \widetilde{\mathbf{D}}^{\nu_2}_{(i)}- \widetilde{\mathbf{D}}^{\nu_1}_{(i)} \right\rangle
	+G(\widetilde{\mathbf{D}}^{\nu_2}_{(i)})-G(\widetilde{\mathbf{D}}_{(i)}^{\nu_1})\geq 0,
	\\
	&\quad\left\langle \highlightRED{\nabla_D \tilde{f}_i(\widetilde{\mathbf{D}}^{\nu_2}_{(i)};\mathbf{D}^{\nu_2}_{(i)},\mathbf{X}^{\nu_2}_i)}+I\cdot
	 {\boldsymbol{\Theta}}_{(i)}^{\nu_2}-\nabla_D f_i(\mathbf{D}^{\nu_2}_{(i)},\mathbf{X}^{\nu_2}_i), \widetilde{\mathbf{D}}^{\nu_1}_{(i)}- \widetilde{\mathbf{D}}^{\nu_2}_{(i)} \right\rangle
	+G(\widetilde{\mathbf{D}}^{\nu_1}_{(i)})-G(\widetilde{\mathbf{D}}_{(i)}^{\nu_2})\geq 0.
	\end{aligned}
	\end{equation}
	Summing the two  inequalities above while adding/subtracting inside the inner product  $\nabla_D \tilde{f}_i(\widetilde{\mathbf{D}}^{\nu_2}_{(i)};\mathbf{D}^{\nu_1}_{(i)},\mathbf{X}_i^{\nu_1})$ and using  \eqref{Theta_update}, yield
	\begin{equation}\label{min_princ_2}
	\begin{aligned}
	&\!\left\langle \highlightGRAY{\nabla_D \tilde{f}_i(\widetilde{\mathbf{D}}^{\nu_2}_{(i)};\mathbf{D}^{\nu_1}_{(i)},\mathbf{X}^{\nu_1}_i)}\!-\!\highlightRED{\nabla_D \tilde{f}_i(\widetilde{\mathbf{D}}^{\nu_2}_{(i)};\mathbf{D}^{\nu_2}_{(i)},\mathbf{X}^{\nu_2}_i)},\widetilde{\mathbf{D}}_{(i)}^{\nu_2}- \widetilde{\mathbf{D}}^{\nu_1}_{(i)} \right\rangle
	\\
	&-\left\langle\nabla_D f_i(\mathbf{D}_{(i)}^{\nu_1},\mathbf{X}_i^{\nu_1})-\nabla_D f_i(\mathbf{D}_{(i)}^{\nu_2},\mathbf{X}_i^{\nu_2}),\widetilde{\mathbf{D}}_{(i)}^{\nu_2}- \widetilde{\mathbf{D}}^{\nu_1}_{(i)}\right\rangle
	+I \left\langle   {\boldsymbol{\Theta}}_{(i)}^{\nu_1}- {\boldsymbol{\Theta}}_{(i)}^{\nu_2}    ,\widetilde{\mathbf{D}}_{(i)}^{\nu_2}- \widetilde{\mathbf{D}}^{\nu_1}_{(i)}\right\rangle
	\\
	&\geq \left\langle \highlightBLUE{\nabla_D \tilde{f}_i(\widetilde{\mathbf{D}}^{\nu_1}_{(i)};\mathbf{D}^{\nu_1}_{(i)},\mathbf{X}^{\nu_1}_i)}- \highlightGRAY{\nabla_D \tilde{f}_i(\widetilde{\mathbf{D}}^{\nu_2}_{(i)};\mathbf{D}^{\nu_1}_{(i)},\mathbf{X}^{\nu_1}_i)}, \widetilde{\mathbf{D}}^{\nu_1}_{(i)}-\widetilde{\mathbf{D}}_{(i)}^{\nu_2} \right\rangle
	\geq \tau_{D,i}^{\nu_1}~||\widetilde{\mathbf{D}}_{(i)}^{\nu_2}- \widetilde{\mathbf{D}}^{\nu_1}_{(i)}||_F^2,
	\end{aligned}
	\end{equation}
	where the second inequality follows from the  $\tau_{D,i}^{\nu_1}$-strong convexity of $\tilde{f}_i(\bullet;\mathbf{D}^{\nu_1}_{(i)},\mathbf{X}^{\nu_1}_i)$ [cf. Remark \ref{surrogate_props}].  
	To bound the first term on the LHS of the above inequality, let us use the expression \eqref{eq:ftilde2} of $\tilde{f}_i$, and write \begin{equation}
	\label{min_princ_4}
	\begin{aligned}
	& \highlightGRAY{\nabla_D \tilde{f}_i(\widetilde{\mathbf{D}}^{\nu_2}_{(i)};\mathbf{D}^{\nu_1}_{(i)},\mathbf{X}^{\nu_1}_i)}\!-\!\highlightRED{\nabla_D \tilde{f}_i(\widetilde{\mathbf{D}}^{\nu_2}_{(i)};\mathbf{D}^{\nu_2}_{(i)},\mathbf{X}^{\nu_2}_i)}
	\\
	&\quad=\nabla_D f_i(\mathbf{D}_{(i)}^{\nu_1},\mathbf{X}_i^{\nu_1})-\nabla_D f_i(\mathbf{D}_{(i)}^{\nu_2},\mathbf{X}_i^{\nu_2})
	+\tau_{D,i}^{\nu_1}\left(\mathbf{D}_{(i)}^{\nu_2}-\mathbf{D}_{(i)}^{\nu_1}\right)+(\tau_{D,i}^{\nu_1}-\tau_{D,i}^{\nu_2})\left(\widetilde{\mathbf{D}}_{(i)}^{\nu_2}-\mathbf{D}_{(i)}^{\nu_2}\right).
	\end{aligned}
	\end{equation}
	Substituting \eqref{min_princ_4} in \eqref{min_princ_2},
	we get
	\begin{equation}
	\begin{aligned}
	&\norm{\widetilde{\mathbf{D}}_{(i)}^{\nu_2}- \widetilde{\mathbf{D}}^{\nu_1}_{(i)}}_F
	\\
	\leq & \frac{|\tau_{D,i}^{\nu_1}-\tau_{D,i}^{\nu_2}|}{\tau_{D,i}^{\nu_1}}\norm{\widetilde{\mathbf{D}}_{(i)}^{\nu_2}-\mathbf{D}_{(i)}^{\nu_2}}_F
	+\norm{\mathbf{D}^{\nu_1}_{(i)}-\mathbf{D}^{\nu_2}_{(i)}}_F
+\frac{I}{\tau_{D,i}^{\nu_1}}\norm{  {\boldsymbol{\Theta}}_{(i)}^{\nu_1}- {\boldsymbol{\Theta}}_{(i)}^{\nu_2} }_F
	\\
\overset{(a)}{\leq} & \frac{|\tau_{D,i}^{\nu_1}-\tau_{D,i}^{\nu_2}|}{\tau_{D,i}^{\nu_1}}\norm{\widetilde{\mathbf{D}}_{(i)}^{\nu_2}-\mathbf{D}_{(i)}^{\nu_2}}_F
+\norm{\overline{\mathbf{D}}_{\boldsymbol{\phi}^{\nu_2}}-\overline{\mathbf{D}}_{\boldsymbol{\phi}^{\nu_1}}}_F+\norm{\mathbf{D}^{\nu_1}_{(i)}-\overline{\mathbf{D}}_{\boldsymbol{\phi}^{\nu_1}}}_F+\norm{\mathbf{D}^{\nu_2}_{(i)}-\overline{\mathbf{D}}_{\boldsymbol{\phi}^{\nu_2}}}_F
\\
&+\frac{I}{\tau_{D,i}^{\nu_1}}\norm{  {\boldsymbol{\Theta}}_{(i)}^{\nu_1}-\frac{1}{I}\sum_{i=1}^I\nabla_D f_i(\overline{\mathbf{D}}_{\boldsymbol{\phi}^{\nu_1}},\mathbf{X}_i^{\nu_1}) }_F	
\\
&+\frac{I}{\tau_{D,i}^{\nu_1}}\norm{ {\boldsymbol{\Theta}}_{(i)}^{\nu_2}-\frac{1}{I}\sum_{i=1}^I\nabla_D f_i(\overline{\mathbf{D}}_{\boldsymbol{\phi}^{\nu_2}},\mathbf{X}_i^{\nu_2}) }_F	
\\
&+\frac{{L}_{\nabla_D}}{\tau_{D,i}^{\nu_1}}\norm{(\overline{\mathbf{D}}_{\boldsymbol{\phi}^{\nu_1}},\mathbf{X}^{\nu_1})-(\overline{\mathbf{D}}_{\boldsymbol{\phi}^{\nu_2}},\mathbf{X}^{\nu_2})},
	\end{aligned}
	\label{Lip_bound_3}
	\end{equation}
where (a) holds by add/subtracting average quantities $\frac{1}{I}\sum_{i=1}^I\nabla_D f_i(\overline{\mathbf{D}}_{\boldsymbol{\phi}^{\nu}},\mathbf{X}_i^{\nu})$ and $\overline{\mathbf{D}}_{(i)}^\nu$, triangle inequality, and invoking  the Lipschitz continuity bound \eqref{Lip_G}. 	
	By the compactness of $\mathcal D$, we have  $||\widetilde{\mathbf{D}}_{(i)}^{\nu_2}-\mathbf{D}_{(i)}^{\nu_2}||_F\leq B_D$, for some finite $B_D>0$. Furthermore,  by  Assumption \ref{freeVars_assumptions}, $\tau_{D,i}^{\nu}$ is convergent to some $\tau_{D,i}^{\infty}>0$  and there exists a sufficiently small  $\tilde{s}_D>0$  such that   $\tilde{s}_D\leq\tau_{D,i}^{\nu}\leq \tilde{s}_D^{-1}$, for all $\nu$ and $i$. Thus \eqref{Lip_bound_3} gives
	\begin{equation}
	\begin{aligned}
	\norm{\widetilde{\mathbf{D}}_{(i)}^{\nu_2}- \widetilde{\mathbf{D}}^{\nu_1}_{(i)}}_F
	&\leq \left(\frac{{L}_{\nabla_D}}{\tilde{s}_D}+1\right)\norm{(\overline{\mathbf{D}}_{\boldsymbol{\phi}^{\nu_1}},\mathbf{X}^{\nu_1})-(\overline{\mathbf{D}}_{\boldsymbol{\phi}^{\nu_2}},\mathbf{X}^{\nu_2})}+\tilde{T}_i^{\nu_1}+\tilde{T}_i^{\nu_2}
	\end{aligned}
		\label{Lip_bound_4}
	\end{equation}
	with $\tilde{T}_i^{\nu}\triangleq \frac{I}{\tilde{s}_D}|| \widetilde{\boldsymbol{\Theta}}_{i}^{\nu}-\frac{1}{I}\sum_{i=1}^I\nabla_D f_i(\overline{\mathbf{D}}_{\boldsymbol{\phi}^{\nu}},\mathbf{X}_i^{\nu})||_F	+||\mathbf{D}^{\nu}_{(i)}-\overline{\mathbf{D}}_{\boldsymbol{\phi}^{\nu}}||_F+\frac{B_D}{\tilde{s}_D}|\tau_{D,i}^{\nu}-\tau_{D,i}^{\infty}|$. Convergence of $\tau_{D,i}^\nu$ (Assumption \ref{freeVars_assumptions}2), Lemma \ref{consVar_props2} and Proposition \ref{consVar_props} yield $\tilde{T}_i^\nu\rightarrow 0$ as $\nu\rightarrow\infty$. Summing \eqref{Lip_bound_4} leads to the desired result, with $L_D=I(\frac{{L}_{\nabla_D}}{\tilde{s}_D}+1)$ and $\tilde{T}^\nu\triangleq \sum_{i=1}^I\tilde{T}_i^\nu$.
	\\
	It is clear that the claim also holds when   \eqref{eq:ftilde1} is chosen for $\tilde{f}_i$ [specifically, trivial extension is to modify the RHS of \eqref{min_princ_4}  and following a similar steps as in the rest of the proof]; we omit further details.

\noindent \textbf{(b)}
We prove \eqref{DeltaX_bound} when $\tilde{h}_i$ is given by  \eqref{eq:htilde2}; we leave the proof  under \eqref{eq:htilde1}   to the reader, since it is almost identical to that under \eqref{eq:htilde2}.  

Invoking optimality of each  $\mathbf{X}_i^\nu$ and $\mathbf{X}_i^{\nu+1}$ defined in \eqref{eq:Xupdate}  while using the strong convexity of $\tilde{h}_i(\bullet;\mathbf{U}_{(i)}^\nu,\mathbf{X}_i^\nu)$ and $g_i$'s,  it is not difficult to show that the following holds:
\begin{equation}
\nonumber
\begin{aligned}
&(\tau_{X,i}^\nu+\mu_i)\norm{\mathbf{X}_i^{\nu+1}-\mathbf{X}_i^\nu}_F^2\leq
\\
&\qquad\qquad\left\langle\nabla_{X_i} \tilde{h}_i(\mathbf{X}_i^{\nu};\mathbf{U}_{(i)}^\nu,\mathbf{X}_i^\nu)-\nabla_{X_i} \tilde{h}_i(\mathbf{X}_i^\nu;\mathbf{U}_{(i)}^{\nu-1},\mathbf{X}_i^{\nu-1}),\mathbf{X}_i^\nu-\mathbf{X}_i^{\nu+1}\right\rangle.
\end{aligned}
\end{equation}
Using \eqref{eq:htilde2}, the definition  $\Upsilon_i^\nu(\mathbf{X}_i)\triangleq \tau_{X,i}^\nu\mathbf{X}_i-\nabla_{X_i} f_i(\mathbf{U}_{(i)}^\nu,\mathbf{X}_i)$, and  the Cauchy-Schwarz inequality, the above inequality yields
\begin{equation}
\begin{aligned}
(\tau_{X,i}^\nu+\mu_i)\norm{\mathbf{X}_i^{\nu+1}-\mathbf{X}_i^\nu}_F^2
\leq & \norm{ \Upsilon_i^\nu(\mathbf{X}^\nu_i)-\Upsilon_i^\nu(\mathbf{X}^{\nu-1}_i)}_F\norm{\mathbf{X}_i^{\nu+1}-\mathbf{X}_i^\nu}_F
\\
& +\norm{\nabla_{X_i}f_i(\mathbf{U}_{(i)}^\nu,\mathbf{X}_i^{\nu-1})-\nabla_{X_i}f_i(\mathbf{U}_{(i)}^{\nu-1},\mathbf{X}_i^{\nu-1})}_F\norm{\mathbf{X}_i^{\nu+1}-\mathbf{X}_i^\nu}_F
\\
&+|\tau_{X,i}^\nu-\tau_{X,i}^{\nu-1}|\norm{\mathbf{X}_i^\nu-\mathbf{X}_i^{\nu-1}}_F\norm{\mathbf{X}_i^{\nu+1}-\mathbf{X}_i^\nu}_F.
\end{aligned}
\label{DeltaX_bound4}
\end{equation}
Following the same steps used to prove \eqref{eq:Upsilon_bound}, 
it is not difficult to check that, under Assumption \ref{freeVars_assumptions}1,    $||\Upsilon_i^\nu(\mathbf{X}^\nu_i)-\Upsilon_i^\nu(\mathbf{X}^0_i)||_F\leq\tau_{X,i}^\nu||\mathbf{X}^\nu_i-\mathbf{X}^0_i||_F$. Using in \eqref{DeltaX_bound4}  this bound   together with the  Lipschitz continuity of $\nabla_{X_i}f_i(\bullet,\mathbf{X}_i^{\nu-1})$ [cf. Remark \ref{Lipschitz_continuity_remark}]  and summing over $i$, yield\vspace{-0.3cm}
\begin{equation}
\begin{aligned}
&||\mathbf{X}^{\nu+1}-\mathbf{X}^\nu||_F^2=\sum_{i=1}^I||\mathbf{X}_i^{\nu+1}-\mathbf{X}_i^\nu||_F^2
\\
&\leq\sum_{i=1}^I\frac{\tau_{X,i}^\nu+|\tau_{X,i}^\nu-\tau_{X,i}^{\nu-1}|}{\tau_{X,i}^\nu+\mu_i}\,||\mathbf{X}_i^\nu-\mathbf{X}_i^{\nu-1}||_F~||\mathbf{X}_i^{\nu+1}-\mathbf{X}_i^\nu||_F
\\
&\quad+\sum_{i=1}^I\frac{L_\nabla}{\tau_{X,i}^\nu+\mu_i}\,||\mathbf{U}_{(i)}^\nu-\mathbf{U}_{(i)}^{\nu-1}||_F~||\mathbf{X}_i^{\nu+1}-\mathbf{X}_i^\nu||_F.
\end{aligned}
\label{DeltaX_bound6}
\end{equation}
Define  \begin{equation}
p_X^\nu\triangleq \max_i\,\frac{\tau_{X,i}^\nu + |\tau_{X,i}^\nu-\tau_{X,i}^{\nu-1}|}{\tau_{X,i}^\nu+\mu_i},\quad\text{and}\quad  q_{X}^\nu\triangleq\max_i\,\frac{L_\nabla}{\tau_{X,i}^\nu+\mu_i}.
\label{p_X_q_X_def}
\end{equation}
Note that   $0<p_{X}^\nu,q_{X}^\nu<\infty$. Then, \eqref{DeltaX_bound6} becomes\vspace{-0.2cm}
\begin{equation}
\begin{aligned}
||\mathbf{X}^{\nu+1}-\mathbf{X}^\nu||_F^2&\leq p_X^\nu\sum_{i=1}^I||\mathbf{X}_i^\nu-\mathbf{X}_i^{\nu-1}||_F~||\mathbf{X}_i^{\nu+1}-\mathbf{X}_i^\nu||_F
+q_X^\nu\sum_{i=1}^I||\mathbf{U}_{(i)}^\nu-\mathbf{U}_{(i)}^{\nu-1}||_F~||\mathbf{X}_i^{\nu+1}-\mathbf{X}_i^\nu||_F
\\
& \leq p_X^\nu||\mathbf{X}^\nu-\mathbf{X}^{\nu-1}||_F~||\mathbf{X}^{\nu+1}-\mathbf{X}^\nu||_F
+q_X^\nu||\mathbf{U}^\nu-\mathbf{U}^{\nu-1}||_F~||\mathbf{X}^{\nu+1}-\mathbf{X}^\nu||_F,
\end{aligned}
\nonumber
\end{equation}
where in the last inequality we used $\sum_{i}a_ib_i\leq||\mathbf{a}||\cdot||\mathbf{b}||$. Therefore,
\begin{equation}
||\mathbf{X}^{\nu+1}-\mathbf{X}^\nu||_F\leq p_X^\nu||\mathbf{X}^\nu-\mathbf{X}^{\nu-1}||_F+q_X^\nu||\mathbf{U}^\nu-\mathbf{U}^{\nu-1}||_F.
\label{DeltaX_bound_proof}
\end{equation}
If, in addition,  Assumption \ref{freeVars_assumptions}2 holds, then  it follows from  \eqref{p_X_q_X_def}  that there exists a sufficiently large $\nu_X>0$ such that  $p_X^\nu\in(0,\delta_0)$,   for all $\nu\geq\nu_X$ and some   $\delta_0\in(0,1)$. \hfill$\square$

\end{appendix}

\clearpage

 \bibliography{References_main,References_scutari,References_others}

\end{document}